\documentclass[10pt, a4paper, reqno, oneside]{amsart}
\usepackage[utf8]{inputenc}
\usepackage{amsmath, calligra, mathrsfs}
\usepackage{enumerate, color}
\usepackage[hidelinks]{hyperref}
\usepackage{bbm}
\usepackage[all]{xy}
\usepackage{amsfonts} 
\usepackage{amssymb} 
\usepackage{graphics} 
\usepackage{amscd} 
\usepackage{enumitem} 
\usepackage{todonotes} 
\usepackage{comment}
\usepackage{tikz-cd}

\newcommand{\bA}{\mathbb{A}}
\newcommand{\A}{\mathbb{A}}

\newcommand{\C}{\mathbb{C}}

\newcommand{\F}{\mathbb{F}}

\newcommand{\bL}{\mathbb{L}}

\newcommand{\N}{\mathbb{N}}

\newcommand{\Q}{\mathbb{Q}}
\newcommand{\Qp}{\mathbb{Q}_p}

\newcommand{\R}{\mathbb{R}}

\newcommand{\bT}{\mathbb{T}}

\newcommand{\Z}{\mathbb{Z}}
\newcommand{\Zp}{\mathbb{Z}_p}

\DeclareMathOperator{\GL}{GL}

\newcommand{\by}{\mathbf{y}}

\newcommand{\cA}{\mathcal{A}}

\newcommand{\cE}{\mathcal{E}}

\newcommand{\cL}{\mathcal{L}}

\newcommand{\cN}{\mathcal{N}}

\newcommand{\cO}{\mathcal{O}}

\newcommand{\cS}{\mathcal{S}}
\newcommand{\cT}{\mathcal{T}}

\newcommand{\crA}{\mathscr{A}}

\newcommand{\crD}{\mathscr{D}}

\newcommand{\crW}{\mathscr{W}}

\newcommand{\sA}{\mathscr{A}}

\newcommand{\sC}{\mathscr{C}}
\newcommand{\sD}{\mathscr{D}}

\newcommand{\sI}{\mathscr{I}}

\newcommand{\sL}{\mathscr{L}}

\newcommand{\sV}{\mathscr{V}}
\newcommand{\sW}{\mathscr{W}}
\newcommand{\sX}{\mathscr{X}}
\newcommand{\sY}{\mathscr{Y}}

\newcommand{\fa}{\mathfrak{a}}

\newcommand{\fg}{\mathfrak{g}}

\newcommand{\m}{\mathfrak{m}}
\newcommand{\fm}{\mathfrak{m}}
\newcommand{\fn}{\mathfrak{n}}

\newcommand{\fp}{\mathfrak{p}}

\newcommand{\fP}{\mathfrak{P}}

\renewcommand{\hbar}{\overline{h}}

\newcommand{\End}{\mathrm{End}}

\newcommand{\h}{\mathrm{H}}

\newcommand{\hc}[1]{\mathrm{H}^{#1}_{\mathrm{c}}}

\renewcommand{\leq}{\leqslant}
\renewcommand{\geq}{\geqslant}

\newcommand*{\longhookrightarrow}{\ensuremath{\lhook\joinrel\relbar\joinrel\rightarrow}}

\newcommand\isorightarrow{\xrightarrow{
		\,\smash{\raisebox{-0.65ex}{\ensuremath{\scriptstyle\sim}}}\,}}

\newcommand*{\defeq}{\mathrel{\vcenter{\baselineskip0.5ex \lineskiplimit0pt
			\hbox{\scriptsize.}\hbox{\scriptsize.}}}%
	=}

\newcommand{\smallmatrd}[4]{\left(\begin{smallmatrix}#1 & #2\\#3 & #4\end{smallmatrix}\right)}

\newcommand{\newmod}[1]{\hspace{2pt}(\mathrm{mod}\hspace{2pt}#1)}

\newcommand{\beqn}{\begin{eqnarray*}}
	\newcommand{\eeqn}{\end{eqnarray*}}
\newcommand{\beqa}{\begin{eqnarray}}
	\newcommand{\eeqa}{\end{eqnarray}}

\numberwithin{equation}{section}

\newcommand{\YK}{K}
\newcommand{\VAN}[3]{#2}

\usepackage[OT2,T1]{fontenc}

\usepackage{environ}
\NewEnviron{myquote}{\vspace{1ex}\par
	\hfill\llap{($\dagger$)}\hfill\parbox{\textwidth-2cm}%
	{\emph{\BODY}}%
	\vspace{1ex}\par}

\DeclareSymbolFont{cyrletters}{OT2}{wncyr}{m}{n}
\DeclareMathSymbol{\Sha}{\mathalpha}{cyrletters}{"58}

\makeatletter 
\newcommand\figcaption{\def\@captype{figure}\caption} 
\newcommand\tabcaption{\def\@captype{table}\caption} 
\makeatother

\usepackage[left=1in, right=1in, top=1in, bottom=1in]{geometry}

\newtheorem{thm}{Theorem}[section]
\newtheorem{theorem}[thm]{Theorem}

\newtheorem{hyp}[thm]{Hypothesis}

\newtheorem{proposition}[thm]{Proposition}
\newtheorem{lemma}[thm]{Lemma}
\newtheorem{corollary}[thm]{Corollary}
\newtheorem*{corollary*}{Corollary}
\newtheorem{conjecture}[thm]{Conjecture}
\newtheorem{assumption}[thm]{Assumption}
\newtheorem*{assumption*}{Assumption}

\newcounter{alphalabels}

\newtheorem{theoremx}[alphalabels]{Theorem}

\newtheorem{corollaryx}[alphalabels]{Corollary}

\theoremstyle{definition}

\newtheorem{set-up}[thm]{Set-up}

\theoremstyle{definition}

\newtheorem{definition}[thm]{Definition}

\newtheorem*{notation*}{Notation}
\newtheorem{convention}[thm]{Convention}
\newtheorem*{convention*}{Convention}
\newtheorem{facts}[thm]{Facts}

\newtheorem{test-data}[thm]{Test data}

\theoremstyle{remark}
\newtheorem{remark}[thm]{Remark}
\newtheorem{example}[thm]{Example}
\newtheorem*{remark*}{Remark}

\renewcommand{\tilde}{\widetilde}

\newcommand{\Iw}{\opn{Iw}}
\newcommand{\cInd}{\mathrm{c}\text{-}\mathrm{Ind}}

\DeclareFontFamily{U}{wncy}{}
\DeclareFontShape{U}{wncy}{m}{n}{<->wncyr10}{}
\DeclareSymbolFont{mcy}{U}{wncy}{m}{n}
\DeclareMathSymbol{\sha}{\mathord}{mcy}{"58}

\newcommand{\ide}[1]{\mathfrak{#1}}
\newcommand{\mbb}[1]{\mathbb{#1}}

\newcommand{\opn}[1]{\operatorname{#1}}

\newcommand{\mbf}[1]{\mathbf{#1}}

\DeclareMathOperator{\bfA}{\mathbf{A}}

\DeclareMathOperator{\bfD}{\mathbf{D}}

\newcommand{\Addresses}{{
  \bigskip
  \footnotesize

  (Barrera Salazar) \textsc{Universidad de Santiago de Chile, Avenida Libertador Bernardo O'Higgins no. 3363, Estaci\'{o}n Central, Santiago, Chile}\par\nopagebreak
  \textit{E-mail address}: \texttt{daniel.barrera.s@usach.cl}

  \medskip

  (Graham) \textsc{Mathematical Institute, University of Oxford, Woodstock Road, Oxford OX2 6GG, United Kingdom}\par\nopagebreak
  \textit{E-mail address}: \texttt{andrew.graham@maths.ox.ac.uk}

  \medskip

  (Williams) \textsc{University of Nottingham, University Park, Nottingham, NG7 2RD, United Kingdom}\par\nopagebreak
  \textit{E-mail address}: \texttt{chris.williams1@nottingham.ac.uk}

}}

\title{The non-abelian Leopoldt conjecture and equalities of $\cL$-invariants}
\author{Daniel Barrera Salazar, Andrew Graham, and Chris Williams}
\date{}

\begin{document}
\begin{abstract} 
Let $G$ be a reductive group quasi-split at $p$.  Using arguments of Hansen--Thorne, we show that under the non-abelian Leopoldt conjecture (NALC), Hansen's $p$-adic overconvergent cohomology eigenvariety for $G$ is \'etale over its image in weight space at any non-critical classical tempered cuspidal point of `cohomological multiplicity one'. This applies to all non-critical classical cuspidal points if $G = \mathrm{Res}_{F/\Q}\GL_n$.
        
We then let $\pi$ be a $p$-ordinary regular algebraic cuspidal automorphic representation of $\mathrm{GL}_n(\mathbb{A}_{\mathbb{Q}})$ such that $\pi_p$ is Steinberg. Combining the above \'etaleness result for the classical point attached to $\pi$, and a local-global compatibility result from our earlier work, we deduce -- under a tangent vector hypothesis that is true for at least half the simple roots -- the equality of Fontaine--Mazur and automorphic $\mathcal{L}$-invariants for $\pi$. Where this assumption is satisfied, we deduce the NALC implies a conjecture of Gehrmann: that automorphic $\mathcal{L}$-invariants are independent of cohomological degree. Our approach is inspired by (and generalises) previous work of Gehrmann--Rosso.

When $\pi = \opn{Sym}^{n-1} \pi_f$ is the symmetric power lift of a modular form, we verify all assumptions other than the NALC, and deduce a functoriality result for the automorphic $\cL$-invariants.

\bigskip

\noindent\textsc{R\'esum\'e.}
Soit $G$ un groupe réductif quasi-déployé en $p$. En utilisant des arguments de Hansen--Thorne, nous montrons que, sous la conjecture de Leopoldt non abélienne (NALC), la variété de Hecke de $G$ construite par Hansen à l'aide de la cohomologie surconvergente est étale sur son image dans l'espace des poids en tout point classique cuspidal tempéré non critique de `multiplicité cohomologique un'. Cela s'applique à tous les points classiques cuspidaux non critiques si $G = \mathrm{Res}_{F/\mathbb{Q}}\mathrm{GL}_n$.

Soit alors $\pi$ une représentation automorphe cuspidale algébrique régulière $p$-ordinaire de $\mathrm{GL}_n(\mathbb{A}_{\mathbb{Q}})$ telle que $\pi_{p}$ est Steinberg. En combinant le résultat ci-dessus pour le point classique associé à $\pi$ et un résultat de compatibilité locale-globale issu de nos travaux antérieurs, nous déduisons -- sous une hypothèse sur des vecteurs tangents qui est vérifiée pour au moins la moitié des racines simples -- l'égalité des $\mathcal{L}$-invariants de Fontaine--Mazur et automorphe associés à $\pi$. Lorsque cette hypothèse est satisfaite, nous en déduisons que la NALC implique une conjecture de Gehrmann : à savoir que les $\mathcal{L}$-invariants automorphes sont indépendants du degré cohomologique. Notre approche s'inspire d'un travail antérieur de Gehrmann-Rosso (et le généralise).

Lorsque $\pi = \mathrm{Sym}^{n-1} \pi_f$ est la puissance symétrique d'une forme modulaire, nous vérifions toutes les hypothèses à l'exception de la NALC, et nous en déduisons un résultat de fonctorialité pour les $\mathcal{L}$-invariants automorphes.
\end{abstract}

\maketitle

\footnotesize
\setcounter{tocdepth}{1}
    \tableofcontents
	\normalsize

\section{Introduction}

Let $\pi$ be a $p$-ordinary regular algebraic cuspidal automorphic representation (RACAR) of $\GL_n(\A)$ such that $\pi_p$ is the Steinberg representation of $\GL_n(\Qp)$. The main result of this note is an equality of ($p$-adic) automorphic and Fontaine--Mazur $\mathcal{L}$-invariants for $\pi$. We also prove results on the structure of overconvergent cohomology and the \'etaleness of eigenvarieties in a more general setting.

\subsection{Structure theorems for overconvergent cohomology and \'etaleness of eigenvarieties}

An essential tool in our comparison of $\mathcal{L}$-invariants is the $p$-adic variation of systems of Hecke eigenvalues, captured geometrically via \emph{eigenvarieties}. Eigenvarieties, and $p$-adic families, have become a staple topic in algebraic number theory and arithmetic geometry, at the heart of breakthroughs in Iwasawa theory (cases of the Birch--Swinnerton-Dyer and Bloch--Kato conjectures) and the Langlands program (through new modularity results). 

We first describe the geometry of eigenvarieties in a more general setting. As such, let $G/\Q$ be a reductive group quasi-split at $p$. When studying the geometry of the eigenvariety for $G$, one encounters a dichotomy.
\begin{itemize}
	\item[(A)] If $G(\R)$ admits discrete series representations, the cuspidal eigenvariety is expected to be flat over weight space, with Zariski-dense sets of `classical' points (arising from cuspidal automorphic representations of $G(\A)$). This case is the main focus of \cite{Urb11}. Here there are general results studying the geometry at `nice' points; for example, \cite[\S2]{BDJ17} provides systematic \'etaleness/structure results at non-critical points in `multiplicity one' settings.
	\item[(B)] By contrast, if $G(\R)$ does not admit discrete series then the cuspidal eigenvariety is never flat over weight space, and we expect existence of `non-classical' families with isolated classical points. See \cite[Intro]{classical-locus} for a detailed summary of this phenomenon and the nature of classical families. 
\end{itemize}

The first result of this note is an analogue of the \'etaleness and structure results of \cite{BDJ17} in case (B) settings, working with Hansen's eigenvarieties from overconvergent cohomology. In particular, using ideas of Hansen--Thorne \cite{HT17}, we show a freeness result for overconvergent cohomology. 

To explain this result, note the above dichotomy is captured via degrees of (the tempered part of) cuspidal cohomology. Let $K_\infty$ be any maximal compact-modulo-centre subgroup of $G(\R)$. Then the \emph{defect} and \emph{amplitude} of $G$ are respectively 
\begin{equation}\label{eq:defect}
	\ell = \ell(G) \defeq \opn{rank}(G) - \opn{rank}(K_\infty), \qquad q = q(G) = [\opn{dim}(G(\R)/K_\infty)-\ell(G)]/2.
\end{equation}
For example, $\ell(\GL_n)= \lfloor(n-1)/2\rfloor$; if $G$ admits a Shimura datum, then $\ell(G) = 0$; and if $F$ is a number field with $r$ real embeddings and $2s$ complex embeddings, then $\ell(\mathrm{Res}_{F/\Q}\GL_2) = s$.

More generally, we have:
\begin{itemize}
	\item $G(\R)$ admits discrete series if and only if $\ell(G) = 0$;
	\item tempered cuspidal cohomology for $G$ is supported exactly in degrees $\{q,q+1,...,q+\ell\}$.
\end{itemize}
In particular, case (A) corresponds to cuspidal cohomology being supported in only one degree, whilst case (B) corresponds to positive defect, and cuspidal cohomology in multiple degrees. 

Now let $K^p \subset G(\A_f^p)$ be a fixed tame level, let $\opn{Iw} \subset G(\Qp)$ be a fixed Iwahori subgroup, and let $K = K^p\opn{Iw}$. Let $\sX_{G,K} \xrightarrow{\ w \ } \sW$ be Hansen's (Iwahori-level) eigenvariety for $G$ of tame level $K^p$, where $\sW$ is the weight space. Let $K_\infty^\circ \leq K_\infty$ be the connected component of the identity; then $\sX_{G,K} = \cup_{\varepsilon} \sX_{G,K}^\varepsilon$, where $\varepsilon$ runs over the characters $K_\infty/K_\infty^\circ \to \{\pm 1\}$. Each point $x_\phi \in \sX_{G,K}^\varepsilon$ corresponds to a finite-slope Hecke eigenpacket $\phi$ for $G$ appearing in the $\varepsilon$-part of overconvergent cohomology (of weight $w(x_\phi)$ and level $K$).

The \emph{non-abelian Leopoldt conjecture} predicts that the dimension of a `sufficiently nice' cuspidal irreducible component of $\sX_{G,K}$ (or $\sX_{G,K}^\varepsilon$) is always $\dim \sW - \ell(G)$; see Conjecture \ref{conj:non-abelian leopoldt} below. This conjecture is known if $\ell(G) \leq 1$ (see \cite[Theorem 1.1.5]{Han17}). We show:

\begin{theoremx}\label{thm:main etale}
		Suppose that:
	\begin{itemize}
		\item[(a)] $\phi$ is a non-critical classical tempered cuspidal eigenpacket of weight $\lambda$ and level $K$,
		\item[(b)] $\phi$ is of cohomological multiplicity one at sign $\varepsilon$ (see Definition \ref{def:cohom mult one}),
		\item[(c)] The non-abelian Leopoldt conjecture holds for $\phi$.
	\end{itemize}
	Then there exists a rigid locally Zariski-closed subspace $\Sigma^\varepsilon \subset \sW$, containing $\lambda$ and equidimensional of dimension $\dim \sW - \ell(G)$, such that:
	\begin{itemize}
		\item[(i)] $\sX_{G,K}^\varepsilon \times_{\sW} \Sigma^\varepsilon \to \Sigma^\varepsilon$ is \'etale at $x_\phi$, and
		\item[(ii)] For $0 \leq i \leq \ell(G)$ and $h \gg 0$, the space $\hc{q+i}(K,\sD_{\Sigma^\varepsilon})^{\leq h, \varepsilon}_{\phi}$ is free of rank $\binom{\ell(G)}{i}$ over $\cO(\Sigma^\varepsilon)_\lambda$. 
	\end{itemize}
    Moreover, one can take $\Sigma^\varepsilon$ to be the image in $\sW$ of a rigid open neighbourhood of $x_\phi$ in $\sX_{G,K}^\varepsilon$.
\end{theoremx}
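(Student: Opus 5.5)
We follow the strategy of Hansen--Thorne \cite{HT17}: study the localised overconvergent cohomology complex over the local ring of weight space at $\lambda$, and use commutative algebra (Calegari--Geraghty style) to get freeness and étaleness. Let $\Lambda = \cO(\sW)_\lambda$, a regular local ring of dimension $d = \dim \sW$. By Hansen's construction, for $h \gg 0$ there is a slope-$\leq h$ perfect complex $C^\bullet = C^\bullet_c(K,\sD_{\Omega})^{\leq h,\varepsilon}$ over an affinoid neighbourhood $\Omega$ of $\lambda$, carrying a Hecke action. We localise it at the maximal ideal $\fm_\phi$ of the Hecke algebra, giving a perfect complex $C^\bullet_\phi$ over $\Lambda$ with Hecke algebra $\bT_\phi$. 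Non-criticality gives the control theorem $C^\bullet_\phi \otimes^{\mathbf L}_\Lambda \Lambda/\fm_\lambda \simeq C^\bullet_c(K,V_\lambda^\vee)^{\varepsilon}_\phi$. Temperedness and cuspidality put the classical cohomology in degrees $[q,q+\ell]$. Cohomological multiplicity one says it is $\wedge^{i}$ of an $\ell$-dimensional space in degree $q+i$, with the bottom degree one-dimensional and $\bT_\phi\otimes k = k$.

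The first step is the amplitude bound. Since $C^\bullet_\phi$ is perfect and its derived fibre has cohomology exactly in $[q,q+\ell]$, we can choose a minimal complex of free $\Lambda$-modules in degrees $[q,q+\ell]$ whose ranks are $\binom{\ell}{i}$. We then apply the Calegari--Geraghty / Hansen--Thorne lemma. The complex $H^*(C^\bullet_\phi)$ is supported on $\mathrm{Spec}\,\bT_\phi$. NALC (c) says the support in $\mathrm{Spec}\,\Lambda$ has dimension $\ge d-\ell$ (in fact equal). A perfect complex of length $\ell$ whose cohomology has codimension $\le \ell$ support must, by the new intersection theorem / acyclicity lemma, have its top cohomology $H^{q+\ell}$ nonzero with support of codimension exactly $\ell$ equal to $\mathrm{Supp}$. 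It is then a Cohen--Macaulay module of codimension $\ell$, and the lower cohomology vanishes except as forced by the Koszul structure.

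Concretely, we aim to show there is a regular sequence $f_1,\dots,f_\ell \in \Lambda$ with $C^\bullet_\phi$ quasi-isomorphic to the Koszul complex $K(f_1,\dots,f_\ell)$ shifted to start in degree $q$, tensored with a rank-one module. This is where the multiplicity-one ranks are used: the minimal complex has ranks $\binom{\ell}{i}$, and the top cohomology is cyclic over $\Lambda$, hence equal to $\Lambda/I$ with $\dim \Lambda/I = d-\ell$. Then $I$ is generated by $\ell$ elements, the images of the last differential. By Krull's height theorem these form a system of parameters in a regular ring, hence a regular sequence. This makes the Koszul complex exact except at the top, and identifies all of it. We set $\Sigma^\varepsilon = V(I)$ locally, which is a complete intersection of dimension $d-\ell$ containing $\lambda$. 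It is the image of the neighbourhood of $x_\phi$, since the eigenvariety's support is $\mathrm{Supp}\,H^*$.

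Finally we base change to $\Sigma^\varepsilon$. Since $C^\bullet_\phi\otimes^{\mathbf L}_\Lambda \Lambda/I$ is $K(f)\otimes \Lambda/I$ with zero differentials, $H^{q+i}(K,\sD_{\Sigma^\varepsilon})_\phi \cong (\Lambda/I)^{\binom{\ell}{i}}$, which gives (ii). Here we use that the derived base change of the slope-decomposed complex computes $\hc{*}(K,\sD_{\Sigma^\varepsilon})$. For (i), $\bT_\phi$ acts faithfully on the top cohomology $\Lambda/I$ and the action is $\Lambda/I$-linear on a cyclic module. Hence $\bT_\phi\otimes\Lambda/I \to \mathrm{End}(\Lambda/I)=\Lambda/I$ is an isomorphism, so the map is étale, since $\Lambda/I$ is regular? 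This is not necessarily so; étale here means an isomorphism onto the local ring of $\Sigma^\varepsilon$, which suffices.

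The main obstacle is the step that converts NALC into the Koszul/regular-sequence structure. It requires carefully checking that the dimension of the eigenvariety component equals the dimension of $\mathrm{Supp}_\Lambda H^{q+\ell}$, and that cyclicity of the top cohomology follows from multiplicity one via Nakayama.
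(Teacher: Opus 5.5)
Apart from packaging, your route is the paper's Hansen--Thorne argument. The minimal complex with ranks $\binom{\ell}{i}$ is the complex-level form of the $\opn{Tor}_1$ computation in Theorem~\ref{thm:regular sequence}(i): the $\ell$ entries of your last differential are the paper's $\ell$ generators of $J_\phi^\varepsilon$. Your regularity, Koszul base-change and $\bT_\phi\cong\Lambda/I$ steps also match the paper's.

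\textbf{The gap.} Everything rests on showing that $C^\bullet_\phi$ has no cohomology below degree $q+\ell$, and you do not establish this.
\begin{itemize}
\item The lemma you invoke has the inequality reversed. Support of codimension $\le\ell$ (dimension $\ge d-\ell$) is the unconditional bound from Newton's appendix, not the content of NALC. It forces nothing: a complex with zero differentials satisfies it.
\item ``Identifies all of it'' does not follow once the last differential is given by a regular sequence. Take $\ell=2$, a regular sequence $f_1,f_2\in\fm$, and $0\neq g\in\fm$. The minimal complex
\[
\Lambda \xrightarrow{\ (gf_2,\,-gf_1)^{T}\ } \Lambda^{2} \xrightarrow{\ (f_1\ \ f_2)\ } \Lambda
\]
in degrees $q,q+1,q+2$ has fibre dimensions $1,2,1$ and top cohomology $\Lambda/(f_1,f_2)$ of dimension $d-2$. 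Yet $H^{q+1}\cong\Lambda/(g)\neq 0$, so it is not the Koszul complex.
\end{itemize}
Only the NALC \emph{upper} bound, applied to the support of \emph{every} cohomology group, rules this out. You also use this concentration silently in (i), to know that $\bT_\phi$ (defined through all degrees) acts faithfully on $H^{q+\ell}$.

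\textbf{How the paper handles it.} The paper takes concentration as input: it is Fact~(C) of Facts~\ref{facts:hansen other}, the last line of Newton's appendix. This gives $\hc{\bullet}(K,\sD_\Omega)^{\le h,\varepsilon}_\phi=\hc{q+\ell}(K,\sD_\Omega)^{\le h,\varepsilon}_\phi$, and hence Proposition~\ref{prop:tor iso}.

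\textbf{How you could prove it in your framework.}
\begin{enumerate}
\item NALC gives $\dim\opn{Supp}_\Lambda H^j(F)\le d-\ell$ for every $j$. This holds because $\bT_\phi$ is finite over $\Lambda$ and every component through $x_\phi$ has dimension $d-\ell$.
\item Since $\Lambda$ is Cohen--Macaulay, each $H^j(F)$ therefore has grade $\ge\ell$.
\item In the spectral sequence $\opn{Ext}^a_\Lambda(H^{-b}(F),\Lambda)\Rightarrow H^{a+b}(F^\vee)$, all terms with $a<\ell$ vanish. So $F^\vee$, which sits in degrees $[-q-\ell,-q]$, is a resolution of $N=H^{-q}(F^\vee)$.
\item Since $\opn{Supp}N\subset\opn{Supp}H^*(F)$, $N$ also has grade $\ge\ell$. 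Hence $H^{q+\ell-i}(F)\cong\opn{Ext}^{\ell-i}_\Lambda(N,\Lambda)=0$ for $i\ge 1$.
\item Now $F$ and $K(f_1,\dots,f_\ell)$ are both minimal free resolutions of $\Lambda/I$, hence isomorphic, and the rest of your argument goes through.
\end{enumerate}
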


Note part (i) and the last sentence mean $\sX_{G,K}^\varepsilon$ is \'etale over its image in weight space at $x_\phi$. The groups $\hc{\bullet}(K,\sD_{\Sigma^\varepsilon})$ are \emph{overconvergent cohomology groups} in the style of Ash--Stevens \cite{AS08}. We describe all notation (which we adopt almost directly from \cite{Han17}) and terminology  in \S\ref{sec:set-up} and \S\ref{sec:proof}. The proof goes through Hansen's Tor spectral sequence (see Facts \ref{facts:hansen other}), and occupies  \S\ref{sec:proof}. 

\begin{remark}
We stress that our proof essentially follows Hansen--Thorne \cite[\S4.3]{HT17}. They specialise to $G = \GL_n/\Q$, and make the same assumptions; then in Theorem 4.9 \emph{op.\ cit}., they prove that: the local ring $\bT_\phi$ of $\sX_{G,K}$ at $\phi$ is a complete intersection; that, for an appropriate open affinoid $\Omega \subset \sW$, the localised top degree cohomology $\hc{q+\ell(G)}(K,\sD_\Omega)^{\leq h, \varepsilon}_\phi$ is free of rank one over $\bT_\phi$; and then that
\[
\opn{Tor}_i^{\cO(\Omega)_\lambda}(\hc{q+\ell(G)}(K,\sD_\Omega)_\phi^{\leq h, \varepsilon}, L) \cong \hc{q+\ell(G)-i}(K,\sL_\lambda)_\phi^\varepsilon,
\]
where $\sL_\lambda$ is the local system attached to the $G$-representation of highest weight $\lambda$. We claim no originality beyond explicitly spelling out further applications/consequences of their methods.
\end{remark}

Theorem \ref{thm:main etale} is a key tool in our result on $\mathcal{L}$-invariants, but it should have wider applications. For example, in the special case of Bianchi modular forms (where $G = \mathrm{Res}_{F/\Q}\GL_2$, for $F$ an imaginary quadratic field), the result generalises \cite[Thm.\ 4.5]{BW18}, which was used to construct $p$-adic $L$-functions in families. The result \emph{op.\ cit}.\ worked over an irreducible component $\Sigma'$ of $\Sigma$ and imposed the additional assumption that $\Sigma'$ was smooth at $\lambda$. In particular, Theorem \ref{thm:main etale} also upgrades \cite[Thm.\ 5.1, Cor.\ 5.3]{GehrmannRosso} by removing the smoothness assumption on the curve $\Sigma$ (denoted by $\cS$ in \cite{GehrmannRosso}). That result describes equalities of top and bottom degree $\cL$-invariants attached to Bianchi modular forms of trivial weight.

We believe constructions of families of $p$-adic $L$-functions are being explored for $G = \GL(3)$ and $G=\GL(3)\times\GL(2)$ in the forthcoming works of Dimitrakopoulou--Rockwood and Hara--Namikawa respectively. In both cases, Theorem \ref{thm:main etale} is likely to be an essential ingredient. Note in these cases that assumptions (b) and (c) always hold; (b) by strong multiplicity one, and (c) since $\ell(G) = 1$.

\subsection{The equality of $\mathcal{L}$-invariants for $\mathrm{GL}(n)$}

We now turn to our main result. In general, $\mathcal{L}$-invariants are $p$-adic invariants attached to arithmetic objects sufficiently ramified at $p$. For example, if $n=2$ and $\pi = \pi_E$ is attached to an elliptic curve $E$ with split multiplicative reduction at $p$, then one has an $\mathcal{L}$-invariant $\mathcal{L}_E = \log_p(q)/\opn{ord}_p(q)$, where $q$ is the Tate period, such that $E(\Qp) \cong \Qp^\times/q^{\Z}$. Thanks to work of many people, notably Greenberg--Stevens \cite{GS93}, Fontaine--Mazur \cite{MazurLinvariant}, and Darmon \cite{Dar01}, $\mathcal{L}_E$ is known to have many other descriptions: for example, it encodes infinitesimal Hecke eigenvalues in $p$-adic families through $\pi_E$; it describes extension data in the local Galois representation of $E$ at $p$; and it arises from $p$-arithmetic cohomology/modular symbols on the Bruhat--Tits tree. This quantity also appears in Mazur--Tate--Teitelbaum's \emph{exceptional zero conjecture}, proved in the aforementioned work of Greenberg--Stevens. This result is a key ingredient in proving cases of the $p$-part of the Birch and Swinnerton-Dyer conjecture when the elliptic curve has multiplicative reduction at $p$ \cite{SkinnerSplitMult}. 

The latter two descriptions of $\mathcal{L}_E$ have been generalised hugely. Let now $\pi$ be a $p$-ordinary regular algebraic cuspidal automorphic representation of $\GL_n(\A)$ such that $\pi_p$ is the Steinberg representation of $\GL_n(\Qp)$. Under these assumptions, $\pi$ necessarily has trivial cohomological weight. Then for each $1\leq c \leq n-1$:
\begin{itemize}
\item When $\rho_\pi|_{G_{\Qp}}$ is non-split at $c$ (see Definition \ref{def: non-crystalline at c}), there is an attached \emph{Fontaine--Mazur $\mathcal{L}$-invariant} $\bL_{c}^{\opn{FM}}(\pi)$, constructed from extension data in $\rho_\pi|_{G_{\Qp}}$ at the simple root $\alpha_c$ corresponding to $c$. 
\item There are also \emph{automorphic $\mathcal{L}$-invariants} $\bL_{c}^{r,\varepsilon}(\pi)$ for $\pi$, where $0 \leq r \leq \ell$ and $\varepsilon \colon K_\infty/K_\infty^\circ \to \{\pm 1\}$ is an \emph{admissible sign} for $\pi$ (that is, any character such that $\pi$ contributes to the $\varepsilon$-part of cohomology). These were constructed by Gehrmann \cite{AutomorphicLinvariants} using $\varepsilon$-parts of compactly-supported $p$-arithmetic cohomology (in degree $q-n+1+r$) and extensions of Steinberg representations.
\end{itemize}
We recall the definitions of both types of $\cL$-invariant in \S\ref{s:L-invariants}.

\begin{remark}
In both cases, the $\cL$-invariants described above are subspaces $\bL \subset \opn{Hom}_{\opn{cts}}(\Qp^\times,L) = \langle \log_p,\opn{ord}_p\rangle$ of codimension $\geq 1$, and $\mbb{L}_c^{\opn{FM}}(\pi)$ is always non-zero. Under further assumptions, we can extract quantities $\cL_c^{\opn{FM}}(\pi), \cL_c^{r,\varepsilon}(\pi) \in L$ more closely resembling $\cL_E$ in the elliptic curve case. More precisely:
\begin{itemize}
\item The representation $\rho_\pi|_{G_{\Qp}}$ is non-crystalline at $c$ if and only if $\opn{ord}_p \not\in \mbb{L}_c^{\opn{FM}}(\pi)$. In this case, we define $\cL_c^{\opn{FM}}(\pi) \in L$ such that $\log_p - \cL_c^{\opn{FM}}(\pi)\opn{ord}_p \in \bL_c^{\opn{FM}}(\pi)$.
\item We know $\opn{ord}_p \not\in \bL_c^{r,\varepsilon}(\pi)$ by work of Gehrmann (see Proposition \ref{p: L-invariant} below). If $\bL_c^{r,\varepsilon}(\pi) \neq 0$, then we define $\cL_c^{r,\varepsilon}(\pi) \in L$ such that $\log_p - \cL_c^{r,\varepsilon}(\pi)\opn{ord}_p \in \bL_c^{r,\varepsilon}(\pi)$.
\end{itemize}
The quantity $\cL_c^{\opn{FM}}(\pi)$ appears in the Greenberg--Benois exceptional zero conjecture, formulated in \cite[p.166]{Gre94}. In our generality and notation, this is stated in \cite[Conj. 1.1.4]{GL3-ExceptionalZeros}. 
\end{remark}

In \cite{GL3-ExceptionalZeros}, under certain assumptions we proved the equality of Fontaine--Mazur and (bottom degree) automorphic $\mathcal{L}$-invariants when $n=3$, and thus deduced the first cases of the Greenberg--Benois exceptional zero conjecture \cite{Gre94,Benois11} without any self-duality condition. The main result of this note is a generalisation of this equality to arbitrary dimensions and degrees.

\begin{theoremx}\label{thm:intro equality}
Let $n\geq 2$, and let $\pi$ be a $p$-ordinary regular algebraic cuspidal automorphic representation of $\GL_n(\A)$ such that $\pi_p$ is the Steinberg representation of $\GL_n(\Qp)$. Let $1 \leq c \leq n-1$. Suppose that:
\begin{itemize}
\item[(a)] The non-abelian Leopoldt conjecture holds for $p$-adic families through $\pi$;
\item[(b)] $\pi$ admits non-$c$-parabolic deformations; 
\item[(c)] $p>2n$, and the residual Galois representation $\overline{\rho}_\pi : G_{\Q} \to \GL_n(\overline{\F}_p)$ is irreducible and decomposed generic;
\item[(d)] The local Galois representation $\rho_\pi|_{G_{\Qp}}$ is non-split at $c$.
\end{itemize}
Then for all $0 \leq r \leq \ell$ and all admissible signs $\varepsilon$, we have an equality
\[
    \bL_{c}^{\mathrm{FM}}(\pi) = \bL_{c}^{r,\varepsilon}(\pi)
\]
of Fontaine--Mazur and automorphic $\cL$-invariants at the simple root $\alpha_c$. Moreover, $\rho_{\pi}|_{G_{\mbb{Q}_p}}$ is non-crystalline at $c$ and we have an equality of scalar $\mathcal{L}$-invariants $\mathcal{L}_{c}^{\mathrm{FM}}(\pi) = \mathcal{L}_{c}^{r,\varepsilon}(\pi)$.
\end{theoremx}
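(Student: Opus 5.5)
The plan follows the Greenberg--Stevens / Gehrmann--Rosso strategy: compute the automorphic $\cL$-invariant infinitesimally in a $p$-adic family through $\pi$, compute the Fontaine--Mazur $\cL$-invariant infinitesimally along the same family, and compare the two. Let $x_\pi$ be the point of $\sX_{G,K}^\varepsilon$ (with $G=\GL_n$, Iwahori level $K$) attached to the ordinary refinement of $\pi$. Since $\pi_p$ is Steinberg and ordinary, $x_\pi$ is non-critical. Cohomological multiplicity one holds by strong multiplicity one, as the paper notes for $\opn{Res}_{F/\Q}\GL_n$. Under (a), Theorem \ref{thm:main etale} then gives $\Sigma^\varepsilon \subset \sW$ of dimension $\dim\sW-\ell$ through $\lambda = 0$. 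Over $\Sigma^\varepsilon$ the eigenvariety is étale at $x_\pi$, so the $U_p$-eigenvalues $U_{p,i}$ at $x_\pi$ become analytic functions on $\Sigma^\varepsilon$. Moreover, $\hc{q+i}(K,\sD_{\Sigma^\varepsilon})_\phi$ is free of rank $\binom{\ell}{i}$, which lets every degree $q+r$ be treated uniformly.

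\textbf{Step 1 (automorphic side).} Following Gehrmann and Gehrmann--Rosso, I would show that $\bL_c^{r,\varepsilon}(\pi)$ is cut out by infinitesimal deformations. The point is that a class in $p$-arithmetic cohomology extends to the extension of Steinberg by a character $\log_p$ or $\opn{ord}_p$ exactly when a corresponding first-order deformation exists. Concretely, I would take a tangent vector $v \in T_\lambda\Sigma^\varepsilon$ that moves the weight along $\alpha_c$ non-trivially; providing such a $v$ is the role of hypothesis (b), the non-$c$-parabolic deformation. Freeness in Theorem \ref{thm:main etale}(ii) lets the degree-$(q+r)$ eigenclass lift to a free $\cO(\Sigma^\varepsilon)_\lambda/\fm^2$-module along $v$. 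A Steinberg-extension/cocycle computation, as in \cite{GL3-ExceptionalZeros} and Gehrmann--Rosso, should then identify
\[
d_v(\text{weight along } \alpha_c)\cdot\log_p \;-\; d_v\!\left(\log(U_{p,c}\text{-ratio})\right)\cdot \opn{ord}_p \;\in\; \bL_c^{r,\varepsilon}(\pi),
\]
and this holds simultaneously for every $r$, because the family is the same in every degree.

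\textbf{Step 2 (Galois side).} Using (c), I would attach to the family the Galois pseudo-representation and a trianguline (ordinary) family. This should come from the local-global compatibility result of our earlier work, whose hypotheses are $p>2n$, irreducibility and decomposed genericity. Varying the triangulation parameters along $v$ gives, by the standard Benois/Ding computation, the same expression in $\bL_c^{\opn{FM}}(\pi)$, built from the derivative of the Hodge--Tate weight and of the Frobenius eigenvalue ratio at $\alpha_c$. Here (d) guarantees that $\bL_c^{\opn{FM}}$ is a line defined by the extension class, so the vector from Step 1 lies in it.

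\textbf{Step 3 (conclusion).} By (b) the weight derivative along $\alpha_c$ is nonzero, so both sides contain the same nonzero vector with nonzero $\log_p$-coefficient. Both are subspaces of codimension $\ge1$ in a $2$-dimensional space, and the automorphic one is nonzero by Step 1. Hence they coincide as lines. Since $\opn{ord}_p\notin\bL_c^{r,\varepsilon}(\pi)$ by Proposition \ref{p: L-invariant}, we get non-crystallinity at $c$ and equality of the scalar $\cL$-invariants.

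The main obstacle is Step 1 in degrees $r>0$. There one must check that the infinitesimal lifting of the degree-$(q+r)$ class is compatible with Gehrmann's $p$-arithmetic construction; I would try to use the Tor description of cohomology in lower degrees together with the freeness in Theorem \ref{thm:main etale}(ii).
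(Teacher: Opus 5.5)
Your architecture matches the paper's: an automorphic BCGS formula $\partial\chi_v[c]\in\bL_c^{r,\varepsilon}(\pi)$ for $v\in\opn{T}_1(\Sigma)$, a Galois BCGS formula $\partial\chi_v[c]\in\bL_c^{\opn{FM}}(\pi)$ from a triangulation along the family, and hypothesis (b) to make $\partial\chi_v[c]\neq 0$, so both lines equal $\langle\partial\chi_v[c]\rangle$. The gap is Step 1 for $r>0$. You flag it as the obstacle, but it is exactly the new content of the paper, and the mechanism you sketch does not reach it.

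Freeness in Theorem~\ref{thm:main etale}(ii) lets you deform classes in \emph{overconvergent} cohomology $\hc{q+r}(K,\sD_\Sigma)$. However, $\bL_c^{r,\varepsilon}(\pi)$ is defined by cup products on $p$-arithmetic cohomology. The link to deformations is Gehrmann--Rosso's criterion (Proposition~\ref{prop:GR}), which requires lifting the $\pi^p$-isotypic part of $\opn{H}^{q'+r}(G(\Q),\cA_{G,c}(\mathrm{I}^{\opn{la}}_{\overline{B}}(1)^*))^\varepsilon$ to coefficients $\mathrm{I}^{\opn{la}}_{\overline{B}}(\chi_v)^*$. The natural bridge to overconvergent cohomology is the Koszul augmentation map. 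At the trivial weight this map is injective only in bottom degree $q'=q-n+1$ when $\ell>0$ (Remark~\ref{rem:GR bottom}). So lifting overconvergent classes, as in Gehrmann--Rosso and \cite{GL3-ExceptionalZeros}, yields only $r=0$. The slogan ``the family is the same in every degree'' does not replace this.

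What is missing is a Tor spectral sequence for the $p$-arithmetic cohomology of the principal series themselves. The paper builds it in several steps:
\begin{enumerate}
\item It dualises the Kohlhaase--Schraen resolution (Lemma~\ref{Lem:KoszulResolution}). This resolves $\cA_{G,c}(\mathrm{I}^{\opn{la}}_{\overline{B}}(\chi_\Omega)^*)$ by a Koszul complex on $\opn{Ind}_{\opn{Iw}}^{G(\Qp)}(\mathbf{D}^r_\Omega)$, whose cohomology is overconvergent cohomology.
\item It proves these terms are flat, so base change is underived.
\item It uses non-abelian Leopoldt: the localised ordinary cohomology is concentrated in degree $q'+\ell$. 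It also uses that $U_{t_i}$ acts on $\cN_\Omega$ by $\chi_\Sigma(t_i)$, so the Koszul differentials vanish. Together these give the splitting $R\Gamma(G(\Q),\cA_{G,c}(\mathrm{I}^{\opn{la}}_{\overline{B}}(\chi_\Omega)^*))^\varepsilon_{\phi^S}\cong\bigoplus_k\wedge^k\cO(\Omega)_\lambda^{\oplus n-1}\otimes\cN_\Omega[-q'-\ell-k]$ (Proposition~\ref{prop:quasi-iso sum}).
\item The splitting forces degeneration and projectivity over $\cO(\Sigma)_\lambda$ in every degree (Corollary~\ref{cor:sigma projective}). This is where your instinct of ``Tor plus freeness'' genuinely enters, via Proposition~\ref{prop:tor iso} and Theorem~\ref{thm:main etale}(ii).
\item Hence specialisation to the trivial weight is surjective in every degree (Proposition~\ref{prop:red specialise}). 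Factoring through $L[\epsilon]$ along $v$ and localising at $\phi^S$ (Corollary~\ref{cor:GR localisation}) verifies the criterion for all $r$.
\end{enumerate}

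A lesser point concerns Step 2. To invoke the earlier local--global compatibility, you must show the family's eigensystem factors through the big ordinary Hecke algebra $\cT_\fm$ modulo its nilpotent ideal. The paper does this by comparing integral overconvergent cohomology with completed cohomology (Lemma~\ref{l:factorize}), after passing to a reduced component of $\Sigma$ containing $v$.
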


We note condition (d) is required for $\bL_{c}^{\opn{FM}}(\pi)$ to have codimension $\geq 1$ in $\opn{Hom}_{\opn{cts}}(\mbb{Q}_p^{\times}, L)$.

\begin{remark}
    Under the assumptions (a)--(c) of Theorem \ref{thm:intro equality}, we have shown that if $*_c$ is non-split, then $*_c$ is non-crystalline. To see this, note that $\bL_c^{\opn{FM}}(\pi) = \bL_{c}^{r,\varepsilon}(\pi) \not\ni\opn{ord}_p$; so $\opn{ord}_p \not\in \bL_c^{\opn{FM}}(\pi)$. If we know this for all $1 \leq c \leq n-1$, then we deduce that the monodromy on $\mbf{D}_{\opn{st}}(\rho_{\pi}|_{G_{\mbb{Q}_p}})$ has maximal rank and hence we obtain the full local-global compatibility statement at $\ell = p$ for $\rho_{\pi}$. Currently, the most general result concerning such local-global compatibility in the non-polarised setting is \cite[Theorem 1.1]{HevesiOrdinaryParts}, which unfortunately doesn't say anything about the monodromy when $\pi_p$ is Steinberg (the inequality `$\preceq$' in \cite[(1.1)]{HevesiOrdinaryParts} is in the opposite direction).
\end{remark}

\begin{remark}\label{rem:esd}
The most subtle assumption in the theorem is (b), which we now explain. The other assumptions ensure $\pi$ has trivial cohomological weight. Attached to $\pi$ is a unique eigenpacket $\phi$ which is non-critical classical of cohomological multiplicity one. As we also assume non-abelian Leopoldt, Theorem \ref{thm:main etale} thus holds for $\phi$, yielding a Zariski-closed rigid weight subspace $\Sigma = \Sigma^\varepsilon \subset \sW$, containing the trivial character $\mathbbm{1}$. The $\GL_n$ weight space $\sW$ is $n$-dimensional, with a general $L$-point having the form $(\lambda_1,\dots,\lambda_n)$, where $\lambda_i : \Zp^\times \to L^\times$ is a continuous character.  For $1 \leq c \leq n-1$, we say $\pi$ \emph{admits non-$c$-parabolic deformations} if there exists a tangent vector $v = (v_1,\dots,v_n) \in \opn{T}_{\mathbbm{1}}(\Sigma)$ such that $v_c \neq v_{c+1}$. 

For any $\pi$, we strongly expect the existence of non-$c$-parabolic deformations for all $c$. Such a result should follow from a higher-dimensional analogue of the Calegari--Mazur conjecture \cite{CM09}; see \cite[Rem.\ 8.2.4]{GL3-ExceptionalZeros} for a discussion of this for $\GL_3$. 

By Theorem \ref{thm:main etale}, we know $\dim \Sigma = n-\lfloor\tfrac{n-1}{2}\rfloor$. Since each of the $n-1$ conditions on $v$ cuts out a different codimension 1 subspace of $\sW$, we thus see that $\pi$ admits non-$c$-parabolic deformations for at least half the possible values of $c$.

If $\pi$ is essentially-self-dual, then we expect $\Sigma$ to be an open subspace of the pure weight space $\sW^0$, that is the subspace where $\lambda_i + \lambda_{n+1-i}$ is independent of $i$. Note this has dimension $n - \lfloor \tfrac{n-1}{2}\rfloor$, and that if $\Sigma \subset \sW^0$ is open, then $\pi$ admits non-$c$-parabolic deformations for all $c$. Essentially-self-dual representations of $\GL_n$ are either of symplectic or orthogonal type; see Remark \ref{rem:symplectic} for a more detailed discussion of the symplectic case.
\end{remark}

Gehrmann--Rosso \cite{GehrmannRosso} proved an analogous result in the special case where $\ell = 0$. Whilst their strategy does not extend immediately to $\ell > 0$ (see Remark \ref{rem:GR bottom}), our approach is inspired by theirs, using Kohlhaase--Schraen's Koszul complex and the cohomology of locally analytic principal series representations. Our key new input is a Tor spectral sequence for the latter, which we analyse using the former. 

Condition (b) furnishes a good choice of infinitesimal deformation for $\pi$, and we study automorphic and Galois data for a $p$-adic family through $\pi$ in this direction. We establish two key formulae:
\begin{itemize}
\item[--] An `automorphic' Benois--Colmez--Greenberg--Stevens formula, proved in Theorem \ref{p: automorphic BCGS}. Here we use assumption (a) (to ensure \'etaleness of the eigenvariety at $\pi$, using Theorem \ref{thm:main etale}) and assumption (b) (to ensure the resulting infinitesimal formula is not an empty statement). 

Establishing this automorphic formula is the heart of the paper, and in particular it is where the Koszul complex and locally analytic principal series are required.

\item[--] A `Galois' Benois--Colmez--Greenberg--Stevens formula, proved in Theorem \ref{t: galois Benois--Colmez--Greenberg--Stevens formula}. Here a crucial input is our earlier work on local--global compatibility at $\ell=p$ for $\GL_n$ torsion classes, which allows us to deduce that the Galois representation for a Hida family through $\pi$ is trianguline at $p$. This work requires assumption (c). 
\end{itemize}
Combining these formulae gives the theorem.

Much like in the $n=2,3$ cases, we anticipate applications of this result towards the Greenberg--Benois exceptional zero conjecture. See \cite[Intro]{GL3-ExceptionalZeros} for a detailed introduction to this topic. 

We also have an immediate application to a conjecture of Gehrmann:

\begin{corollaryx}
Let $n\geq 2$, and let $\pi$ be a $p$-ordinary regular algebraic cuspidal automorphic representation of $\GL_n(\A)$ such that $\pi_p$ is the Steinberg representation of $\GL_n(\Qp)$. Let $1 \leq c \leq n-1$.
\begin{itemize}
\item[(i)] Suppose conditions (a)--(d) of Theorem \ref{thm:intro equality} hold. Then Conjecture A of \cite{AutomorphicLinvariants} holds for $\pi$ at $c$; that is, the automorphic $\cL$-invariant $\bL_{c}^{r,\varepsilon}(\pi)$ is independent of the degree $r$ and sign $\varepsilon$.

\item[(ii)] Suppose only conditions (a) and (b) hold. Then $\bL_{c}^{r,\varepsilon}(\pi)$ is independent of $r$.
\end{itemize}
\end{corollaryx}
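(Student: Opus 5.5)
Part (i) follows directly from Theorem \ref{thm:intro equality}. Under (a)--(d), that theorem gives $\bL_c^{r,\varepsilon}(\pi) = \bL_c^{\opn{FM}}(\pi)$ for every $0 \leq r \leq \ell$ and every admissible sign $\varepsilon$. The right-hand side is built only from $\rho_\pi|_{G_{\Qp}}$, so it depends on neither $r$ nor $\varepsilon$. Hence all the automorphic $\cL$-invariants at $c$ coincide, which is Conjecture A of \cite{AutomorphicLinvariants} at $c$. No further input is needed.

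For (ii) we cannot use the Galois side, because (c) and (d) are exactly what feed the local--global compatibility and the Galois Benois--Colmez--Greenberg--Stevens formula (Theorem \ref{t: galois Benois--Colmez--Greenberg--Stevens formula}). Instead I would work entirely on the automorphic side and use the automorphic Benois--Colmez--Greenberg--Stevens formula (Theorem \ref{p: automorphic BCGS}), which by the introduction needs only (a) and (b). The steps are as follows.
\begin{enumerate}
\item Fix an admissible sign $\varepsilon$. By (a), Theorem \ref{thm:main etale} applies to the eigenpacket $\phi$ attached to $\pi$ (it is of cohomological multiplicity one by strong multiplicity one), giving the weight subspace $\Sigma = \Sigma^\varepsilon$ on which the eigenvariety is étale at $x_\phi$.
\item By (b), choose a tangent vector $v \in \opn{T}_{\mathbbm{1}}(\Sigma)$ with $v_c \neq v_{c+1}$. Étaleness lifts $v$ uniquely to a tangent vector at $x_\phi$, so the derivatives of the $U_p$-eigenvalues along $v$ are well defined and independent of any cohomological degree.
\item For each $r$, apply Theorem \ref{p: automorphic BCGS} in degree $r$. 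It should say that the explicit element of $\opn{Hom}_{\opn{cts}}(\Qp^\times,L)$ formed from $v_c - v_{c+1}$ and the derivative of the $U_{p,c}$-eigenvalue along $v$ lies in $\bL_c^{r,\varepsilon}(\pi)$.
\item The condition $v_c \neq v_{c+1}$ makes this element non-zero, and not a multiple of $\opn{ord}_p$.
\end{enumerate}

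The remaining point, which I expect to be the main one, is to show this nonzero element determines $\bL_c^{r,\varepsilon}(\pi)$. By Proposition \ref{p: L-invariant} we have $\opn{ord}_p \notin \bL_c^{r,\varepsilon}(\pi)$, so $\bL_c^{r,\varepsilon}(\pi)$ is a proper subspace of the $2$-dimensional space $\opn{Hom}_{\opn{cts}}(\Qp^\times, L) = \langle \log_p, \opn{ord}_p\rangle$, and hence has dimension at most $1$. A nonzero element forces dimension exactly $1$, so $\bL_c^{r,\varepsilon}(\pi)$ is the line it spans. That line is read off from eigenvalue derivatives on the eigenvariety, which do not depend on $r$. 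So $\bL_c^{r,\varepsilon}(\pi)$ is independent of $r$ (for fixed $\varepsilon$, since $\Sigma^\varepsilon$ and the eigenvariety component may depend on the sign). I would check carefully that Theorem \ref{p: automorphic BCGS} indeed holds uniformly in $r$, through the Tor spectral sequence, with the same infinitesimal Hecke data.
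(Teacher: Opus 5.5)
Your proposal is correct and is essentially the paper's argument: (i) is read off from Theorem \ref{thm:intro equality} because $\bL_c^{\opn{FM}}(\pi)$ depends on neither $r$ nor $\varepsilon$, and (ii) is the corollary following Theorem \ref{p: automorphic BCGS}, where a fixed $v \in \opn{T}_1(\Sigma^\varepsilon)$ with $v_c \neq v_{c+1}$ gives $\partial\chi_v[c] \in \bL_c^{r,\varepsilon}(\pi)$ for every $r$, and this element is nonzero since its restriction to $\Zp^\times$ is $(v_c - v_{c+1})\log_p$. Since these spaces have dimension at most one by Proposition \ref{p: L-invariant}, each equals $\langle \partial\chi_v[c]\rangle$, and the uniformity in $r$ you wanted to check is exactly Theorem \ref{p: automorphic BCGS}(i), which holds for all $0 \leq r \leq \ell$ at a fixed sign.
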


Part (i) follows directly from Theorem \ref{thm:intro equality}, since the Fontaine--Mazur $\cL$-invariant has no dependence on degree or sign. Part (ii) follows directly from the automorphic Benois--Colmez--Greenberg--Stevens formula, which requires only parts (a) and (b) (but works at a fixed admissible sign $\varepsilon$).

In particular, given the existence of non-$c$-parabolic deformations, we find the non-abelian Leopoldt conjecture implies Gehrmann's conjecture on independence of degree. Gehrmann has an analogous result proving that this follows from a conjecture of Venkatesh. We comment on this, and the relationship between the two results, in Remark \ref{rem:venkatesh}.

\begin{remark}
As highlighted above, since we assume $\pi_p$ is Steinberg, the $p$-ordinary assumption ensures $\pi$ has trivial cohomological weight. We expect analogues of these results should exist in the non-ordinary/higher weight case as well. For example, the equality of Theorem \ref{thm:intro equality} is known for elliptic modular forms of any weight $k\geq 2$. However there are two major obstacles to such a result at present.

The first main obstacle is that in general, Gehrmann's automorphic $\cL$-invariants have not yet been constructed for non-trivial weights (see \cite[Rem.\ 2.7]{AutomorphicLinvariants}). Without this one cannot even formulate the automorphic Benois--Colmez--Greenberg--Stevens formula. 

A second obstacle comes in the Galois Benois--Colmez--Greenberg--Stevens formula. A crucial ingredient for this is local-global compatibility at $\ell=p$ for torsion classes for $\GL_n/\Q$, and our proof of this result in \cite[\S9]{GL3-ExceptionalZeros} makes essential use of the ordinarity assumption. In order to obtain such a formula in the higher weight case, one would need analogous local-global compatibility results for non-ordinary torsion classes; in the setting of $\opn{GL}_n$ over CM fields, such results have been obtained in recent work of McDonald \cite{McDonald-Trianguline}.
\end{remark}

\subsection{Application to symmetric powers}

In the case of symmetric powers of modular forms, some of our key assumptions are known to hold. Let $f \in S_2^{\opn{new}}(\Gamma_1(Np), \chi_f)$, with attached (unitary) automorphic representation $\pi$ of $\GL_2(\A)$. We assume that $\pi_p$ is the Steinberg representation (equivalently, $a_p(f) = 1$ and the conductor of the nebentypus $\chi_f$ is prime to $p$). Fix $n\geq 3$, and consider the symmetric power lift $\opn{Sym}^{n-1}\pi$, which is a RACAR of $\GL_n(\A)$ constructed by Newton--Thorne \cite{NewtonThorneSymII}. This is $p$-ordinary and Steinberg-at-$p$. In \S\ref{sec:symmetric powers}, we verify condition (b) of Theorem \ref{thm:intro equality} for $\opn{Sym}^{n-1}\pi$, and thus as a consequence of our automorphic Benois--Colmez--Greenberg--Stevens formula, we deduce the following functoriality for automorphic $\cL$-invariants:

\begin{theoremx} 
Suppose the non-abelian Leopoldt conjecture holds for $\opn{Sym}^{n-1}\pi$. Then for all $1 \leq c \leq n-1$, all $0 \leq r \leq \ell(\GL_n)$, and all admissible signs $\pm$ for $\pi$ and $\varepsilon$ for $\opn{Sym}^{n-1}\pi$, we have
\[
    \bL_c^{r,\varepsilon}(\opn{Sym}^{n-1}\pi) = \bL_1^{0,\pm}(\pi) = \bL_1^{\opn{FM}}(\pi) = \bL_{c}^{\opn{FM}}(\opn{Sym}^{n-1}\pi).
\]
In particular Gehrmann's conjecture on independence of degree and sign holds for $\opn{Sym}^{n-1}\pi$.
\end{theoremx}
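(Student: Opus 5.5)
The plan is to deduce the statement from the automorphic Benois--Colmez--Greenberg--Stevens formula (Theorem \ref{p: automorphic BCGS}) applied to $\Pi \defeq \opn{Sym}^{n-1}\pi$, together with the known $n=2$ equality $\bL_1^{0,\pm}(\pi)=\bL_1^{\opn{FM}}(\pi)$. That equality is classical for weight two Steinberg forms: Gehrmann's automorphic invariant agrees with the Darmon/Orton invariant, which equals the Greenberg--Stevens and Fontaine--Mazur invariants. The remaining equality $\bL_1^{\opn{FM}}(\pi)=\bL_c^{\opn{FM}}(\Pi)$ is a purely local Galois computation. Writing $\rho_\pi|_{G_{\Qp}}$ as a non-split extension of $\chi$ by $\chi\varepsilon_{\mathrm{cyc}}$, its symmetric power is an upper-triangular filtered representation. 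The extension class at each simple root $\alpha_c$ is a nonzero multiple of the class of $\rho_\pi$, the multiple being a binomial coefficient that is a unit since $p>2n$. So the Fontaine--Mazur subspaces coincide for every $c$, and in particular $\Pi$ is non-split at every $c$.

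For the automorphic side, I would verify the hypotheses of Theorem \ref{p: automorphic BCGS}: non-abelian Leopoldt is assumed, so only condition (b) remains, namely the existence of non-$c$-parabolic deformations for all $c$. To get these, take the Hida family (Coleman family) $f_k$ through $f$ over a one-dimensional weight disc. Its symmetric power lifts, which exist by Newton--Thorne in a neighbourhood, give a one-parameter family of classical ordinary points on the $\GL_n$ eigenvariety through $x_\Pi$, with weights of the form $\lambda(k)=((n-1)(k-2),(n-2)(k-2),\dots,0)$ up to normalisation. By étaleness in Theorem \ref{thm:main etale}, and because $\Sigma$ is the image of a neighbourhood of $x_\Pi$, this curve lies in $\Sigma$. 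Its tangent vector at $\mathbbm{1}$ has $v_c-v_{c+1}$ equal to a fixed nonzero constant for every $c$, which gives condition (b) for all $c$ at once. One should also twist by characters to cover the centre direction, although this is not needed.

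Applying the automorphic BCGS formula along this tangent vector then expresses $\bL_c^{r,\varepsilon}(\Pi)$ through the derivative of the $U_p$-type eigenvalue at $\alpha_c$ along the family. For $\opn{Sym}^{n-1}$ of the Hida family, that derivative is explicitly a combination of $a_p(f_k)'$ and the weight derivative. By Greenberg--Stevens, the ratio is $-\tfrac12\cL(f)$, matching $\bL_1^{0,\pm}(\pi)$ after rescaling; this rescaling is consistent because the formula is homogeneous in $v$. Independence of $r$ and $\varepsilon$ follows since the right-hand side does not depend on them, and this gives Gehrmann's conjecture.

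I expect the main obstacle to be the precise identification of the tangent direction and the Hecke eigenvalue derivatives of the symmetric power family at $\alpha_c$. This requires matching the $\GL_n$ Iwahori $U_p$ operators on the lifted family with powers of $a_p(f_k)$ times weight factors, and checking that the family genuinely lies in the neighbourhood used to define $\Sigma$. I would first handle this via the Galois side: the family of Galois representations $\opn{Sym}^{n-1}\rho_{f_k}$ is triangulated, and ordinary eigenvalues are read off from the triangulation parameters.
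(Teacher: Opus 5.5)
Your route is the paper's. You apply Theorem \ref{p: automorphic BCGS} to $\opn{Sym}^{n-1}\pi$ along the tangent vector $v=(n-1,n-2,\dots,0)$ induced from the direction $u=(1,0)$ of the $\GL_2$ Hida family, so that $v_c-v_{c+1}=1$ for every $c$. You then compare $\partial\chi'_v[c]$ with the $\GL_2$ invariant. The paper likewise treats the last two equalities as already known. It identifies $\partial\chi_u[1]$ with a basis of $\bL_1^{0,\pm}(\pi)$ directly via Gehrmann--Rosso rather than via Greenberg--Stevens, but that difference is cosmetic.

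\textbf{The gap: interpolating the lifts.} You pass from pointwise Newton--Thorne lifts $\opn{Sym}^{n-1}f_k$ to a curve in $\sX^{\varepsilon}_{\GL_n,K}$ through $x_\Pi$ without justification. A set of classical points does not by itself give a tangent vector in $\opn{T}_1(\Sigma)$, and the Galois-side argument you propose does not address this.
\begin{itemize}
\item The paper obtains an honest morphism $\mathscr{C}\to\mathscr{C}'$ over $\iota$ from Johansson--Newton's $p$-adic functoriality \cite[Thm.\ 3.2.1]{JoNew}.
\item Before invoking it, the paper checks that the lifts are ordinary, hence non-critical.
\item It also checks, by the parity argument of \cite[(3.2)]{Mah05}, that $\varepsilon$ is admissible for the lifts. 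You omit this sign check, and it is needed because $\Sigma=\Sigma^\varepsilon$ depends on $\varepsilon$.
\end{itemize}
You could instead fill the gap directly. The lifted points converge to $x_\Pi$, so for $k$ near $2$ they lie in the neighbourhood $\sY$ with $w(\sY)=\Sigma$. Their weights $\iota(\lambda_k)$ are infinitely many points of the one-dimensional disc $\iota(\{(\kappa_1,\mathbbm{1})\})$ accumulating at $\mathbbm{1}$. By Weierstrass preparation, that whole disc germ then lies in the Zariski-closed $\Sigma$.

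\textbf{The eigenvalue step is easier than you expect.} Once the family is induced by $\jmath$, the identification you flag as the main obstacle needs no Galois input. The map $\jmath$ on $\Qp[T^+]$ gives $\chi'_v=(\chi_{u,1}^{n-1},\chi_{u,1}^{n-2}\chi_{u,2},\dots,\chi_{u,2}^{n-1})$. Hence $\partial\chi'_v[c]=\partial\chi_{u,1}-\partial\chi_{u,2}=\partial\chi_u[1]$ for all $c$.

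The étaleness of Theorem \ref{thm:main etale} is what guarantees this family's infinitesimal eigenpacket is the $\phi_v$ appearing in the BCGS formula, because the lift of $v$ to the eigenvariety is unique. That is where étaleness enters, not in showing ``the curve lies in $\Sigma$''.

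\textbf{The Fontaine--Mazur step.} The hypothesis $p>2n$ is not assumed in this theorem and is not needed. The class $*_c$ of $\opn{Sym}^{n-1}\rho_\pi|_{G_{\Qp}}$ is $c$ times the extension class of $\rho_\pi|_{G_{\Qp}}$, and $c\in L^\times$ because $L$ has characteristic $0$.
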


The second and third equalities are already known; our input is the first equality. Taken together, though, we obtain the main result of Theorem \ref{thm:intro equality} for symmetric powers assuming only condition (a).

Note this result is unconditional for symmetric squares and cubes, since the non-abelian Leopoldt conjecture is known to hold for $\GL_3$ and $\GL_4$ (see \cite[Theorem 1.1.5]{Han17}, recalling that $\ell(\opn{GL}_n) = \lfloor\tfrac{n-1}{2}\rfloor$, whence $\ell(\opn{GL}_3) = \ell(\opn{GL}_4) = 1$). For $\GL_3$ we previously obtained this result for the bottom degree $\cL$-invariant in \cite[\S7]{GL3-ExceptionalZeros}.

\subsection{Acknowledgements}
It is a pleasure to thank Henri Darmon for his impact on our respective mathematical lives. DBS and CW first got interested in $\mathcal{L}$-invariants and exceptional zeros thanks to a generous suggestion of Henri in 2016, which led directly to the paper \cite{BW17}. That project solidified our friendship and collaboration, which has been so important to us over the subsequent decade. Later, at a workshop in Oaxaca in 2018, he suggested to us the possibility of proving equalities of $\mathcal{L}$-invariants via infinitesimal weight deformations, which -- thanks to the further insights of Lennart Gehrmann and Giovanni Rosso -- is exactly the subject of this paper. More recently, all three authors have been collaborating on projects that have their root in Henri's work, notably on arithmetic cohomology and his construction of Stark--Heegner points. 

We thank Kenichi Namikawa, James Newton and the anonymous referee for their comments and corrections on earlier drafts of this article. DBS was supported by ECOS230025, Anid Fondecyt Regular grant 1241702 and a CRM-Simons  Professor position in Montreal. AG was funded by UK Research and Innovation grant MR/V021931/1. For the purpose of Open Access, the authors have applied a CC BY public copyright licence to any Author Accepted Manuscript (AAM) version arising from this submission.

\section{Overconvergent cohomology and eigenvarieties}\label{sec:set-up}
	Throughout we use generalities on rigid spaces, and their irreducible components, as found in \cite{Con99}. We use the eigenvarieties constructed by Hansen in \cite{Han17}, and rather than explain significant notation here, we largely adopt his\footnote{We do, however, mostly write $K$ where Hansen uses $K^p$, to emphasise the Iwahori-level nature of the constructions; and we define the abstract Hecke algebras over $\Zp$ rather than $\Qp$.}, and give precise references to his paper. In particular:
	\begin{itemize}
		\item $G/\Q$ is a reductive group, of the form $\opn{Res}_{F/\Q}H$, where $F$ is a number field and $H/F$ is a reductive group quasi-split at each prime $\fp|p$ of $F$ above $p$;       
        \item $K = K^p\opn{Iw} \subset G(\A_f)$ is an open compact subgroup, where $\opn{Iw} \subset G(\Qp)$ is an Iwahori subgroup (at all $\fp|p$) and $Y(K)$ is the associated locally symmetric space (see \S2.1 of \cite{Han17});
        \item $S$ denotes a finite set of places of $F$ containing all $v$ where $v|p$, or $H/F_v$ is ramified, or $K_v = H(F_v)\cap K^p$ is not hyperspecial maximal compact in $H(F_v)$;
		\item $\bT(K) = \bT^{S}(K^p)\otimes \sA_p^+$ is the (commutative) abstract Hecke algebra of level $K$ over $\Zp$, where $\bT^{S}(K^p)$ is the spherical Hecke algebra over $\Zp$ (a restricted tensor product over $v \not\in S$) and $\sA_p^+$ is the $\Zp$-algebra of controlling operators at $p$ (\S2.1). If $R$ is a $\Zp$-algebra, we write $\bT^S(K^p)_R, \bT(K)_R$ etc.\ for the base-changes to $R$;
        \item For $L/\mbb{Q}_p$ a finite extension, we say that $R$ is an $L$-affinoid algebra if it is the quotient of a Tate algebra of the form $L\langle X_1, \dots, X_d \rangle$, for some $d \geq 0$. We let $R^+ \subset R$ denote the unit ball with respect to the supremum semi-norm on $R$ (see \cite[\S 3.3.1]{Bosch});
		\item $\sW = \sW_{K}$ is the weight space of level $K$ (\S2.2), and for an affinoid $\Omega = \opn{Sp}(\cO(\Omega)) \subset \sW$, weights $\lambda \in \Omega$ are in bijection with maximal ideals $\m_\lambda \subset \cO(\Omega)$; 
		\item For any affinoid $\Omega \subset \sW$, let $\sD_\Omega$ be the local system of locally analytic distributions of weight $\Omega$ (\S2.2);
		\item $\hc{\bullet}(K,\sD_\Omega) \defeq \hc{\bullet}(Y(K), \sD_\Omega)$ denotes the total (singular) cohomology with compact support;
		\item a \emph{slope datum} $(U_t, \Omega,h)$ is a triple of a fixed controlling operator $U_t$ at $p$ (\S2.1), and an affinoid  $\Omega \subset \sW$ such that $\hc{\bullet}(K,\sD_\Omega)$ admits a slope $U_t \leq h$ decomposition, with slope $\leq h$ subspace $\hc{\bullet}(K,\sD_\Omega)^{\leq h}$;
        \item If $M$ is an $R$-module upon which $\bT(K)$ acts, we write $\bT(K)(M)$ for the image of $\bT(K)_R$ in $\opn{End}_R(M)$. For a slope datum $(U_t,\Omega,h)$, we simplify via $\bT_{\Omega,h}(K) \defeq \bT(K)(\hc{\bullet}(K,\sD_\Omega)^{\leq h})$ (\S3.2);
		\item A \emph{slope $\leq h$ eigenpacket} of weight $\lambda \in \sW$ and level $K$ is an algebra homomorphism $\phi : \bT_{\lambda, h}(K) \to \overline{\Q}_p$ (for the affinoid $\{\lambda\} \subset \sW$; Definition 3.2.2);
        \item If $M$ is a $\bT(K)_R$-module, and $\phi \colon \bT(K)_R \to R$ is a homomorphism, we say $\phi$ \emph{contributes to $M$} if the localisation $M_\phi \defeq M_{\ker(\phi)}$ is non-zero;
		\item If $\lambda$ is algebraic and dominant, and $\phi$ contributes to $\hc{\bullet}(K,\sL_{\lambda})$, we say $\phi$ is \emph{classical} (Definition 3.2.3). Here $\sL_\lambda$ is the local system attached to the $G$-representation $L_\lambda$ of highest weight $\lambda$. (For simplicity of notation only, we exclude more general arithmetic weights.)  
        
        \item We say $\phi$ is \emph{cuspidal} if it is classical and 
        \[
        \hc{\bullet}(K,\sL_{\lambda})_\phi \cong \h_{\opn{cusp}}^{\bullet}(K,\sL_{\lambda})_\phi;
        \] in this case it appears\footnote{The converse is not true; $\phi$ can appear in a cuspidal automorphic representation without itself being cuspidal. For example, consider $G = \opn{GSp}_4$, and $\phi$ attached to a Saito--Kurokawa lift of a modular form. This is cohomological (of non-regular highest weight), and appears in a cuspidal $\pi$; but it also appears in a \emph{non}-cuspidal $\pi'$ by \cite{PS83}. Such $\phi$ contributes to multiple degrees of compactly-supported cohomology, but only one degree of cuspidal cohomology. Our results do not apply to such a $\phi$; for example, Proposition \ref{prop:matsushima} already fails.} in (at least one) cohomological cuspidal automorphic representation $\pi$ of weight $\lambda$. We say such a $\phi$ is \emph{tempered} if the associated $(\fg,K_\infty)$-module $\pi_\infty$ is tempered for every possible such $\pi$.
        \item If $\phi$ is an eigenpacket, and $M$ is a module upon which $\bT(K)$ acts, we write $M_\phi$ for the localisation at $\ker(\phi)$. If $\Omega  = \mathrm{Sp}(\cO(\Omega)) \subset \sW$ is an affinoid and $\lambda \in \Omega$ corresponding to a maximal ideal $\m_\lambda \subset \cO(\Omega)$, we write $\cO(\Omega)_\lambda$ for the localisation at $\m_\lambda$.
		\item A classical slope $\leq h$ eigenpacket $\phi$ is \emph{non-critical} if the natural map
		\[
		i_\lambda : \hc{\bullet}(K,\sD_\lambda)_\phi^{\leq h} \to \hc{\bullet}(K, \sL_{\lambda})_\phi^{\leq h}
		\]
		is an isomorphism (Definition 3.2.3).
	\end{itemize}
	
	With all of this data, Hansen constructs in \cite[\S4]{Han17} an (Iwahori-level) \emph{eigenvariety} $\sX_{G,K}$ for $G$ of tame level $K^p$, which is a rigid space together with a \emph{weight map} $w : \sX_{G,K} \to \sW$. The local pieces of $\sX_{G,K}$ are of the form $\opn{Sp}(\bT_{\Omega,h}(K))$, with $(U_t,\Omega,h)$ a slope datum and $\Omega$ open in $\sW$. Points $x$ in this local piece parametrise slope $\leq h$ eigenpackets of weight $\lambda = w(x) \in \Omega$ and level $K$.

\begin{remark}\label{rem:neat}
Note we do not specify that $K$ is neat. If $K$ is not neat, then for any local system $\mathscr{M}$ on $Y(K)$, when we write $\hc{\bullet}(Y(K),\mathscr{M})$, we mean $\hc{\bullet}(Y(K'),\mathscr{M})^{K}$, where $K'\subset K$ is any neat open compact normal subgroup. This is independent of $K'$, and carries a canonical Hecke action (see \cite[\S4.1]{HT17}). This definition also dovetails with the usual relationship between singular cohomology and (cohomological) automorphic representations. All the foundational results of \cite{Han17} we need apply in this case.
\end{remark}

    For precise results, it will be important to also consider $\varepsilon$-analogues of all of the above. 

\begin{definition}
		Let $K_\infty \subset G(\R)$ be a maximal  compact-mod-centre subgroup, and $K_\infty^\circ \subset K_\infty$ the connected component of the identity. Let $\varepsilon : K_\infty/K_\infty^\circ \to \{\pm 1\}$ be a character. If $M$ is a space on which $K_\infty/K_\infty^\circ$ acts, let $M^\varepsilon$ be the submodule where it acts as $\varepsilon$. We call this the \emph{$\varepsilon$-part} of $M$.
\end{definition}
\begin{definition}\label{def:admissible signs}
If $\pi$ is a cohomological cuspidal automorphic representation of weight $\lambda$, we let
\[
    \cE_\pi \defeq \Big\{\varepsilon \colon K_\infty/K_\infty^\circ \to \{\pm1\} : \h^\bullet(\fg,K_\infty^\circ;\pi_\infty\otimes L_\lambda)^\varepsilon \neq 0\Big\}
\]
be the set of \emph{admissible signs for $\pi$}; that is, the signs $\varepsilon$ for which the $(\fg,K_\infty^\circ)$ cohomology is non-trivial. If $\phi$ is a classical cuspidal eigenpacket appearing in $\pi$, we also write $\cE_\phi \defeq \cE_\pi$.\footnote{This is possibly an abuse of notation; in general, we do not make the claim that $\cE_\phi$ depends only on $\phi$, rather than $\pi$. This abuse is harmless for our purposes: under our later assumptions where $\phi$ is a cuspidal tempered eigenpacket of cohomological multiplicity one, the set $\cE_\phi$ does only depend on $\phi$.}
\end{definition}

By repeating Hansen's constructions using only the $\varepsilon$-part of the cohomology, one obtains an eigenvariety $\sX_{G,K}^\varepsilon$. The local pieces of $\sX_{G,K}^\varepsilon$ are, analogously to above, of the form $\opn{Sp}(\bT_{\Omega,h}^\varepsilon(K))$,  where $\bT_{\Omega,h}^\varepsilon(K)$ is the image of $\bT(K)\otimes \cO(\Omega)$ in $\opn{End}_{\cO(\Omega)}(\hc{\bullet}(K,\sD_\Omega)^{\leq h, \varepsilon})$. Its points $x$ parametrise slope $\leq h$ eigenpackets that contribute to $\hc{\bullet}(K,\sD_{w(x)})^\varepsilon$. Note each $\bT_{\Omega,h}^{\varepsilon}$ is naturally a quotient of $\bT_{\Omega,h}$, leading to a canonical closed immersion $\sX_{G,K}^\varepsilon \hookrightarrow \sX_{G,K}$; and $\sX_{G,K} = \cup_\varepsilon \sX_{G,K}^{\varepsilon}$.

\medskip
    
	Recall the amplitude $q = q(G)$ and defect $\ell = \ell(G)$ from \eqref{eq:defect}. If $\phi$ is a classical cuspidal tempered eigenpacket, then for any $\varepsilon \in \cE_\phi$, by \cite{Clo90} and \cite[Thm.\ III.5.1]{BW00}, $\phi$ contributes to the $\varepsilon$-part of cuspidal cohomology in degrees $\{q, q+1,\dots, q+\ell\}$. If $\phi$ is non-critical, it thus yields a point $x_\phi^\varepsilon \in \sX_{G,K}^\varepsilon$. Since the image of this point in $\sX_{G,K}$ is independent of $\varepsilon$, we henceforth just denote this point $x_\phi$, and abusing notation consider it as a point either in $\sX_{G,K}$ or, for any $\varepsilon \in \cE_\phi$, in $\sX_{G,K}^\varepsilon$. 
    
    The following conjecture of Hida and Urban is \cite[Conj.\ 1.1.4]{Han17}:
	
	\begin{conjecture}[The non-abelian Leopoldt conjecture] \label{conj:non-abelian leopoldt} Let $\phi$ be a non-critical classical tempered cuspidal eigenpacket. Any irreducible component $\sI \subset \sX_{G,K}$ (or $\sX_{G,K}^\varepsilon$) through $x_\phi$ has dimension $\dim \sW - \ell(G).$
	\end{conjecture}
	
	\begin{remark}
    \label{rem:non-abelian leopoldt}
    In the appendix of \cite{Han17}, Newton proved that $\dim\sI \geq \dim \sW-\ell(G)$. In \cite[Thm.\ 4.5.1]{Han17}, Hansen proved the conjecture for $\ell(G) = 0, 1$, and showed that if $\ell(G) \geq 1$, then $\dim\sI < \dim\sW$.
    \end{remark}

	\section{Structure theorems and \'etaleness results}\label{sec:proof}

	In this section we prove  Theorem \ref{thm:main etale}. As highlighted in the introduction, the proof here is inspired heavily by \cite[\S4.3]{HT17}. First we must make precise the conditions in the statement.
	
	\begin{convention} \label{convention}
		We will work throughout with a fixed finite-slope tempered eigenpacket $\phi$, which will, if not specified, have weight $\lambda$ and level $K$, corresponding to a point $x_\phi \in \sX_{G,K}$. We will only work with slope data $(U_t,\Omega,h)$ such that $\phi$ has slope $\leq h$ with respect to $U_t$. We will always work over a sufficiently large finite extension $L/\Qp$. Affinoids denoted $\Omega \subset \sW$ will always be open in $\sW$ (though we later also use Zariski-closed affinoids $\Sigma,\Lambda \subset \Omega$).
	\end{convention}
	
	In practice, $L$ will be taken large enough for the following assumption to hold.
	
	\begin{definition}\label{def:cohom mult one}
		We say a classical cuspidal eigenpacket $\phi$ of weight $\lambda$ and level $K$ is \emph{of cohomological multiplicity one (at sign $\varepsilon$)} if 
		\begin{equation}\label{eq:mult one def}
		\dim_L \hc{q+\ell}(K,\sL_{\lambda}(L))_{\phi}^\varepsilon = \dim_L \hc{q}(K,\sL_{\lambda}(L))_\phi^\varepsilon = 1.
		\end{equation}
		In practice, $\varepsilon$ will often be fixed and implicit, and we simply write `of cohomological multiplicity one'. 
	\end{definition}

	\begin{example}\label{ex:mult one}
		Let $G = \mathrm{Res}_{F/\Q}\GL_n$, for $F$ a number field, and let $\pi$ be a RACAR of $G(\A)$ such that the $\phi$-eigenspace of $\bT(K)$ acting on $\pi_f^{K}$ is 1-dimensional.  Then by Matsushima's formula $\phi$ is cohomologically of multiplicity one at any $\varepsilon \in \cE_\pi$ (see Definition \ref{def:admissible signs}).
		
		If $K^p$ is the Whittaker new level, $\pi_p$ is $p$-ordinary, and the $p$-part $\phi|_{\sA_p^+}$ of $\phi$ is the unique ordinary eigensystem in $\pi_p^{\opn{Iw}}$, then $\pi_f^{K}$ is 1-dimensional. Moreover, if $F = \Q$, then $K_\infty/K_\infty^\circ \cong \{\pm 1\}$, and $\cE_\pi$ either contains both possible signs (if $n$ is even) or precisely one sign, depending on the parity of the central character of $\pi_\infty$ (if $n$ is odd); see \cite[(3.2)]{Mah05}.
	\end{example}

\begin{remark}\label{rem:more general}
There are more general examples of $\pi$ satisfying \eqref{eq:mult one def}. An obvious variant of the above is when $G$ is a product of general linear groups, capturing Rankin--Selberg and triple product cases.

More generally, there are multiple conditions that need to be satisfied in order for $\pi$ to be of cohomological multiplicity one with respect to a local system $E$. More precisely, from the description of cuspidal cohomology as in \cite{FS98}, one needs:
\begin{enumerate}
\item The multiplicity of $\pi$ in the discrete spectrum of cuspidal automorphic forms on $G(\mbb{A})$ to be one. In the notation \emph{op.cit.}, this implies that $A_{E, \{G\}, \pi} \cong \pi$.
\item Strong multiplicity one holds for $\phi$, namely, $\pi$ is the unique (up to isomorphism) cuspidal automorphic representation with Hecke eigensystem $\phi$.
\item In the notation \emph{op.cit.}, the Lie algebra cohomology groups (with fixed sign $\varepsilon$) 
\[
\opn{H}^{q}(\mathfrak{m}_G, K_{\infty}; \pi_{\infty} \otimes E)^{\varepsilon} \quad \text{ and } \quad \opn{H}^{q+\ell}(\mathfrak{m}_G, K_{\infty}; \pi_{\infty} \otimes E)^{\varepsilon}
\]
are both $1$-dimensional.
\item The invariants $\pi_f^K$ are $1$-dimensional.
\end{enumerate}
For a non general linear example where all of these conditions are satisfied, one can take $\pi$ to be a discrete series cuspidal automorphic representation of a rank $n$ definite unitary group $G$ (defined with respect to an imaginary quadratic number field $F$), which is ramified only at primes which split in the extension $F/\mbb{Q}$ and whose automorphic base-change to $\opn{GL}_n(\mbb{A}_F)$ is cuspidal. Condition (1) holds by Arthur's multiplicity formula \cite[Thm.\ 2.6]{CZ25}; condition (2) is a consequence (using the ramification assumption) of strong multiplicity one for cuspidal automorphic representations of $\opn{GL}_{n}(\mbb{A}_F)$ and properties of automorphic base-change from $G(\mbb{A})$ to $\opn{GL}_{n}(\mbb{A}_F)$ \cite{ShinAppendix}; condition (3) essentially follows from Schur's lemma (noting that $q = \ell = 0$ in this setting); and condition (4) can be guaranteed if we take $K$ to be equal to the Whittaker new level at primes where $\pi$ is ramified, and maximal special at unramified primes. Such a $\pi$ is the main object of study in forthcoming work of Th{\o}gersen \cite{ThogersenUnitary}.
\end{remark}

    \begin{proposition}\label{prop:matsushima}
        Suppose $\phi$ is a classical cuspidal tempered eigenpacket of weight $\lambda$ of cohomological multiplicity one at $\varepsilon$. Then for $0 \leq i \leq \ell$, we have
     		\[
		\dim_L \hc{q+i}(K,\sL_{\lambda}(L))_{\phi}^\varepsilon = \left(\begin{smallmatrix}\ell \\ i \end{smallmatrix}\right).
		\]
    \end{proposition}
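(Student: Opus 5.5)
Since $\phi$ is cuspidal, the localised compactly supported cohomology coincides with localised cuspidal cohomology, $\hc{\bullet}(K,\sL_\lambda)_\phi \cong \h^\bullet_{\opn{cusp}}(K,\sL_\lambda)_\phi$, compatibly with the action of $K_\infty/K_\infty^\circ$, so it suffices to compute $\dim_L \h^{q+i}_{\opn{cusp}}(K,\sL_\lambda(L))^\varepsilon_\phi$. By Matsushima's formula (in the form of Franke--Schwermer \cite{FS98}, extended to non-neat $K$ via Remark \ref{rem:neat}), after extending scalars to $\C$ via a fixed isomorphism $\overline{\Q}_p\cong\C$, this is
\[
\h^{j}_{\opn{cusp}}(K,\sL_\lambda)_\phi \cong \bigoplus_{\pi} m(\pi)\, \h^j(\fg,K_\infty^\circ;\pi_\infty\otimes L_\lambda)\otimes (\pi_f^{K})_\phi,
\]
with $K_\infty/K_\infty^\circ$ acting only on the $(\fg,K_\infty^\circ)$-cohomology factor, and the sum over cuspidal $\pi$ whose $K$-invariants contain the eigensystem $\phi$. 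Taking $\varepsilon$-parts commutes with this decomposition.

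Since $\phi$ is tempered, every $\pi_\infty$ occurring is tempered, and by \cite{Clo90}, \cite[Thm.\ III.5.1]{BW00} its relative Lie algebra cohomology has the standard shape: for the fundamental parabolic, $\h^{q+i}(\fg,K_\infty^\circ;\pi_\infty\otimes L_\lambda) \cong \h^{q}(\fg,K_\infty^\circ;\pi_\infty\otimes L_\lambda)\otimes \wedge^i \fa^*$, where $\fa$ is the $\ell$-dimensional split part of the fundamental Cartan (this is what gives support in $\{q,\dots,q+\ell\}$). The key point is to check this isomorphism is $K_\infty/K_\infty^\circ$-equivariant with $K_\infty/K_\infty^\circ$ acting trivially on $\wedge^\bullet\fa^*$ (or more precisely that the $\varepsilon$-parts match up); the cleanest route is to note that $K_\infty$ normalises the fundamental Cartan up to $K_\infty^\circ$-conjugacy, and that the exterior-algebra structure comes from a cup product with $\h^\bullet$ of $\fa$, on which components act trivially after adjusting. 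Granting this, for each $\pi$ and $\varepsilon$ we get $\dim \h^{q+i}(\cdots)^\varepsilon = \binom{\ell}{i}\dim\h^q(\cdots)^\varepsilon$.

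Summing over $\pi$, we deduce $\dim_L \hc{q+i}(K,\sL_\lambda)^\varepsilon_\phi = \binom{\ell}{i} d$ where $d = \dim_L \hc{q}(K,\sL_\lambda)^\varepsilon_\phi$. Cohomological multiplicity one gives $d=1$, which yields the claim (and is consistent with the degree $q+\ell$ hypothesis, $\binom{\ell}{\ell}=1$). One must also note that localisation at $\ker\phi$ agrees with taking the generalised $\phi$-eigenspace, and that on cuspidal cohomology the Hecke action at unramified places is semisimple, but for the dimension count generalised eigenspaces suffice, since Matsushima's formula is Hecke-equivariant and the $(\pi_f^K)_\phi$ factor is common to all degrees. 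The main obstacle is the sign-equivariance of the exterior-algebra description of tempered $(\fg,K_\infty^\circ)$-cohomology; if a clean equivariant statement is unavailable, I would instead use the given equality in both degrees $q$ and $q+\ell$ to pin down that only one $\pi$ and one $\varepsilon$-isotypic piece of rank one contributes, and then compute the middle degrees from the explicit Vogan--Zuckerman $A_\fq(\lambda)$ description.
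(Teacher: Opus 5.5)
Your proposal is correct and follows the same route as the paper. The paper's proof consists only of citing Matsushima's formula together with the Borel--Wallach computation of tempered $(\fg,K_\infty^\circ)$-cohomology \cite[Thm.\ III.5.1]{BW00} (in the form \cite[(5)]{Ven19}); you have unpacked that citation, namely the reduction to cuspidal cohomology, the Hecke-equivariant decomposition over $\pi$, the $\wedge^\bullet\fa^*$-structure in degrees $q,\dots,q+\ell$, and $d=1$ from the degree-$q$ hypothesis. The compatibility with $\varepsilon$-parts that you single out is exactly the point the paper leaves inside the citation, so your argument is at the same level of completeness as the paper's.
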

    \begin{proof}
    This is  Matsushima's formula \cite[Thm.\ III.5.1]{BW00} (see, e.g., \cite[(5)]{Ven19}).
    \end{proof}

	 With this, we have completely explained the statement of Theorem \ref{thm:main etale}, and we now turn to the proof. We will prove a number of auxiliary results, assuming varying combinations of conditions (a), (b) and (c); and combine them to deduce Theorem \ref{thm:main etale}. 

     In the proof, we will require some further relevant results from \cite{Han17}:
	\begin{facts}\label{facts:hansen other}
		\begin{itemize}
			\item[(A)] By \cite[Thm.\ 3.3.1, Rem.\ 3.3.2]{Han17}, if $(U_t,\Omega,h)$ is a slope datum and $\Lambda \subset \Omega$ a rigid Zariski-closed subspace, there is a convergent second quadrant Hecke-equivariant Tor spectral sequence
			\[
			E_2^{i,j} = \opn{Tor}_{-i}^{\cO(\Omega)}\Big(\hc{j}(K,\sD_\Omega)^{\leq h}, \cO(\Lambda)\Big) \Rightarrow \hc{i+j}(K,\sD_\Lambda)^{\leq h}.
			\]
			
			\item[(B)] Using \cite[Prop.\ 4.5.2]{Han17}, for $\phi$ a non-critical classical tempered cuspidal eigenpacket of weight $\lambda$, and $(U_t,\Omega,h)$ a slope datum such that $\lambda \in \Omega$, the largest $i$ for which $\hc{i}(K,\sD_\Omega)_\phi^{\leq h} \neq 0$ is $i = q+\ell$. 
			
			\item[(C)] Let $\phi$ and $\Omega$ be as in (B). Suppose the non-abelian Leopoldt conjecture holds for $\phi$, and $\Omega$ is open in $\sW$. Then by the last line of Newton's appendix to \cite{Han17}, $\hc{i}(K,\sD_\Omega)_\phi^{\leq h} \neq 0$ if and only if $i = q+\ell$.

            \item[(D)]   For $\varepsilon \in \cE_\phi$, facts (A), (B) and (C) above all apply identically with $\varepsilon$-parts everywhere.
		\end{itemize}
	\end{facts}

	\subsection{Results assuming only (a) and (b) of Theorem \ref{thm:main etale}}

	\begin{proposition}\label{prop:top cyclic}
		Let $\phi$ be a non-critical classical cuspidal tempered eigenpacket of weight $\lambda$. 
		\begin{itemize}
			\item[(i)] Let $(U_t,\Omega,h)$ be a slope datum, and $\Lambda \subset \Omega$ a Zariski-closed affinoid containing $\lambda$. Then
			\[
			\hc{q+\ell}(K,\sD_\Omega)_\phi^{\leq h} \otimes_{\cO(\Omega)_\lambda}\cO(\Lambda)_\lambda \cong \hc{q+\ell}(K,\sD_\Lambda)_\phi^{\leq h}.
			\]
			\item[(ii)] If $\phi$ is of cohomological multiplicity one at $\varepsilon$, then there exists an ideal $J_\phi^\varepsilon \subset \cO(\Omega)_\lambda$ such that
			\begin{equation}\label{eq:top degree cyclic}
				\hc{q+\ell}(K,\sD_\Omega)^{\leq h, \varepsilon}_\phi \cong \cO(\Omega)_\lambda/J_\phi^\varepsilon.
			\end{equation}
		\end{itemize}
	\end{proposition}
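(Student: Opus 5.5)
For (i), I will use the Tor spectral sequence of Facts \ref{facts:hansen other}(A), localised at $\ker(\phi)$; this is harmless, since the spectral sequence is Hecke-equivariant and localisation is exact. In the localised sequence, Fact (B) gives $E_2^{i,j} = 0$ whenever $j > q+\ell$. The abutment in degree $q+\ell$ therefore receives contributions only from terms $E_2^{-k,q+\ell+k}$ with $k\geq 0$, and these vanish for $k>0$. What remains is $E_2^{0,q+\ell} = \hc{q+\ell}(K,\sD_\Omega)^{\leq h}_\phi \otimes_{\cO(\Omega)} \cO(\Lambda)$.

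This corner term is stable. Outgoing differentials $d_r$ go from $E_r^{0,q+\ell}$ to $E_r^{r,q+\ell-r+1}$, which lies in the first quadrant $i>0$ and so is zero. Incoming differentials come from $E_r^{-r,q+\ell+r-1}$, which vanishes for $r\geq 2$ because $q+\ell+r-1>q+\ell$. Hence $E_\infty^{0,q+\ell} = E_2^{0,q+\ell}$, and it is the only graded piece of $\hc{q+\ell}(K,\sD_\Lambda)^{\leq h}_\phi$. Finally, I will identify $M\otimes_{\cO(\Omega)}\cO(\Lambda)$, for $M$ already localised at $\phi$, with $M\otimes_{\cO(\Omega)_\lambda}\cO(\Lambda)_\lambda$. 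This works because the localisation at $\phi$ is a module over $\cO(\Omega)_\lambda$: $\phi$ has weight $\lambda$, so elements of $\cO(\Omega)\setminus\m_\lambda$ act invertibly after localising. This gives (i), and the same argument applies to $\varepsilon$-parts by Fact (D).

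For (ii), I will apply (i) with $\Lambda=\{\lambda\}$, so that $\cO(\Lambda)_\lambda = L = \cO(\Omega)_\lambda/\m_\lambda$. Write $M=\hc{q+\ell}(K,\sD_\Omega)^{\leq h,\varepsilon}_\phi$. Then (i) gives
\[
M/\m_\lambda M \cong \hc{q+\ell}(K,\sD_\lambda)^{\leq h,\varepsilon}_\phi.
\]
Non-criticality identifies the right-hand side with $\hc{q+\ell}(K,\sL_\lambda)^{\varepsilon}_\phi$. Here I must check that the slope $\leq h$ part at $\phi$ is the whole $\phi$-localisation; this holds because $\phi$ has slope $\leq h$. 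By cohomological multiplicity one, this space is $1$-dimensional.

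Next, $M$ is finitely generated over the noetherian local ring $\cO(\Omega)_\lambda$, since slope decompositions give finite $\cO(\Omega)$-modules. Nakayama's lemma then shows $M$ is cyclic, and nonzero since its fibre is nonzero. So $M\cong \cO(\Omega)_\lambda/J^\varepsilon_\phi$ with $J_\phi^\varepsilon = \opn{Ann}(M)$.

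The main obstacle is the bookkeeping in the spectral sequence: the indexing conventions must be right so that the top-degree corner term really does not interact with anything. I also need the localisation at $\phi$ to be compatible both with the spectral sequence and with the $\cO(\Omega)_\lambda$-structure, which needs the finiteness of the slope-$\leq h$ cohomology over $\cO(\Omega)$.
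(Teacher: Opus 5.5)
Your proposal is correct and follows essentially the same route as the paper. You localise Hansen's Tor spectral sequence at $\phi$, and use Fact (B) together with its second-quadrant shape to see that the corner term $E_2^{0,q+\ell}$ survives unchanged and is the only contribution in degree $q+\ell$. You then specialise to $\Lambda=\{\lambda\}$ and use non-criticality, multiplicity one and Nakayama to obtain cyclicity. Your remarks on identifying the two tensor products over $\cO(\Omega)$ and $\cO(\Omega)_\lambda$, and on finite generation, just spell out points the paper leaves implicit.
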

	\begin{proof}
		We localise the Tor spectral sequence (for $\Omega$ and $\Lambda$) at $\phi$. Note that the terms in the limit that contribute to the grading on $\hc{q+\ell}(K,\sD_\Lambda)_\phi$ are $(E_\infty^{i,q+\ell-i})_\phi$. As $E_2^{i,j} = 0$ for $i > 0$, and $(E_2^{i,j})_\phi = 0$ for all $j > q+\ell$ by fact (B) above, we have $(E_\infty^{i,q+\ell-i})_\phi = 0$ unless $i = 0$. Moreover note
		\begin{equation}\label{eq:spectral sequence stabilise}
			E_3^{0,q+\ell} =  \ker(E_2^{0,q+\ell}\to E_2^{2,q+\ell-1})/\opn{im}(E_2^{-2,q+\ell+1}\to E_2^{0,q+\ell});
		\end{equation}
		localising at $\phi$ we find $(E_3^{0,q+\ell})_\phi = (E_2^{0,q+\ell})_\phi$, and continuing $(E_r^{0,q+\ell})_\phi$ stabilises at the $E_2$ page. Part (i) follows as
		\begin{align*} \hc{q+\ell}(K,\sD_\Omega)_\phi^{\leq h} \otimes_{\cO(\Omega)_\lambda}\cO(\Lambda)_\lambda &= (E_2^{0,q+\ell})_\phi = (E_\infty^{0,q+\ell})_\phi \cong \hc{q+\ell}(K,\sD_\Lambda)_\phi^{\leq h}.
		\end{align*}
		
		To see (ii), first apply (i) to $\Lambda = \{\lambda\}$, giving
		\begin{equation}\label{eq:spec weight lambda}
		\hc{q+\ell}(K,\sD_\Omega)_\phi^{\leq h} \otimes_{\cO(\Omega)_\lambda}\cO(\Omega)_\lambda/\m_\lambda \cong \hc{q+\ell}(K,\sD_\lambda)_\phi^{\leq h}.
		\end{equation}
		Now $\hc{q+\ell}(K,\sD_\lambda)^{\leq h, \varepsilon}_\phi$ is 1-dimensional by the multiplicity one assumption and non-criticality. Taking $\varepsilon$-parts in \eqref{eq:spec weight lambda}, and lifting a generator via Nakayama's lemma, we see $\hc{q+\ell}(K,\sD_\Omega)_\phi^{\leq h, \varepsilon}$ is a cyclic $\cO(\Omega)_\lambda$ module, and hence deduce \eqref{eq:top degree cyclic}. 
	\end{proof}

	As $\cO(\Omega)_\lambda$ is Noetherian, $J_\phi^\varepsilon$ is finitely generated. Up to shrinking $\Omega$ to avoid denominators, we may assume that $J_\phi^\varepsilon \subset \cO(\Omega)$ is defined globally. We define
	\begin{equation}\label{eq:sigma}
		\Sigma = \Sigma^\varepsilon := \opn{Sp}\big(\cO(\Omega)/J_\phi^\varepsilon\big).
	\end{equation}
    To ease notation, we will henceforth fix\footnote{Note that if $\Sigma$ contains a Zariski-dense set of classical weights such that $h$ is a non-critical slope, then one can show $\Sigma$ is independent of the admissible sign $\varepsilon$ using the methods of \cite[Cor.\ 7.22]{BDW20}.} $\varepsilon$ and just write $\Sigma$.

	\begin{corollary}\label{cor:top degree etale}
		Let $\phi$ be a non-critical classical cuspidal tempered eigenpacket of cohomological multiplicity one at $\varepsilon$, and let $\Sigma$ be as in  \eqref{eq:sigma}. Then $\hc{q+\ell}(K,\sD_\Sigma)^{\leq h, \varepsilon}_\phi$ is free of rank one over $\cO(\Sigma)_\lambda$.
	\end{corollary}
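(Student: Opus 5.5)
The plan is to apply Proposition \ref{prop:top cyclic}(i), with $\varepsilon$-parts taken as permitted by Facts \ref{facts:hansen other}(D), to the Zariski-closed affinoid $\Lambda = \Sigma = \opn{Sp}(\cO(\Omega)/J_\phi^\varepsilon) \subset \Omega$. This affinoid contains $\lambda$: since $\hc{q+\ell}(K,\sD_\Omega)^{\leq h,\varepsilon}_\phi \cong \cO(\Omega)_\lambda/J_\phi^\varepsilon$ is nonzero (its fibre at $\lambda$ is one-dimensional), we have $J_\phi^\varepsilon \subset \m_\lambda\cO(\Omega)_\lambda$. Part (i) then gives
\[
\hc{q+\ell}(K,\sD_\Sigma)^{\leq h,\varepsilon}_\phi \cong \hc{q+\ell}(K,\sD_\Omega)^{\leq h,\varepsilon}_\phi \otimes_{\cO(\Omega)_\lambda} \cO(\Sigma)_\lambda .
\]

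Next, we substitute \eqref{eq:top degree cyclic}. Localising $\cO(\Sigma) = \cO(\Omega)/J_\phi^\varepsilon$ at $\m_\lambda$ gives $\cO(\Sigma)_\lambda = \cO(\Omega)_\lambda/J_\phi^\varepsilon\cO(\Omega)_\lambda$, because localisation is exact. Here we use the global definition of $J_\phi^\varepsilon$ after shrinking $\Omega$, which ensures its localisation recovers the local ideal. The right-hand side then becomes
\[
\cO(\Omega)_\lambda/J_\phi^\varepsilon \otimes_{\cO(\Omega)_\lambda} \cO(\Omega)_\lambda/J_\phi^\varepsilon \cong \cO(\Omega)_\lambda/J_\phi^\varepsilon = \cO(\Sigma)_\lambda,
\]
which is free of rank one, as claimed.

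The only point requiring care is bookkeeping. One must check that localising at $\phi$, that is at $\ker\phi$ in the Hecke algebra over $\cO(\Omega)$, is compatible with localising the weight ring at $\m_\lambda$, so that the tensor products in Proposition \ref{prop:top cyclic}(i) are over $\cO(\Omega)_\lambda$ as written. One must also check that the $\varepsilon$-projection commutes with the Tor spectral sequence. Both follow because the Hecke action is $\cO(\Omega)$-linear and $\phi$ lies over $\lambda$, and because the projection to the $\varepsilon$-part is exact, being given by an idempotent since $2$ is invertible. I expect no genuine obstacle beyond this.
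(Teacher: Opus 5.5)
Your proof is correct and is essentially the paper's argument: apply Proposition~\ref{prop:top cyclic}(i) with $\Lambda = \Sigma$, substitute the cyclic presentation $\cO(\Omega)_\lambda/J_\phi^\varepsilon$ from part (ii), and use $\cO(\Sigma)_\lambda \otimes_{\cO(\Omega)_\lambda} \cO(\Sigma)_\lambda \cong \cO(\Sigma)_\lambda$. Your extra checks are points the paper leaves implicit, and you handle them correctly: that $\lambda \in \Sigma$, and that localisation at $\phi$ and passage to $\varepsilon$-parts are compatible with the Tor spectral sequence.
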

	
	\begin{proof}
		By definition $\cO(\Omega)_\lambda/J_\phi^\varepsilon \cong \cO(\Sigma)_\lambda$. Applying part (i) of the proposition with $\Lambda = \Sigma$ yields 
		\begin{align*}
			\hc{q+\ell}(K,\sD_\Sigma)^{\leq h, \varepsilon}_\phi &\cong \hc{q+\ell}(K,\sD_\Omega)_\phi^{\leq h, \varepsilon} \otimes_{\cO(\Omega)_\lambda}\cO(\Sigma)_\lambda\\
			&\cong \cO(\Sigma)_\lambda \otimes_{\cO(\Omega)_\lambda} \cO(\Sigma)_\lambda \cong \cO(\Sigma)_\lambda,
		\end{align*}
		where we have used (ii) in the second isomorphism. The result is immediate.
	\end{proof}

In \cite[Def.\ 4.3.2]{Han17} Hansen also constructs eigenvarieties $\sX_{G,K}^n$ using only degree $n$ cohomology, whose points parametrise finite slope eigensystems appearing in $\hc{n}(K,\sD_\lambda)$. Applying the same constructions with just the $\varepsilon$-parts of the cohomology, one obtains eigenvarieties $\sX_{G,K}^{n,\varepsilon}$ parametrising finite slope eigensystems in $\hc{n}(K,\sD_\lambda)^\varepsilon$. Without needing to assume non-abelian Leopoldt, the results above show that the top degree eigenvariety for $\varepsilon$ is \'etale  over its image in weight space at $x_\phi$. Precisely:

	\begin{corollary}\label{cor:top etale}
		Let $\phi$ be a non-critical classical cuspidal tempered eigenpacket of cohomological multiplicity one at $\varepsilon$ and let $\Sigma$ be as in (\ref{eq:sigma}). Then
		\[
		\sX_{G,K}^{q+\ell, \varepsilon}\times_{\sW}\Sigma\longrightarrow \Sigma
		\]
		is \'etale at $x_\phi$. 
	\end{corollary}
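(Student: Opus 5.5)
The strategy is to read off the local ring of the degree-$(q+\ell)$, sign-$\varepsilon$ eigenvariety over $\Sigma$ at $x_\phi$ from Corollary \ref{cor:top degree etale}, and show it coincides with $\cO(\Sigma)_\lambda$. Fix a slope datum $(U_t,\Omega,h)$ with $\lambda\in\Omega$ and $\Omega$ open. By Hansen's construction (applied to $\varepsilon$-parts in the single degree $q+\ell$), a neighbourhood of $x_\phi$ in $\sX_{G,K}^{q+\ell,\varepsilon}\times_\sW\Sigma$ is $\opn{Sp}$ of the image $\bT_\Sigma$ of $\bT(K)\otimes\cO(\Sigma)$ in $\End_{\cO(\Sigma)}\big(\hc{q+\ell}(K,\sD_\Sigma)^{\leq h,\varepsilon}\big)$. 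There is one point to check here: that forming the degree-$(q+\ell)$ eigenvariety over $\Omega$ and then base changing to $\Sigma$ agrees, locally at $x_\phi$, with the Hecke algebra acting on $\hc{q+\ell}(K,\sD_\Sigma)$. This comparison is supplied by Proposition \ref{prop:top cyclic}(i), which identifies $\hc{q+\ell}(K,\sD_\Omega)^{\leq h,\varepsilon}_\phi\otimes\cO(\Sigma)_\lambda$ with $\hc{q+\ell}(K,\sD_\Sigma)^{\leq h,\varepsilon}_\phi$.

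Next, localise at $x_\phi$, i.e.\ at the maximal ideal $\ker\phi$. Since $\bT_\Sigma$ is finite over $\cO(\Sigma)$, its localisation at $\ker\phi$ is a direct factor of $\bT_\Sigma\otimes\cO(\Sigma)_\lambda$. Set $M:=\hc{q+\ell}(K,\sD_\Sigma)^{\leq h,\varepsilon}_\phi$. The local ring $\bT_{\Sigma,x_\phi}$ is then the image of $\bT(K)\otimes\cO(\Sigma)_\lambda$ in $\End_{\cO(\Sigma)_\lambda}(M)$. By Corollary \ref{cor:top degree etale}, $M\cong\cO(\Sigma)_\lambda$ is free of rank one, so $\End_{\cO(\Sigma)_\lambda}(M)=\cO(\Sigma)_\lambda$. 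It follows that $\bT_{\Sigma,x_\phi}$ is a quotient of $\cO(\Sigma)_\lambda$ (scalars act through the structure map) containing it. It also acts faithfully on $M$, and $M$ is a faithful $\cO(\Sigma)_\lambda$-module. Hence the structure map $\cO(\Sigma)_\lambda\to\bT_{\Sigma,x_\phi}$ is an isomorphism.

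Finally, pass from this isomorphism of local rings to étaleness. To do this I would compare completions: the map of rigid spaces is finite near $x_\phi$, $x_\phi$ is the only point over $\lambda$ in this localised piece, and the residue fields agree after enlarging $L$. Under these conditions, an isomorphism $\cO_{\Sigma,\lambda}\cong\cO_{\mathscr X,x_\phi}$ of (completed) local rings gives an isomorphism of the rigid local rings, so the map is étale at $x_\phi$, indeed a local isomorphism. The main obstacle I anticipate is this bookkeeping between algebraic localisations of affinoid algebras and rigid-analytic local rings, together with the compatibility of the eigenvariety fibre product with the Hecke algebra on $\sD_\Sigma$-cohomology. I would handle both by working with completions, which agree for affinoid algebras and their finite extensions.
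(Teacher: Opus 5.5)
Your faithfulness argument is the right mechanism, but you run it over $\Sigma$, and the bridge back to the fibre product is not supplied by the result you cite. Near $x_\phi$ the fibre product $\sX_{G,K}^{q+\ell,\varepsilon}\times_{\sW}\Sigma$ is $\opn{Sp}\big(\bT^{q+\ell,\varepsilon}_{\Omega,h}(K)\otimes_{\cO(\Omega)}\cO(\Sigma)\big)$. Its local ring at $x_\phi$ is therefore $A\defeq\bT^{q+\ell,\varepsilon}_{\Omega,h}(K)_\phi\otimes_{\cO(\Omega)_\lambda}\cO(\Sigma)_\lambda$. This is not a priori the image $\bT_{\Sigma,x_\phi}$ of the Hecke algebra in $\End_{\cO(\Sigma)_\lambda}\big(\hc{q+\ell}(K,\sD_\Sigma)^{\leq h,\varepsilon}_\phi\big)$.

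Proposition \ref{prop:top cyclic}(i) only says that the cohomology \emph{module} commutes with base change to $\Sigma$. What it gives you is a surjection $A\twoheadrightarrow\bT_{\Sigma,x_\phi}$, and nothing you have proved makes it injective. Forming the Hecke image does not commute with base change in general: tensoring the inclusion $\bT^{q+\ell,\varepsilon}_{\Omega,h}(K)_\phi\hookrightarrow\End\big(\hc{q+\ell}(K,\sD_\Omega)^{\leq h,\varepsilon}_\phi\big)$ with $\cO(\Sigma)_\lambda$ need not stay injective. Your computation $\bT_{\Sigma,x_\phi}\cong\cO(\Sigma)_\lambda$ therefore only shows that $\cO(\Sigma)_\lambda$ is a retract of $A$. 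The complement, namely the kernel of $A\to\bT_{\Sigma,x_\phi}$, is uncontrolled. As written, you have proved \'etaleness of the eigenvariety built directly from $\sD_\Sigma$-cohomology, whose local ring is a priori only a quotient of that of the fibre product.

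The repair is to apply your argument one level up, using Proposition \ref{prop:top cyclic}(ii) instead of Corollary \ref{cor:top degree etale}. Since $\hc{q+\ell}(K,\sD_\Omega)^{\leq h,\varepsilon}_\phi\cong\cO(\Omega)_\lambda/J^\varepsilon_\phi$ is cyclic, the local ring $\bT^{q+\ell,\varepsilon}_{\Omega,h}(K)_\phi$ embeds in $\End_{\cO(\Omega)_\lambda}(\cO(\Omega)_\lambda/J^\varepsilon_\phi)=\cO(\Omega)_\lambda/J^\varepsilon_\phi$. It also contains the image of $\cO(\Omega)_\lambda$, which is all of this. Hence $\bT^{q+\ell,\varepsilon}_{\Omega,h}(K)_\phi\cong\cO(\Omega)_\lambda/J^\varepsilon_\phi=\cO(\Sigma)_\lambda$, and then $A\cong\cO(\Sigma)_\lambda\otimes_{\cO(\Omega)_\lambda}\cO(\Sigma)_\lambda=\cO(\Sigma)_\lambda$. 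This is exactly the paper's proof. The top-degree eigenvariety over $\Omega$ already has local ring $\cO(\Sigma)_\lambda$ at $x_\phi$, so no comparison with $\sD_\Sigma$-cohomology is needed at all.

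Your final step, passing from local rings to \'etaleness via completions, is fine. One minor inaccuracy: $\cO(\Sigma)_\lambda$ need not be henselian, so the localisation of $\bT_\Sigma$ at $\ker\phi$ need not be a direct factor of $\bT_\Sigma\otimes\cO(\Sigma)_\lambda$. You only need that localisation preserves faithfulness of the action, which does hold.
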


	\begin{proof}
		We take $(U_t,\Omega,h)$ a slope datum as above, such that $\Omega$ is open in $\sW$. The local ring $\bT_{\Omega,h}^{q+\ell, \varepsilon}(K)_\phi$ of $\sX_{G,K}^{q+\ell, \varepsilon}$ at $x_\phi$ is the (localisation of the) image of $\bT(K) \otimes \cO(\Omega)$ in $\End_{\cO(\Omega)}(\hc{q+\ell}(K,\sD_\Omega)^{\leq h, \varepsilon})$. In this case Proposition \ref{prop:top cyclic}(ii) implies that
		\[
		0 \neq \bT_{\Omega,h}^{q+\ell, \varepsilon}(K)_\phi \subset \End_{\cO(\Omega)_\lambda}(\hc{q+\ell}(K,\sD_\Omega)^{\leq h, \varepsilon}_\phi) \cong \cO(\Omega)_\lambda/J_\phi^\varepsilon
		\]
		as $\cO(\Omega)_\lambda$-modules, hence $\bT_{\Omega,h}^{q+\ell,\varepsilon}(K)_\phi \cong \cO(\Omega)_\lambda/J_\phi^\varepsilon \cong \cO(\Sigma)_\lambda$ (as $1\in \bT_{\Omega,h}^{q+\ell,\varepsilon}(K)_\phi$). Thus
		\[
		\bT_{\Omega,h}^{q+\ell, \varepsilon}(K)_\phi\otimes_{\cO(\Omega)_\lambda}\cO(\Sigma)_\lambda \cong \cO(\Sigma)_\lambda
		\]
		is the local ring of $\sX_{G,K}^{q+\ell, \varepsilon}\times_{\sW}\Sigma$ at $x_\phi$, from which we deduce the claimed \'etaleness. 
	\end{proof}

	\subsection{Results assuming (a), (b) and (c)  of Theorem \ref{thm:main etale}}

In this section, we additionally assume the non-abelian Leopoldt conjecture for $\phi$ (at $\varepsilon$): so every irreducible component of $\sX_{G,K}^\varepsilon$ through $x_\phi$ has dimension $\dim \sW - \ell$, for $\ell = \ell(G)$. An immediate consequence is:

\begin{corollary}\label{cor:main etale}
		Let $\phi$ be a non-critical classical cuspidal tempered eigenpacket of cohomological multiplicity one at $\varepsilon$. If the non-abelian Leopoldt conjecture holds for $\phi$ at $\varepsilon$, then 
		\[
		\sX_{G,K}^\varepsilon \times_{\sW}\Sigma\longrightarrow \Sigma
		\]
		is \'etale at $x_\phi$.
	\end{corollary}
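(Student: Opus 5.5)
The plan is to compare the full local ring $\bT_\phi := \bT^\varepsilon_{\Omega,h}(K)_\phi$ of $\sX^\varepsilon_{G,K}$ at $x_\phi$ with the top-degree local ring $\bT^{q+\ell,\varepsilon}_{\Omega,h}(K)_\phi$. Corollary \ref{cor:top etale} identifies the latter with $\cO(\Sigma)_\lambda$. Here $(U_t,\Omega,h)$ is a slope datum with $\Omega$ open in $\sW$.

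The first step uses non-abelian Leopoldt, via Facts \ref{facts:hansen other}(C) and (D): localised at $\phi$, the complex computing $\hc{\bullet}(K,\sD_\Omega)^{\leq h,\varepsilon}$ has cohomology concentrated in degree $q+\ell$. Hence
\[
\hc{\bullet}(K,\sD_\Omega)^{\leq h,\varepsilon}_\phi = \hc{q+\ell}(K,\sD_\Omega)^{\leq h,\varepsilon}_\phi \cong \cO(\Omega)_\lambda/J^\varepsilon_\phi
\]
by Proposition \ref{prop:top cyclic}(ii). The local ring $\bT_\phi$ is, after localising, the image of $\bT(K)\otimes\cO(\Omega)_\lambda$ in $\End_{\cO(\Omega)_\lambda}$ of the localised total cohomology. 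Localisation commutes with forming this image because the slope-$\leq h$ cohomology is finite over $\cO(\Omega)$ and the Hecke algebra acting on it is finite, hence semilocal. So $\bT_\phi$ equals the image in $\End_{\cO(\Omega)_\lambda}(\hc{q+\ell}(K,\sD_\Omega)^{\leq h,\varepsilon}_\phi)$, which is exactly $\bT^{q+\ell,\varepsilon}_{\Omega,h}(K)_\phi$.

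The second step repeats the argument in the proof of Corollary \ref{cor:top etale}. The endomorphism ring of the cyclic module $\cO(\Omega)_\lambda/J^\varepsilon_\phi$ is $\cO(\Omega)_\lambda/J^\varepsilon_\phi$ itself. The image $\bT_\phi$ is a nonzero $\cO(\Omega)_\lambda$-subalgebra containing $1$, so $\bT_\phi\cong\cO(\Sigma)_\lambda$. Tensoring with $\cO(\Sigma)_\lambda$ over $\cO(\Omega)_\lambda$ then gives that the local ring of $\sX^\varepsilon_{G,K}\times_\sW\Sigma$ at $x_\phi$ is $\cO(\Sigma)_\lambda\otimes_{\cO(\Omega)_\lambda}\cO(\Sigma)_\lambda\cong\cO(\Sigma)_\lambda$. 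Thus the map is a local isomorphism at $x_\phi$ on local rings, and in particular étale there. The residue fields agree once $L$ is large enough, in line with Convention \ref{convention}.

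The main obstacle is the bookkeeping in the first step. One has to justify that the local ring of $\sX^\varepsilon_{G,K}$ at $x_\phi$ is computed by the localised Hecke image on total cohomology. One also has to check that the vanishing of all other localised degrees (the consequence of non-abelian Leopoldt) really kills their contribution to that image, and not only to the module. The first point is Hansen's description of the local pieces $\opn{Sp}(\bT^\varepsilon_{\Omega,h}(K))$. For the second, the image in $\prod_n\End(\hc{n})$, localised at the maximal ideal of $\phi$, sees only those $\hc{n}$ whose localisation at $\phi$ is nonzero. I would make this precise by decomposing the finite $\bT(K)\otimes\cO(\Omega)$-module $\hc{\bullet}(K,\sD_\Omega)^{\leq h,\varepsilon}$ over the maximal ideals of the finite (semilocal) Hecke algebra.
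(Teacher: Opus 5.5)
Your proposal is correct and is essentially the paper's own argument. Non-abelian Leopoldt (via Facts \ref{facts:hansen other}(C),(D)) forces $\hc{\bullet}(K,\sD_\Omega)^{\leq h,\varepsilon}_\phi = \hc{q+\ell}(K,\sD_\Omega)^{\leq h,\varepsilon}_\phi$, hence $\bT^\varepsilon_{\Omega,h}(K)_\phi = \bT^{q+\ell,\varepsilon}_{\Omega,h}(K)_\phi$, and Corollary \ref{cor:top etale} then gives the \'etaleness.

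Your extra bookkeeping is sound but does not need any semilocal decomposition. It suffices that annihilators of finitely generated modules commute with localisation, so $\bT^\varepsilon_{\Omega,h}(K)_\phi$ acts faithfully on the localised top-degree cohomology and embeds in its endomorphism ring $\cO(\Omega)_\lambda/J^\varepsilon_\phi$.
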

	\begin{proof}
        We are assuming non-abelian Leopoldt; thus, by facts (C) and (D) above, $\hc{\bullet}(K,\sD_\Omega)^{\leq h, \varepsilon}_\phi = \hc{q+\ell}(K,\sD_\Omega)^{\leq h, \varepsilon}_\phi$, so $\bT_{\Omega,h}^\varepsilon(K)_\phi = \bT_{\Omega,h}^{q+\ell, \varepsilon}(K)_\phi$. The result follows from Corollary \ref{cor:top etale}.
	\end{proof}

    We also have the following (without any multiplicity one).

	\begin{proposition}\label{prop:tor iso}
		Let $\phi$ be a non-critical classical cuspidal tempered eigenpacket, and suppose the non-abelian Leopoldt conjecture holds for $\phi$. Let $\Lambda \subset \Omega$ be a Zariski-closed affinoid containing $\lambda$. Then for $k \geq 0$, we have
        \begin{equation}\label{eq:tor iso}
            \opn{Tor}_k^{\cO(\Omega)_\lambda}\Big(\hc{q+\ell}(K,\sD_\Omega)^{\leq h}_\phi, \cO(\Lambda)_\lambda\Big) \cong \hc{q+\ell-k}(K,\sD_\Lambda)^{\leq h}_\phi.
        \end{equation}
        The same holds when replacing the cohomology with $\varepsilon$-parts.
      \end{proposition}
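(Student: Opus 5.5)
The plan is to use the Tor spectral sequence of Facts \ref{facts:hansen other}(A) for the pair $\Lambda \subset \Omega$, localised at $\phi$, and to combine it with the concentration result of Fact (C). Localisation at $\ker(\phi)$ is exact and commutes with Tor. Here $\cO(\Omega)\to \bT_{\Omega,h}(K)$ and the maximal ideal of $\phi$ contracts to $\m_\lambda$. Hence localising the $E_2$-page at $\phi$ gives
\[
(E_2^{i,j})_\phi \cong \opn{Tor}_{-i}^{\cO(\Omega)_\lambda}\Big(\hc{j}(K,\sD_\Omega)^{\leq h}_\phi, \cO(\Lambda)_\lambda\Big).
\]
The module structures need a little care: the localisation at $\phi$ of an $\cO(\Omega)$-module with a Hecke action is a direct summand of its localisation at $\m_\lambda$, because the slope $\leq h$ part is finite over $\cO(\Omega)$ and so the Hecke algebra is semilocal over $\cO(\Omega)_\lambda$. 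I will therefore state that Tor over $\cO(\Omega)$ followed by localisation at $\phi$ agrees with Tor over $\cO(\Omega)_\lambda$ of the $\phi$-localised module. I will also note that the differentials are Hecke-equivariant, so the localised spectral sequence still converges, now to $\hc{\bullet}(K,\sD_\Lambda)^{\leq h}_\phi$.

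By Fact (C), which is where the non-abelian Leopoldt conjecture is used and where $\Omega$ is open in $\sW$, we have $\hc{j}(K,\sD_\Omega)^{\leq h}_\phi = 0$ for $j \neq q+\ell$. The localised $E_2$-page is therefore concentrated in the single row $j = q+\ell$. All differentials $d_r$ for $r \geq 2$ change $j$ by $1-r \neq 0$, so they vanish, and the sequence degenerates: $(E_\infty^{i,j})_\phi = (E_2^{i,j})_\phi$. Since only one row is nonzero, each abutment degree $n$ has a single graded piece, with no extension problems. This gives
\[
\hc{n}(K,\sD_\Lambda)^{\leq h}_\phi \cong (E_2^{n-q-\ell,\,q+\ell})_\phi = \opn{Tor}_{q+\ell-n}^{\cO(\Omega)_\lambda}\Big(\hc{q+\ell}(K,\sD_\Omega)^{\leq h}_\phi,\cO(\Lambda)_\lambda\Big).
\]
Setting $k = q+\ell-n \geq 0$ yields \eqref{eq:tor iso}. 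For $n > q+\ell$ both sides vanish, consistent with Fact (B).

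For the $\varepsilon$-version, I will apply Fact (D). Taking $\varepsilon$-parts is exact, since $K_\infty/K_\infty^\circ$ is a finite $2$-group acting $\cO(\Omega)$-linearly and commuting with Hecke operators. The $\varepsilon$-part of the Tor spectral sequence is therefore again a convergent spectral sequence with $E_2$-terms given by Tor of $\hc{j}(K,\sD_\Omega)^{\leq h,\varepsilon}$. The same degeneration argument then applies verbatim, now using the $\varepsilon$-version of Fact (C).

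The only step I expect to need real care is the bookkeeping in the first paragraph. Specifically, I must justify that localising at $\phi$, which is an ideal of the Hecke algebra rather than of $\cO(\Omega)$, is compatible with Tor over $\cO(\Omega)_\lambda$ and with convergence. Everything else is formal degeneration of a one-row spectral sequence.
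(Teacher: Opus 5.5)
Your proposal is correct and is the paper's proof. You localise Hansen's Tor spectral sequence at $\phi$, use Fact (C) to concentrate the $E_2$-page in the row $j=q+\ell$, and read off each abutment degree from its single graded piece, using Fact (D) for the $\varepsilon$-parts. Your extra bookkeeping goes beyond what the paper writes, but the `direct summand' justification is unnecessary and not automatic, since a semilocal finite algebra over the non-Henselian ring $\cO(\Omega)_\lambda$ need not split. The identification of the localised $E_2$-terms follows directly from exactness of localisation at $\ker(\phi)$ on Hecke modules together with flat base change along $\cO(\Omega)\to\cO(\Omega)_\lambda$.
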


\begin{proof}
We localise the Tor spectral sequence (for $\Omega$ and $\Lambda$) at $\phi$ (though we omit this from notation). By Fact \ref{facts:hansen other}(C), $E_2^{i,j} = 0$ unless $j = q+\ell$, so the spectral sequence degenerates at the $E_2$ page; in particular, for $k \geq 0$, we have 
\begin{equation}\label{eq:k degen}
E_2^{-k,q+\ell} \cong E_\infty^{-k,q+\ell}.
\end{equation}
The terms contributing to the grading on $\hc{q+\ell-k}(K,\sD_\Lambda)_\phi^{\leq h}$ are the $E_\infty^{-k-i,q+\ell+i}$. As subquotients of $E_{2}^{-k-i,q+\ell+i}$, these vanish unless $i=0$. Thus $E_{\infty}^{-k,q+\ell} \cong \hc{q+\ell-k}(K,\sD_\Lambda)_\phi^{\leq h}$, whence \eqref{eq:tor iso} follows from  \eqref{eq:k degen}. The $\varepsilon$-statement follows similarly using Fact \ref{facts:hansen other}(D).
\end{proof}

Henceforth we assume all assumptions from Theorem \ref{thm:main etale}. Thus:
\begin{myquote}
$\phi$ is a non-critical classical cuspidal tempered eigenpacket of cohomological multiplicity one at $\varepsilon$, and we assume non-abelian Leopoldt holds for $\phi$. 
\end{myquote}
Then Proposition \ref{prop:top cyclic}(ii) yields $J_\phi^\varepsilon \subset \cO(\Omega)$ cutting out a Zariski-closed subspace $\Sigma \subset \Omega$ (as in \eqref{eq:sigma}). 

    	\begin{lemma}\label{lem:equidimensional}
		Assume $(\dagger)$. After possibly shrinking $\Omega$:
        \begin{itemize}
        \item[(i)] $\Sigma$ is equidimensional of dimension $\dim \sW - \ell$;
        \item[(ii)] there exists an open neighbourhood 
        \[
            \sY \subset \opn{Sp}(\bT_{\Omega,h}^\varepsilon(K)) \subset \sX_{G,K}^\varepsilon
        \]
        of $x_\phi^\varepsilon$ such that $\Sigma = w(\sY)$. 
        \end{itemize}
	\end{lemma}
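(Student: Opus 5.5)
The plan is to identify the local ring of the eigenvariety at $x_\phi$ with $\cO(\Sigma)_\lambda$, and then read off both the dimension of $\Sigma$ and its description as an image from non-abelian Leopoldt. By the proof of Corollary \ref{cor:main etale}, under $(\dagger)$ we have
\[
\bT_{\Omega,h}^\varepsilon(K)_\phi = \bT_{\Omega,h}^{q+\ell,\varepsilon}(K)_\phi \cong \cO(\Omega)_\lambda/J_\phi^\varepsilon = \cO(\Sigma)_\lambda .
\]
This isomorphism is compatible with the structure map from $\cO(\Omega)_\lambda$. So the weight map $w$ induces an isomorphism of the local ring of $\sX_{G,K}^\varepsilon$ at $x_\phi$ with the local ring of $\Sigma$ at $\lambda$; both are equal to $\cO(\Omega)_\lambda/J^\varepsilon_\phi$.

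For (i), I would pass to completions. The irreducible components of a rigid space through a point correspond to the minimal primes of the completed local ring, up to the usual care with normalisation as in \cite{Con99}. Since the completed local rings of $\sX_{G,K}^\varepsilon$ at $x_\phi$ and of $\Sigma$ at $\lambda$ are isomorphic, every irreducible component of $\Sigma$ through $\lambda$ has the same dimension as some component of $\sX^\varepsilon_{G,K}$ through $x_\phi$. By non-abelian Leopoldt, that dimension is $\dim\sW-\ell$. Shrinking $\Omega$ to an affinoid neighbourhood of $\lambda$ removes the components of $\Sigma$ not passing through $\lambda$, so $\Sigma$ becomes equidimensional of dimension $\dim \sW - \ell$. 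Alternatively, and more directly, I would use that $\dim \cO(\Sigma)_\lambda$ and the dimensions of the minimal primes are intrinsic to the local ring, so they agree with those computed on $\sX^\varepsilon_{G,K}$.

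For (ii), I would start from the fact that $\bT_{\Omega,h}^\varepsilon(K)$ is finite over $\cO(\Omega)$. The localisation $\bT_{\Omega,h}^\varepsilon(K)_\phi$ is then a direct factor of the semilocal ring $\bT_{\Omega,h}^\varepsilon(K)\otimes_{\cO(\Omega)}\cO(\Omega)_\lambda$. After shrinking $\Omega$ around $\lambda$, $\opn{Sp}(\bT_{\Omega,h}^\varepsilon(K))$ decomposes as a disjoint union $\sY\sqcup\sY'$ of affinoids, with $\sY$ the connected component containing $x_\phi$ and with $x_\phi$ the only point of $\sY$ over $\lambda$. This is the standard local structure of a finite map. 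The map $\cO(\Omega)\to\cO(\sY)$ has kernel $J$, and localising at $\lambda$ gives $J_\lambda = J_\phi^\varepsilon$ because $\cO(\sY)_\lambda = \bT^\varepsilon_{\Omega,h}(K)_\phi$. Shrinking $\Omega$ further, $J = J^\varepsilon_\phi$ globally; this uses finite generation and the fact that two ideals with equal localisation at $\lambda$ agree on a Zariski neighbourhood. Since $\sY\to\Omega$ is finite, its image is the closed subspace cut out by the kernel, so $w(\sY)=\opn{Sp}(\cO(\Omega)/J^\varepsilon_\phi)=\Sigma$.

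I expect the main obstacle to be the bookkeeping of the shrinkings. One must ensure that a single shrinking of $\Omega$ simultaneously makes $J^\varepsilon_\phi$ global, isolates the connected component $\sY$, and removes stray components of $\Sigma$, all while keeping $(U_t,\Omega,h)$ a slope datum. Slope decompositions restrict to smaller affinoids, so this is permissible.
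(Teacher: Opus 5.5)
Your proposal is correct. For (i), your \emph{more direct} variant is exactly the paper's argument: $\bT^\varepsilon_{\Omega,h}(K)_\phi \cong \cO(\Omega)_\lambda/J^\varepsilon_\phi\cO(\Omega)_\lambda$ as $\cO(\Omega)_\lambda$-algebras, so minimal primes and their quotients match. Non-abelian Leopoldt then supplies the dimension, after shrinking so that every component of $\Sigma$ passes through $\lambda$. The detour through completions is unnecessary, and as phrased it is inaccurate: minimal primes of a completed local ring correspond to analytic branches, not irreducible components, though the dimensions still agree by excellence.

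For (ii) your route genuinely differs. The paper takes $\sY$ to be the union of the irreducible components of $\opn{Sp}(\bT^\varepsilon_{\Omega,h}(K))$ through $x_\phi$. It gets $w(\sY)=\Sigma$ from the component-by-component bijection set up in (i), and asserts that $\sY$ is the connected component through $x_\phi$. You instead use the local structure of finite maps to split off a clopen piece $\sY$ whose only point over $\lambda$ is $x_\phi$, so that $\cO(\sY)\otimes_{\cO(\Omega)}\cO(\Omega)_\lambda = \bT^\varepsilon_{\Omega,h}(K)_\phi$. You then identify $\ker(\cO(\Omega)\to\cO(\sY))$ with $J^\varepsilon_\phi$ after a further flat shrinking, and read off $w(\sY)=\Sigma$ because a finite map has image cut out by its kernel.

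What each buys: your version is independent of (i) and of any component bookkeeping, and it actually proves that $\sY$ is open. The paper's claim that the union of components through $x_\phi$ is a connected component tacitly needs an extra argument or shrinking, so that no component avoiding $x_\phi$ meets that union elsewhere. The paper's version, in return, records the finer fact that the components of $\sY$ and of $\Sigma$ correspond bijectively.

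One small correction: $\bT^\varepsilon_{\Omega,h}(K)_\phi$ is a direct factor of $\bT^\varepsilon_{\Omega,h}(K)\otimes_{\cO(\Omega)}\cO(\Omega)_\lambda$ only after that shrinking, since the algebraic local ring $\cO(\Omega)_\lambda$ is not Henselian. Your argument only uses this afterwards, so nothing breaks.
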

	
	\begin{proof}
		From the definition, irreducible components $\sV$ of $\Sigma$ are in bijection with minimal primes $\fp$ of $\cO(\Omega)$ containing $J_\phi^\varepsilon$ (that is, minimal with respect to this containment). As $\cO(\Sigma)$ is Noetherian, there are only finitely many. Up to shrinking the original $\Omega$, we may thus assume that every such $\fp$ is contained in $\m_\lambda$, that is, the irreducible components of $\Sigma$ are in bijection with minimal primes $\fp$ of $\cO(\Omega)_{\lambda}$ containing $J^{\varepsilon}_\phi \cO(\Omega)_\lambda$ (and the corresponding $\sV$ contains $\lambda$).
		
		From the proofs of Corollaries \ref{cor:top etale} and \ref{cor:main etale}, $\bT_{\Omega,h}^\varepsilon(K)_\phi$ is free of rank one over $\cO(\Omega)_\lambda/J_\phi^\varepsilon \cO(\Omega)_\lambda$. In particular any minimal prime $\fp$ as above arises as the contraction of a minimal prime $\fP$ of $\bT_{\Omega,h}^\varepsilon(K)_\phi$, corresponding to an irreducible component $\sI$ of $\sX_{G,K}^\varepsilon$ through $x_\phi$; and $w : \sI \to \sV$ is \'etale. Thus $\dim \sV = \dim\sI = \dim \sW - \ell$, where the last equality is non-abelian Leopoldt. This proves (i).
        
        To see (ii), note that the identification of $\sV$ to $\sI$ gives a bijection between the irreducible components $\{\sV_i\} \subset \Sigma$ containing $\lambda$ and $\{\sI_i\} \subset \opn{Sp}(\bT_{\Omega,h}^\varepsilon(K))$ containing $x_\phi^{\varepsilon}$. Let $\sY = \bigcup \sI_i$ be the union of all such irreducible components of $\opn{Sp}(\bT_{\Omega,h}^\varepsilon(K))$. Then $\Sigma = w(\sY)$. Further, by construction $\sY$ is the connected component of $\opn{Sp}(\bT_{\Omega,h}^\varepsilon(K))$ through $x_\phi^\varepsilon$. Thus $\sY$ is a rigid open neighbourhood of $x_\phi^\varepsilon$ in $\opn{Sp}(\bT_{\Omega,h}^\varepsilon(K))$, hence in $\sX_{G,K}^\varepsilon$, as required.
	\end{proof}

With this, we have proved the entirety of assertion (i) of Theorem \ref{thm:main etale}, as well as its final sentence. It remains to prove (ii).
    
\begin{theorem}\label{thm:regular sequence}
Assume $(\dagger)$. Then:
		\begin{itemize}
			\item[(i)] The ideal $J_\phi^\varepsilon\cO(\Omega)_\lambda \subset \cO(\Omega)_\lambda$ can be generated by a regular sequence $y_1,\dots, y_{\ell} \in \cO(\Omega)_\lambda$. (In particular, $\Sigma$ is a local complete intersection at $\lambda$).

            \item[(ii)] For $0 \leq i \leq \ell$, the space $\hc{q+i}(K,\sD_\Sigma)^{\leq h, \varepsilon}_\phi$ is free of rank $\binom{\ell}{i}$ over $\cO(\Sigma)_\lambda$.
      \end{itemize}
      \end{theorem}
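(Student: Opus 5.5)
We follow the Hansen--Thorne strategy: compute the Tor groups of the cyclic module $M \defeq \hc{q+\ell}(K,\sD_\Omega)^{\leq h,\varepsilon}_\phi \cong \cO(\Omega)_\lambda/J$ (with $J = J_\phi^\varepsilon\cO(\Omega)_\lambda$) against the residue field in two ways. Write $A = \cO(\Omega)_\lambda$, a regular local ring of dimension $d = \dim\sW$ (as $\Omega$ is open in $\sW$), with residue field $L = A/\m_\lambda$.

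\textbf{Step 1: count the generators of $J$.} Apply Proposition \ref{prop:tor iso} (in its $\varepsilon$-version) with $\Lambda = \{\lambda\}$. Combining it with non-criticality and Proposition \ref{prop:matsushima} gives
\[
\opn{Tor}_k^{A}(A/J, L) \cong \hc{q+\ell-k}(K,\sD_\lambda)^{\leq h,\varepsilon}_\phi \cong \hc{q+\ell-k}(K,\sL_\lambda)^{\varepsilon}_\phi,
\]
which has dimension $\binom{\ell}{k}$. For $k=1$ this yields $\dim_L J/\m_\lambda J = \opn{Tor}_1^A(A/J,L) = \ell$. By Nakayama's lemma, $J$ is therefore generated by $\ell$ elements $y_1,\dots,y_\ell$.

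\textbf{Step 2: regularity.} By Lemma \ref{lem:equidimensional}(i), $\dim A/J = d-\ell$, so $\opn{ht}(J) = \ell$ because $A$ is regular and hence catenary and equidimensional. In a Cohen--Macaulay local ring, an ideal of height $\ell$ generated by $\ell$ elements is generated by a regular sequence; thus $y_1,\dots,y_\ell$ is $A$-regular. This proves (i), and in particular $\Sigma$ is a local complete intersection at $\lambda$.

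\textbf{Step 3: freeness.} Because $y_1,\dots,y_\ell$ is regular, the Koszul complex $K_\bullet(y_1,\dots,y_\ell)$ is a free resolution of $A/J = \cO(\Sigma)_\lambda$. We then apply Proposition \ref{prop:tor iso} with $\Lambda = \Sigma$:
\[
\hc{q+\ell-k}(K,\sD_\Sigma)^{\leq h,\varepsilon}_\phi \cong \opn{Tor}_k^A(A/J, A/J).
\]
We compute the right-hand side by tensoring the Koszul resolution with $A/J$. All differentials in the Koszul complex have entries in $J$, so after tensoring with $A/J$ they vanish. Hence $\opn{Tor}_k^A(A/J,A/J) \cong \wedge^k (A/J)^{\ell}$, which is free of rank $\binom{\ell}{k}$ over $\cO(\Sigma)_\lambda$. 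Setting $i = \ell-k$ and using $\binom{\ell}{\ell-i} = \binom{\ell}{i}$ gives (ii).

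\textbf{Main obstacle.} The delicate point is Step 2. There we need the height of $J$ in the localised ring to equal $\ell$, and this requires the equidimensionality of Lemma \ref{lem:equidimensional}, where non-abelian Leopoldt enters, to hold at the localisation. Concretely, we must ensure that every minimal prime over $J$ corresponds to a component through $\lambda$; the shrinking of $\Omega$ in that lemma is exactly what guarantees this. We must also check that localising $J$ and the $\varepsilon$-parts commutes with the Tor identifications used in Steps 1 and 3.
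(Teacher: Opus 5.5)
Your proof is correct and follows the paper's argument essentially step for step. You bound the number of generators by Nakayama, using $\opn{Tor}_1^A(A/J,L)\cong J/\m_\lambda J$, which is $\ell$-dimensional by Proposition \ref{prop:tor iso} and Matsushima; you get regularity from the dimension count of Lemma \ref{lem:equidimensional}; and you get freeness from the Koszul computation of $\opn{Tor}^A_{\ell-i}(A/J,A/J)$. Your Step 2 usefully spells out the Cohen--Macaulay height argument that the paper compresses into ``this is only possible if the sequence is regular''.
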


The proof of this result is where we crucially follow the strategy of \cite[Thm.\ 4.9]{HT17}.

\begin{proof}
(i) Applying Proposition \ref{prop:tor iso} with $\Lambda = \{\lambda\}$ and $k=1$, we have
\begin{align*}
            \opn{Tor}_1^{\cO(\Omega)_\lambda}\Big(\hc{q+\ell}(K,\sD_\Omega)^{\leq h, \varepsilon}_\phi, \cO(\Omega)/\m_\lambda\Big) &\cong \hc{q+\ell-1}(K,\sD_\lambda)^{\leq h, \varepsilon}_\phi\\ 
            &\cong \hc{q+\ell-1}(K,\sL_\lambda)^{\leq h, \varepsilon}_\phi,
\end{align*}
where the second isomorphism is non-criticality. By the cohomological multiplicity 1 assumption and Proposition \ref{prop:matsushima}, this is an $L$-vector space of dimension $\ell = \ell(G)$. Now, by Proposition \ref{prop:top cyclic} we also know 
\begin{align*}
            \opn{Tor}_1^{\cO(\Omega)_\lambda}\Big(\hc{q+\ell}(K,\sD_\Omega)^{\leq h, \varepsilon}_\phi, \cO(\Omega)/\m_\lambda \Big) &\cong \opn{Tor}_1^{\cO(\Omega)_\lambda}\Big(\cO(\Omega)_\lambda/J_\phi^\varepsilon\cO(\Omega)_\lambda, \cO(\Omega)/\m_\lambda\Big)\\
            &\cong J_\phi^\varepsilon\cO(\Omega)_\lambda \otimes_{\cO(\Omega)_\lambda} \cO(\Omega)/\m_\lambda. 
\end{align*}
Combining these two expressions for $\opn{Tor}_1^{\cO(\Omega)_\lambda}(-)$, we find that
\[
J_\phi^\varepsilon\cO(\Omega)_\lambda \otimes_{\cO(\Omega)_\lambda} \cO(\Omega)/\m_\lambda \cong \hc{q+\ell-1}(K,\sL_\lambda)^{\leq h, \varepsilon}_\phi
\]
is an $\ell$-dimensional $L$-vector space. Applying Nakayama's lemma, we see that $J_\phi^\varepsilon\cO(\Omega)_\lambda$ is generated by $\ell$ elements $y_1,\dots,y_\ell$. 

By Lemma \ref{lem:equidimensional}, we know $\dim \Sigma = \dim\Omega - \ell$. Since $\cO(\Sigma)_\lambda = \cO(\Omega)_\lambda/(y_1,\dots,y_\ell)$, this is only possible if the sequence $y_1,\dots,y_\ell$ is regular, as claimed.

\medskip

(ii) Applying Proposition \ref{prop:tor iso} with $\Lambda = \Sigma$, we have
\begin{align*}
\hc{q+i}(K,\sD_\Sigma)^{\leq h, \varepsilon}_\phi &\cong \opn{Tor}_{\ell-i}^{\cO(\Omega)_\lambda}\Big(\hc{q+\ell}(K,\sD_\Omega)^{\leq h, \varepsilon}_\phi, \cO(\Sigma)_\lambda\Big)\\
&\cong\opn{Tor}_{\ell-i}^{\cO(\Omega)_\lambda}\Big(\cO(\Omega)_\lambda/J_\phi^\varepsilon \cO(\Omega)_\lambda, \cO(\Omega)_\lambda/J_\phi^\varepsilon \cO(\Omega)_\lambda \Big),
\end{align*}
the second isomorphism by Proposition \ref{prop:top cyclic}(ii) and definition of $\Sigma$. As $J_\phi^\varepsilon \cO(\Omega)_\lambda$ is generated by a regular sequence of length $\ell$ by (i), via the Koszul complex we compute these Tor groups to be free of rank $\binom{\ell}{\ell-i} = \binom{\ell}{i}$ over $\cO(\Omega)_\lambda/J_\phi^\varepsilon \cO(\Omega)_\lambda = \cO(\Sigma)_\lambda$, as required.
\end{proof}

   Combining all of the above, we have proved Theorem \ref{thm:main etale}. As a consequence, we also have a partial generalisation of Proposition \ref{prop:top cyclic}(i) to all degrees.

\begin{corollary}
Suppose $(\dagger)$. For all $q \leq d \leq q+\ell$, we have an isomorphism
\[
    \hc{d}(K, \sD_\Sigma)_\phi^{\leq h,\varepsilon} \otimes_{\cO(\Sigma)_\lambda} \cO(\Sigma)/\m_\lambda \cong \hc{d}(K,\sD_\lambda)_\phi^{\leq h, \varepsilon}.
\]
\end{corollary}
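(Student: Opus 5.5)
Write $R = \cO(\Omega)_\lambda$, $J = J_\phi^\varepsilon R = (y_1,\dots,y_\ell)$ (a regular sequence by Theorem \ref{thm:regular sequence}(i)), $A = \cO(\Sigma)_\lambda = R/J$ and $k = \cO(\Sigma)/\m_\lambda = R/\m_\lambda$. The plan is to compute both sides via Proposition \ref{prop:tor iso} and compare them using the Koszul complex. Throughout we work with $\varepsilon$-parts, localised at $\phi$.

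\textbf{Step 1: the left-hand side.} By Theorem \ref{thm:regular sequence}(ii), $M_d \defeq \hc{d}(K,\sD_\Sigma)^{\leq h,\varepsilon}_\phi$ is free over $A$ of rank $\binom{\ell}{d-q}$. Hence $M_d \otimes_A k$ is a $k$-vector space of dimension $\binom{\ell}{d-q}$.

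\textbf{Step 2: the right-hand side.} By non-criticality together with Proposition \ref{prop:matsushima}, $\hc{d}(K,\sD_\lambda)^{\leq h,\varepsilon}_\phi \cong \hc{d}(K,\sL_\lambda)^{\varepsilon}_\phi$ also has dimension $\binom{\ell}{d-q}$. So the dimensions match, and what remains is to produce a natural map between the two sides and show it is an isomorphism. Without such a map, dimension equality already gives an abstract isomorphism of $k$-vector spaces. This may suffice for the statement as written, but a canonical map is preferable.

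\textbf{Step 3: a canonical identification.} Apply Proposition \ref{prop:tor iso} twice, with $\Lambda = \Sigma$ and $\Lambda = \{\lambda\}$, writing $N = \hc{q+\ell}(K,\sD_\Omega)^{\leq h,\varepsilon}_\phi \cong A$ (Proposition \ref{prop:top cyclic}(ii)) and $j = q+\ell-d$:
\[
M_d \cong \opn{Tor}^R_{j}(A,A), \qquad \hc{d}(K,\sD_\lambda)^{\leq h,\varepsilon}_\phi \cong \opn{Tor}^R_{j}(A,k).
\]
Both Tor groups can be computed from the Koszul complex $K_\bullet = K_\bullet(y_1,\dots,y_\ell;R)$, which is a free resolution of $A$ because the sequence is regular. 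Then $K_\bullet\otimes_R A$ has zero differentials, since each $y_i$ acts as $0$ on $A$, so $\opn{Tor}_j^R(A,A) = \wedge^j A^\ell$. Likewise $K_\bullet \otimes_R k$ has zero differentials, since each $y_i \in \m_\lambda$, so $\opn{Tor}_j^R(A,k) = \wedge^j k^\ell$. The reduction map $A \to k$ induces $K_\bullet\otimes A \to K_\bullet \otimes k$, and on homology this is exactly $\wedge^j A^\ell \otimes_A k \isorightarrow \wedge^j k^\ell$.

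\textbf{Step 4: compatibility with the geometric map.} It remains to check that the induced map on Tor agrees with the specialisation map $\hc{d}(K,\sD_\Sigma)\to\hc{d}(K,\sD_\lambda)$ under the identifications of Step 3. This follows from the functoriality of Hansen's Tor spectral sequence in $\Lambda$ (Fact \ref{facts:hansen other}(A)), applied to the closed immersion $\{\lambda\}\subset\Sigma$. The spectral sequence is constructed from $C^\bullet(K,\sD_\Omega)\otimes^{\mathbb{L}}_{\cO(\Omega)}\cO(\Lambda)$, which is natural in $\cO(\Lambda)$, and under our hypotheses the sequence degenerates as in the proof of Proposition \ref{prop:tor iso}. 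I expect this compatibility check to be the only delicate step. If it is awkward to carry out, Steps 1 and 2 already give the stated isomorphism as an equality of dimensions of $L$-vector spaces.
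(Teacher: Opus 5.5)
Your proposal is correct, and it reaches the canonical isomorphism by a different route from the paper.

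\textbf{The paper's route.} The paper works relative to $\Sigma$ rather than $\Omega$. It uses the Tor spectral sequence for the specialisation $\cO(\Sigma)_\lambda \to \cO(\Sigma)/\m_\lambda$, with
$E_2^{i,j} = \opn{Tor}_{-i}^{\cO(\Sigma)_\lambda}\big(\hc{j}(K,\sD_\Sigma)^{\leq h,\varepsilon}_\phi, \cO(\Sigma)/\m_\lambda\big)$.
Freeness (Theorem \ref{thm:regular sequence}(ii)) kills every column except $i=0$. Hence the edge map $\hc{d}(K,\sD_\Sigma)^{\leq h,\varepsilon}_\phi\otimes \cO(\Sigma)/\m_\lambda \to \hc{d}(K,\sD_\lambda)^{\leq h,\varepsilon}_\phi$ is injective, and this edge map is the specialisation map by construction. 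The paper then concludes with the same dimension count as your Steps 1--2. (Your $\binom{\ell}{d-q}$ is the correct rank; the paper's $\binom{\ell}{d}$ is a slip.)

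\textbf{Your route.} You go back to $\Omega$ and use Proposition \ref{prop:tor iso} to write both sides as $\opn{Tor}$ groups over $\cO(\Omega)_\lambda$ of the cyclic module $\cO(\Sigma)_\lambda$. You compute these with the Koszul complex. You then need Step 4 to know that these identifications intertwine the Koszul reduction map with geometric specialisation.

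Your justification of Step 4 is sound:
\begin{itemize}
\item Both localised spectral sequences, for $\Lambda=\Sigma$ and $\Lambda=\{\lambda\}$, are concentrated in the single row $j=q+\ell$, so the identifications of the abutments with $E_2$ are natural.
\item The map $\cO(\Sigma)\to\cO(\Sigma)/\m_\lambda$ of $\cO(\Omega)$-algebras induces a morphism of spectral sequences. This matches the map on $E_2$ with specialisation on cohomology, because Hansen's quasi-isomorphism $C^\bullet(K,\sD_\Omega)\otimes^{\mathbb{L}}_{\cO(\Omega)}\cO(\Lambda)\simeq C^\bullet(K,\sD_\Lambda)$ is natural in $\Lambda$.
\end{itemize}

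\textbf{Comparison.} The paper's argument is shorter and gets naturality for free from the edge map, using only freeness over $\cO(\Sigma)_\lambda$. Yours yields a more explicit statement: both sides are identified with $\wedge^{q+\ell-d}\cO(\Sigma)_\lambda^{\oplus \ell}$ and $\wedge^{q+\ell-d}L^{\oplus\ell}$, compatibly with specialisation. This makes the dimension count of Steps 1--2 unnecessary and essentially re-derives the freeness of Theorem \ref{thm:regular sequence}(ii) along the way. As you note, the abstract isomorphism from Steps 1--2 alone is not the real content of the statement.
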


\begin{proof}
Since $\hc{\bullet}(K, \sD_\Sigma)_\phi^{\leq h,\varepsilon}$ is free over $\cO(\Sigma)_\lambda$, the relative Tor spectral sequence
\[
    E_2^{i,j} = \opn{Tor}_{-i}^{\cO(\Sigma)_\lambda}(\hc{j}(K,\sD_\Sigma)_\phi^{\leq h, \varepsilon}, \cO(\Sigma)/\m_\lambda) \Rightarrow \hc{i+j}(K,\sD_\lambda)_\phi^{\leq h, \varepsilon}
\]
degenerates on the $E_2$ page. In particular, we deduce that for all $d$, the natural map
\begin{align*}
\opn{Tor}_{0}^{\cO(\Sigma)_\lambda}(\hc{d}(K,\sD_\Sigma)_\phi^{\leq h, \varepsilon}, \cO(\Sigma)/\m_\lambda) &=  \hc{d}(K, \sD_\Sigma)_\phi^{\leq h,\varepsilon} \otimes_{\cO(\Sigma)_\lambda} \cO(\Sigma)/\m_\lambda\\
&\longhookrightarrow \hc{d}(K,\sD_\lambda)_\phi^{\leq h, \varepsilon}
\end{align*}
is injective. Since both sides have dimension $\binom{\ell}{d}$ we conclude.
\end{proof}

\begin{remark}
Special cases of this result have been proved before, and have been used in constructions of $p$-adic $L$-functions; see, for example, \cite[\S4]{BW18} for the case of imaginary quadratic fields. We hope that the above may prove useful in other settings, for example the case of $\GL_4$ over an imaginary quadratic field; here the natural construction of $p$-adic $L$-functions uses neither top nor bottom degree, but rather a degree in between (as explained in \cite{Wil-2n}).
\end{remark}

\subsection{Intermediate affinoids}\label{sec:intermediate}
For completeness, we record some further results describing the structure of overconvergent cohomology over intermediate affinoids. Assume $(\dagger)$, and let $y_1, \dots, y_\ell$ be as in Theorem \ref{thm:regular sequence}(i). Possibly shrinking $\Omega$ to avoid denominators, we may assume that each $y_j \in \cO(\Omega)$ is global. For $0 \leq j \leq \ell$, let 
   \[
    J_{\phi,j}^{\varepsilon} \defeq \langle y_1, \dots, y_j\rangle \subset \cO(\Omega),
    \]
    and define $\Sigma_j = \Sigma_j^\varepsilon \defeq \opn{Sp}(\cO(\Omega)/J_{\phi,j}^\varepsilon$; then
    \[
        \Omega = \Sigma_0 \supset \Sigma_1 \supset \cdots \supset \Sigma_\ell = \Sigma,
    \]
    and each $\Sigma_{j+1}$ has codimension 1 in $\Sigma_{j}$.
    
    \begin{corollary}\label{cor:intermediate}
        Assume $(\dagger)$. For each $0 \leq j \leq \ell$:
        \begin{itemize}
        \item[(i)] $\hc{\bullet}(K,\sD_{\Sigma_j})^{\leq h,\varepsilon}_\phi$ vanishes above degree $q+\ell$; and
        \item[(ii)] For $k \geq 0$, the space $\hc{q+\ell-k}(K,\sD_{\Sigma_j})_\phi^{\leq h, \varepsilon}$ is free of rank $\binom{j}{k}$ over $\cO(\Sigma)_\lambda$.
\end{itemize}
    \end{corollary}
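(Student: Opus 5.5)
Both parts come from applying Proposition \ref{prop:tor iso} with $\Lambda = \Sigma_j$. Since $\Sigma_j \subset \Omega$ is a Zariski-closed affinoid containing $\lambda$, that proposition (with $\varepsilon$-parts) gives, for every $k \geq 0$,
\[
\hc{q+\ell-k}(K,\sD_{\Sigma_j})^{\leq h,\varepsilon}_\phi \cong \opn{Tor}_k^{\cO(\Omega)_\lambda}\Big(\hc{q+\ell}(K,\sD_\Omega)^{\leq h,\varepsilon}_\phi, \cO(\Sigma_j)_\lambda\Big).
\]
By Proposition \ref{prop:top cyclic}(ii) and Theorem \ref{thm:regular sequence}(i), the first argument is $\cO(\Omega)_\lambda/(y_1,\dots,y_\ell)$, and $\cO(\Sigma_j)_\lambda = \cO(\Omega)_\lambda/(y_1,\dots,y_j)$.

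For (i), we need vanishing of degrees $d > q+\ell$. These correspond to negative $k$ in the spectral sequence. I would rerun the argument of Proposition \ref{prop:tor iso} directly. After localising at $\phi$, Fact \ref{facts:hansen other}(C),(D) says $E_2^{i,j}=0$ unless $j=q+\ell$, and $E_2^{i,j}=0$ for $i>0$ since Tor has non-negative index. Hence every $E_\infty^{i,j}$ with $i+j>q+\ell$ vanishes, so the abutment vanishes above degree $q+\ell$. This step is routine.

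For (ii), I would compute $\opn{Tor}_k^{R}(R/(y_1,\dots,y_\ell), R/(y_1,\dots,y_j))$, where $R=\cO(\Omega)_\lambda$. Since $y_1,\dots,y_j$ is a regular sequence (an initial segment of a regular sequence in a Noetherian local ring), the Koszul complex $K_\bullet(y_1,\dots,y_j;R)$ is a free resolution of $R/(y_1,\dots,y_j)$. Tensoring it with $R/(y_1,\dots,y_\ell)$ gives the Koszul complex on the images of $y_1,\dots,y_j$ in $R/J$, where $J=(y_1,\dots,y_\ell)$. All of these images are zero, so every differential vanishes. Therefore $\opn{Tor}_k$ is $\bigwedge^k (R/J)^j$, which is free of rank $\binom{j}{k}$ over $R/J=\cO(\Sigma)_\lambda$. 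This gives the claimed rank, and it is consistent with the extreme cases: $j=0$ recovers Proposition \ref{prop:top cyclic} in the form of Fact (C), and $j=\ell$ recovers Theorem \ref{thm:regular sequence}(ii).

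The only genuine point to check is the global-versus-local bookkeeping. The $y_i$ are assumed global on $\Omega$, and we need $\cO(\Sigma_j)_\lambda = \cO(\Omega)_\lambda/(y_1,\dots,y_j)$; this holds because localisation is exact. We also need the regularity of $y_1,\dots,y_j$ to survive localisation, which it does because regularity was established in $R$ itself. I expect no real obstacle beyond this. Note also that the module is free over $\cO(\Sigma)_\lambda$, not over $\cO(\Sigma_j)_\lambda$, which matches the statement as written.
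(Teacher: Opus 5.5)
Your proposal is correct and follows the paper's proof. The paper also applies Proposition \ref{prop:tor iso} with $\Lambda=\Sigma_j$, identifies the result with $\opn{Tor}_k^{\cO(\Omega)_\lambda}\big(\cO(\Omega)_\lambda/(y_1,\dots,y_\ell),\ \cO(\Omega)_\lambda/(y_1,\dots,y_j)\big)$, and computes it with the Koszul complex. Your explicit spectral-sequence argument for part (i), using that $E_2^{i,j}=0$ for $i>0$ or $j\neq q+\ell$, fills in a step the paper leaves implicit.
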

\begin{proof}
By Proposition  \ref{prop:tor iso} with $\Lambda = \Sigma_j$, we have
\begin{align*}
\hc{q+\ell-k}(K,\sD_{\Sigma_j})^{\leq h, \varepsilon}_\phi &\cong \opn{Tor}_{k}^{\cO(\Omega)_\lambda}\Big(\hc{q+\ell}(K,\sD_\Omega)^{\leq h, \varepsilon}_\phi, \cO(\Sigma_j)_\lambda\Big)\\
&\cong\opn{Tor}_{k}^{\cO(\Omega)_\lambda}\Big(\cO(\Omega)_\lambda/(y_1,\dots,y_\ell), \ \cO(\Omega)_\lambda/(y_1,\dots,y_j)\Big).
\end{align*}
As in Theorem \ref{thm:regular sequence}, we compute these Tor groups using the Koszul complex to conclude.
\end{proof}

In particular, $\hc{\bullet}(K,\sD_{\Sigma_j})_\phi^{\leq h, \varepsilon}$ is concentrated in degrees $\{q+\ell-j, \dots, q+\ell\}$, and is free of rank one over $\cO(\Sigma)_\lambda$ in degrees $q+\ell-j$ and $q+\ell$. Note that taking $j = \ell$ (and $k = \ell-i$), Corollary \ref{cor:intermediate} recovers exactly Theorem \ref{thm:regular sequence}(ii).

Corollary \ref{cor:intermediate} implies that for $j=0$ and $j=\ell$, the specialisation map
\[
\hc{q+\ell-j}(K,\sD_{\Sigma_j})_\phi^{\leq h, \varepsilon} \longrightarrow \hc{q+\ell-j}(K,\sD_\lambda)_\phi^{\leq h, \varepsilon}
\]
is surjective. The analogous statement for $1 \leq j \leq \ell-1$ obviously fails; the target is an $\binom{\ell}{j}$-dimensional $L$-vector space, whilst the image of the left-hand side is 1-dimensional. Instead, we have the following generalisation to such $j$. If $J \subset \{ 1, \dots, \ell\}$ is a subset of size $j$, we let $\Sigma_J \subset \Omega$ denote the vanishing locus $\{ y_j = 0 : j \in J \}$, which has codimension $j$.

\begin{proposition}\label{prop:spec surjective}
Assume $(\dagger)$. The sum of the natural specialisation maps
\begin{equation}\label{eq:specialisation oplus}
\bigoplus_{\substack{J \subset \{ 1, \dots, \ell\} \\ \#J = j}} \opn{H}^{q+\ell-j}_c(K, \mathscr{D}_{\Sigma_J})_{\phi}^{\leq h, \varepsilon} \isorightarrow \opn{H}^{q+\ell-j}_c(K, \mathscr{D}_{\Sigma})_{\phi}^{\leq h, \varepsilon} 
\end{equation}
is an isomorphism. In particular, the sum of the natural specialisation maps
\[
\bigoplus_{\substack{J \subset \{ 1, \dots, \ell\} \\ \#J = j}} \opn{H}^{q+\ell-j}_c(K, \mathscr{D}_{\Sigma_J})_{\phi}^{\leq h, \varepsilon} \longrightarrow \opn{H}^{q+\ell-j}_c(K, \mathscr{D}_{\lambda})_{\phi}^{\leq h, \varepsilon} 
\]
is surjective.
\end{proposition}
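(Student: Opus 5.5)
Everything will be reduced to Koszul-complex computations via Proposition \ref{prop:tor iso}. Write $A = \cO(\Omega)_\lambda$, $M = \hc{q+\ell}(K,\sD_\Omega)^{\leq h,\varepsilon}_\phi \cong A/(y_1,\dots,y_\ell)$ (Proposition \ref{prop:top cyclic}(ii) and Theorem \ref{thm:regular sequence}(i)), and $A_J = A/(y_i : i\in J)$. The Tor spectral sequence is functorial in the Zariski-closed subspace $\Lambda$, and the specialisation $\sD_{\Sigma_J}\to\sD_\Sigma$ is induced by $A_J \to \cO(\Sigma)_\lambda = A_\emptyset$, where $\emptyset$ denotes $\{1,\dots,\ell\}$ with all $y_i$ killed. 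Hence the isomorphisms of Proposition \ref{prop:tor iso} are natural in $\Lambda$. The map \eqref{eq:specialisation oplus} is therefore identified with the sum over $J$ of
\[
\opn{Tor}_j^{A}(M, A_J) \longrightarrow \opn{Tor}_j^{A}(M, A/(y_1,\dots,y_\ell)).
\]
The first step is to confirm this naturality; it follows from the construction in \cite[Thm.\ 3.3.1]{Han17}, since the spectral sequence comes from $C^\bullet(K,\sD_\Omega)\otimes^{\mathbb{L}}_{\cO(\Omega)}\cO(\Lambda)\simeq C^\bullet(K,\sD_\Lambda)$, which is functorial in $\cO(\Lambda)$.

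Next I compute both sides by resolving the \emph{first} argument $M$ by the Koszul complex $\mathrm{Kos}(y_1,\dots,y_\ell)$. This is a free resolution because the $y_i$ form a regular sequence. Its degree-$j$ term is $\bigwedge^j A^\ell$, with basis $e_{I}$ for $\#I=j$, and the differential sends $e_I \mapsto \sum \pm y_i e_{I\setminus i}$. Tensoring with $A/(y_1,\dots,y_\ell)$ kills all the differentials, so the target is free over $\cO(\Sigma)_\lambda$ with basis $\{e_I : \#I = j\}$.

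For the source, tensor with $A_J$. The complex $\mathrm{Kos}(y_\bullet)\otimes A_J$ is the Koszul complex on $A_J$ in which the $y_i$ with $i\in J$ act by $0$. Since $(y_i)_{i\notin J}$ is regular on $A_J$, this complex factors as $\bigwedge^\bullet(A_J^{J})$ with zero differential, tensored with a Koszul complex on a regular sequence. Hence $\opn{Tor}_j^A(M,A_J) \cong A_J\otimes\bigwedge^j(A^J)/(\ldots) \cong \cO(\Sigma)_\lambda\cdot e_J$, free of rank one and generated by the class of $e_J$. This is the $k=j$ case of Corollary \ref{cor:intermediate} applied with the reordered sequence placing $J$ first. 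The map to the target is induced by the identity on Koszul chains, so it sends $e_J \mapsto e_J$.

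Summing over $J$, the map is a bijection between bases, which gives \eqref{eq:specialisation oplus}. The surjectivity statement then follows by composing with the surjection $\hc{q+\ell-j}(K,\sD_\Sigma)_\phi^{\leq h,\varepsilon}\to \hc{q+\ell-j}(K,\sD_\lambda)_\phi^{\leq h,\varepsilon}$ given by the preceding corollary.

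The main obstacle is the bookkeeping in the Koszul computation over $A_J$. I need to check that the top-degree homology of the $J$-part together with the bottom homology of the regular part really gives $\opn{Tor}_j$ generated by $e_J$, with no additional contributions in degree $j$. This uses the Künneth formula for Koszul complexes and the regularity of $(y_i)_{i\notin J}$ modulo $(y_i)_{i\in J}$, which holds for any permutation because regular sequences in a Noetherian local ring can be permuted.
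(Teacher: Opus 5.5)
Your argument is correct. It follows the paper's strategy: use Proposition~\ref{prop:tor iso} to reduce to a Koszul computation, then match the basis vectors $e_J$. The difference is that you resolve the \emph{other} argument of $\opn{Tor}$.

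\textbf{The paper's route.} The paper resolves $\cO(\Sigma_J)_\lambda = R/\by_J$ by the sub-Koszul complex $K^J_\bullet(R) \subset K_\bullet(R)$ on $(y_i)_{i\in J}$.
\begin{itemize}
\item After tensoring with $M = R/\by$, all differentials vanish.
\item Hence $\opn{Tor}^R_j(M,R/\by_J) = K^J_{-j}(M) = M\cdot e_J$ at once.
\item The result then reduces to the observation that $\bigoplus_{\#J=j}K^J_{-j}(R)\to K_{-j}(R)$ is an isomorphism. The inclusion of complexes is automatically the chain map lifting $R/\by_J \to R/\by$.
\end{itemize}

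\textbf{Your route.} You resolve $M$ instead, so the target is trivial but the source needs work.
\begin{itemize}
\item You need the Künneth splitting of $\mathrm{Kos}(\by)\otimes A_J$.
\item You need the regularity of $(y_i)_{i\notin J}$ on $R/\by_J$, which uses permutability of regular sequences in the maximal ideal of a Noetherian local ring.
\end{itemize}
This is exactly the bookkeeping you flagged as the main obstacle. The paper's choice of resolution sidesteps it entirely, although your verification of it is fine.

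\textbf{What you add.} You make explicit two points the paper leaves implicit:
\begin{itemize}
\item the naturality in $\Lambda$ of the isomorphism in Proposition~\ref{prop:tor iso}, which is needed to identify the map on $\opn{Tor}$ with the specialisation map;
\item the deduction of surjectivity onto $\hc{q+\ell-j}(K,\sD_\lambda)^{\leq h,\varepsilon}_\phi$ from the preceding corollary.
\end{itemize}

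\textbf{Minor notational slip.} Your $A_\emptyset$ for $R/\by$ clashes with your own convention $A_J = A/(y_i : i \in J)$. It should be $A_{\{1,\dots,\ell\}}$.
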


Note by Corollary \ref{cor:intermediate}, we already know both sides of \eqref{eq:specialisation oplus} are free over $\mathcal{O}(\Sigma)_{\lambda}$ of rank $\binom{\ell}{j}$. 

\begin{proof}
 Let $R = \cO(\Omega)_\lambda$, let $\by = (y_1,\dots,y_\ell) \subset R$, and for $J \subset \{1,\dots,\ell\}$, let $\by_J \defeq (y_j : j \in J)$. Let $M = \opn{H}^{q+\ell}_c(K, \mathscr{D}_{\Omega})_{\phi}^{\leq h, \varepsilon} \cong R/\by$, the latter isomorphism by Proposition \ref{prop:top cyclic}. 

For any $J$, let $K_\bullet^J(R)$ denote the Koszul complex with respect to the regular sequence $(y_j : j\in J)$, and let $K_\bullet^J(M) \defeq K_\bullet^J(R) \otimes_R M$. For $J = \{1,\dots,\ell\}$, we drop the superscripts, writing $K_\bullet(R)$ and $K_\bullet(M)$. 

There is a canonical map $\oplus_{\#J = j} K_\bullet^J(R) \to K_\bullet(R)$, and this is an isomorphism of the degree $-j$ terms (with both sides free $R$-modules of rank $\binom{\ell}{j}$). Tensoring with $M$, we see 
\begin{equation}\label{eq:koszul oplus}
    \bigoplus_{\substack{J \subset \{ 1, \dots, \ell\} \\ \#J = j}} K_{-j}^J(M) \isorightarrow K_{-j}(M).
\end{equation}

Now, note the differentials in $K_\bullet^J(M)$ are all zero, so for $k \geq 0$ we have $\h_{-k}(K_\bullet^J(M)) \cong K_{-k}^J(M)$. By definition of Tor, we also know
\begin{align}
     K_{-k}^J(M) \cong \h_{-k}(K_\bullet^J(M)) &= \opn{Tor}_k^R(M,R/\by_J) \notag\\
    &= \opn{Tor}_k^{\cO(\Omega)_\lambda}(\hc{q+\ell}(K,\sD_\Omega)_\phi^{\leq h, \varepsilon}, \cO(\Sigma_J)_\lambda) \notag\\
    & \cong \hc{q+\ell-k}(K,\sD_{\Sigma_J})_\phi^{\leq h, \varepsilon},\label{eq:tor homology}
\end{align}
using Proposition \ref{prop:tor iso} in the last isomorphism. Taking $k = j$ and substituting this into \eqref{eq:koszul oplus} yields exactly the claimed isomorphism.
\end{proof}

\section{\texorpdfstring{$\cL$}{L}-invariants}\label{s:L-invariants}

From now on we take $G= \mathrm{GL}_n$ over $\mbb{Q}$, and denote by $B\subset G$ the upper triangular Borel subgroup, by $N$ its unipotent radical and by $T$ the torus of diagonal matrices. We denote the opposite subgroups by $\overline{(-)}$.

Let $\pi$ be a $p$-ordinary RACAR of $\GL_n(\A)$ such that $\pi_p$ is the Steinberg representation of $\GL_n(\Qp)$. We let $G_{\Q} \defeq \opn{Gal}(\overline{\Q}/\Q)$, fix $L$ a sufficiently large finite extension of $\Q_p$, and denote by $\rho_{\pi}\colon G_{\Q} \to \mathrm{GL}_n(L)$ the continuous $p$-adic Galois representation associated with $\pi$ (see, for example, \cite{ScholzeTorsion, HLTTrigid}).

\subsection{Steinberg representations and extensions}\label{sec:extensions of steinberg} \subsubsection{Steinberg representations} If $c\in \{1,..., n-1\}$, we let $B \subset P_c \subset G$ be the maximal standard parabolic of type $(c, n-c)$, and let $M_c = \GL_c \times \GL_{n-c}$ be its Levi subgroup. Let $\Delta = \{ \alpha_c : c=1, \dots, n-1\}$ be the set of simple roots of $G$; then under the usual correspondence between standard parabolics and subsets of $\Delta$, $P_c$ corresponds to $\Delta \backslash \{\alpha_c\}$. We also denote by $Q_c$ the standard parabolic corresponding to the subset $\{\alpha_c\}$, that is, the parabolic of type $(1..., 1, 2, 1,..., 1)$ with the first $c-1$ entries equal to $1$. 

Given a parabolic subgroup $P$ of $G$ containing $B$, we denote by $\mathrm{I}^{\mathrm{sm}}_{\overline{P}}(L)$ the $L[G(\Q_p)]$-module of smooth functions $f: \overline{P}(\Q_p)\backslash G(\Q_p)\rightarrow L$. We endow this with a left $G(\Q_p)$-action given by right translation. Define three left-$G(\Q_p)$-representations by
\begin{align*}
\mathrm{St}^{\mathrm{sm}}(L)= \mathrm{I}^{\mathrm{sm}}_{\overline{B}}(L)\Big/ \sum_{B\subsetneqq P\subset G} & \mathrm{I}^{\mathrm{sm}}_{\overline{P}}(L)\text{,} \qquad v_{c}^{\mathrm{sm}}(L) = \mathrm{I}^{\mathrm{sm}}_{\overline{Q_c}}(L)\Big/ \sum_{Q_c\subsetneqq P\subset G}\mathrm{I}^{\mathrm{sm}}_{\overline{P}}(L), \\
&\mathrm{St}_c^{\mathrm{sm}}(L)= \mathrm{I}^{\mathrm{sm}}_{\overline{P}_c}(L)/ (\text{constants}).
\end{align*}
These are the \emph{generalised Steinberg representations} associated with the parabolics $B, Q_c$ and $P_c$ respectively. 
We also have locally analytic and continuous versions of all these representations, denoted $\mathrm{St}^{\mathrm{la}}_\star(L)$, $ \mathrm{St}^{\mathrm{cts}}_\star(L)$, etc.\ for $\star \in \{\varnothing, c\}$. Note that the action of $G(\Q_p)$ on the locally analytic versions of these representations is locally analytic. 

\subsubsection{Duals} We equip $\opn{St}^{\opn{sm}}(L)$ with the discrete topology; the space $\opn{St}^{\opn{cts}}(L)$ is naturally an admissible $L$-Banach representation with the supremum norm; and $\opn{St}^{\opn{la}}(L)$ is its subspace of locally analytic vectors, with its associated locally convex topology (in the sense of \cite{Eme17}). These topologies allow us to define, for $\bullet \in \{\opn{sm},\opn{cts},\opn{la}\}$, the continuous $L$-dual 
\[
\opn{St}^{\bullet}(L)^* \defeq \opn{Hom}_{L,\opn{cts}}(\opn{St}^{\bullet}(L),L).
\]
Note that in the case of $\bullet = \opn{sm}$ this is just the algebraic dual (as $L$-vector spaces).

\subsubsection{Extensions} From \cite[Thm. 2.15]{AutomorphicLinvariants} (c.f., \cite[Theorem 2.19]{DingLinvariants}) we have a canonical isomorphism
\begin{equation}\label{e:homs and extensions}
\mathrm{Hom}_{\mathrm{cts}}(\Q_p^{\times}, L) \isorightarrow  \mathrm{Ext}^1_{\mathrm{an}}(v_{c}^{\mathrm{sm}}(L), \mathrm{St}^{\mathrm{la}}(L)),
\end{equation}
where $\mathrm{Ext}^1_{\mathrm{an}}(v_{c}^{\mathrm{sm}}(L), \mathrm{St}^{\mathrm{la}}(L))$ is the group of locally analytic extensions of $v_{c}^{\mathrm{sm}}(L)$ by $\mathrm{St}^{\mathrm{la}}(L)$.

\subsection{Automorphic $\cL$-invariants} \label{sec:automorphic L-invariants}

We continue to work with level $K = K^p\opn{Iw} \subset \GL_n(\A_f)$, where $\opn{Iw}\subset \GL_n(\Zp)$ is the Iwahori subgroup  and $K^p= \prod_{\ell\neq p}K_\ell$. We may (and do) assume that $K_\ell = \GL_n(\Z_\ell)$ for almost all $\ell$. 

In our specific setting, we can make explicit the quantities $q$ and $\ell = \ell(\GL_n)$ from \S\ref{sec:set-up}. The locally symmetric space $Y(K)$ has dimension $n^2/2+ n/2-1$, and we have $q= \lfloor n^2/4\rfloor$ and $\ell= \lfloor (n-1)/2 \rfloor$. Then recall $\pi$ appears in the singular cohomology $\mathrm{H}^{j}(Y(K),\C)$ for $j \in \{q,q+1,\dots, q+ \ell\}$ (and similarly for the compactly supported cohomology).

\subsubsection{$p$-arithmetic cohomology}
\begin{definition}
Suppose $M$ is a $L[G(\Q)]$-module, and let $D_G$ denote the Steinberg module of $G$ (see for example \cite[Def. 3.21]{AutomorphicLinvariants}). Then:
\begin{itemize}
\item[(i)] Let $\cA_{G}(M)$ be the space of functions $f: \pi_0(G(\R))\times G(\bA_f^p)/K^p\rightarrow M$.
\item[(ii)] Let $\cA_{G,c}(M)= \mathrm{Hom}_{\Z}(D_G, \cA_{G}(M))$. 
\end{itemize}
The spaces $\cA_{G}(M)$ and $\cA_{G,c}(M)$ are left $L[G(\Q)]$-modules via
$$(\gamma\cdot f)([g_{\infty}], g^p)= \gamma\cdot f([\gamma^{-1}g_{\infty}], \gamma^{-1}g^p)\ \ \text{and} \ \ (\gamma\cdot h)(d)([g_{\infty}], g^p)= \gamma\cdot h(\gamma^{-1}d)([\gamma^{-1}g_{\infty}], \gamma^{-1}g^p)$$
for $f\in \cA_{G}(M)$, $h\in \cA_{G, c}(M)$, $\gamma\in G(\Q)$, $[g_{\infty}]\in \pi_0(G(\R))$, $g^p\in G(\bA_f^p)$ and $d\in D_G$.
\end{definition}

For a left $\opn{Iw}$-module $N$, let 
\[
\opn{Ind}_{\opn{Iw}}^{G(\Qp)} N \defeq \big\{f \colon G(\Qp) \to N : f(gk) = k^{-1}\cdot f(g) \ \ \forall g\in G(\Qp), k\in\opn{Iw}\big\}
\]
be its induction to $G(\Qp)$ (noting we make no topological assumption on $f$). If $R$ is any ring equipped with the trivial action of $\opn{Iw}$, then from \cite[after Cor.\ 3.4 and \S3.6]{AutomorphicLinvariants}, we have Hecke-equivariant isomorphisms\footnote{Gehrmann shows these isomorphisms hold when $K$ is neat. However given this, even if $K$ is not neat they hold more or less directly from our definitions (see Remark \ref{rem:neat}).}
\begin{align}
\mathrm{H}^j(G(\Q), \cA_{G}(\mathrm{Ind}_{\mathrm{Iw}}^{G(\Q_p)}R))&\cong \mathrm{H}^{j}(K, R), \label{eq:arithmetic to betti}\\
\mathrm{H}^j(G(\Q), \cA_{G, c}(\mathrm{Ind}_{\mathrm{Iw}}^{G(\Q_p)}R))&\cong \mathrm{H}^{j+ n-1}_c(K, R). \label{eq:arithmetic to betti c}
\end{align}

It will be important to work with the `$\pi^p$-part' of these groups. By \cite[Thm.\ C]{Jan18}, there exists a number field $\Q_\pi$ such that $\pi_f$ has a model over $\Q_\pi$.

\begin{definition}\label{def:pi-isotypic}
If $k$ is any field containing the field $\Q_\pi$ of definition of $\pi$, let 
\[
    \bT^{\opn{full}}(K^{p})_k \defeq C_{\opn{c}}(K^p\backslash \GL_n(\A_f^p)/K^p, k)
\]
be the (full) prime-to-$p$ Hecke algebra; note that, unlike $\bT^S(K^p)$ above, this includes contributions at $v \in S$, and is not in general commutative. If $H$ is a left $\bT^{\opn{full}}(K^{p})_k$-module, its $\pi^{p}$-isotypic part is
\[
H[\pi^{p}] \defeq \opn{Hom}_{\bT^{\opn{full}}(K^{p})_k}((\pi^{p}_f)^{K^{p}}, H).
\]

For later use, we also define the \emph{$\pi^p$-isotypic image} to be $H\{\pi^p\} \defeq \sum_{f \in H[\pi^p]} \opn{im}(f) \subset H$.
\end{definition}

By arguments as in Proposition \ref{prop:matsushima} (see \cite[Prop.\ 3.6]{AutomorphicLinvariants}), we know that for $0 \leq r \leq \ell$ and $(\varepsilon\colon K_\infty/K_\infty^\circ \to \{\pm 1\}) \in \cE_\pi$ (see Definition \ref{def:admissible signs}), we have
\[
\dim_L \mathrm{H}^{q+r}(G(\Q), \cA_{G, c}(\mathrm{Ind}_{\mathrm{Iw}}^{G(\Q_p)}L))^\varepsilon[\pi^p] = \big(\begin{smallmatrix}\ell \\ r \end{smallmatrix}\big),
\]
and that this space is 0 in other degrees (and for other signs).

As $\pi_p$ is Steinberg, we can `lift' cohomology to the Bruhat--Tits building, as in \cite[\S3.2]{AutomorphicLinvariants}. For this, observe that by \cite[above Cor.\ 2]{GK14}, the Iwahori-invariants $\opn{St}^{\opn{sm}}(L)^{\opn{Iw}}$ are a line. Then Frobenius reciprocity yields a $G(\Qp)$-equivariant map 
\[
    \opn{St}^{\opn{sm}}(L)^* \to \opn{Ind}_{\opn{Iw}}^{G(\Qp)} L,
\]
inducing a map
\[
\opn{ev}^{(j)} \colon \mathrm{H}^j(G(\Q), \cA_{G, c}(\opn{St}^{\opn{sm}}(L)^*)) \longrightarrow \mathrm{H}^j(G(\Q), \cA_{G, c}(\mathrm{Ind}_{\mathrm{Iw}}^{G(\Q_p)}L)).
\]

\begin{lemma}\label{lem:on the tree mult one}
For all $j$ the map $\opn{ev}^{(j)}$ induces an isomorphism
\[
\mathrm{H}^j(G(\Q), \cA_{G, c}(\opn{St}^{\opn{sm}}(L)^*))[\pi^p] \isorightarrow \mathrm{H}^j(G(\Q), \cA_{G, c}(\mathrm{Ind}_{\mathrm{Iw}}^{G(\Q_p)}L))[\pi^p].
\]
In particular, for any $\varepsilon \in \cE_\pi$, we have
\[
\dim_L \mathrm{H}^{q+r}(G(\Q), \cA_{G, c}(\opn{St}^{\opn{sm}}(R)^*))^\varepsilon[\pi^p] = \big(\begin{smallmatrix}\ell \\ r \end{smallmatrix}\big).
\]
\end{lemma}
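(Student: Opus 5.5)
The plan is to factor the map through the short exact sequence defining the Steinberg representation and show that the error terms have no $\pi^p$-isotypic part. Write $\opn{Ind}_{\opn{Iw}}^{G(\Qp)}L$ as the (algebraic) dual of the compact induction $\cInd_{\opn{Iw}}^{G(\Qp)}L = C_c(G(\Qp)/\opn{Iw},L)$. Since $\opn{St}^{\opn{sm}}(L)$ is generated by its Iwahori line, there is a $G(\Qp)$-equivariant surjection $\cInd_{\opn{Iw}}^{G(\Qp)}L \twoheadrightarrow \opn{St}^{\opn{sm}}(L)$, whose dual is the map inducing $\opn{ev}^{(j)}$. Dualising the resulting short exact sequence gives
\[
0 \to \opn{St}^{\opn{sm}}(L)^* \to \opn{Ind}_{\opn{Iw}}^{G(\Qp)}L \to W^* \to 0,
\]
with $W$ the kernel. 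Applying $\cA_{G,c}(-)$, which is exact since it is a $\opn{Hom}$ out of the free module $D_G$ into function spaces, we obtain a long exact sequence in $G(\Q)$-cohomology. It therefore suffices to show that $\h^j(G(\Q),\cA_{G,c}(W^*))[\pi^p]=0$ for all $j$. Since $[\pi^p]$ is $\opn{Hom}$ out of a finite-dimensional semisimple module (we may localise at the maximal ideal of $\bT^S$ attached to $\pi$, where generalised eigenspaces behave exactly), this vanishing then gives the isomorphism.

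To prove the vanishing, I would use the Iwahori–Matsumoto/Bernstein description. The subquotients of $\cInd_{\opn{Iw}}^{G(\Qp)}L$ correspond to modules over the Iwahori–Hecke algebra $\cH$, and the cohomology of $\cA_{G,c}(\opn{Hom}(\cInd_{\opn{Iw}}L\otimes_{\cH} N, L))$ is the $N$-part of $\hc{\bullet+n-1}(K,L)$. Concretely, $W$ admits a finite resolution by direct sums of $\cInd_{\opn{Iw}}^{G(\Qp)}L$, using the Bernstein presentation, or alternatively the Borel–Serre/Schneider–Stuhler resolution of $\opn{St}$ by parahoric inductions $\cInd_{P}L$. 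Each term then contributes to $\hc{\bullet}(K^pK_P,L)$, the cohomology at parahoric level. For a proper parahoric $K_P \supsetneq \opn{Iw}$, the representations $\pi'$ with $\pi'^p\cong\pi^p$ that contribute are those with $\pi'_p$ having $K_P$-fixed vectors. By strong multiplicity one together with the $\pi_p$-isotypic part (cuspidal cohomology plus Franke's description), the only candidate is $\pi'=\pi$. But $\opn{St}^{K_P}=0$ for any parahoric larger than $\opn{Iw}$, and the Eisenstein part is killed by the cuspidal character $\pi^p$, because Eisenstein eigensystems never match a cuspidal one away from $p$ (Jacquet–Shalika). Hence the $\pi^p$-part of every term of $W$'s resolution vanishes, and so does that of $W$.

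The second claim then follows from the displayed dimension formula for $\h^{q+r}(G(\Q),\cA_{G,c}(\opn{Ind}L))^\varepsilon[\pi^p]$, since $\opn{ev}^{(j)}$ commutes with the $K_\infty/K_\infty^\circ$ action (here $R=L$). The main obstacle is the vanishing step: making precise that the kernel $W$ has a resolution by parahoric inductions whose $\pi^p$-parts vanish. I would take the Schneider–Stuhler/Borel–Serre complex
\[
0\to \cInd_{G(\Zp)}L \to \cdots \to \bigoplus \cInd_{P\text{-parahoric}}L \to \cInd_{\opn{Iw}}L \to \opn{St}\to 0,
\]
which is exact and $G(\Qp)$-equivariant, and run the resulting spectral sequence. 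This follows \cite[\S3.2]{AutomorphicLinvariants}, where I expect Gehrmann proves exactly this.
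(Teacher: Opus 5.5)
The paper simply cites \cite[Prop.\ 3.6]{AutomorphicLinvariants}. Your outline is the standard argument behind a statement of this kind: dualise $\cInd_{\opn{Iw}}^{G(\Qp)}L\twoheadrightarrow\opn{St}^{\opn{sm}}(L)$, resolve $\opn{St}^{\opn{sm}}(L)$ by the building, and kill the non-chamber terms after localising at $\ker(\phi^S)$, using $\opn{St}^{\opn{sm}}(L)^{K_F}=0$ for parahorics $K_F\supsetneq\opn{Iw}$ together with strong multiplicity one and Jacquet--Shalika.

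However, the step you single out as the main obstacle uses a complex that is not exact for $\GL_n(\Qp)$. The Borel--Serre/Schneider--Stuhler resolution has terms $\cInd_{\opn{Stab}(F)}^{G(\Qp)}(\eta_F)$, where $F$ is a facet of the building of $\opn{PGL}_n$ and $\eta_F$ its orientation character. Every $\opn{Stab}(F)$ contains $Z(\Qp)$. The chamber term is $\cInd_{\Pi^{\Z}\opn{Iw}}^{G(\Qp)}(\eta)$, where $\Pi^{\Z}\opn{Iw}$ (with $\Pi^n=p\cdot 1_n$) is the normaliser of $\opn{Iw}$; it is not $\cInd_{\opn{Iw}}^{G(\Qp)}L$.

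The normaliser is harmless. $\Pi$ acts on the line $\opn{St}^{\opn{sm}}(L)^{\opn{Iw}}$ by a fixed scalar, so the other $\Pi$-eigensummands of $\cInd_{\opn{Iw}Z(\Qp)}^{G(\Qp)}L$ have no $\pi^p$-part. The centre is not harmless. For $z=p\cdot 1_n$, the operator $z-1$ is injective on $\cInd_{\opn{Iw}}^{G(\Qp)}L$ and zero on $\opn{St}^{\opn{sm}}(L)$. So your $W$ contains $(z-1)\cInd_{\opn{Iw}}^{G(\Qp)}L\cong\cInd_{\opn{Iw}}^{G(\Qp)}L$, whose contribution after localising at $\ker(\phi^S)$ is a shift of $\hc{\bullet}(K,L)_{\phi^S}\neq 0$.

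This cannot be patched within the literal $\GL_n$ set-up, because there the vanishing you want is false.
\begin{enumerate}
\item Pick $m\geq 1$ with $p^m\cdot 1_n\in K^p$. This central element of $G(\Q)$ acts trivially on $D_G$, on $\pi_0(G(\R))\times G(\A_f^p)/K^p$ and on $\opn{St}^{\opn{sm}}(L)$, hence on $\cA_{G,c}(\opn{St}^{\opn{sm}}(L)^*)$. On the other hand, $\nu\defeq\opn{ord}_p\circ\det\colon G(\Q)\to\Z$ is non-zero on it.
\item Apply Hochschild--Serre to this central copy of $\Z$, which $\nu$ splits rationally. It shows that in the lowest degree where the source of $\opn{ev}$ has a non-zero $\pi^p$-part, cup product with $\nu$ is injective on that part.
\item But $\nu$ restricts to zero on the arithmetic subgroups computing $\hc{\bullet}(K,L)$, so $\nu\cup-$ vanishes on the target.
\item Since $\opn{ev}$ commutes with $\nu\cup-$ and with the Hecke action, $\opn{ev}$ is not injective on $\pi^p$-parts one degree higher. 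If instead the source had no $\pi^p$-part at all, $\opn{ev}^{(q-n+1)}$ could not be surjective.
\end{enumerate}

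So you must pass to the central quotient: work with $\opn{PGL}_n$, or with $\opn{Iw}Z(\Qp)$ in place of $\opn{Iw}$ and the $p$-arithmetic group modulo its central $p^{\Z}$, fixing central characters. The lemma, together with its citation, should be read in that normalisation. In that setting your argument goes through once the resolution is written with the correct stabilisers and orientation characters. The non-chamber terms give parahoric-level cohomology whose localisation vanishes since $\opn{St}^{\opn{sm}}(L)^{K_F}=0$, and the chamber term is handled by the $\Pi$-eigenvalue remark above.
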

\begin{proof}
This is \cite[Prop.\ 3.6]{AutomorphicLinvariants}.
\end{proof}

\begin{remark}
This is a cohomological analogue/generalisation of \cite[Lem.\ 1.3(3)]{Dar01}, which treats the case of $G = \GL_2$, and shows that $p$-new modular forms can be lifted to `forms on the Bruhat--Tits tree'. 
\end{remark}

\subsubsection{Automorphic $\cL$-invariants}

Let $\lambda: \Q_p^{\times}\rightarrow L$ be a continuous homomorphism and $c\in \{1, ..., n-1\}$. By \eqref{e:homs and extensions}, attached to $\lambda$ is an extension $\Delta_{\lambda, c}\in \mathrm{Ext}^1_{\mathrm{an}}(v_{c}^{\mathrm{sm}}(L), \mathrm{St}^{\mathrm{la}}(L))$. Forgetting topologies gives a map
\[
    \mathrm{Ext}^1_{\mathrm{an}}\Big(v_{c}^{\mathrm{sm}}(L), \mathrm{St}^{\mathrm{la}}(L)\Big)\longrightarrow \mathrm{Ext}^1_{L[G(\Q_p)]}\Big(v_{c}^{\mathrm{sm}}(L), \mathrm{St}^{\mathrm{la}}(L)\Big)\cong \mathrm{H}^{1}\Big(G(\Q_p), \mathrm{Hom}_L(v_{c}^{\mathrm{sm}}(L), \mathrm{St}^{\mathrm{la}}(L))\Big),
\]
and we can consider $\Delta_{\lambda, c}$ as an element of $\mathrm{H}^{1}(G(\Q_p), \mathrm{Hom}_L(v_{c}^{\mathrm{sm}}(L), \mathrm{St}^{\mathrm{la}}(L)))$. Via the natural map 
$$\cA_{G, c}(\mathrm{St}^{\mathrm{la}}(L)^{\ast})\otimes_L \mathrm{Hom}_L\Big(v_{c}^{\mathrm{sm}}(L), \mathrm{St}^{\mathrm{la}}(L)\Big)\longrightarrow \cA_{G, c}(v_{c}^{\mathrm{sm}}(L)^{\ast}),$$
the cup product with $\Delta_{\lambda, c}$ gives, for any $j \geq 0$, a prime-to-$p$ Hecke equivariant morphism
$$\Upsilon_{c, \lambda}^{j, \opn{la}}:  \mathrm{H}^{j}(G(\Q), \cA_{G, c}(\mathrm{St}^{\mathrm{la}}(L)^{\ast})) \rightarrow \mathrm{H}^{j+1}(G(\Q), \cA_{G, c}(v_{c}^{\mathrm{sm}}(L)^{\ast})).$$

By \cite[Cor. 2.19]{AutomorphicLinvariants} we have that $\mathrm{St}^{\mathrm{cts}}(L)$ is the unitary completion of $\mathrm{St}^{\mathrm{la}}(L)$; and then from \cite[Prop. 3.11]{AutomorphicLinvariants} we obtain a canonical isomorphism 
\[
    \mathrm{H}^{j}(G(\Q), \cA_{G, c}(\mathrm{St}^{\mathrm{cts}}(L)^{\ast})) \cong \mathrm{H}^{j}(G(\Q), \cA_{G, c}(\mathrm{St}^{\mathrm{\mathrm{sm}}}(L)^{\ast})).
    \]
    Then the inclusion $\mathrm{St}^{\mathrm{la}}(L)\subset \mathrm{St}^{\mathrm{cts}}(L)$ induces a natural map 
    \[
        \mathrm{H}^{j}(G(\Q), \cA_{G, c}(\mathrm{St}^{\mathrm{sm}}(L)^{\ast})) \rightarrow \mathrm{H}^{j}(G(\Q), \cA_{G, c}(\mathrm{St}^{\mathrm{la}}(L)^{\ast})).
    \]
    Composing with $\Upsilon_{c,\lambda}^{j, \opn{la}}$ we finally obtain a map
    \[
    \Upsilon_{c, \lambda}^{j}\colon \mathrm{H}^{j}(G(\Q), \cA_{G, c}(\mathrm{St}^{\mathrm{sm}}(L)^{\ast}))\longrightarrow \mathrm{H}^{j+1}(G(\Q), \cA_{G, c}(v_{c}^{\mathrm{sm}}(L)^{\ast})).
    \]

As $\Upsilon_{c, \lambda}^{j}$ is prime-to-$p$ Hecke-equivariant, it induces a map on the $\pi^p$-isotypic parts
\[
\Upsilon_{c, \lambda}^{j}[\pi^p]: \mathrm{H}^{i}(G(\Q), \cA_{G, c}(\mathrm{St}^{\mathrm{sm}}(L)^{\ast}))[\pi^{p}]\rightarrow \mathrm{H}^{j+1}(G(\Q), \cA_{G, c}(v_{c}^{\mathrm{sm}}(L)^{\ast}))[\pi^{p}].
\]
For any character $\varepsilon \in \cE_\pi$, we can perform similar constructions everywhere using the $\varepsilon$-parts of the cohomology, obtaining maps
\begin{equation}\label{eq:upsilon}
\Upsilon_{c, \lambda}^{j,\varepsilon}[\pi^p]: \mathrm{H}^{j}(G(\Q), \cA_{G, c}(\mathrm{St}^{\mathrm{sm}}(L)^{\ast}))^\varepsilon[\pi^{p}]\longrightarrow \mathrm{H}^{j+1}(G(\Q), \cA_{G, c}(v_{c}^{\mathrm{sm}}(L)^{\ast}))^\varepsilon[\pi^{p}].
\end{equation}

Recall that cuspidal cohomology is supported in degrees $q, q+1, \dots, q+\ell$, where $\ell = \ell(\GL_n)$. By \eqref{eq:arithmetic to betti c}, we see that the $\pi^p$-isotypic pieces of the $p$-arithmetic cohomology with compact support are supported in degrees $q', \dots, q'+\ell$, where $q' \defeq q-n+1$.

\begin{definition} \label{def:auto L-invariant}
For $c\in \{1,.., n-1\}$, $0\leq  r \leq \ell$, and $\varepsilon \in \cE_\pi$ an admissible sign, let $\bL_{c}^{r,\varepsilon}(\pi)$ be the subspace of $\mathrm{Hom}_{\mathrm{cts}}(\Q_p^{\times}, L)$ given by
\[
    \bL^{r, \varepsilon}_c(\pi) \defeq \{\lambda \in \mathrm{Hom}_{\mathrm{cts}}(\Q_p^{\times}, L): \ \Upsilon_{c, \lambda}^{q'+r, \varepsilon}[\pi^p]= 0\}.
\]
\end{definition}

\begin{proposition} \label{p: L-invariant} \cite[Prop.\ 3.14]{AutomorphicLinvariants}
Let $c \in \{1,\dots, n-1\}$ and $\varepsilon \in \cE_\pi$ an admissible sign.
\begin{itemize}
\item[(i)] For any choice of $0 \leq r \leq \ell$, we have $\mathrm{ord}_p\notin \bL^{r,\varepsilon}_c(\pi)$. In particular, the $L$-vector space $\bL^{r, \varepsilon}_c(\pi)$ has codimension at least 1 (equivalently, dimension at most 1).

\item[(ii)] The (bottom degree and top degree) spaces $\bL_{c}^{0,\varepsilon}(\pi)$ and $\bL_{c}^{\ell,\varepsilon}(\pi)$ have dimension exactly 1.
\end{itemize}
\end{proposition}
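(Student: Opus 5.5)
The plan is to analyse the map $\Upsilon^{q'+r,\varepsilon}_{c,\lambda}[\pi^p]$ through the extension $\Delta_{\lambda,c}$ and its pushout/pullback along the natural maps between Steinberg-type representations. The key is the linearity of $\lambda \mapsto \Delta_{\lambda,c}$. By \eqref{e:homs and extensions} this assignment is $L$-linear, and the cup product is bilinear. Hence $\lambda\mapsto \Upsilon^{q'+r,\varepsilon}_{c,\lambda}[\pi^p]$ is an $L$-linear map from $\opn{Hom}_{\opn{cts}}(\Qp^\times,L)$ into a space of homomorphisms, and $\bL^{r,\varepsilon}_c(\pi)$ is its kernel. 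Since $\opn{Hom}_{\opn{cts}}(\Qp^\times,L)=\langle\log_p,\opn{ord}_p\rangle$ is $2$-dimensional, the codimension statement in (i) follows from $\opn{ord}_p\notin\bL^{r,\varepsilon}_c(\pi)$. It therefore suffices to show that $\Upsilon^{q'+r,\varepsilon}_{c,\opn{ord}_p}[\pi^p]\neq 0$ for every $r$.

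For (i), I would use the fact that $\opn{ord}_p$ is smooth. This means $\Delta_{\opn{ord}_p,c}$ comes from a smooth extension $\cE^{\opn{sm}}$ of $v_c^{\opn{sm}}(L)$ by $\opn{St}^{\opn{sm}}(L)$, which can be realised explicitly as a subquotient of $\opn{I}^{\opn{sm}}_{\overline{B}}(L)$, namely $\opn{I}^{\opn{sm}}_{\overline B}/\sum_{P\supsetneq Q_c} \opn{I}^{\opn{sm}}_{\overline P}$. The map $\Upsilon_{c,\opn{ord}_p}$ is then the connecting homomorphism of the long exact sequence in $p$-arithmetic cohomology attached to $0\to v_c^{\opn{sm}*}\to\cE^{\opn{sm}*}\to\opn{St}^{\opn{sm}*}\to 0$. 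Its vanishing on the $\pi^p$-part in degree $q'+r$ would mean that the $\pi^p$-classes lift to $\cE^{\opn{sm}}(L)^*$. I would rule this out by comparing $\pi^p$-parts via Iwahori invariants and Frobenius reciprocity, as in Lemma \ref{lem:on the tree mult one}. Any lift would give a $\pi^p$-isotypic class in the cohomology of $\cA_{G,c}$ of a dual of a representation with a quotient $v_c^{\opn{sm}}$, that is, a representation induced from $\overline{Q_c}$. That in turn would produce a system of Hecke eigenvalues of $\pi$ occurring in cohomology induced from the Levi of $Q_c$, which is Eisenstein or non-Steinberg at $p$. Because $\pi$ is cuspidal with $\pi_p$ Steinberg, and the $U_p$-eigenvalues on such a piece would differ from the ordinary Steinberg eigenvalues (the monodromy at $\alpha_c$ being nontrivial), the $\pi^p$-part of such a class must vanish. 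This gives $\Upsilon^{q'+r,\varepsilon}_{c,\opn{ord}_p}[\pi^p]\neq0$.

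For (ii), I need one nonzero element in the kernel in the extremal degrees. The target $\h^{q'+r+1}(G(\Q),\cA_{G,c}(v_c^{\opn{sm}*}))^\varepsilon[\pi^p]$ is computed by the same Frobenius reciprocity. For $r=\ell$ it vanishes, since the degree $q'+\ell+1$ lies outside the cuspidal range and the $\pi^p$-part of the $\overline{Q_c}$-induced piece vanishes there. So every $\lambda$ lies in the kernel, which contradicts (i) unless the target is understood to be nonzero. In practice I would show the target in degree $q'+\ell+1$ is $1$-dimensional, while the source is $1$-dimensional by Lemma \ref{lem:on the tree mult one}. Then $\lambda\mapsto\Upsilon$ maps a $2$-dimensional space into $\opn{Hom}(L,L)\cong L$, so the kernel has dimension at least $1$. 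The case $r=0$ is dual, using a Poincaré-duality-type pairing between degrees $q'$ and $q'+\ell$.

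The main obstacle is the precise computation of the $\pi^p$-isotypic cohomology with coefficients in $v_c^{\opn{sm}}(L)^*$, in particular showing that it is one-dimensional in the relevant degree. I would first attempt this via a resolution of $v_c^{\opn{sm}}$ by inductions from parabolics containing $Q_c$, in the style of the Tits/Orlik complex, combined with the vanishing of the $\pi^p$-part of cohomology induced from proper parabolics other than the relevant one.
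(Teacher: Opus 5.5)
Your skeleton matches the paper's. The map $\lambda\mapsto\Upsilon^{q'+r,\varepsilon}_{c,\lambda}[\pi^p]$ is $L$-linear with kernel $\bL^{r,\varepsilon}_c(\pi)$, part (i) is a statement about the smooth character $\opn{ord}_p$, and part (ii) is a dimension count. However, both substantive steps have gaps.

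\textbf{Part (i).} Your model of the smooth extension is wrong.
\begin{itemize}
\item A class in $\opn{Ext}^1(v_c^{\opn{sm}}(L),\opn{St}^{\opn{sm}}(L))$ has $\opn{St}$ as a sub and $v_c$ as a quotient.
\item In $\opn{I}^{\opn{sm}}_{\overline B}(L)$, $\opn{St}$ is the unique irreducible quotient and occurs only once. So any subrepresentation having $\opn{St}$ as a constituent is the whole space.
\item Hence every subquotient with constituents $\opn{St}$ and $v_c$ is a non-split extension $0\to v_c\to\ast\to\opn{St}\to 0$. This lies in $\opn{Ext}^1(\opn{St},v_c)$, the wrong direction.
\item The quotient you wrote down is not even of length two: for $n=3$, $c=1$ it is $\opn{I}^{\opn{sm}}_{\overline B}(L)$ modulo constants.
\end{itemize}
Your vanishing heuristic also cannot work as stated. $\cE^{\opn{sm}}$ has the same constituents as the split extension $\opn{St}\oplus v_c$. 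The $\pi^p$-part of cohomology with coefficients in $(\opn{St}\oplus v_c)^*$ does not vanish, since it contains the $\opn{St}^*$-part. So no argument that only sees constituents or $U_p$-eigenvalues can give the vanishing; it must use the non-splitness of the class.

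There is also a smaller issue. Passing from $\delta(x)=0$ to a $\pi^p$-isotypic lift needs an exactness argument, e.g.\ localising at $\phi^S$, because $[\pi^p]$ is only left exact.

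The paper does not reprove any of this. It cites \cite[Cor.\ 3.8, Rem.\ 2.16, \S3.6]{AutomorphicLinvariants} for the stronger fact that $\Upsilon^{j,\varepsilon}_{c,\lambda}[\pi^p]$ is an \emph{isomorphism} for every nonzero smooth $\lambda$.

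\textbf{Part (ii).} That isomorphism is exactly the idea your argument is missing. Your count needs $\dim(\text{target})\le 1$, but non-vanishing of $\Upsilon_{c,\opn{ord}_p}$ only gives $\dim(\text{target})\ge 1$.
\begin{itemize}
\item Your first claim, that the target vanishes in degree $q'+\ell+1$ because this lies outside the cuspidal range, is false. The target there is in fact a line, and, as you notice yourself, vanishing would contradict (i).
\item You then leave the dimension of the target as the unresolved `main obstacle'.
\end{itemize}
With the isomorphism this is immediate. The target has the same dimension as the source, namely $\binom{\ell}{r}$ by Lemma~\ref{lem:on the tree mult one}, which equals $1$ for both $r=0$ and $r=\ell$. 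So the pairing with the $2$-dimensional space $\opn{Hom}_{\opn{cts}}(\Qp^\times,L)$ degenerates, and some $\lambda\neq 0$ lies in $\bL^{r,\varepsilon}_c(\pi)$.

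This argument treats $r=0$ and $r=\ell$ identically. You need neither a Tits/Orlik resolution nor a duality argument for $r=0$. Duality would in any case exchange $\pi$ with its contragredient, and compactly supported with ordinary cohomology.
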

\begin{proof} 
For the singular cohomology, in \cite[Cor.\ 3.8, Rem.\ 2.16]{AutomorphicLinvariants} Gehrmann shows that if $\lambda$ is any non-zero \emph{smooth} character, then $\Upsilon^{q+r,\varepsilon}_{c,\lambda}[\pi^p]$ is an isomorphism, hence  $\lambda \not\in \bL_c^{r,\varepsilon}(\pi)$. Thus $\opn{ord}_p  \not\in \bL^{r,\varepsilon}_c(\pi)$. That these results carry over to the compactly supported cohomology is explained in \cite[\S3.6]{AutomorphicLinvariants}. The rest of (i) follows since $\opn{Hom}_{\opn{cts}}(\Qp^\times,L)$ is 2-dimensional, generated by $\opn{ord}_p$ and $\log_p$. 

To see (ii), it suffices to prove that $\bL_{c}^{r,\varepsilon}(\pi) \neq 0$ for $r = 0,\ell$. We note that in bottom degree $j = q'$ and top degree $j = q'+\ell$, the space $\mathrm{H}^{j}(G(\Q), \cA_{G, c}(\mathrm{St}^{\mathrm{sm}}(L)^{\ast}))^\varepsilon[\pi^{p}]$ is 1-dimensional (by Lemma \ref{lem:on the tree mult one}).  As $\Upsilon_{c,\opn{ord}_p}^{j,\varepsilon}[\pi^p]$ is an isomorphism, the target $\mathrm{H}^{j+1}(G(\Q), \cA_{G, c}(v_{c}^{\mathrm{sm}}(L)^{\ast}))^\varepsilon[\pi^{p}]$ is similarly a line. Since $\opn{Hom}_{\opn{cts}}(\Qp^\times,L)$ is 2-dimensional, it follows that for $r = 0$ and $r = \ell$, the cup product pairing
\[
\mathrm{H}^{j}(G(\Q), \cA_{G, c}(\mathrm{St}^{\mathrm{sm}}(L)^{\ast}))^\varepsilon[\pi^{p}] \times \opn{Hom}_{\opn{cts}}(\Qp^\times,L) \to  \mathrm{H}^{j+1}(G(\Q), \cA_{G, c}(v_{c}^{\mathrm{sm}}(L)^{\ast}))^\varepsilon[\pi^{p}]
\]
described above must degenerate, in that there exists $\lambda \neq 0$ such that $\langle \phi, \lambda\rangle = 0$ for all $\phi$. But then $\lambda \in \bL_{c}^{r,\varepsilon}(\pi)$ by definition.
\end{proof}

In \cite[Conj.\ A]{AutomorphicLinvariants}, Gehrmann makes the following further prediction. Note that part (i) is known for $r = 0$ or $r=\ell$ by Proposition \ref{p: L-invariant}. 

\begin{conjecture}\label{conj:L-invariant dim 1}
\begin{itemize}
\item[(i)] The space $\bL_{c}^{r,\varepsilon}(\pi)$ has codimension exactly 1 (equivalently, dimension exactly 1) for any $c \in \{1,\dots,n-1\}$, $0 \leq r \leq \ell$, and $\varepsilon \in \cE_\pi$.
\item[(ii)] The space $\bL_{c}^{r,\varepsilon}(\pi)$ is independent of both $r$ and $\varepsilon \in \cE_\pi$.
\end{itemize}
\end{conjecture}

Finally, we translate the space $\bL_{c}^{r,\varepsilon}(\pi)$ (which Gehrmann calls the automorphic $\mathcal{L}$-invariant) into an actual quantity, similar to descriptions of $\mathcal{L}$-invariants in, say, the case of modular forms. The following definition is conditional in general, but unconditional for $r = 0$ or $r=\ell$.

\begin{definition}  \label{def:automorphic number}
Let $c\in \{1,..., n-1\}$, $0 \leq r \leq \ell$ and $\varepsilon \in \cE_\pi$. Assuming Conjecture \ref{conj:L-invariant dim 1}(i) holds for these parameters, the \emph{automorphic $\cL$-invariant for $c, r,$ and $\varepsilon$}  is the unique quantity $\cL^{r,\varepsilon}_{\pi, c} \in L$ satisfying 
\[
    \mathrm{log}_p- \cL^{r,\varepsilon}_{\pi, c}\mathrm{ord}_p\in \bL^{r,\varepsilon}_c(\pi).
\]
\end{definition}

\subsection{Fontaine--Mazur $\cL$-invariants} Fontaine--Mazur $\cL$-invariants for $\pi$ are constructed via the Galois representation $\rho_{\pi}$. We impose additional assumptions:

\begin{hyp}\label{h: abs irred and decomposed generic} 
We assume that $p > 2n$ and  the residual representation $\overline{\rho}_\pi$ is absolutely irreducible and decomposed generic.
\end{hyp}

If $\omega$ denotes the $p$-adic cyclotomic character, then under the hypothesis above we deduce from \cite[Corollary 5.5.2]{10author} that we have
    \begin{equation}\label{eq:steinberg at p}
    \rho_{\pi}|_{G_{\Q_p}} \sim {\left(\begin{matrix} 1 & *_1 & \dots & * & * \\ & \omega^{-1} & & * & * \\ & & \ddots & *_{n-2} & \vdots \\ & & & \omega^{2-n} & *_{n-1} \\ & & & & \omega^{1-n}\end{matrix}\right)}.
    \end{equation}

\begin{definition} \label{def: non-crystalline at c} 
Let $c\in \{1, ..., n-1\}$. The local Galois representation $\rho_{\pi}\mid_{G_{\Q_p}}$ is called \emph{non-split at $c$} if $\ast_c \neq 0$. We say $*_c$ is non-crystalline if the corresponding extension is non-crystalline. 
\end{definition}

\begin{remark} \label{rem:essentially self-dual}
If $\pi$ is essentially self-dual, then -- independent of Hypothesis \ref{h: abs irred and decomposed generic} -- equation \eqref{eq:steinberg at p} holds, and $*_c$ is non-crystalline for all $c$. For this, it suffices to know that local-global compatibility holds for $\rho_\pi$ at $\ell= p$ (up to Frobenius-semisimplification, not just semisimplification), as the Weil--Deligne representation associated with the Steinberg representation under local Langlands is non-crystalline at all $c$. If $n$ is odd, it is proved in \cite{BGGTII} (noting that, in the language \emph{op.\ cit}., every weight is Shin-regular for odd $n$); and if $n$ is even, in \cite{CaraianiANT}.
\end{remark}

Let $c\in \{1, ..., n-1\}$ and suppose that $\rho_{\pi}\mid_{G_{\Q_p}}$ is non-split at $c$. We denote by
\[
    \bL^{\mathrm{FM}}_c(\pi)= \langle \ast_c \rangle^{\perp}\subset \mathrm{Hom}_{\mathrm{cts}}(\Q_p^{\times}, L)\cong \mathrm{H}^1(\Q_p, L),
\]
the orthogonal complement of $\langle \ast_c \rangle\subset \mathrm{H}^1(\Q_p, L(1))$ under local Tate duality. As $\ast_c$ is non-split by assumption, $\bL^{\mathrm{FM}}_c(\pi)$ is a $1$-dimensional $L$-vector space, and $\mathrm{ord}_p\notin \bL^{\mathrm{FM}}_c(\pi)$ if $*_c$ is non-crystalline (since, by Kummer theory, $\mbb{Z}_p^{\times} \otimes_{\mbb{Z}_p} L \cong \opn{H}^1_f(\mbb{Q}_p, L(1))$).

\begin{definition} \label{def:FM number}
Suppose $*_c$ is non-crystalline. The \emph{Fontaine-Mazur $\cL$-invariant} at $c$ is the unique element $\cL_{\pi, c}^{\mathrm{FM}}\in L$ such that $\mathrm{log}_p- \cL_{\pi, c}^{\mathrm{FM}}\mathrm{ord}_p \in \bL^{\mathrm{FM}}_c(\pi)$.
\end{definition}

\section{The automorphic Benois--Colmez--Greenberg--Stevens formula}\label{sec:automorphic BCGS}

Let $\pi$ be as in \S\ref{s:L-invariants}. We now make crucial use of Theorem \ref{thm:main etale}. To ensure the required cohomological-multiplicity-one assumption, we must be more precise about the prime-to-$p$ level. Specifically, in the rest of the paper we will take Whittaker new level
\begin{equation}\label{eq:whittaker new}
    K^p = K_1^p(N)  \defeq \{\smallmatrd{a}{b}{c}{d} \in \GL_n(\A_f^p) : c \equiv 0 \newmod{N}, d \equiv 1 \newmod{N}\},
\end{equation}
where $N$ is the smallest (prime-to-$p$) positive integer such that $\pi_f^p$ admits $K_1^p(N)$-invariants. Here $\smallmatrd{a}{b}{c}{d}$ is a block matrix with bottom row $(c,d)$, where $c$ is a $1\times (n-1)$ vector and $d$ is a scalar.

Let $S$ denote a finite set of places containing $p$ and all primes $\ell$ where $K_\ell \neq \GL_n(\Z_\ell)$. With this $S$, we take other notation from \S\ref{sec:set-up}; so $\bT(K) = \bT^S(K^p) \otimes \sA_p^+$ is the Hecke algebra of level $K$, and 
\[
    \phi \colon \bT(K)_L \longrightarrow L
\]
is the classical cuspidal eigenpacket arising from $\pi_f^{K}$. The $p$-part $\phi_p = \phi|_{\sA_p^+}$ is trivial, since $\pi_p$ is Steinberg; so $\phi$ is $p$-ordinary, hence non-critical. Moreover, by \cite[\S5]{JPSS}, and since the Iwahori-invariants of the Steinberg representation are 1-dimensional, we have $\opn{dim}_{\C} \pi_f^{K} = 1$; so by Example \ref{ex:mult one}, $\phi$ is of cohomological multiplicity one at level $K$ at any admissible sign $\varepsilon \in \cE_\pi$.

\begin{assumption}\label{ass:nap}
We assume throughout that the non-abelian Leopoldt conjecture (Conjecture \ref{conj:non-abelian leopoldt}) holds for $\phi$. Note that this is automatic for $\GL_n$ with $n \leq 4$, since then $\ell(\GL_n) \leq 1$ (see Remark \ref{rem:non-abelian leopoldt}).
\end{assumption}

By the assumption, for any $\varepsilon \in \cE_\pi$ we may apply Theorem \ref{thm:main etale} at the weight $\lambda = 1$, yielding a rigid locally Zariski-closed subspace\footnote{Note that \emph{a priori} $\Sigma$ does depend on $\varepsilon$, though we do not encode this in notation.} $\Sigma = \Sigma^\varepsilon \subset \crW$ such that $\hc{q+\ell}(K, \sD_{\Sigma})^{\leq h, \varepsilon}_\phi$ is free of rank one over $\cO(\Sigma)_1$. Up to shrinking $\Sigma$ there is a connected component $\sC \subset \sX_{G,K}^\varepsilon$ such that
\begin{equation}\label{eq:family over Sigma}
\hc{q+\ell}(K, \sD_{\Sigma})^{\leq h, \varepsilon} \otimes_{\bT_{\Sigma,h}^\varepsilon} \cO(\sC) \text{ is free of rank one over }\cO(\Sigma).
\end{equation}
The Hecke algebra acts on this space by a character $\phi_\Sigma = \phi^S_\Sigma \otimes \phi_{\Sigma,p}: \bT(K)\rightarrow \cO(\Sigma)$. By construction, if $\m_1 \subset \cO(\Sigma)$ is the maximal ideal corresponding to $\lambda = 1$, then the classical eigenpacket $\phi$ factors as
\[
\phi \colon \bT(K) \xrightarrow{\ \phi_{\Sigma} \ } \cO(\Sigma) \xrightarrow{ \ \newmod{\m_1} \ } L.
\] 
The titular \emph{automorphic Benois--Colmez--Greenberg--Stevens formula} studies the infinitesimal behaviour of $\phi_\Sigma$ along a `good' tangent vector in $\Sigma$ at $\lambda = 1$. To prove it, we use fundamentally the strategy of Gehrmann--Rosso from \cite{GehrmannRosso}, via the Koszul complex with overconvergent cohomology from \cite{KS12}.

\medskip

For clarity in later sections, we summarise our complete running set-up in the following.

\begin{set-up}\label{setup}
let $\pi$ be a $p$-ordinary RACAR of $\GL_n(\A)$ such that $\pi_p$ is the Steinberg representation of $\GL_n(\Qp)$. (In particular $\pi$ has trivial cohomological weight). Let $\phi$ be the associated eigenpacket $\phi \colon \bT(K) \to L$. Assume that the description in \eqref{eq:steinberg at p} holds. From \S\ref{s:L-invariants}, for $1 \leq c\leq n-1$, we have:
\begin{itemize}
\item automorphic $\cL$-invariants $\bL_{c}^{r,\varepsilon}(\pi)$ for $0 \leq r \leq \ell(\GL_n) = \lfloor \tfrac{n-1}{2}\rfloor$ and any admissible sign $\varepsilon \in \cE_\pi$,
\item and if $*_c$ is non-split in \eqref{eq:steinberg at p}, we have a Fontaine--Mazur $\cL$-invariant $\bL_c^{\opn{FM}}(\pi)$. 
\end{itemize}
Both are subspaces of the 2-dimensional $L$-vector space $\opn{Hom}_{\opn{cts}}(\Qp^\times,L)$ of dimension at most 1.
\end{set-up}

\subsection{Weights, families and tangent vectors}\label{sec:infinitesimal weights}
\subsubsection{Infinitesimal weights}
For $\GL_n$, the weight space $\sW$ is the $\Qp$-rigid space whose $L$-points parametrise continuous characters $T(\Zp) \to L^\times$; that is, tuples $\kappa = (\kappa_1,\dots, \kappa_n)$, where each $\kappa_i$ is a continuous character $\Zp^\times \to L^\times$. It decomposes as the disjoint union of $n$-dimensional open discs. The connected component $\sW^\circ \subset \sW$ containing $\lambda = 1$ is the rigid generic fibre of $\opn{Spf}(\Zp[\![X_1, \dots, X_n]\!])$. It comes equipped with a universal character 
\[
    \kappa^{\opn{univ}} = (\kappa^{\opn{univ}}_1, \dots, \kappa^{\opn{univ}}_n) : T(\Zp) = (\Zp^\times)^n \to \Zp[\![X_1,\dots,X_n]\!]^\times,
    \]
    where $\kappa_i^{\opn{univ}}(x) = (1+X_i)^{\log_p(x)}$ for any $x \in \Zp^\times$. For any Zariski-closed affinoid $\Lambda \subset \sW^\circ$ contained in this component, let 
    \[
    \kappa_\Lambda^{\opn{univ}} \colon  T(\Zp) \to \cO(\Lambda)^\times
    \]
    be the composition of $\kappa^{\opn{univ}}$ with the natural map $\Zp[\![ X_1,\dots,X_n]\!] \to \cO(\Lambda)$.

    \medskip

    By construction, we have an inclusion $\Sigma \subset \sW_L$. After base-changing to $L$, we have the tangent space $\opn{T}_1(\sW_L) \cong L^n$  at the trivial character, spanned linearly by $X_1, \dots, X_n$. We thus have $\opn{T}_1(\Sigma) \subset L^n$.

    Any $v = (v_1,\dots,v_n) \in \opn{T}_1(\Sigma) \subset \opn{T}_1(\sW_L)$ corresponds to a morphism $\opn{Spec}(L[\epsilon]) \to \Sigma \subset \sW_L$, corresponding to a map
     \begin{equation}\label{eq:spec to v}
        \Zp[\![X_1,\dots,X_n]\!] \to \cO(\Sigma) \to L[\epsilon], \qquad X_i \longmapsto v_i\epsilon \ \ \text{ (for }i = 1,\dots, n)
    \end{equation}
    that factors through $\cO(\Sigma)$.    Note in particular that $X_i^2 \mapsto 0$ for all $i$. Composing with $\kappa^{\opn{univ}}$, we get a `universal' infinitesimal character in the direction of $v$, namely 
    \[
        \kappa_v^{\opn{univ}} = (\kappa_{v,1}^{\opn{univ}}, \dots, \kappa_{v,n}^{\opn{univ}}) \colon (\Zp^\times)^n \longrightarrow L[\epsilon]^n,
    \]
    where 
    \begin{equation}\label{eq:universal infinitesimal}
        \kappa_{v,i}^{\opn{univ}}(x) = 1 + \log_p(x) v_i\epsilon \qquad \forall x \in \Zp^\times.
    \end{equation}
    Note in particular that $\kappa_v^{\opn{univ}} \equiv 1 \newmod{\epsilon}$.

\subsubsection{Infinitesimal families}\label{ss: Infinitesimal families}
Recall the eigenpacket $\phi$ attached to $\pi$, and the eigenpacket 
\[
    \phi_\Sigma = \phi_{\Sigma}^S \otimes \phi_{\Sigma,p} \colon \bT(K) = \bT^S(K) \otimes \sA_p^+ \longrightarrow \cO(\Sigma)
\]
for the family over $\Sigma$ from \eqref{eq:family over Sigma}.

Recall $\sA_p^+$ can be identified with the free commutative $\Qp$-algebra generated by $t_1, \dots, t_n$, where 
\[
    t_i = \opn{diag}(p,\dots,p,1,\dots,1) \in T(\Qp), \qquad \text{with $i$ lots of $p$}.
\]
Since the elements $t_i$ (together with their inverses) generate $T(\Qp)/T(\Zp)$, the character $\phi_{\Sigma,p} \colon \sA_p^+ \to \cO(\Sigma)^\times$ naturally defines a character $\Phi_{\Sigma,p} \colon T(\Qp)/T(\Zp) \to \cO(\Sigma)^\times$. We freely lift this to a character on $T(\Qp)$, and write $\Phi_{\Sigma,p} = (\Phi_{\Sigma,p,1},\dots, \Phi_{\Sigma,p,n})$, where each $\Phi_{\Sigma,p,i}$ is a character of $\Qp^\times$ (trivial on $\Zp^\times$).

\medskip

If $v \in \opn{T}_1(\Sigma)$, then composing $\phi_\Sigma$ with the map $\cO(\Sigma) \to L[\epsilon]$ at $v$ from \eqref{eq:spec to v}, we obtain an infinitesimal eigenpacket
\[
\phi_v = \phi_v^S \otimes \phi_{v,p} \colon \bT(K) \longrightarrow L[\epsilon]
\]
in the direction of $v$. Note that $\phi_v \equiv \phi \newmod{\epsilon}$. In particular $\phi_{v,p} \equiv 1 \newmod{\epsilon}$.

Specialising $\Phi_{\Sigma,p}$ along the map $\cO(\Sigma) \to L[\epsilon]$ yields a character $\Phi_{v,p} = (\Phi_{v,p,1},\dots, \Phi_{v,p,n}) \colon T(\Qp) \to L[\epsilon]^\times$, where each $\Phi_{v,p,i}$ is a character of $\Qp^\times$ (trivial on $\Zp^\times$).

\begin{lemma}\label{lem:centre trivial 2}
\begin{itemize}
\item[(i)] The character $\Phi_{\Sigma,p}$ is trivial on $Z_{\GL_n}(\Qp)$. Moreover, $\Phi_{\Sigma,p} \equiv 1 \newmod{\m_1}$, where $\m_1 \subset \cO(\Sigma)$ is the maximal ideal corresponding to the trivial character $1 \in \Sigma$.
\item[(ii)] For any $v \in \opn{T}_1(\Sigma)$, the character $\Phi_{v,p}$ is trivial on $Z_{\GL_n}(\Qp)$. Moreover $\Phi_{v,p} \equiv 1 \newmod{\epsilon}$.
\end{itemize}
\end{lemma}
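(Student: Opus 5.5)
The plan is to prove (i) first and then deduce (ii) by specialising along the tangent vector. For the first assertion of (i), triviality of $\Phi_{\Sigma,p}$ on the centre, I would use the central element $t_n = \opn{diag}(p,\dots,p) \in Z_{\GL_n}(\Qp)$. Its Hecke operator acts on the Iwahori-level overconvergent cohomology $\hc{\bullet}(K,\sD_\Sigma)$ through the central character of the family. The weight-space parametrisation involves only $T(\Zp)$, so on $Z(\Zp) = \Zp^\times$ the action of the centre is the universal weight restricted to the diagonal, $\prod_i \kappa^{\opn{univ}}_{\Sigma,i}$. However, $\Phi_{\Sigma,p}$ is trivial on $T(\Zp)$ by construction, so only the value at $t_n$ is in question. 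Since $Z(\Q)$ acts trivially on the locally symmetric space, the element $p \in Z(\Q)$ gives an identity: the action of $t_n$ at $p$ agrees with the inverse of the action of $p$ at the places $\ell \neq p$ and $\infty$. Concretely, the $U_{t_n}$-eigenvalue equals the product over $\ell \in S \setminus \{p\}$ of the diamond-type actions of $p$ in $K_\ell$, together with the prime-to-$S$ central Hecke operators evaluated at $p$, up to a normalisation twist by the weight.

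The key step is to show that this eigenvalue is constant in the family. Here I would argue that the relevant prime-to-$p$ central characters take values in a finite set of roots of unity times a weight-dependent factor. The weight factor is trivial on the component we are working on, because $\kappa(p)$ is normalised to $1$ in Hansen's convention for $U_t$ (the $t$-action on $\sD_\kappa$ twists by $\kappa(t)$ only through its $\Zp^\times$-part). A locally constant function on the connected $\Sigma$ (after shrinking) that is a root of unity times $1$ is then constant. At $\lambda = 1$ the value is $\phi(U_{t_n}) = 1$, because $\phi_p$ is trivial and $\pi_p$ is Steinberg with trivial central character at $p$. Hence $\Phi_{\Sigma,p}(t_n) = 1$, and $\Phi_{\Sigma,p}$ is trivial on $Z(\Qp) = p^{\Z}\times\Zp^\times$.

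The main obstacle I expect is this identification of the central $U$-operator with prime-to-$p$ data in the family. If it proves delicate, a simpler fallback is available. The map $\Sigma \to \cO(\Sigma)^\times$ given by $U_{t_n}$ is a unit whose specialisations at a Zariski-dense set of classical points, or at least at the single point $1$ together with the Tor-isomorphism of Proposition~\ref{prop:tor iso}, are determined by central characters. I would then use that central characters in a $p$-adic family vary only through the weight on $\Zp^\times$.

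For the second assertion of (i), the congruence $\Phi_{\Sigma,p} \equiv 1 \newmod{\m_1}$ is immediate from the factorisation $\phi = (\phi_\Sigma \bmod \m_1)$ and triviality of $\phi_p$. Part (ii) then follows by composing with the map $\cO(\Sigma) \to L[\epsilon]$ of \eqref{eq:spec to v}. This map preserves triviality on the centre. It also sends $\m_1$ into $(\epsilon)$, since it reduces modulo $\epsilon$ to evaluation at $1$, so it gives $\Phi_{v,p} \equiv 1 \newmod{\epsilon}$.
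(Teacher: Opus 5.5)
Your proof is correct, but for the triviality on the centre it takes a different and more careful route than the paper.

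The paper argues in one line. $U_z=[\opn{Iw}\,z\,\opn{Iw}]$ is just the action of $z$, and $z$ acts trivially on $\sD_\Sigma$ because the $T^+$-action on distributions is induced by conjugation on $N(\Zp)$. Hence $\Phi_{\Sigma,p}(z)=1$, with no appeal to the classical point or to connectedness.

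You use the same input, namely that there is no weight twist for central $t_n$. The clean justification for this is exactly the paper's conjugation description. Your phrase "$\kappa(p)$ normalised to $1$" does not parse, since $\kappa$ is a character of $T(\Zp)$.

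Beyond that, you also track the translation of $Y(K)$ by $t_n$ at $p$. Via $p\in Z(\Q)$, this equals a tame diamond operator at the primes $\ell\in S\setminus\{p\}$. The contributions at $\infty$ and at $\ell\notin S$ are trivial, because $p^{-1}$ lies in $K_\infty^\circ$ and in $\GL_n(\Z_\ell)$ respectively, so your "prime-to-$S$ central Hecke operators" term can simply be dropped. This diamond operator has finite order, so $x=\phi_\Sigma(U_{t_n})$ satisfies $x^m=1$ in $\cO(\Sigma)$. Since $x\equiv 1 \newmod{\m_1}$, we get $x=1$ in $\cO(\Sigma)_1$, because $\prod_{\zeta\neq1}(x-\zeta)$ is a unit there. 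On connected $\Sigma$ we get $x=1$ globally, via the idempotents of $L[x]/(x^m-1)$; this handles the non-reduced case too.

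What your route buys is that it never needs $U_{t_n}$ to act trivially on all of overconvergent cohomology. That stronger statement fails in general whenever $\langle p\rangle$ acts non-trivially at tame level, for example for $\GL_2$ with nebentypus satisfying $\chi(p)\neq1$. You only need triviality on the part through $\phi$, where $\phi_p=1$ (the Steinberg hypothesis) forces eigenvalue $1$. The paper's route is shorter, but its stated justification only controls the coefficient action.

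Your "fallback" paragraph is unnecessary and should be dropped. Classical points need not be Zariski-dense in $\Sigma$ when $\ell(G)>0$, and "central characters vary only through the weight on $\Zp^\times$" is essentially what is being proved.
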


\begin{proof}
(i) For any $z \in Z_{\GL_n}(\Qp)$, the operator $U_z = [\opn{Iw} z \opn{Iw}] \in \sA_p^+$ acts on the overconvergent cohomology as $z$ itself. By construction, it also acts as $\Phi_{\Sigma,p}(z)$. The first claim follows since $z$ acts trivially on overconvergent cohomology (of any weight). Indeed, the action of $z$ on $\sD_\Sigma$ is induced from conjugation on $N(\Zp)$; and as $z$ is central, it thus acts trivially. The second claim follows since $\phi_{\Sigma,p} \equiv \phi_p = 1\newmod{\m_1}$.

Part (ii) follows specialising (i) along $v$, or directly noting $\phi_{v,p}$ (hence $\Phi_{v,p}$) is trivial modulo $\epsilon$.
\end{proof}

\subsubsection{Characters on $T(\Qp)$} \label{sec:chars on T}
We now draw all of the above together as preparation for the next subsection. The choice of uniformiser $p \in \Zp$ defines a splitting 
\[
T(\Qp) \cong T(\Zp) \times T(\Qp)/T(\Zp).
\]
In particular, we may define a character
\begin{equation}\label{eq:chisigma}
  \chi_\Sigma \colon T(\Qp) \longrightarrow \cO(\Sigma)^\times, \qquad \chi_\Sigma|_{T(\Zp)} = \kappa_\Sigma^{\opn{univ}}, \ \ \chi_\Sigma|_{T(\Qp)/T(\Zp)} = \Phi_{\Sigma,p}.
\end{equation}
Similarly, for any $v \in \opn{T}_1(\Sigma)$ we may define a character
\begin{equation}\label{eq:chiv}
    \chi_v \colon T(\Qp) \longrightarrow L[\epsilon]^\times, \qquad \chi_v|_{T(\Zp)} = \kappa_v^{\opn{univ}}, \ \ \chi_v|_{T(\Qp)/T(\Zp)} = \Phi_{v,p}.
\end{equation}
\begin{lemma}\label{lem:centre trivial 3}
We have $\chi_\Sigma \equiv 1 \newmod{\m_1}$, and $\chi_v \equiv 1 \newmod{\epsilon}$.
\end{lemma}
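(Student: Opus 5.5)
The plan is to check the two restrictions that define $\chi_\Sigma$ in \eqref{eq:chisigma} separately, using the splitting $T(\Qp) \cong T(\Zp) \times T(\Qp)/T(\Zp)$ fixed by the uniformiser $p$. A character of $T(\Qp)$ is congruent to $1$ modulo an ideal exactly when both restrictions are, so it suffices to treat $T(\Zp)$ and $T(\Qp)/T(\Zp)$ one at a time. The statement for $\chi_v$ will then be handled by the same two checks with $\epsilon$ in place of $\m_1$, or by reducing $\chi_\Sigma$ along the map $\cO(\Sigma) \to L[\epsilon]$ of \eqref{eq:spec to v}.

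On $T(\Qp)/T(\Zp)$, the restriction of $\chi_\Sigma$ is $\Phi_{\Sigma,p}$. Lemma \ref{lem:centre trivial 2}(i) gives $\Phi_{\Sigma,p} \equiv 1 \newmod{\m_1}$ directly, and Lemma \ref{lem:centre trivial 2}(ii) gives $\Phi_{v,p} \equiv 1 \newmod{\epsilon}$.

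On $T(\Zp)$, the restriction of $\chi_\Sigma$ is $\kappa^{\opn{univ}}_\Sigma$, the pullback of the universal character on $\sW^\circ$ to $\cO(\Sigma)$. Reducing modulo $\m_1$ means evaluating at the point $1 \in \Sigma$, which corresponds to the trivial character of $T(\Zp)$. By the defining property of the universal character, its specialisation at a point $\kappa$ is $\kappa$ itself. Concretely, the point $1$ corresponds to $X_i \mapsto 0$, so $(1+X_i)^{\log_p(x)} \mapsto 1$. Hence $\kappa^{\opn{univ}}_\Sigma \equiv 1 \newmod{\m_1}$. For $v$, formula \eqref{eq:universal infinitesimal} gives $\kappa^{\opn{univ}}_{v,i}(x) = 1 + \log_p(x) v_i \epsilon \equiv 1 \newmod{\epsilon}$, which the paper already notes.

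There is essentially no obstacle here. The only point needing a moment of care is that evaluation of $\cO(\Sigma)$ at $\m_1$ is compatible with the map $\Zp[\![X_1,\dots,X_n]\!] \to \cO(\Sigma)$, sending each $X_i$ into $\m_1$. This holds because $\m_1$ is the pullback to $\Sigma$ of the maximal ideal of $\sW^\circ$ at the trivial character, which is generated by $X_1, \dots, X_n$.
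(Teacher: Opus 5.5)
Your proposal is correct and follows the same route as the paper. The paper's proof simply cites the congruence $\kappa_\Sigma^{\opn{univ}} \equiv 1 \newmod{\m_1}$, equation \eqref{eq:universal infinitesimal}, and Lemma \ref{lem:centre trivial 2}. Your argument is exactly this, with the check on each factor of the splitting of $T(\Qp)$ written out.
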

\begin{proof}
Immediate from the fact $\kappa_\Sigma^{\opn{univ}} \equiv 1 \newmod{\m_1}$, from \eqref{eq:universal infinitesimal}, and by Lemma \ref{lem:centre trivial 2}.
\end{proof}

It will be  convenient to have analogues of these characters for more general affinoids. We put ourselves in the set-up of \S\ref{sec:set-up}-\ref{sec:proof}. Let $\Omega \subset \sW^\circ$ be an open affinoid, and suppose $\Sigma = \opn{Sp}(\cO(\Omega)/J) \subset \Omega$, with $J \subset \cO(\Omega)$ an ideal. Indeed, this is how $\Sigma$ is constructed in the proof of Theorem \ref{thm:main etale} (see \eqref{eq:sigma}).

Now suppose $\Sigma \subset \Lambda \subset \Omega$ is any Zariski-closed affinoid. Choose lifts $\Phi_{\Lambda,p}(t_i) \in \cO(\Lambda)$ of each $\Phi_{\Sigma,p}(t_i) \in \cO(\Sigma)^\times$. Since the vanishing locus of each $\Phi_{\Lambda,p}(t_i)$ is closed and disjoint from $\Sigma$, up to shrinking $\Omega$ to avoid the zeros of $\Phi_{\Lambda,p}(t_1),\dots,\Phi_{\Lambda,p}(t_{n-1})$, we may assume each $\Phi_{\Lambda,p}(t_i) \in \cO(\Lambda)^\times$. These values thus determine a lift $\Phi_{\Lambda,p} \colon T(\Qp)/T(\Zp) \to \cO(\Lambda)^\times$ of $\Phi_{\Sigma,p}$. (We emphasise this lift is not unique).

We can then define a character 
\begin{equation}\label{eq:chilambda}
    \chi_\Lambda \colon T(\Qp) \longrightarrow \cO(\Lambda)^\times, \qquad \chi_\Lambda|_{T(\Zp)} = \kappa_\Lambda^{\opn{univ}}, \ \ \chi_\Lambda|_{T(\Qp)/T(\Zp)} = \Phi_{\Lambda,p}.
\end{equation}
This lifts $\chi_\Sigma$ under the specialisation $\cO(\Lambda) \to \cO(\Sigma)$. 

Summarising, for any $1 \in \Sigma \subset \Lambda \subset \Omega$ and $v \in \opn{T}_1(\Sigma)$, the following diagram commutes:

\[
\xymatrix{
T(\Qp) \ar[d]_{\chi_\Lambda} \ar[drr]_-{\chi_\Sigma} \ar[drrrr]_{\chi_v} \ar[drrrrrr]^{1} &&&&&&\\ 
\cO(\Lambda)^\times \ar[rr] &&\cO(\Sigma)^\times \ar[rr] && L[\epsilon]^\times \ar[rr] && L^\times,}
\]
where the horizontal arrows are the natural maps, and the $\chi$ maps were defined in \eqref{eq:chisigma}, \eqref{eq:chiv}, and \eqref{eq:chilambda}.

\subsection{The automorphic BCGS formula and Gehrmann--Rosso's criterion} 
We now  describe what the automorphic Benois--Colmez--Greenberg--Stevens (BCGS) formula actually is, and state a key criterion of Gehrmann--Rosso for it to hold. We maintain running notation from earlier in \S\ref{sec:automorphic BCGS}.

\subsubsection{The automorphic BCGS formula}\label{sec:auto BCGS}

Let $v \in \opn{T}_1(\Sigma)$ be a tangent vector, with associated character $\chi_v \colon T(\Qp) \to L[\epsilon]^\times$ from \eqref{eq:chiv}. By Lemma \ref{lem:centre trivial 3} (and \cite[\S2.2]{GehrmannRosso}), for each $1 \leq i \leq n$ there exists a character $\partial\chi_{v,i} \in \opn{Hom}_{\opn{cts}}(\Qp^\times,L)$ such that
\begin{equation}\label{eq:partial chi_i}
\chi_{v,i}(x) = 1 + \partial\chi_{v,i}(x)\epsilon \qquad \forall x \in \Qp^\times.
\end{equation}

For any $1 \leq c \leq n-1$, let 
\begin{align*}
\alpha_c^\vee \colon \Qp^\times &\longrightarrow T(\Qp)\\
x &\longmapsto \opn{diag}(1,\dots,1,x,x^{-1},1,\dots,1)
\end{align*}
be the associated coroot, with $x$ in the $c$-th entry and $x^{-1}$ in the $(c+1)$-st entry. Then $\chi_v \circ \alpha_c^\vee$ is a character $\Qp^\times \to L[\epsilon]^\times$. From \eqref{eq:partial chi_i}, we see that
\[
\chi_v \circ \alpha_c^\vee(x) = 1 + \Big(\partial\chi_{v,c}(x) - \partial\chi_{v,c+1}(x)\Big)\epsilon \qquad \forall x \in \Qp^\times.
\]
Let 
\begin{equation}\label{eq:partial-chi}
\partial\chi_{v}[c] \defeq \partial\chi_{v,c} - \partial\chi_{v,c+1} \in \opn{Hom}_{\opn{cts}}(\Qp^\times,L).
\end{equation}

For $v \in \opn{T}_1(\Sigma)$, $1 \leq c \leq n-1$, $0 \leq r \leq \ell$, and $\varepsilon : K_\infty/K_\infty^\circ \to \{\pm1\}$ an admissible character, we say the \emph{automorphic BCGS formula} holds for $v$, $c$, $r$ and $\varepsilon$ if 
\[
    \partial \chi_{v}[c] \in \bL^{r,\varepsilon}_c(\pi),
\]
where $\bL^{r,\varepsilon}_c(\pi)$ is the automorphic $\cL$-invariant from Definition \ref{def:auto L-invariant}.

\subsubsection{Gehrmann--Rosso's criterion}
Let 
\[
\chi: \overline{B}(\Q_p)\longrightarrow T(\Q_p)\longrightarrow L[\epsilon]^\times
\]
be a locally analytic character that is trivial modulo $\epsilon$. Let
\begin{equation}\label{eq:Ila}
\mathrm{I}_{\overline{B}}^{\mathrm{la}}(\chi)= \{f\colon G(\Q_p)\rightarrow L[\epsilon] \ \text{locally analytic} : \ f(bg)= \chi(b)f(g) \ \text{for} \ b\in \overline{B}(\Q_p), g\in G(\Q_p) \},
\end{equation}
and similarly let $\opn{I}_{\overline{B}}^{\opn{la}}(1)$ be the analogous space modulo $\epsilon$ (i.e., in the definition we replace $L[\epsilon]$ with $L$, and $\chi$ with the trivial character). As usual, these spaces carry a $G(\Qp)$-action by right-translation. 

Let $\opn{I}_{\overline{B}}^{\opn{la}}(\chi)^*$ denote the continuous $L[\epsilon]$-dual and $\opn{I}_{\overline{B}}^{\opn{la}}(1)^*$ the continuous $L$-dual. Reduction modulo $\epsilon$ yields a map $\mathrm{I}_{\overline{B}}^{\mathrm{la}}(\chi)^* \to \mathrm{I}_{\overline{B}}^{\mathrm{la}}(1)^*$ of $G(\Qp)$-modules, inducing, for any $d \geq 0$ and sign $\varepsilon$, a map
\[
    \opn{red}^{d,\varepsilon}_\chi \colon \mathrm{H}^{d}\Big(G(\Q), \cA_{G, c}(\mathrm{I}_{\overline{B}}^{\mathrm{la}}(\chi)^{\ast})\Big)^\varepsilon \longrightarrow \mathrm{H}^{d}\Big(G(\Q), \cA_{G, c}(\mathrm{I}_{\overline{B}}^{\mathrm{la}}(1)^{\ast})\Big)^\varepsilon.
\]
These cohomology groups are modules over the full Hecke algebra $\bT^{\opn{full}}(K^p)_L$. 

Now let $v \in \opn{T}_1(\Sigma)$ be a tangent vector, and take $\chi = \chi_v \colon \overline{B}(\Qp) \to T(\Qp) \to L[\epsilon]^\times$ the associated character defined in \eqref{eq:chiv}. This is trivial $\newmod{\epsilon}$ by Lemma \ref{lem:centre trivial 3}. For any $1 \leq c \leq n-1$, let $\partial\chi_{v}[c] \in \opn{Hom}_{\opn{cts}}(\Qp^\times,L)$ be the associated additive character from \S\ref{sec:auto BCGS}.

The following is \cite[Lem.\ 2.2]{GehrmannRosso}. They treat cohomology without compact support, but the compact support case follows identically, following the modifications in \cite[\S3.6]{AutomorphicLinvariants}. Let $q' \defeq q-n+1$, and recall that $q' \leq d \leq q'+\ell$ is the range of degrees such that $\pi$ contributes to compactly-supported $p$-arithmetic cohomology. For a $\bT^{\opn{full}}(K^p)_L$-module $H$, recall the $\pi^p$-isotypic image $H\{\pi^p\}$ from Definition \ref{def:pi-isotypic}.

\begin{proposition}\label{prop:GR}
For $0 \leq r \leq \ell$, suppose that the image of 
\[
    \mathrm{red}_{\chi_v}^{q'+r, \varepsilon}\colon  \mathrm{H}^{q'+r}\Big(G(\Q), \cA_{G, c}(\mathrm{I}_{\overline{B}}^{\mathrm{la}}(\chi_v)^{\ast})\Big)^\varepsilon \longrightarrow \mathrm{H}^{q'+r}\Big(G(\Q), \cA_{G, c}(\mathrm{I}_{\overline{B}}^{\mathrm{la}}(1)^{\ast})\Big)^\varepsilon
\]
contains the $\pi^p$-isotypic image $\mathrm{H}^{q'+r}(G(\Q), \cA_{G, c}(\mathrm{I}_{\overline{B}}^{\mathrm{la}}(1)^{\ast}))^\varepsilon\{\pi^p\}$. Then the automorphic BCGS formula holds for $v, \varepsilon, r$, and all $c$; that is,
\[
    \partial\chi_{v}[c] \in \bL_{c}^{r,\varepsilon}(\pi) \qquad \text{ for all $1\leq c \leq n-1$}.
\]
In this case, if $\partial \chi_v[c] \neq 0$, then it is a basis of $\bL_c^{r,\varepsilon}(\pi)$.
\end{proposition}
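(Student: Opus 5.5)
The argument rests on the short exact sequence of $L[\epsilon][G(\Qp)]$-modules
\[
0 \to \opn{I}_{\overline{B}}^{\opn{la}}(1) \xrightarrow{\ \epsilon\ } \opn{I}_{\overline{B}}^{\opn{la}}(\chi_v) \to \opn{I}_{\overline{B}}^{\opn{la}}(1) \to 0,
\]
obtained by splitting $f = f_0 + \epsilon f_1$ using $\chi_v \equiv 1 \newmod{\epsilon}$ (Lemma \ref{lem:centre trivial 3}). Dualising gives an extension of $\opn{I}_{\overline{B}}^{\opn{la}}(1)^*$ by itself. The associated long exact sequence in $\cA_{G,c}$-cohomology identifies the cokernel of $\opn{red}^{q'+r,\varepsilon}_{\chi_v}$ with the image of the connecting map, which is cup product with the extension class $e_v \in \opn{Ext}^1(\opn{I}_{\overline{B}}^{\opn{la}}(1),\opn{I}_{\overline{B}}^{\opn{la}}(1))$. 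The hypothesis that the image of $\opn{red}$ contains the $\pi^p$-isotypic image therefore says precisely that
\[
\delta_v \colon \h^{q'+r}\big(G(\Q),\cA_{G,c}(\opn{I}_{\overline{B}}^{\opn{la}}(1)^*)\big)^\varepsilon \longrightarrow \h^{q'+r+1}\big(G(\Q),\cA_{G,c}(\opn{I}_{\overline{B}}^{\opn{la}}(1)^*)\big)^\varepsilon
\]
vanishes on the $\pi^p$-isotypic image.

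The next step is to transport this vanishing to the Steinberg quotient, where $\Upsilon$ lives. The smooth Steinberg $\opn{St}^{\opn{sm}}$ and the locally analytic $\opn{St}^{\opn{la}}$ are quotients of $\opn{I}_{\overline{B}}^{\opn{la}}(1)$ (via $\opn{I}^{\opn{sm}}_{\overline{B}} \subset \opn{I}^{\opn{la}}_{\overline{B}}$), and $v_c^{\opn{sm}}$ arises from the filtration by the $\opn{I}_{\overline{P}}$. I would show that pushing $e_v$ along $\opn{I}_{\overline{B}}^{\opn{la}}(1) \to \opn{St}^{\opn{la}}$ and pulling back to the subquotient $v_c^{\opn{sm}}$ yields the class in $\opn{Ext}^1_{\opn{an}}(v_c^{\opn{sm}},\opn{St}^{\opn{la}})$ attached by \eqref{e:homs and extensions} to $\chi_v\circ\alpha_c^\vee$, that is, to $\partial\chi_v[c]$. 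This is a computation of the infinitesimal deformation of the parabolic induction restricted to the rank-one Levi of $Q_c$, and is essentially the content of Gehrmann's/Ding's construction of \eqref{e:homs and extensions} via deformations of principal series.

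Functoriality of cup products then gives a commutative square. Its left column is the map $\h^{q'+r}(\cA_{G,c}(\opn{St}^{\opn{sm}}(L)^*))[\pi^p] \to \h^{q'+r}(\cA_{G,c}(\opn{I}_{\overline{B}}^{\opn{la}}(1)^*))$ induced by the duals, and its horizontal maps are $\Upsilon^{q'+r,\varepsilon}_{c,\partial\chi_v[c]}$ and $\delta_v$. This shows that $\Upsilon^{q'+r,\varepsilon}_{c,\partial\chi_v[c]}[\pi^p]$ factors through $\delta_v$ restricted to the $\pi^p$-isotypic image, composed with a map to $\h^{q'+r+1}(\cA_{G,c}(v_c^{\opn{sm}}(L)^*))$. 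The $\pi^p$-isotypic classes indeed land in the isotypic image, since the maps are prime-to-$p$ Hecke equivariant. Hence $\Upsilon$ vanishes, so $\partial\chi_v[c] \in \bL_c^{r,\varepsilon}(\pi)$.

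For the final sentence, Proposition \ref{p: L-invariant}(i) gives $\dim \bL_c^{r,\varepsilon}(\pi) \leq 1$. A nonzero element of this space is therefore a basis.

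The main obstacle is the compatibility of extension classes in the second step: identifying the image of $e_v$ in $\opn{Ext}^1_{\opn{an}}(v_c^{\opn{sm}},\opn{St}^{\opn{la}})$ with the class of $\partial\chi_v[c]$, with the correct sign. It also requires checking that the relevant maps between duals are compatible with the $\opn{St}^{\opn{cts}}$ identification used to define $\Upsilon$. I would first treat this via Gehrmann's explicit description of \eqref{e:homs and extensions} through $\opn{I}_{\overline{Q_c}}$ and reduce to the $\GL_2$-Levi computation.
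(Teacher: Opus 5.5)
Your route is the one the paper sketches before deferring to Gehrmann--Rosso. It runs through the self-extension $\opn{I}^{\opn{la}}_{\overline B}(\chi_v)$ of $\opn{I}^{\opn{la}}_{\overline B}(1)$, then $\opn{im}(\opn{red}^{q'+r,\varepsilon}_{\chi_v})=\ker(\delta_v)$ from the long exact sequence, then a comparison of $\delta_v$ with $\Upsilon$. The final sentence follows from $\dim_L\bL_c^{r,\varepsilon}(\pi)\le 1$, exactly as in the paper. The comparison step, however, does not work as you state it once $n\ge 3$.

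\textbf{The square does not exist for $n\ge 3$.} Any $G(\Qp)$-map from a smooth representation to $\opn{I}^{\opn{la}}_{\overline B}(1)$ lands in the smooth vectors $\opn{I}^{\opn{sm}}_{\overline B}(L)$. That module has the trivial representation as its socle. Meanwhile $v_c^{\opn{sm}}(L)$ is only a \emph{quotient} of the subrepresentation $\opn{I}^{\opn{sm}}_{\overline{Q_c}}(L)$. So there is no nonzero map $v_c^{\opn{sm}}(L)\to \opn{I}^{\opn{la}}_{\overline B}(1)$ to dualise. Hence there is no induced map $\mathrm{H}^{q'+r+1}(G(\Q),\cA_{G,c}(\opn{I}^{\opn{la}}_{\overline B}(1)^*))\to \mathrm{H}^{q'+r+1}(G(\Q),\cA_{G,c}(v_c^{\opn{sm}}(L)^*))$, and the square through which you factor $\Upsilon$ does not exist. (For $n=2$ it does exist, since $v_1^{\opn{sm}}(L)$ is the constants.)

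\textbf{What you actually get, and what is missing.} Phrase the extension-class computation as follows. Restricting $e_v$ to $\opn{I}^{\opn{sm}}_{\overline{Q_c}}(L)$ and pushing out to $\opn{St}^{\opn{la}}(L)$ gives the inflation of $\Delta_{\partial\chi_v[c],c}$ along $\opn{I}^{\opn{sm}}_{\overline{Q_c}}(L)\twoheadrightarrow v_c^{\opn{sm}}(L)$. Naturality of connecting maps then gives
\[
\iota\circ\Upsilon^{q'+r,\varepsilon}_{c,\partial\chi_v[c]}=i^*\circ\delta_v\circ s
\]
as maps into $\mathrm{H}^{q'+r+1}(G(\Q),\cA_{G,c}(\opn{I}^{\opn{sm}}_{\overline{Q_c}}(L)^*))^\varepsilon$. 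Here $s$ is your left column, and $\iota$, $i^*$ are induced by the quotient map and by the inclusion $\opn{I}^{\opn{sm}}_{\overline{Q_c}}(L)\subset \opn{I}^{\opn{la}}_{\overline B}(1)$. So your hypothesis only yields $\iota\circ\Upsilon[\pi^p]=0$.

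To conclude, you need the further input that $\iota$ is injective on $\pi^p$-isotypic parts in degree $q'+r+1$. Its kernel is the image of the connecting map from $\mathrm{H}^{q'+r}(G(\Q),\cA_{G,c}(W^*))^\varepsilon$, where $W=\sum_{P\supsetneq Q_c}\opn{I}^{\opn{sm}}_{\overline P}(L)$. The constituents of $W$ are generalised Steinberg representations for parabolics strictly containing $Q_c$. Showing this image meets the $\pi^p$-part trivially needs control of $\pi^p$-isotypic cohomology with such coefficients, in the style of Gehrmann. This is part of what the paper hands off to Gehrmann--Rosso (the ``essentially'' and ``further massaging'' in its sketch), and your proposal does not address it.
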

\begin{proof}
All but the last part is \cite{GehrmannRosso}. The last part follows as $\dim_{L} \bL_{c}^{r,\varepsilon}(\pi) \leq 1$ (see Proposition \ref{p: L-invariant}).

For completeness, we give a rough sketch of the proof. Let $\lambda = \partial\chi_v[c] \in \opn{Hom}_{\opn{cts}}(\Qp^\times,L)$. Via an explicit process analogous to \eqref{e:homs and extensions}, one interprets $\opn{I}_{\overline{B}}^{\opn{la}}(\chi_v)$ as an extension of $I_{\overline{B}}^{\opn{la}}(1)$. (Note that the locally analytic Steinberg is a quotient of the latter, relating this to \eqref{e:homs and extensions}.) One gets from this a long exact sequence of cohomology, and one essentially has $\lambda \in \bL_{c}^{r,\varepsilon}(\pi)$ if the $\pi^p$-isotypic image is contained in the kernel of the connecting map from degree $q'+r$ to $q'+r+1$. Now, via some commutative algebra, the image of the map $\opn{red}^{q'+r,\varepsilon}_{\chi_v}$ can be identified with the image of the previous map in the long exact sequence, hence the kernel of the connecting map. With some further massaging, the result follows.
\end{proof}

In practice, we verify a stronger (but simpler to check) local condition. Consider the prime-to-$S$ eigenpacket $\phi^S$ associated with $\pi$. If $H$ is a $\bT^
S(K^p)_L$-module, let $H_{\phi^S}$ denote its localisation at $\ker(\phi^S)$.

\begin{corollary}\label{cor:GR localisation}
With notation as above, suppose that the localisation
\[
    \mathrm{red}_{\chi_v, \phi^S}^{q'+r, \varepsilon}\colon  \mathrm{H}^{q'+r}\Big(G(\Q), \cA_{G, c}(\mathrm{I}_{\overline{B}}^{\mathrm{la}}(\chi_v)^{\ast})\Big)^\varepsilon_{\phi^S}\longrightarrow \mathrm{H}^{q'+r}\Big(G(\Q), \cA_{G, c}(\mathrm{I}_{\overline{B}}^{\mathrm{la}}(1)^{\ast})\Big)^\varepsilon_{\phi^S}
\]
of $\mathrm{red}_{\chi_v}^{q'+r,\varepsilon}$ is surjective. Then $\partial\chi_{v}[c] \in \bL_{c}^{r,\varepsilon}(\pi)$ for all $1\leq c \leq n-1$.
\end{corollary}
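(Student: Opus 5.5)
The plan is to deduce Corollary \ref{cor:GR localisation} from Proposition \ref{prop:GR}. We show that surjectivity after localising at $\phi^S$ forces the image of $\mathrm{red}^{q'+r,\varepsilon}_{\chi_v}$ to contain the $\pi^p$-isotypic image.

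Write $H_\chi \defeq \mathrm{H}^{q'+r}(G(\Q), \cA_{G,c}(\mathrm{I}^{\mathrm{la}}_{\overline{B}}(\chi_v)^*))^\varepsilon$ and $H_1 \defeq \mathrm{H}^{q'+r}(G(\Q), \cA_{G,c}(\mathrm{I}^{\mathrm{la}}_{\overline{B}}(1)^*))^\varepsilon$. The map $\mathrm{red} \colon H_\chi \to H_1$ is equivariant for $\bT^{\opn{full}}(K^p)_L$, and hence for its commutative subalgebra $\bT^S(K^p)_L$.

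First, we locate the isotypic image inside the localisation. Let $f \in H_1[\pi^p]$. Every $x \in (\pi_f^p)^{K^p}$ is an eigenvector for $\bT^S(K^p)_L$ with eigencharacter $\phi^S$, since the unramified Hecke operators act on $\pi_f^p$ through $\phi^S$. Hence $f(x)$ is annihilated by $\ker(\phi^S)$. It follows that $H_1\{\pi^p\}$ is contained in the $\ker(\phi^S)$-torsion submodule $H_1[\ker\phi^S]$.

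Next, we identify this torsion with part of the localisation. The $\bT^S$-action on these cohomology groups factors through a finite-dimensional quotient on the relevant generalised eigenspaces; where finiteness is unclear, we argue directly on $\m$-torsion elements, with $\m = \ker\phi^S$ a maximal ideal. For any module $H$ and any element $y$ killed by $\m$, every $s \notin \m$ acts invertibly on $Ly$, because $s$ acts on it by the nonzero scalar $\phi^S(s)$. So $y$ maps injectively to $H_\m$, and the $\m$-torsion $H[\m]$ embeds into $H_\m$ as $H_\m[\m]$. Moreover, the natural map $H[\m^\infty] \to H_\m$ is an isomorphism onto $H_\m[\m^\infty]$, using only that $\m$ is maximal.

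Now take $y \in H_1\{\pi^p\} \subset H_1[\m]$. By the assumed surjectivity, $y/1 = \mathrm{red}(z)/s$ for some $z \in H_\chi$ and $s \notin \m$. Thus there is $t \notin \m$ with $t\,s\,y = t\,\mathrm{red}(z)$ in $H_1$. Since $ts$ acts on $Ly$ as the scalar $\phi^S(ts) \neq 0$, we get
\[
y = \phi^S(ts)^{-1}\, t\,\mathrm{red}(z) = \mathrm{red}\big(\phi^S(ts)^{-1} t z\big),
\]
which lies in the image of $\mathrm{red}$ by equivariance.

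Hence $H_1\{\pi^p\} \subset \opn{im}(\mathrm{red}^{q'+r,\varepsilon}_{\chi_v})$, and Proposition \ref{prop:GR} gives $\partial\chi_v[c] \in \bL^{r,\varepsilon}_c(\pi)$ for all $c$.

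The only point needing care is that the elements of the isotypic image are genuinely $\ker(\phi^S)$-torsion. This requires checking that the prime-to-$S$ part of $\bT^{\opn{full}}(K^p)$ acts on $(\pi_f^p)^{K^p}$ via $\phi^S$. That holds by the definition of $\phi$ from $\pi_f^K$ and the commutativity of the unramified Hecke algebra with the full one at places $v \notin S$. No finiteness of the cohomology is needed, since the argument above is element-wise.
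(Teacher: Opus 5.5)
Your argument is correct and follows the paper's route: elements of the isotypic image are $\phi^S$-eigenvectors (the paper passes through the intermediate $\pi^S$-isotypic image, you go there directly), these embed in the localisation, and a localised preimage becomes a global one by Hecke-equivariance and $\phi^S(s)\neq 0$ for $s\notin\ker\phi^S$. Your explicit auxiliary $t\notin\ker\phi^S$, coming from the equality $y/1=\mathrm{red}(z)/s$ in the localisation, is slightly more careful than the paper, which takes $\phi^S(r)^{-1}\Phi$ as the lift directly. Your side claim that $H[\m^\infty]\to H_\m[\m^\infty]$ is an isomorphism \emph{using only that $\m$ is maximal} is never used, and it is false in general without finite generation of $\m$ (which fails for the non-Noetherian $\bT^S(K^p)_L$), so drop it.
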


\begin{proof}

To ease notation, for $\chi = \chi_v$ or $1$, let 
\[
    \h^{q'+r,\varepsilon}_v(\chi) \defeq \mathrm{H}^{q'+r}(G(\Q), \cA_{G, c}(\mathrm{I}_{\overline{B}}^{\mathrm{la}}(\chi)^{\ast}))^\varepsilon.
\]

We first translate the condition in Proposition \ref{prop:GR} from a prime-to-$p$ to a prime-to-$S$ condition. If $H$ is a $\bT^{\opn{full}}(K^p)_L$-module, define its \emph{$\pi^S$-isotypic part} to be 
\[
    H[\pi^S] \defeq \opn{Hom}_{\bT^S(K)_L}((\pi_f^p)^{K^p}, H),
\]
and its \emph{$\pi^S$-isotypic image} to be $H\{\pi^S\}\defeq \sum_{f \in H[\pi^S]} \opn{im}(f) \subset H$. 

As any $\bT^{\opn{full}}(K^p)$-equivariant map is $\bT^S$-equivariant, we see (for any $H$) that $H[\pi^p] \subset H[\pi^S]$, hence $H\{\pi^p\} \subset H\{\pi^S\}$. Applying to $H = \h^{q'+r,\varepsilon}_v(1)$, an immediate corollary of Proposition \ref{prop:GR} is:
\begin{quote}
$(\ddagger)$ \ \ \emph{If $\opn{im}(\opn{red}_{\chi_v}^{q'+r,\varepsilon})$ contains the $\pi^S$-isotypic image, then $\partial\chi_{v}[c] \in \bL_{c}^{r,\varepsilon}(\pi)$ for all $c$.}
\end{quote}

Recall that for all $\ell \not\in S$,  $\pi_\ell$ is unramified and $K_\ell = \GL_n(\Z_\ell)$. In particular every element of $(\pi_f^p)^{K^p}$ is a $\bT^S(K^p)_L$-eigenvector with eigenvalues given by $\phi^S$. From the definition, for any $H$, we see that the $S$-isotypic image $H\{\pi^S\}$ is contained within the $\phi^S$-eigenspace for $\bT^S(K^p)_L$. From $(\ddagger)$, it thus suffices to show that $\opn{red}_{\chi_v}^{q'+r,\varepsilon}$ contains the $[\bT^S(K^p)=\phi^S]$-eigenspace in $\h^{q'+r,\varepsilon}_v(1)$.

Now we consider localisations. As $\ker(\phi^S)$ is prime, the $[\bT^S(K^p)=\phi^S]$-eigenspace in $\h^{q'+r,\varepsilon}_v(1)$ injects naturally and Hecke-equivariantly into the localisation $\h^{q'+r,\varepsilon}_v(1)_{\phi^S}$;  we thus consider it a submodule. 

Suppose the condition of the corollary holds true, so $\opn{red}^{q'+r,\varepsilon}_{\chi_v,\phi^S}$ is surjective. If 
\[
    \varphi \in \h^{q'+r,\varepsilon}_v(1)[\bT^S(K^p)=\phi^S] \subset \h^{q'+r,\varepsilon}_v(1)_{\phi^S}
\]
is a $\phi^S$-eigenvector, then by surjectivity there exists $r \in \bT^S(K^p)_L\backslash \ker(\phi^S)$ and $\Phi \in \h^{q'+r,\varepsilon}_v(\chi_v)$ such that
\[
    \opn{red}^{q'+r,\varepsilon}_{\chi_v,\phi^S}(r^{-1} \Phi) = \varphi.
\]
Since $r \not\in \ker(\phi^S)$, we have $\phi^S(r) \in L\backslash\{0\}$. Since the reduction map is prime-to-$S$ Hecke-equivariant, we see that $\phi^S(r)^{-1}\Phi \in \h^{q'+r,\varepsilon}_v(\chi_v)$ is a global vector lifting $\varphi$ under $\opn{red}^{q'+r,\varepsilon}_{\chi_v}$. In particular, this (global) reduction map surjects onto the $[\bT^S(K^p)=\phi^S]$-eigenspace. By the arguments above, its image thus contains the $\pi^S$-isotypic image, and we conclude by $(\ddagger)$.
\end{proof}

\subsection{Koszul resolutions and overconvergent cohomology} \label{ss: Koszul resolutions and overconvergent cohomology}

We now verify the criterion of Gehrmann--Rosso in some special cases. For this, we use the Koszul complex in families. Setting up notation:
\begin{itemize}
\item Let $R$ be an $L$-affinoid algebra. We allow $R=L$.
\item Let $\chi: \overline{B}(\Q_p)\rightarrow T(\Q_p)\rightarrow R^{\times}$ be a locally analytic character. Let $\opn{I}_{\overline{B}}^{\opn{la}}(\chi)$ be as in \eqref{eq:Ila} (with $L[\epsilon]$ replaced with $R$), and $\opn{I}_{\overline{B}}^{\opn{la}}(\chi)^*$ its continuous $R$-dual.
\item Let $\chi_0: \mathrm{Iw}\cap \overline{B}(\Q_p)\rightarrow T(\Zp) \to R^{\times}$ be the restriction of $\chi$.
\item As in \S\ref{sec:set-up}, we recall from \cite[\S2.2]{Han17}\footnote{To be precise, we work with an opposite convention to Hansen. He considers locally analytic inductions from the Borel. We instead use induction from the opposite Borel, as in \cite[Def.\ 3.5]{BW20}, giving cleaner comparison with $\opn{I}_{\overline{B}}^{\opn{la}}(\chi)$.}
 the $R$-module $\crA_{\chi_0}$ of locally analytic functions of weight $\chi_0$ on $\mathrm{Iw}$ (that is, the locally analytic induction of $\chi_0$ from $\overline{B}(\Zp)\cap \opn{Iw}$ to $\opn{Iw}$).  We also have its continuous $R$-dual $\sD_{\chi_0}$, the $R$-module of locally analytic distributions of weight $\chi_0$ on $\opn{Iw}$.
 \item Let $r[\chi_0]$ be the smallest integer $r$ such that $\chi$ is $r$-analytic. As in \cite[\S2.2]{Han17}, for any $r \geq r[\chi_0]$, we denote by $\bfA^{r}_{\chi_0}$ and $\bfD^{r}_{\chi_0}$ the $r$-analytic versions of $\sA_{\chi_0}$ and $\sD_{\chi_0}$; thus $\sA_{\chi_0} = \cup_{r\geq r[\chi_0]} \bfA_{\chi_0}^{r}$, whilst $\sD_{\chi_0} = \cap_{r\geq r[\chi_0]}\bfD_{\chi_0}^{r}$.
 \item If $N$ is an $R[\opn{Iw}]$-module, let
\[
\cInd_{\opn{Iw}}^{G(\Qp)} N \defeq \big\{f \colon G(\Qp) \to N : f\text{ has compact support}, f(gk) = k^{-1}\cdot f(g) \ \ \forall g\in G(\Qp),k\in\opn{Iw}\big\}
\]
denote its compact induction to an $R[G(\Qp)]$-module. 
\end{itemize}

The space $\crA_{\chi_0}$ can naturally be identified with the subspace of functions in $\mathrm{I}_{\overline{B}}^{\mathrm{la}}(\chi)$ with support in $\overline{B}(\Q_p)\mathrm{Iw}$. Frobenius reciprocity then yields an $R[G(\Q_p)]$-equivariant map 
\[
    \mathrm{aug}_{\chi}\colon \cInd_{\mathrm{Iw}}^{G(\Q_p)}(\bfA^{r}_{\chi_0})\longrightarrow \cInd_{\mathrm{Iw}}^{G(\Q_p)}(\crA_{\chi_0})\longrightarrow \mathrm{I}_{\overline{B}}^{\mathrm{la}}(\chi),
\]
where the first map is the natural inclusion. 

To define the differentials in the complex we must introduce some operators. Let $T^- \subset T(\Qp)$ be the set of elements such that $v_p(\alpha_j(t)) \leq 0$ for all simple roots $\alpha_j$; that is, the $p$-adic valuations of the entries of $t$ increase down the diagonal. There is a natural action of $T^-$ on $\bfA_{\chi_0}^{r}$ induced by conjugation on $B(\Zp) \subset \Iw$. For any $t \in T^-$, let 
\[
U_{t} \colon G(\mbb{Q}_p) \to \opn{End}_R(\bfA^{r}_{\chi_0})
\]
denote the unique bi-$\opn{Iw}$-equivariant morphism satisfying
\begin{itemize}
\item $\opn{supp}(U_{t}) = \opn{Iw} \cdot t \cdot \opn{Iw}$, and 
\item $U_t(t) = t \star -$ (where $\star$ denotes the action of $T^-$ on $\bfA^{r}_{\chi_0}$).
\end{itemize}
This gives rise to an endomorphism of $\cInd_{\opn{Iw}}^{G(\mbb{Q}_p)}(\bfA^{r}_{\chi_0})$, also denoted $U_t$ and described in \cite[(2.2)]{KS12}.

For $i \in \{1,\dots,n-1\}$ recall $t_i = \opn{diag}(p,\dots,p, 1,\dots, 1)$, with $i$ lots of $p$, from \S\ref{ss: Infinitesimal families}. Then $t_i^{-1} \in T^-$, and we let 
\[
    y_i \defeq U_{t_i^{-1}} - \chi(t_i),
\]
an endomorphism of $\cInd_{\opn{Iw}}^{G(\mbb{Q}_p)}(\bfA^{r}_{\chi_0})$. 

\begin{definition}
       The \emph{Koszul complex}, concentrated in degrees $[1-n,0]$, is 
\[
    \Lambda^{\bullet}_{R}(R^{\oplus n-1}) \otimes_{R} \cInd_{\opn{Iw}}^{G(\Q_p)}(\bfA^{r}_{\chi_0}),
\]
with differentials $d_k$ given by
\begin{align*}
    d_k \colon \Lambda^{k}_{R}(R^{\oplus n-1}) \otimes_{R} \cInd_{\opn{Iw}}^{G(\Q_p)}(\bfA^{r}_{\chi_0}) &\to \Lambda^{k-1}_{R}(R^{\oplus 2}) \otimes_{R} \cInd_{\opn{Iw}}^{G(\Q_p)}(\bfA^{r}_{\chi_0}) \\
    e_{i_1} \wedge \cdots \wedge e_{i_k} \otimes f &\mapsto \sum_{l=1}^k (-1)^{l+1} e_{i_1} \wedge \cdots \wedge \widehat{e_{i_l}} \wedge \cdots \wedge e_{i_k} \otimes y_{i_l}(f) .
\end{align*}
Here $e_1, \dots, e_{n-1}$ denotes the standard basis of $R^{\oplus n-1}$, and $\widehat{e_{i_l}}$ means omit the element $e_{i_l}$. 
\end{definition}

\begin{lemma}[{c.f. \cite[Theorem 2.5]{KS12}, \cite[Theorem 3.4]{GehrmannRosso}, \cite[Prop.\ 4.12]{Tarrach}}] \label{Lem:KoszulResolution}
The \emph{augmented Koszul complex}
\[
\Lambda^{\bullet}_{R}(R^{\oplus n-1}) \otimes_{R} \cInd_{\opn{Iw}}^{G(\Q_p)}(\bfA^{r}_{\chi_0} ) \xrightarrow{\ \ \opn{aug}_\chi \ \ } \mathrm{I}_{\overline{B}}^{\opn{la}}(\chi)\rightarrow 0
\]
is exact.
\end{lemma}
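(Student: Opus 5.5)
We follow Kohlhaase--Schraen \cite[Thm.\ 2.5]{KS12}, who treat the case $R = L$ and the locally analytic principal series of a split group. The argument has two parts: a geometric reduction to the big cell, and a family-wise verification of exactness.

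\emph{Step 1: surjectivity of the augmentation.} We use the Bruhat--Iwahori decomposition: $\overline{B}(\Qp)\backslash G(\Qp)$ is covered by finitely many translates $\overline{B}(\Qp)\, w\,\opn{Iw}$, for $w$ in the Weyl group. Any $f \in \mathrm{I}_{\overline{B}}^{\opn{la}}(\chi)$ can be decomposed with a locally analytic partition of unity into pieces supported on translates $\overline{B}(\Qp)\,\opn{Iw}\, g$. Each such piece is $g$-translate of an element of $\crA_{\chi_0} \subset \mathrm{I}_{\overline{B}}^{\opn{la}}(\chi)$, which is $r$-analytic for $r \gg 0$ by compactness. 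Since $\cInd$ over $R$ is built from all $r$, and the $U_t$-operators improve analyticity, we can reduce to a fixed $r \geq r[\chi_0]$. This gives surjectivity of $\opn{aug}_\chi$.

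\emph{Step 2: exactness of the Koszul part.} We follow \cite[\S2]{KS12} (and \cite[Thm.\ 3.4]{GehrmannRosso}, \cite[Prop.\ 4.12]{Tarrach} for families). The compact induction $\cInd_{\opn{Iw}}^{G(\Qp)}\bfA^r_{\chi_0}$ decomposes along the Iwahori--Bruhat decomposition $G(\Qp) = \bigsqcup \opn{Iw}\, w t\, \opn{Iw}$, with $t \in T(\Qp)/T(\Zp)$. The operators $U_{t_i^{-1}}$ generate a commutative monoid algebra $R[T^-/T(\Zp)] \cong R[u_1,\dots,u_{n-1}]$ modulo the centre; one checks that the centre acts through $\chi$, using that $\Phi$ is trivial on the centre.

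The key claim is that $\cInd_{\opn{Iw}}^{G(\Qp)}\bfA^r_{\chi_0}$ is free over $R[u_1,\dots,u_{n-1}]$, with the kernel of the augmentation equal to the ideal generated by the $u_i - \chi(t_i)$. Given this, the Koszul complex of the regular sequence $(u_i - \chi(t_i))_i$ on a free module is a resolution of its quotient. We then identify that quotient with $\mathrm{I}_{\overline{B}}^{\opn{la}}(\chi)$ via Frobenius reciprocity: $\cInd_{\opn{Iw}}^{G}\bfA \otimes_{R[T^-]} R_\chi$ corresponds to induction from $\overline{B}$.

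We first prove freeness for the smooth/Iwahori part, namely the Iwahori--Matsumoto presentation of $\cInd_{\opn{Iw}}^G \mathbbm{1}$ over the Bernstein subalgebra. We then propagate it to $\bfA^r_{\chi_0}$ using the Iwahori factorisation $\opn{Iw} = \overline{N}_0 T_0 N_0$ and the contracting action of $t^{-1}$ on $N_0$. Since everything is compatible with base change in $R$, it suffices to show that $R$-flatness is preserved and to argue fibrewise via Nakayama over the affinoid $R$.

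\emph{Main obstacle.} The identification of the cokernel of the top differential with $\mathrm{I}_{\overline{B}}^{\opn{la}}(\chi)$ is the hard step. It requires showing that the functions supported on $\overline{B}(\Qp) t^{-1}\opn{Iw}$, as $t$ grows, are exhausted modulo the images of $y_i$. This uses that $U_{t^{-1}}$ shrinks the support towards $\overline{B}\,\opn{Iw}$ while preserving $r$-analyticity. We would first try to transport Kohlhaase--Schraen's argument verbatim, with $L$ replaced by $R$, checking that their Banach-space estimates hold for orthonormalizable $R$-Banach modules.
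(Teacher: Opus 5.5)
You have a genuine gap: your treatment of the centre.

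Most of your plan (surjectivity of $\opn{aug}_\chi$ by cutting a function into pieces supported on small translates of $\overline{B}(\Qp)\opn{Iw}$, then exactness of the Koszul part via the $U_{t_i^{-1}}$, run over $R$) is what the paper imports from \cite[Thm.\ 2.5]{KS12}, so that part is fine in spirit. The problem is your claim that the centre acts on $\cInd_{\opn{Iw}}^{G(\Qp)}(\bfA^{r}_{\chi_0})$ through $\chi$, so that the $n-1$ elements $y_1,\dots,y_{n-1}$ suffice. This is false for every $\chi$. The subgroup $Z(\Zp)\subset\opn{Iw}$ does act through $\chi_0$. But $z=\opn{diag}(p,\dots,p)$ acts on the compact induction by translating supports, freely, and the $y_i$ only involve $t_1,\dots,t_{n-1}$, so the central direction is never cut down.

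Already for $n=1$ the augmented complex is $R[z^{\pm1}]\to R\cdot\chi\to 0$, which is not exact. For general $n$, take an unramified $\mu$ on $T(\Qp)$ with $\mu(t_i)=1$ for $i<n$ and $\mu(z)=2$. Then $\chi$ and $\chi\mu$ give literally the same Koszul complex. If $\opn{coker}(d_1)$ were $\mathrm{I}^{\mathrm{la}}_{\overline{B}}(\chi)$, the surjection $\opn{aug}_{\chi\mu}$ would make $\mathrm{I}^{\mathrm{la}}_{\overline{B}}(\chi\mu)$ a $G(\Qp)$-equivariant quotient of $\mathrm{I}^{\mathrm{la}}_{\overline{B}}(\chi)$. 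That is absurd, since $z$ acts on them by $2\chi(z)$ and $\chi(z)$.

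So your key claim (freeness over $R[u_1,\dots,u_{n-1}]$, with augmentation kernel generated by the $u_i-\chi(t_i)$) cannot hold for the $\opn{GL}_n$ compact induction. The central character must be built into the induced module. The paper does this: for $\chi$ trivial on $Z(\Qp)$ it passes to $\opn{PGL}_n$, inducing from the image of $\opn{Iw}$. There $t_1,\dots,t_{n-1}$ give a basis of the cocharacters and \cite{KS12} applies.

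You also cannot get the reduction to trivial central character from ``$\Phi$ is trivial on the centre''. The lemma is for an arbitrary locally analytic $\chi$. Even for the characters it is applied to, $\chi_\Omega(\opn{diag}(x,\dots,x))=\prod_i\kappa^{\opn{univ}}_i(x)$ for $x\in\Zp^\times$ is a non-constant function on $\Omega$. Lemma \ref{lem:centre trivial 2} only controls the $T(\Qp)/T(\Zp)$-part.

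The missing step is the paper's twist:
\begin{enumerate}
\item after a finite \'etale extension $R\to S$, choose $\nu\colon\Qp^\times\to S^\times$ with $\chi|_{Z(\Qp)}=\nu^n$;
\item write $\chi=\omega\cdot(\nu\circ\det)$ with $\omega$ trivial on $Z(\Qp)$;
\item twisting by $\nu\circ\det$ identifies the augmented Koszul complex for $\omega$ with the one for $\chi$;
\item descend along $R\to S$.
\end{enumerate}

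Finally, your fallback of arguing ``fibrewise via Nakayama'' would not work. These modules are far from finitely generated over $R$, and fibrewise exactness of a complex of flat $R$-modules does not imply exactness: for a domain $R$, take $\opn{Frac}(R)$ in a single degree. The family statement has to be proved directly over $R$, as in the family versions \cite{GehrmannRosso}, \cite{Tarrach} cited with the lemma.
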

\begin{proof} 
Suppose first that $\chi$ is trivial on the centre of $G(\Qp)$. Then we can identify the induction $\opn{I}_{\overline{B}}^{\opn{la}}(\chi)$ as an induction from the lower-triangular Borel in $\mathrm{PGL}_n$. Similarly, we can view $\bfA^r_{\chi_0}$ as an induction to the image of $\opn{Iw}$ in $\opn{PGL}_n$. The lemma is thus a consequence of the same result for $\mathrm{PGL}_n$ proved by Kohlhasse and Schraen in \cite[Theorem 2.5]{KS12}. Note we are free to arrange $\overline{B}$ to be the choice of Borel and $\opn{Iw}$ to be the choice of Iwahori subgroup \emph{op.\ cit}.

In general, there exists a finite \'{e}tale extension $R \to S$ of $L$-affinoid algebras and a character $\nu \colon \mbb{Q}_p^{\times} \to S^{\times}$ such that $\chi|_{Z_G(\mbb{Q}_p)} = \nu^n$. Therefore we can factorise $\chi$ as $\chi = \omega \cdot (\nu \circ \opn{det})$ over $S$, where $\omega = \chi \cdot (\nu \circ \opn{det})^{-1}$ is trivial on the centre. By the first part of the proof, we know that the augmented Koszul complex 
\[
\Lambda^{\bullet}_{S}(S^{\oplus n-1}) \otimes_{R} \cInd_{\opn{Iw}}^{G(\Q_p)}(\bfA^{r}_{\omega_0} ) \xrightarrow{\ \ \opn{aug}_\omega \ \ } \mathrm{I}_{\overline{B}}^{\opn{la}}(\omega)\rightarrow 0
\]
is exact, and the twist of this complex by $(\nu \circ \opn{det})$ (which is still exact) is identified with the augmented Koszul complex
\[
\Lambda^{\bullet}_{S}(S^{\oplus n-1}) \otimes_{S} \cInd_{\opn{Iw}}^{G(\Q_p)}(\bfA^{r}_{\chi_0} ) \xrightarrow{\ \ \opn{aug}_\chi \ \ } \mathrm{I}_{\overline{B}}^{\opn{la}}(\chi)\rightarrow 0
\]
via the natural $G(\mbb{Q}_p)$-equivariant identifications 
\[
\cInd_{\opn{Iw}}^{G(\Q_p)}(\bfA^{r}_{\omega_0} ) \otimes (\nu \circ \opn{det}) \cong \cInd_{\opn{Iw}}^{G(\Q_p)}(\bfA^{r}_{\omega_0} \otimes (\nu \circ \opn{det})_0 ) \cong \cInd_{\opn{Iw}}^{G(\Q_p)}(\bfA^{r}_{\chi_0} )
\]
and $\mathrm{I}_{\overline{B}}^{\opn{la}}(\omega) \otimes (\nu \circ \opn{det}) \cong \mathrm{I}_{\overline{B}}^{\opn{la}}(\chi)$. The lemma now follows from finite \'{e}tale descent.
\end{proof}

Now we dualise. Define a complex $M^{\bullet}$ 
\[
M^\bullet = \Lambda^{\bullet}_{R}(R^{\oplus n-1}) \otimes_{R} \opn{Ind}_{\opn{Iw}}^{G(\Q_p)}(\bfD^{r}_{\chi_0})
\]
of $G(\mbb{Q}_p)$-representations, concentrated in degrees $[0, n-1]$, with differentials given by the duals of $d_k$ above (identifying $M^k$ with the continuous dual of $\Lambda^k_R(R^{\oplus n-1}) \otimes_R \cInd_{\opn{Iw}}^{G(\mbb{Q}_p)}(\bfA^{r}_{\chi_0})$). Note that the dual of $d_0$ is given by
\begin{equation}\label{eq:d_0}
d^*_0(f) = \sum_{i=1}^{n-1} e_i \otimes z_i(f),
\end{equation}
where $z_i = U_{t_i} - \chi(t_i)$ (resp. $U_{t_i}$) denotes the dual of $y_i$ (resp. $U_{t_i^{-1}}$). From Lemma \ref{Lem:KoszulResolution} we obtain:

\begin{corollary} \label{c: dual spectral sequence}
With notation and assumptions as in Lemma \ref{Lem:KoszulResolution}, assume also that $\chi(t_i) \in (R^+)^\times$ for each $i$. Then there is a resolution of $G(\Q)$-representations
\[
0\rightarrow \mathcal{A}_{G, c}(\mathrm{I}_{\overline{B}}^{\opn{la}}(\chi)^{\ast})\rightarrow \mathcal{A}_{G, c}(M^{\bullet}).
\]
\end{corollary}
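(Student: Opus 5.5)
The plan is to dualise the exact augmented Koszul complex of Lemma \ref{Lem:KoszulResolution} and then apply the exact functor $\cA_{G,c}(-)$. The key steps are as follows.

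\emph{Dualising.} Taking continuous $R$-duals term by term, the continuous dual of $\cInd_{\opn{Iw}}^{G(\Qp)}(\bfA^r_{\chi_0})$ is $\opn{Ind}_{\opn{Iw}}^{G(\Qp)}(\bfD^r_{\chi_0})$. Indeed, a compactly supported induction is a locally convex direct sum over $G(\Qp)/\opn{Iw}$ of copies of $\bfA^r_{\chi_0}$, and its dual is the corresponding product, which is the full induction. So the dual of the Koszul complex is exactly $M^\bullet$. The dual of $\opn{aug}_\chi$ then gives an augmentation $\opn{I}^{\opn{la}}_{\overline{B}}(\chi)^* \to M^0$, and we obtain a complex
\[
0 \to \opn{I}^{\opn{la}}_{\overline{B}}(\chi)^* \to M^0 \to M^1 \to \cdots \to M^{n-1} \to 0.
\]
We must show this complex is exact.

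\emph{Exactness after dualising.} This is the main obstacle. Dualising does not preserve exactness in general; we need the Koszul complex to be strictly exact, with closed images, so that a Hahn--Banach or open mapping argument applies. I would argue as follows.

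Each term $\Lambda^k\otimes\cInd(\bfA^r_{\chi_0})$ is a countable locally convex direct sum of $R$-Banach modules, hence an LB-type space. The term $\opn{I}^{\opn{la}}_{\overline{B}}(\chi)$ is also of compact type or LB type, being the inductive limit of its $r$-analytic pieces.

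The hypothesis $\chi(t_i)\in (R^+)^\times$ ensures that the operators $y_i = U_{t_i^{-1}}-\chi(t_i)$ preserve the natural unit-ball lattices $\cInd(\bfA^{r,+}_{\chi_0})$. Here one uses that $U_{t_i^{-1}}$ acts through $t_i^{-1}\star$, which is norm-decreasing on $\bfA^r$, and that $\chi(t_i)$ is a unit of norm one. It follows that all the differentials are bounded with respect to compatible integral structures.

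With this, I would follow Kohlhaase--Schraen and Gehrmann--Rosso, \cite[Thm.\ 3.4]{GehrmannRosso}. They deduce exactness of the dual complex by checking that the Koszul complex is exact on $\opn{Iw}$-stable pieces given by supports in finitely many cosets. On these pieces everything is Banach and the open mapping theorem gives strictness. Continuous duals of strict exact sequences of Banach (or Fréchet/LB) spaces are exact by Hahn--Banach; over an affinoid $R$ one uses its Banach-module version over $R$, or reduces to $R=L$ by finite \'etale descent as in the proof of Lemma \ref{Lem:KoszulResolution}. Passing to the limit over supports gives the result, because a dual of a direct sum is a product and products are exact. The integrality hypothesis is used precisely to ensure the filtration by supports is respected up to bounded error, so the relevant Mittag-Leffler-type condition holds.

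\emph{Applying $\cA_{G,c}$.} The functor $M \mapsto \cA_{G,c}(M) = \opn{Hom}_\Z(D_G, \opn{Maps}(\pi_0(G(\R))\times G(\A_f^p)/K^p, M))$ is exact. The inner functor is a product of copies of $M$, and $D_G$ is a free $\Z$-module. It also preserves $G(\Q)$-equivariance. Applying it to the exact dual complex yields the claimed resolution $0\to\cA_{G,c}(\opn{I}^{\opn{la}}_{\overline{B}}(\chi)^*)\to\cA_{G,c}(M^\bullet)$.
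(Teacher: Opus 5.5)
Your skeleton (dualise the augmented Koszul complex, prove the dual complex exact via strictness and Hahn--Banach, then apply the exact functor $\cA_{G,c}(-)$) is the same as the paper's. The first and last steps are fine: identifying the dual of $\cInd$ with $\opn{Ind}$, and exactness of $\cA_{G,c}$. The gap is the middle step, which is the entire content of the corollary, and the mechanism you describe for it does not exist. The differentials $y_i = U_{t_i^{-1}}-\chi(t_i)$ do not preserve the subspace of functions supported on a fixed finite set of cosets. Indeed $U_{t_i^{-1}}$ sends a function supported on $g\opn{Iw}$ to one supported on $g\opn{Iw}t_i^{-1}\opn{Iw}$, which is a union of several cosets, and any $U_{t_i^{-1}}$-stable set of cosets is infinite. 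So there are no Banach subcomplexes on which to run the open mapping theorem. The Koszul complex is not a direct sum of such pieces, and its dual is not a product of their duals. Likewise, what you extract from $\chi(t_i)\in(R^+)^\times$ (that $y_i$ preserves unit balls) is mere boundedness. Every continuous operator has that, and it gives nothing towards strictness; the `Mittag-Leffler-type condition' is not an argument. Also, finite \'etale descent cannot reduce an affinoid $R$ to $L$. In Lemma \ref{Lem:KoszulResolution} it only serves to make $\chi$ trivial on the centre, and you should make that same reduction here.

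The missing idea is to work with the submodule $N\subset\cInd_{\opn{Iw}}^{G(\Qp)}(\bfA^r_{\chi_0})$ of functions supported on $G^-=G(\Zp)T^-\opn{Iw}$, rather than with finite supports. This submodule is preserved by the $U_{t_i^{-1}}$ and carries a grading. On it the $y_i$ form a regular sequence (Kohlhaase--Schraen, Thm.~2.6). This is exactly where $\chi(t_i)\in(R^+)^\times$ is used: via the grading it makes each $y_i$ an isometry, hence strict. So the Koszul differentials on $N$ are strict. Hahn--Banach for inductive limits of orthonormalisable Banach $R$-modules then shows the dual complex $\wedge^\bullet_R R^{\oplus n-1}\otimes_R N^*$ is exact in positive degrees. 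Equivalently, $\opn{Ext}^i_A(A/\fm,N^*)=0$ for $i>0$, where $A=R[X_1,\dots,X_{n-1}]$ acts through $X_i\mapsto U_{t_i}$ and $\fm=(X_i-\chi(t_i))_i$. Finally one identifies the cohomology of $M^\bullet$ with $\opn{Ext}^i_A(A/\fm,\opn{Ind}_{\opn{Iw}}^{G(\Qp)}(\bfD^r_{\chi_0}))$. By the arguments of Kohlhaase--Schraen's Thm.~2.5, the inclusion $N\subset\cInd$ induces an isomorphism on these Ext groups. Without this, or some other genuine proof that the differentials on the full compact induction are strict, your claim that the dualised complex is exact is unsupported.
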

\begin{proof}
   This follows exactly as in \cite[Cor.\ 8.2.10]{GL3-ExceptionalZeros}, following \cite[Prop.\ 3.9]{GehrmannRosso}. Since the proof in both of these references is rather brief, we give a more detailed account here. Firstly, by the same arguments as in Lemma \ref{Lem:KoszulResolution}, we can (and do) assume that $\chi$ is trivial on $Z_G(\mbb{Q}_p)$. 
   
   We first claim that $M^{\bullet}$ is exact in degrees $i > 0$. Similarly to \cite[Theorem 2.5]{KS12} (but dualising), the cohomology in degree $i$ of this complex is computed by $\opn{Ext}^i_{A}(A/\ide{m}, \opn{Ind}_{\opn{Iw}}^{G(\Q_p)}(\bfD^{r}_{\chi_0}))$, where $A = R[X_1, \dots, X_{n-1}]$ is the polynomial algebra over $R$ with $X_i$ acting on $\opn{Ind}_{\opn{Iw}}^{G(\Q_p)}(\bfD^{r}_{\chi_0})$ via the operator $U_{t_i}$, and $\ide{m} = (X_1 - \chi(t_1), \dots, X_{n-1} - \chi(t_{n-1}))$ is the ideal generated the regular sequence $(X_i - \chi(t_i))_{i=1, \dots, n-1}$ (in any order).\footnote{Our notation is slightly different than \cite{KS12}; in \emph{op.cit.}, $A$ is denoted $R$.} It therefore suffices to show that these Ext groups vanish in degree $i >0$. 

   Let
   \[
    N \defeq \opn{c-Ind}_{\opn{Iw}}^{G(\Q_p)}(\mbf{A}^{r}_{\chi_0})(G^-) \subset \opn{c-Ind}_{\opn{Iw}}^{G(\Q_p)}(\mbf{A}^{r}_{\chi_0})
    \]
    be the subspace of functions supported on $G^- \defeq G(\mbb{Z}_p) T^- \opn{Iw}$. By the same arguments as in \cite[Theorem 2.5]{KS12}, we see that the natural inclusion induces an isomorphism
    \[
        \opn{Ext}^i_{A}(A/\ide{m}, N^*) \cong  \opn{Ext}^i_{A}(A/\ide{m}, \opn{Ind}_{\opn{Iw}}^{G(\Q_p)}(\bfD^{r}_{\chi_0})),
    \]
    where $N^*$ is the $R$-dual of $N$. Thus it suffices to show that the left-hand Ext groups vanish.
    
   It is shown in  \cite[Theorem 2.5]{KS12} that the augmented Koszul complex for $N$, namely $\wedge_R^{\bullet}(R^{\oplus n-1}) \otimes_R N$ with the differentials $d_{\bullet}$ above, is exact in degrees $< 0$.
   
   Furthermore, the operators $y_i \defeq U_{t_i^{-1}} - \chi(t_i)$ form a \emph{strict} regular sequence for $N$; indeed they form a regular sequence by \cite[Theorem 2.6]{KS12}, and by using the grading on $N$ (see p.\ 232 \emph{op.cit.}), one can easily check that the operators $y_i$ are isometries, hence strict. Therefore the differentials for the Koszul complex for $N$ are also strict.

   By the Hahn--Banach theorem for locally convex inductive limits of orthonormalisable Banach $R$-modules, we see that the dual Koszul resolution for $N$, namely $\wedge_R^{\bullet}(R^{\oplus n-1}) \otimes_R N^*$ with the differentials $d^*_{\bullet}$ above, is exact in degrees $> 0$. Hence we must have $\opn{Ext}^i_{A}(A/\ide{m}, N^*) = 0$ for $i > 0$, implying, by the arguments above, that  $\opn{Ext}^i_{A}(A/\ide{m}, \opn{Ind}_{\opn{Iw}}^{G(\Q_p)}(\bfD^{r}_{\chi_0})) = 0$ for $i>0$, and thus that $M^{\bullet}$ is exact in degrees $i > 0$.

   Finally, we note that applying $\mathcal{A}_{G, c}(-)$ is exact (because $\mathcal{A}_G(-)$ is exact and $D_G$ is a projective module), so we obtain the desired resolution in part (i).
\end{proof}

\subsection{The Tor spectral sequence for locally analytic principal series}

To verify Gehrmann--Rosso's criterion, we develop and study a Tor spectral sequence for locally analytic principal series representations. This is an analogue of Hansen's spectral sequences, as used in the proof of Theorem \ref{thm:main etale}.

Let $R \to S$ a finite map of $L$-affinoid algebras, and $\chi_R \colon T(\mbb{Q}_p) \to (R^+)^{\times}$ a character. Let $M^{\bullet}_R$ denote the complex where 
\[
M^{\bullet}_R = \wedge^{\bullet}_R R^{\oplus n-1} \otimes_R \opn{Ind}_{\opn{Iw}}^{G(\mbb{Q}_p)}(\mbf{D}^r_{\chi_{R, 0}}).
\]
By Corollary \ref{c: dual spectral sequence}, $\mathcal{A}_{G, c}(M_R^{\bullet})$ is a resolution of $\mathcal{A}_{G, c}(\opn{I}_{\overline{B}}^{\opn{la}}(\chi_R))$. We have similar complexes over $S$.

To construct the Tor spectral sequence, we equate the usual tensor product with the derived tensor product, for which the following result is crucial. We expect there are many ways to prove this; we give a proof which uses condensed mathematics.

\begin{lemma}
    Each term $\mathcal{A}_{G, c}(M_R^{\bullet})$ is a flat $R$-module.
\end{lemma}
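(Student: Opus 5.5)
We reduce flatness of $\mathcal{A}_{G,c}(M_R^k)$ to flatness of a product of copies of the dual of an orthonormalisable Banach $R$-module. Unwinding definitions, $M_R^k$ is a finite direct sum of copies of $\opn{Ind}_{\opn{Iw}}^{G(\Qp)}(\bfD^r_{\chi_{R,0}})$. As an $R$-module, with no topology imposed on the induced functions, this induction is isomorphic to $\prod_{G(\Qp)/\opn{Iw}} \bfD^r_{\chi_{R,0}}$: choose coset representatives and evaluate. Next, $\cA_{G}(N)$ is a product of copies of $N$ indexed by $\pi_0(G(\R))\times G(\A_f^p)/K^p$. Since $D_G$ is a projective $\Z$-module (it is free, being the Steinberg module), $\cA_{G,c}(N) = \opn{Hom}_\Z(D_G,\cA_G(N))$ is a direct summand of a product of copies of $\cA_G(N)$. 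Hence $\cA_{G,c}(M_R^k)$ is a direct summand of $\prod_{I}\bfD^r_{\chi_{R,0}}$ for some index set $I$. Direct summands of flat modules are flat, so it suffices to show that arbitrary products of $\bfD^r_{\chi_{R,0}}$ are $R$-flat.

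For this step we use that $\bfA^r_{\chi_{R,0}}$ is (isomorphic to) an orthonormalisable Banach $R$-module, with Mahler/Amice-type basis indexed by a countable set. Its continuous dual $\bfD^r_{\chi_{R,0}}$ is therefore identified with the module of bounded sequences $\ell^\infty(\N,R)$; more precisely, it is $\prod_\N R^+$ with $p$ inverted. Thus $\prod_I \bfD^r_{\chi_{R,0}}$ is a submodule of $\prod_{I\times\N} R$ containing $\prod_{I\times\N}R^+$, and closed under the right operations. Since $R$ is Noetherian, arbitrary products of copies of $R$ are flat by Chase's theorem. The issue is the bounded-sequence condition. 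To handle it, we would write $\ell^\infty(J,R) = (\prod_J R^+)[1/p]$. Localisation preserves flatness, so it suffices that $\prod_J R^+$ is flat over $R^+$, or at least that its $p$-inversion is flat over $R^+[1/p]=R$. When $R^+$ is Noetherian, which holds e.g. when $R$ is reduced and $R^+$ is topologically of finite type up to finite index, Chase's theorem again gives flatness of $\prod_J R^+$ over $R^+$. Base changing to $R$ then gives flatness of $\prod_J R^+\otimes_{R^+}R=(\prod_J R^+)[1/p]$.

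The main obstacle is precisely this last step when $R^+$ is not Noetherian, or when one wants a clean argument independent of such choices. Here we would instead use the condensed approach. Replace $R^+$ by a Noetherian ring of definition $R_0$ (a quotient of $\Z_p\langle X_1,\dots,X_d\rangle$, which is Noetherian), with $R = R_0[1/p]$. The dual Banach module, and its products, are then $(\prod R_0)[1/p]$ up to bounded index, by the orthonormal basis. Chase's theorem applies to $R_0$, and flatness is preserved by the base change $R_0\to R$. Condensed mathematics makes the duality identification $\bfD^r = \underline{\opn{Hom}}(\bfA^r,R)$ canonical, and shows that the solid tensor product agrees with the usual one on these modules. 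This is what makes the subsequent identification of $\otimes$ with $\otimes^{\mathbb{L}}$ in the Tor spectral sequence legitimate.
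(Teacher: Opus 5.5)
Your reductions are sound. Once you forget topology, $\opn{Ind}_{\opn{Iw}}^{G(\Qp)}(\mathbf{D}^r_{\chi_{R,0}})$, $\cA_G(-)$ and $\cA_{G,c}(-)$ are all products, or direct summands of products. So $\cA_{G,c}(M^k_R)$ is (a summand of) $\prod_I\mathbf{D}^r_{\chi_{R,0}}$ with $I$ countably infinite.

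The next step, however, conflates $\prod_I \ell^\infty(\N,R)$ with $\ell^\infty(I\times\N,R)=(\prod_{I\times\N}R_0)[1/p]$. For infinite $I$ these differ, because boundedness is imposed only factor by factor. Concretely, take distinct $i_1,i_2,\dots\in I$ and $e=(1,0,0,\dots)\in\ell^\infty(\N,R)$. The element whose $i_k$-component is $p^{-k}e$ (and $0$ elsewhere) lies in the first module but not the second. So flatness of $(\prod_J R_0)[1/p]$ does not by itself give what you need.

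The repair uses Chase's theorem in full strength: over a coherent (e.g.\ Noetherian) ring, every direct product of flat modules is flat. Since $R$ is Noetherian, it suffices that the single module $\mathbf{D}^r_{\chi_{R,0}}\cong\ell^\infty(\N,R)=(\prod_\N R_0)\otimes_{R_0}R$ is $R$-flat. Your argument gives exactly this: Chase over $R_0$, then base change along $R_0\to R_0[1/p]=R$.

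Two further remarks. First, working with a ring of definition $R_0$ rather than $R^+$ is necessary, not just convenient. The lemma is also applied with $R=\cO(\Sigma)$, which need not be reduced; then $R^+$ contains the nilradical and is unbounded, so $(\prod_\N R^+)[1/p]\neq\ell^\infty(\N,R)$. Second, the condensed remarks in your last paragraph play no role in your argument, which is purely algebraic. The agreement of $\otimes$ with $\otimes^{\mathbf{L}}$ is a consequence of the lemma, not part of its proof.

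With that repair, your proof is correct and genuinely different from the paper's. The paper keeps the topology. It writes $A=\opn{Ind}_{\opn{Iw}}^{G(\Qp)}(\mathbf{D}^r_{\chi_{R,0}})$ as a countable inverse limit of Banach modules $A_n$ of functions supported on compact opens, each taken to be orthonormalisable. It then uses condensed mathematics to produce an exact sequence $0\to A\to P\to P\to 0$ in which $P$ is a filtered colimit of orthonormalisable Banach modules, hence flat, so $A$ is flat as the kernel of a surjection of flat modules. The passage to $\cA_G$ and $\cA_{G,c}$ is a separate check using projectivity of $D_G$.

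Your route is more elementary. It needs orthonormalisability only of $\mathbf{A}^r_{\chi_{R,0}}$, via restriction to $N(\Zp)$ and a Mahler/Amice-type basis, not of its dual. It also treats the induction and both functors $\cA_G,\cA_{G,c}$ uniformly as products, which Chase's theorem handles in one stroke. The paper's argument, in exchange, is coordinate-free and applies to any strongly countable Fr\'echet module.
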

\begin{proof}
 We first claim that $M_R^{\bullet}$ is a (strongly countable) Fr\'{e}chet module, namely a countable inverse limit of orthonormalisable Banach $R$-modules with dense transition maps. It suffices to prove this for $\opn{Ind}_{\opn{Iw}}^{G(\mbb{Q}_p)}(\mbf{D}^r_{\chi_{R, 0}})$, which we present as an inverse limit
    \[
    A \defeq \opn{Ind}_{\opn{Iw}}^{G(\mbb{Q}_p)}(\mbf{D}^r_{\chi_{R, 0}}) = \varprojlim_C \opn{Ind}_{\opn{Iw}}^{G(\mbb{Q}_p)}(\mbf{D}^r_{\chi_{R, 0}})(C) = \varprojlim_n A_n,
    \]
    where $\opn{Ind}_{\opn{Iw}}^{G(\mbb{Q}_p)}(\mbf{D}^r_{\chi_{R, 0}})(C) \subset \opn{Ind}_{\opn{Iw}}^{G(\mbb{Q}_p)}(\mbf{D}^r_{\chi_{R, 0}})$ denotes functions with support in a compact open subspace $C \subset G(\mbb{Q}_p)$. Each term in the inverse limit is an orthonormalisable Banach $R$-module by taking the sup-norm over $C$.

    It is easy to check that orthonormalisable Banach $R$-modules are $R$-flat. We claim strongly countable Fr\'{e}chet modules are too. For this we consider the condensed $\underline{R}$-module $\underline{A} = \varprojlim_n \underline{A}_n$. Following the proof of \cite[Proposition 2.4.3]{FourierPaper}, we have an exact sequence
    \[
    0 \to \underline{A} \to \prod_{n} \underline{A}_n \to \prod_{n} \underline{A}_n \to 0 
    \]
    and $\prod_n \underline{A}_n = \varinjlim_i \underline{B}_i$ is a filtered colimit of orthornormalisable Banach $R$-modules. Similarly, we have an exact sequence of $R$-modules
    \[
    0 \to A \to \prod_n A_n \to \prod_n A_n \to 0,
    \]
    noting it is exact on the right by the topological Mittag-Leffler property. 

    Recall the functor $(-)(*)$ from condensed $\underline{R}$-modules to $R$-modules given by evaluating at the point. This functor preserves colimits, is right-exact, and $\underline{X}(*) = X$ for compactly generated topological spaces (see \cite[Proposition 1.7]{CondensedNotes}). In particular, the morphism $A = \underline{A}(*) \to (\prod_{n} \underline{A}_n)(*)$ factors $A \hookrightarrow \prod_{n} A_n$, hence is injective. Combining these properties, we have an exact sequence
    \[
    0 \to A \to (\prod_{n} \underline{A}_n)(*) \to (\prod_{n} \underline{A}_n)(*) \to 0 
    \]
    and $(\prod_{n} \underline{A}_n)(*) \cong \varinjlim_i B_i$. Since each $B_i$ is $R$-flat, we see that $(\prod_{n} \underline{A}_n)(*)$ is $R$-flat, and hence $A$ is $R$-flat. 

    To conclude the proof, it is a simple check that $\mathcal{A}_G(M_R^{\bullet})$ is $R$-flat (because $M_R^{\bullet}$ is). Finally $\mathcal{A}_{G, c}(M_R^{\bullet})$ is $R$-flat because $D_G$ is projective and $\opn{Hom}_{\mbb{Z}}(D_G, \mathcal{A}_G(M_R^{\bullet})) \otimes_R C = \opn{Hom}_{\mbb{Z}}(D_G, \mathcal{A}_G(M_R^{\bullet}) \otimes_R C)$.
\end{proof}

As a consequence of this lemma, we have the following:

\begin{lemma}
    Let $R \to S$ be a finite morphism of $L$-affinoid algebras. We have a Tor-spectral sequence
    \[
    E_{2}^{i, j} \colon \opn{Tor}^R_{-i}(\opn{H}^j(G(\mbb{Q}), \mathcal{A}_{G, c}(\opn{I}_{\overline{B}}^{\opn{la}}(\chi_R))), S) \Rightarrow \opn{H}^{i+j}(G(\mbb{Q}), \mathcal{A}_{G, c}(\opn{I}_{\overline{B}}^{\opn{la}}(\chi_S))) .
    \]
    This is equivariant with respect to the prime-to-$S$ Hecke algebra as well as $K_{\infty}/K_{\infty}^{\circ}$, so we can localise it at $\phi^S$ and take $\varepsilon$-parts.
\end{lemma}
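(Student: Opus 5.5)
The plan is to realise both sides by a single bounded complex of flat $R$-modules whose base change to $S$ computes the right-hand side. The spectral sequence will then be the usual hypertor (K\"unneth) spectral sequence for $-\otimes_R S$.

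First I would replace the coefficients by their resolution. As noted just before the statement, Corollary \ref{c: dual spectral sequence} shows that $\mathcal{A}_{G,c}(M^\bullet_R)$ resolves the coefficient module $\mathcal{A}_{G,c}(\opn{I}^{\opn{la}}_{\overline{B}}(\chi_R))$ appearing in the statement. Hence $\opn{H}^\bullet(G(\Q),\mathcal{A}_{G,c}(\opn{I}^{\opn{la}}_{\overline{B}}(\chi_R)))$ is the hypercohomology of the finite complex $\mathcal{A}_{G,c}(M^\bullet_R)$, whose Koszul length is $n-1$.

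Each term of that complex has the form $\mathcal{A}_{G,c}(\opn{Ind}_{\opn{Iw}}^{G(\Qp)}\bfD^r_{\chi_{R,0}})^{\oplus N}$. By the Shapiro-type identification \eqref{eq:arithmetic to betti c}, its $G(\Q)$-cohomology is compactly supported cohomology of $Y(K)$ with coefficients $\bfD^r_{\chi_{R,0}}$. That cohomology is computed functorially in the coefficients by a Borel--Serre type complex $C^\bullet_c(K,-)$. This complex is finite, with each term a finite direct sum of copies of the coefficient module. For non-neat $K$ I would pass to a neat normal $K'$ and take invariants, as in Remark \ref{rem:neat}, noting that the finite-group invariants are a direct summand in characteristic $0$.

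This gives a bounded double complex, and I let $C^\bullet_R$ be its total complex. Its terms are finite sums of copies of $\bfD^r_{\chi_{R,0}}$, and these are flat by the argument of the preceding lemma: they are orthonormalisable Banach modules, or strongly countable Fr\'echet modules in the $\opn{Ind}$ formulation. The complex $C^\bullet_R$ computes $\opn{H}^\bullet(G(\Q),\mathcal{A}_{G,c}(\opn{I}^{\opn{la}}_{\overline{B}}(\chi_R)))$. Alternatively I can work directly with $\opn{Hom}_{G(\Q)}(P_\bullet,\mathcal{A}_{G,c}(M_R^\bullet))$ for a suitable resolution $P_\bullet$, using the flatness lemma just proved.

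Next I need the base change identity $C^\bullet_R\otimes_R S\cong C^\bullet_S$. This reduces to
\[
\bfD^r_{\chi_{R,0}}\otimes_R S\cong \bfD^r_{\chi_{S,0}},
\]
together with the compatibility of the differentials, i.e.\ the Koszul operators $z_i=U_{t_i}-\chi(t_i)$ and the Borel--Serre maps, with $R\to S$. The first point holds because $S$ is finite, hence finitely presented, over $R$. Consequently $-\otimes_RS$ commutes with the completed tensor products and inverse limits defining these modules, and $\chi_S$ is by definition the composite of $\chi_R$ with $R\to S$. I expect this step, and keeping track of the topological tensor products, to be the main technical obstacle. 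If needed, I would first prove the identity for orthonormalisable Banach modules and then pass to the limit, using the Mittag-Leffler argument from the flatness lemma.

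Granting these two points, $C^\bullet_R$ is a bounded complex of flat modules, hence K-flat. Therefore $C^\bullet_R\otimes^{\mathbb{L}}_R S\simeq C^\bullet_R\otimes_RS\cong C^\bullet_S$. The standard spectral sequence
\[
E_2^{i,j}=\opn{Tor}^R_{-i}(\opn{H}^j(C^\bullet_R),S)\Rightarrow \opn{H}^{i+j}(C^\bullet_R\otimes^{\mathbb{L}}_RS)
\]
then yields the claimed sequence; it converges because $C^\bullet_R$ is bounded.

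For the equivariance, the prime-to-$S$ Hecke operators and the action of $K_\infty/K_\infty^\circ$ act on $\mathcal{A}_{G,c}(-)$ through the tame level and archimedean data. They therefore commute with the Koszul differentials at $p$, and lift to chain endomorphisms of $C^\bullet_R$ compatible with base change. So the spectral sequence is equivariant. Localisation at $\phi^S$ is exact. Taking $\varepsilon$-parts is exact because it is projection onto a direct summand, as $2$ is invertible. Hence both operations may be applied to the spectral sequence termwise.
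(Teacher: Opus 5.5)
Your argument is essentially the paper's. The paper uses the preceding flatness lemma for the bounded resolution $\mathcal{A}_{G,c}(M_R^\bullet)$, identifies $\mathcal{A}_{G,c}(M_R^\bullet)\otimes^{\mathbb{L}}_R S=\mathcal{A}_{G,c}(M_R^\bullet)\otimes_R S=\mathcal{A}_{G,c}(M_R^\bullet)\widehat{\otimes}_R S=\mathcal{A}_{G,c}(M_S^\bullet)$ using finiteness of $R\to S$, and then cites the standard Tor spectral sequence. That is in substance your ``alternative'' route.

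Your Borel--Serre step aims to justify something the paper leaves implicit, namely that $R\Gamma(G(\Q),-)$ commutes with $-\otimes^{\mathbb{L}}_R S$. As written, however, it does not produce a double complex. The Koszul differentials $z_i=U_{t_i}-\chi(t_i)$ are $G(\Qp)$-equivariant endomorphisms of $\opn{Ind}_{\opn{Iw}}^{G(\Qp)}\bfD^r_{\chi_{R,0}}$, i.e.\ Hecke operators at $p$. They are not induced from $\opn{Iw}$-endomorphisms of $\bfD^r_{\chi_{R,0}}$, so $C^\bullet_c(K,-)$ is not functorial in them. Chain-level lifts of them are only well defined, and only commute, up to homotopy, so the Koszul ``double complex'' need not satisfy $d^2=0$.

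The fix is to use Shapiro and the finite Borel--Serre complex only termwise. This shows that the natural map $R\Gamma(G(\Q),X)\otimes^{\mathbb{L}}_R S\to R\Gamma(G(\Q),X\otimes_R S)$ is an isomorphism for each single term $X=\mathcal{A}_{G,c}(M^k_R)$; this step needs only functoriality in $\opn{Iw}$-modules. You then deduce the isomorphism for the bounded complex $\mathcal{A}_{G,c}(M^\bullet_R)$ by d\'evissage along its stupid truncations, since both sides are exact functors. After that, the Tor spectral sequence and its Hecke- and $K_\infty/K_\infty^\circ$-equivariance follow exactly as you describe.
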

\begin{proof}
    By the above lemma, $\mathcal{A}_{G, c}(M_R^{\bullet}) \otimes^{\mbf{L}}_R S = \mathcal{A}_{G, c}(M_R^{\bullet}) \otimes_R S = \mathcal{A}_{G, c}(M_R^{\bullet}) \widehat{\otimes}_R S = \mathcal{A}_{G, c}(M_S^{\bullet})$, where the second equality follows from the map $R \to S$ being finite. From this, we obtain the usual Tor-spectral sequence (see, e.g., \cite[Tag 061Y]{stacks-project}).
\end{proof}

Now we specialise to our case of interest. Fix $\varepsilon \in \mathcal{E}_{\pi}$, and let $R = \mathcal{O}(\Omega)$ and $S = \cO(\Lambda)$, where $\Lambda \subset \Omega$ is a Zariski-closed affinoid. Let $\chi_R = \chi_\Omega \colon T(\Qp) \to (\cO(\Omega)^+)^\times$ and $\chi_S = \chi_\Lambda$ be the characters from \eqref{eq:chilambda}. Note that $\chi_{R,0} = \chi_\Omega|_{T(\Zp)} = \kappa_\Omega^{\opn{univ}}$, the universal weight character over $\Omega$. In particular, $\bfA_{\chi_{R,0}}^r = \bfA_{\Omega}^r$ and $\bfD_{\chi_{R,0}}^r = \bfD_\Omega^r$ are the usual modules of $r$-analytic functions and distributions over $\Omega$. Similarly $\bfD_{\chi_{S,0}}^r = \bfD_\Lambda^r$. We let $\lambda = 1 \in \Omega$ denote the trivial character.

As we are assuming non-abelian Leopoldt, by Facts \ref{facts:hansen other}(C) and \eqref{eq:arithmetic to betti c}, we know 
\begin{equation}\label{eq:cohom top degree}
    \h^\bullet(G(\Q),\cA_{G,c}(\opn{Ind}_{\opn{Iw}}^{G(\Qp)}(\bfD_{\Omega}^r)))_{\phi^S}^{\opn{ord}, \varepsilon} \text{ is concentrated in degree $q'+\ell$,}
\end{equation}
where $q' = q-n+1$. To ease notation, set 
\[
\mathcal{N}_{\Omega} = \opn{H}^{q'+\ell}(G(\mbb{Q}), \mathcal{A}_{G, c}(\opn{Ind}_{\opn{Iw}}^{G(\mbb{Q}_p)}(\mbf{D}^r_{\Omega})))_{\phi^S}^{\opn{ord}, \varepsilon}.
\]
\begin{proposition}\label{prop:quasi-iso sum}
In the derived category, we have a quasi-isomorphism
\[
R\Gamma(G(\mbb{Q}), \mathcal{A}_{G, c}(\opn{I}_{\overline{B}}^{\opn{la}}(\chi_\Omega)))^{\varepsilon}_{\phi^S} \cong \bigoplus_{k=0}^{n-1} (\wedge^{k}_{R_\lambda} R_\lambda^{\oplus n-1} \otimes_{R_\lambda} \mathcal{N}_{\Omega})[-q'-\ell-k] .
\]
\end{proposition}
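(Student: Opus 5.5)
We will compute $R\Gamma(G(\Q),\cA_{G,c}(\opn{I}_{\overline{B}}^{\opn{la}}(\chi_\Omega)))$ through the resolution of Corollary \ref{c: dual spectral sequence}. Write $R=\cO(\Omega)$ and $I=\opn{Ind}_{\opn{Iw}}^{G(\Qp)}(\bfD^r_\Omega)$. The corollary gives a quasi-isomorphism between $\cA_{G,c}(\opn{I}_{\overline{B}}^{\opn{la}}(\chi_\Omega)^*)$ and the total complex $\cA_{G,c}(M^\bullet)$, with $M^k=\wedge^k R^{\oplus n-1}\otimes_R I$.

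Applying $R\Gamma(G(\Q),-)$ termwise gives a double complex. Its columns are $\wedge^k R^{\oplus n-1}\otimes_R R\Gamma(G(\Q),\cA_{G,c}(I))$. The horizontal maps are built from the operators $z_i=U_{t_i}-\chi_\Omega(t_i)$, as in \eqref{eq:d_0}. We then pass to the ordinary part, which is harmless on the locally analytic side because the $U_{t_i}$ act invertibly there through $\chi$. We also take $\varepsilon$-parts and localise at $\phi^S$, all of which are exact operations. By \eqref{eq:cohom top degree}, each column becomes quasi-isomorphic to $\wedge^k R_\lambda^{\oplus n-1}\otimes\cN_\Omega[-q'-\ell]$, the localisation being at $\lambda$ and $\phi^S$.

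To make this step honest at the level of complexes, I would pick a Hecke-stable model of $R\Gamma$ by Borel--Serre / finite-slope complexes, as in \cite{Han17}. After localising, the complex becomes a perfect complex over $R_\lambda$ with cohomology concentrated in one degree, so it is quasi-isomorphic to $\cN_\Omega[-q'-\ell]$ compatibly with the $\sA_p^+$-action. The whole double complex then becomes the Koszul complex of $\cN_\Omega$ for the commuting endomorphisms $z_1,\dots,z_{n-1}$, shifted by $q'+\ell$.

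The key point, and the main obstacle, is to show that each $z_i$ acts by zero on $\cN_\Omega$. Once this holds, all Koszul differentials vanish and the total complex splits as the stated direct sum. The complex is then formal: it is quasi-isomorphic to its cohomology placed in degrees $q'+\ell+k$.

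To prove the vanishing, use Proposition \ref{prop:top cyclic}(ii) together with the construction of $\chi_\Omega$. Localised at $\phi$ (equivalently at $\phi^S$ together with the ordinary $p$-part), $\cN_\Omega\cong R_\lambda/J$. The Hecke algebra acts on it through $R_\lambda/J=\cO(\Sigma)_\lambda$, and $U_{t_i}$ acts by $\Phi_{\Sigma,p}(t_i)$. The lift $\chi_\Omega(t_i)=\Phi_{\Omega,p}(t_i)$ reduces to $\Phi_{\Sigma,p}(t_i)$ modulo $J$, so $z_i$ acts by zero on $R_\lambda/J$.

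Two points need care here. First, localising at $\phi^S$ must agree with localising at $\phi$ on the ordinary part. This holds because the ordinary eigensystem at $p$ in $\pi_p^{\opn{Iw}}$ is unique, and the relevant Hecke algebra is local after shrinking $\Omega$. Second, we must check the comparison between Hansen's $\hc{}$ and $p$-arithmetic cohomology via \eqref{eq:arithmetic to betti c}, applied with coefficients $\bfD^r_\Omega$.

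If making the double complex strictly formal turns out to be delicate, a fallback is available. The degeneration of the column spectral sequence already yields the cohomology groups. The splitting then follows because $\cN_\Omega$ is cyclic and the extension classes are given by the $z_i$, which act by zero.
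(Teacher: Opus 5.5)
Your proposal is correct and follows essentially the paper's route. You resolve via Corollary \ref{c: dual spectral sequence}, form the ordinary, $\varepsilon$-part, $\phi^S$-localised double complex, collapse each column to $\cN_\Omega[-q'-\ell]$ using \eqref{eq:cohom top degree}, and kill the Koszul differentials because $z_i$ acts by $0$ on $\cN_\Omega\cong\cO(\Sigma)_\lambda$, since $\chi_\Omega(t_i)$ reduces to $\chi_\Sigma(t_i)$. The only difference is cosmetic: the paper replaces the columns using the functorial truncations $\tau^{\leq q'+\ell}$ and $\tau^{\geq q'+\ell}$ on the standard $G(\Q)$-cochain complex, where the $U_{t_i}$ and the prime-to-$p$ Hecke operators already act strictly through the coefficient modules, so your Borel--Serre model and your fallback argument are unnecessary.
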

Here $M[-j]$ denotes the one-term complex $\dots \to 0 \to M \to 0 \to \dots$, with $M$ in the $j$th place.

\begin{proof}
Recall from Corollary \ref{c: dual spectral sequence} the resolution of $G(\Q)$-modules
\begin{equation}\label{eq:resolution 2}
0\rightarrow \mathcal{A}_{G, c}(\mathrm{I}_{\overline{B}}^{\opn{la}}(\chi_\Omega)^{\ast})\rightarrow \mathcal{A}_{G, c}(M_R^{\bullet}).
\end{equation}
Let $C_0^{p, \bullet} = \opn{Hom}_{R}(R[G(\mbb{Q})^{\bullet +1}], \mathcal{A}_{G, c}(M_R^p))$ be the standard acyclic resolution of $G(\mbb{Q})$-representations. This yields a double complex. Let $C_1^{\bullet, \bullet} = (C_0^{\bullet, \bullet})^{G(\mbb{Q})}$, a double complex whose associated total complex is quasi-isomorphic to $R\Gamma(G(\Q),\mathcal{A}_{G, c}(\mathrm{I}_{\overline{B}}^{\opn{la}}(\chi_{\Omega})^{\ast}))$ in the derived category.

By construction, we can describe each column of the double complex via
\begin{align*}
    C^{p,\bullet}_1 &= R\Gamma(G(\Q), \cA_{G,c}(M_R^p))\\
    &\cong \wedge^{p}_{R} R^{\oplus n-1} \otimes_{R} R\Gamma(G(\Q),\cA_{G,c}(\opn{Ind}_{\opn{Iw}}^{G(\Qp)}(\bfD_{\Omega}^r))).
\end{align*}
In particular every term of $C_1^{\bullet,\bullet}$ has an action of the prime-to-$p$ Hecke algebra $\mbb{T}^S(K^p)$, and of the operators $U_{t_i}$ for $1 \leq i \leq n-1$. Moreover, all the differentials in $C^{\bullet,\bullet}_1$ are equivariant with respect to the prime-to-$p$ Hecke operators, since the maps in \eqref{eq:resolution 2} are $G(\A_f^p)$-equivariant. These maps, hence the differentials, are also equivariant with respect to the operators $U_{t_i}$; this is because all the $U_{t_i}$'s commute by \cite[Lem.\ 2.3]{KS12}, hence they commute with the operators $z_i$. 

Let $t = t_1\cdots t_{n-1}$ and $U_t = U_{t_1}\cdots U_{t_{n-1}}$. Then the action of $t$ on $\mbf{D}^r_{\Omega}$ is compact, hence by the same argument at the start of the proof of \cite[Proposition 3.4]{KS12}, the action of $U_t$ is compact on the orthonormalisable $R$-Banach space $\opn{Ind}_{\opn{Iw}}^{G(\mbb{Q}_p)}(\mbf{D}^r_{\Omega})(C)$ (for any compact open $C \subset G(\mbb{Q}_p)$). Therefore there exists an ordinary idempotent on $\opn{Ind}_{\opn{Iw}}^{G(\mbb{Q}_p)}(\mbf{D}^r_{\Omega})(C)$. Passing to the inverse limit and taking a finite direct sum, we obtain an ordinary idempotent on $M^{\bullet}_R$. This in turn induces an ordinary idempotent on $\mathcal{A}_{G, c}(M_R^{\bullet})$ and hence $C_1^{\bullet, \bullet}$ (note that the ordinary idempotent commutes with the $G(\mbb{Q})$-action, since the same is true for the operator $U_t$). 

Let $C_2^{\bullet,\bullet}$ be the double complex obtained by applying this ordinary projector to $C_1^{\bullet,\bullet}$. We claim the total complex $\opn{Tot}(C_2^{\bullet,\bullet})$ is still quasi-isomorphic to $R\Gamma(G(\Q),\mathcal{A}_{G, c}(\mathrm{I}_{\overline{B}}^{\opn{la}}(\chi_{\Omega})^{\ast}))$. Indeed, as in the end of the proof of \cite[Prop.\ 3.9]{GehrmannRosso}, a standard property of Koszul complexes means that after taking cohomology in the $p$-direction, each $z_i$ acts as 0; so $U_{t_i}$ acts by $\chi_\Sigma(t_i)$, which is a $p$-adic unit (since it describes the Hecke eigenvalues over a $p$-adic family through an ordinary point). In particular, $U_{t}$ acts by $p$-adic units on the cohomology of the total complex, and hence 
\[
\h^r(\opn{Tot}(C_1^{\bullet,\bullet})) = \h^r(\opn{Tot}((C_1^{\bullet,\bullet})^{\opn{ord}}) = \h^r(\opn{Tot}(C_2^{\bullet,\bullet})).
\]
In particular 
\[
\opn{Tot}(C_2^{\bullet,\bullet}) \cong R\Gamma(G(\Q),\mathcal{A}_{G, c}(\mathrm{I}_{\overline{B}}^{\opn{la}}(\chi_{\Omega})^{\ast})).
\]
Let $C_3^{\bullet,\bullet} = (C_2^{\bullet,\bullet})^{\varepsilon}_{\phi^S}$ be the complex obtained by localising all terms at $\phi^S$ and taking $\varepsilon$-parts. Then 
\[
    \opn{Tot}(C_3^{\bullet,\bullet}) \cong R\Gamma(G(\Q),\mathcal{A}_{G, c}(\mathrm{I}_{\overline{B}}^{\opn{la}}(\chi_{\Omega})^{\ast}))^{\varepsilon}_{\phi^S}.
\]
Now, in the derived category, we have
\[
    C_3^{p,\bullet} \cong \wedge^{p}_{R_\lambda} R_{\lambda}^{\oplus n-1} \otimes_{R_\lambda} R\Gamma(G(\Q),\cA_{G,c}(\opn{Ind}_{\opn{Iw}}^{G(\Qp)}(\bfD_{\Omega}^r))_{\phi^S}^{\opn{ord}, \varepsilon}.
\]
By \eqref{eq:cohom top degree}, this has cohomology supported in degree $q'+\ell$, hence we have quasi-isomorphisms
\[
C_3^{p, \bullet} \xleftarrow{\sim} \tau^{\leq q'+\ell} C_3^{p, \bullet} \xrightarrow{\sim} \tau^{\geq q'+\ell} \tau^{\leq q'+\ell} C_3^{p, \bullet} \xrightarrow{\sim} \opn{H}^{q'+\ell}(\tau^{\geq q'+\ell} \tau^{\leq q'+\ell} C_3^{p, \bullet})[-q'-\ell] =: C^{p, \bullet}_{4}
\]
where $\tau^{\geq i}$ and $\tau^{\leq i}$ denote the natural truncation functors. This composition of quasi-isomorphisms is functorial with respect to maps of complexes, so we obtain a double complex $C^{\bullet, \bullet}_4$. Note that
\[
    C_4^{p,\bullet} \cong \wedge^{p}_{R_\lambda} R_{\lambda}^{\oplus n-1} \otimes_{R_\lambda}\cN_\Omega[-q'-\ell],
\]
with differentials given by those in the Koszul complex. The complex $C_4^{\bullet, \bullet}$ has the property that:
\begin{itemize}
\item only row $q'+\ell$ is non-zero,
\item and $\opn{Tot}(C_4^{\bullet,\bullet}) \cong \opn{Tot}(C_3^{\bullet,\bullet}) \cong R\Gamma(G(\Q),\mathcal{A}_{G, c}(\mathrm{I}_{\overline{B}}^{\opn{la}}(\chi_{\Omega})^{\ast}))^{\varepsilon}_{\phi^S}.$
\end{itemize}
In particular, 
\begin{equation}\label{eq:RGamma C4}
R\Gamma(G(\Q),\mathcal{A}_{G, c}(\mathrm{I}_{\overline{B}}^{\opn{la}}(\chi_{\Omega})^{\ast}))^{\varepsilon}_{\phi^S} \cong \wedge^{\bullet}_{R_\lambda} R_{\lambda}^{\oplus n-1} \otimes_{R_\lambda}\cN_\Omega[-q'-\ell],
\end{equation}
with differentials given by those in the Koszul complex. We claim all these differentials are zero, even before taking cohomology. Indeed, $\cN_\Omega$ is a free module over $\cO(\Sigma)_\lambda$ upon which $U_{t_i}$ acts by $\chi_\Sigma(t_i)$. In particular, each $z_i = U_{t_i}-\chi_{\Omega}(t_i)$, and hence each differential, vanishes on $\cN_\Omega$ (because scaling by $\chi_{\Omega}(t_i)$ on $\mathcal{N}_{\Omega}$ is the same as scaling by $\chi_{\Sigma}(t_i)$). Thus the right-hand side of \eqref{eq:RGamma C4} gives exactly the claimed direct sum decomposition. 
\end{proof}

\begin{remark}
Concretely, this proposition (and its proof) implies $\h^\bullet(G(\mbb{Q}), \mathcal{A}_{G, c}(\opn{I}_{\overline{B}}^{\opn{la}}(\chi_\Omega)))^{\varepsilon}_{\phi^S}$ is supported in degrees $q'+\ell, \dots, q'+\ell+n-1$, and that for any $0 \leq k \leq n-1$, we have
\[
    \h^{q'+\ell+k}(G(\mbb{Q}), \mathcal{A}_{G, c}(\opn{I}_{\overline{B}}^{\opn{la}}(\chi_\Omega)))^{\varepsilon}_{\phi^S} \cong \wedge^{k}_{R_\lambda} R_\lambda^{\oplus n-1} \otimes_{R_\lambda} \mathcal{N}_{\Omega}.
\]
However, the direct sum decomposition of the left-hand $R\Gamma(-)$ is stronger than this statement alone.
\end{remark}

\begin{corollary}\label{cor:sigma projective}
\begin{itemize}
\item[(i)] For any Zariski-closed affinoid $\Lambda \subset \Omega$, the localised Tor spectral sequence
    \[
    E_{2}^{i, j} \colon \opn{Tor}^{\cO(\Omega)_\lambda}_{-i}(\opn{H}^j(G(\mbb{Q}), \mathcal{A}_{G, c}(\opn{I}_{\overline{B}}^{\opn{la}}(\chi_\Omega)))^{\varepsilon}_{\phi^S}, \cO(\Lambda)_\lambda) \Rightarrow \opn{H}^{i+j}(G(\mbb{Q}), \mathcal{A}_{G, c}(\opn{I}_{\overline{B}}^{\opn{la}}(\chi_\Lambda)))^{\varepsilon}_{\phi^S} 
    \]
    degenerates on the second page (i.e., all differentials on the $r$-th page, for $r \geq 2$, are zero).

\item[(ii)]    Let $\Lambda = \Sigma$. Then the $\cO(\Sigma)_\lambda$-modules $\opn{H}^{\bullet}(G(\mbb{Q}), \mathcal{A}_{G, c}(\opn{I}_{\overline{B}}^{\opn{la}}(\chi_\Sigma)))_{\phi^S}^{\varepsilon}$ are projective.
\end{itemize}
\end{corollary}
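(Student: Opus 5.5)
Both parts follow from Proposition \ref{prop:quasi-iso sum} and Corollary \ref{cor:top degree etale} (equivalently \eqref{eq:family over Sigma}), which together say that $\cN_\Omega \cong \cO(\Omega)_\lambda/J_\phi^\varepsilon \cong \cO(\Sigma)_\lambda$.

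For (i), we compute the abutment directly. The Tor spectral sequence comes from applying $-\otimes^{\mbf{L}}_{\cO(\Omega)_\lambda}\cO(\Lambda)_\lambda$ to $R\Gamma(G(\Q),\mathcal{A}_{G,c}(\opn{I}_{\overline{B}}^{\opn{la}}(\chi_\Omega)))^\varepsilon_{\phi^S}$. By Proposition \ref{prop:quasi-iso sum}, this complex is quasi-isomorphic to a direct sum of shifted modules $\wedge^k R_\lambda^{\oplus n-1}\otimes \cN_\Omega[-q'-\ell-k]$. The spectral sequence for such a complex is the direct sum of the spectral sequences of its summands. Each summand is a single module in a single degree, so its spectral sequence has $E_2$ concentrated in one row and therefore has no nonzero differentials for $r\geq 2$.

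For this to work, the hyper-Tor spectral sequence must be functorial in the derived category, i.e.\ it must depend only on the quasi-isomorphism class of the complex. This holds because the flatness lemma identifies the abutment with $\mathcal{A}_{G,c}(M_\Lambda^\bullet)$, and the spectral sequence is the standard one attached to $K\otimes^{\mbf L}S$ (Stacks Tag 061Y). The one point needing care is that the localisation at $\phi^S$ and the ordinary projection used in Proposition \ref{prop:quasi-iso sum} commute with base change to $\cO(\Lambda)$. Localisation is exact, and the ordinary idempotent is compatible with specialisation, so this should be harmless.

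For (ii), take $\Lambda=\Sigma$. By degeneration, or directly from the derived tensor product, we have
\[
\opn{H}^{m}(\cdots\chi_\Sigma\cdots)^\varepsilon_{\phi^S}\cong\bigoplus_{k}\wedge^k R_\lambda^{\oplus n-1}\otimes_{R_\lambda}\opn{Tor}^{R_\lambda}_{q'+\ell+k-m}(\cN_\Omega,\cO(\Sigma)_\lambda).
\]
Using $\cN_\Omega\cong \cO(\Omega)_\lambda/J_\phi^\varepsilon$ and Theorem \ref{thm:regular sequence}(i), $J_\phi^\varepsilon\cO(\Omega)_\lambda$ is generated by a regular sequence $y_1,\dots,y_\ell$. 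The Koszul complex then computes $\opn{Tor}_j^{R_\lambda}(R_\lambda/\by,R_\lambda/\by)\cong \wedge^j(R_\lambda/\by)^{\oplus\ell}$, which is free over $\cO(\Sigma)_\lambda$ of rank $\binom{\ell}{j}$. Hence every cohomology group is a finite direct sum of free $\cO(\Sigma)_\lambda$-modules, so it is projective, indeed free.

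The main obstacle is the bookkeeping in (i): making sure the localised, ordinary-projected spectral sequence is the one obtained from the decomposed complex. If needed, one can bypass the abstract argument by using the explicit double complex $C_4^{\bullet,\bullet}$ tensored with $\cO(\Lambda)_\lambda$.
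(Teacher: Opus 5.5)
Your proposal is correct and follows the paper's proof. Part (i) is exactly the paper's argument: the Tor spectral sequence splits along the direct-sum decomposition of Proposition \ref{prop:quasi-iso sum}, and each summand lives in a single row, so all $d_r$ with $r\geq 2$ vanish.

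In (ii) you compute $\opn{Tor}^{\cO(\Omega)_\lambda}_\bullet(\cN_\Omega,\cO(\Sigma)_\lambda)$ directly via the Koszul complex, which reproves Theorem \ref{thm:regular sequence}(ii), and you read off freeness from the splitting. The paper instead cites Proposition \ref{prop:tor iso} and Theorem \ref{thm:main etale}, then uses that projectivity is closed under extensions. The content is the same, and your route gives the slightly stronger conclusion that these modules are free. One small citation fix: the identification $\cN_\Omega\cong\cO(\Omega)_\lambda/J_\phi^\varepsilon$ comes from Proposition \ref{prop:top cyclic}(ii), not Corollary \ref{cor:top degree etale}.
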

\begin{proof}
For (i), we observe that the Tor spectral sequence is constructed at the level of $R\Gamma$. As the complex is quasi-isomorphic to a direct sum of complexes in the derived category, we can thus decompose the spectral sequence as a direct sum, with differentials respecting this decomposition. Since each complex in the direct sum has only one term, it follows that the differentials in this summand are identically zero; hence we conclude.

\medskip

    For (ii), the degeneration of the localised Tor spectral sequence implies that $\opn{H}^{a}(G(\mbb{Q}), \mathcal{A}_{G, c}(\opn{I}_{\overline{B}}^{\opn{la}}(\chi_S)))^{\varepsilon}_{\phi^S}$ can be obtained by successive extensions of $E_2^{i, j}$ with $i+j = a$. But $E_2^{i, j}$ is a projective $S$-module because
    \[
    \opn{Tor}^{\mathcal{O}(\Omega)_{\lambda}}_{i}(\mathcal{N}_{\Omega}, \mathcal{O}(\Sigma)_{\lambda}) \cong \hc{q+\ell-i}(K,\bfD_\Sigma^r)_{\phi^S}^{\opn{ord}, \varepsilon}
    \]
    is a projective (even free) $\mathcal{O}(\Sigma)_{\lambda}$-module, the isomorphism being Proposition \ref{prop:tor iso} and \eqref{eq:arithmetic to betti c}, and freeness being Theorem \ref{thm:main etale}. The result follows from the fact that being projective is closed under extensions.
\end{proof}

To conclude, we now use the localised Tor spectral sequence again, but now with $R = \mathcal{O}(\Sigma)$ and $S = L$, with the map $R \to S$ being specialisation at $\lambda = 1$. We take $\chi_R = \chi_\Sigma$ and $\chi_S = 1$, the trivial character.

\begin{proposition}\label{prop:red specialise}
   For all $j$, specialisation at $\lambda = 1$ induces an isomorphism
    \[
    \opn{H}^{j}(G(\mbb{Q}), \mathcal{A}_{G, c}(\opn{I}_{\overline{B}}^{\opn{la}}(\chi_{\Sigma})))^{\varepsilon}_{\phi^S} \otimes_{\mathcal{O}(\Sigma)_{\lambda}} \cO(\Sigma)/\m_\lambda \cong \opn{H}^{j}(G(\mbb{Q}), \mathcal{A}_{G, c}(\opn{I}_{\overline{B}}^{\opn{la}}(1)))^{\varepsilon}_{\phi^S}.
    \]
\end{proposition}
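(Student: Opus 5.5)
The plan is to run the localised Tor spectral sequence of the preceding lemma with $R = \cO(\Sigma)$, $S = L = \cO(\Sigma)/\m_\lambda$, $\chi_R = \chi_\Sigma$ and $\chi_S = 1$. It is legitimate to use $\chi_S = 1$ because $\chi_\Sigma \equiv 1 \newmod{\m_1}$ (Lemma \ref{lem:centre trivial 3}). We localise at $\phi^S$ and take $\varepsilon$-parts, which gives
\[
E_2^{i,j} = \opn{Tor}_{-i}^{\cO(\Sigma)_\lambda}\Big(\opn{H}^j(G(\Q),\cA_{G,c}(\opn{I}_{\overline{B}}^{\opn{la}}(\chi_\Sigma)))^\varepsilon_{\phi^S}, L\Big) \Rightarrow \opn{H}^{i+j}(G(\Q),\cA_{G,c}(\opn{I}_{\overline{B}}^{\opn{la}}(1)))^\varepsilon_{\phi^S}.
\]
We also use that localisation at $\phi^S$ commutes with the base change $\cO(\Sigma)\to\cO(\Sigma)_\lambda$. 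This holds because the $\bT^S$-action factors through $\phi_\Sigma$ near $x_\phi$, so localising at $\phi^S$ also localises $\cO(\Sigma)$ at $\m_\lambda$. Some care is needed here, but it is the same bookkeeping already used in Corollary \ref{cor:sigma projective}.

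The key input is Corollary \ref{cor:sigma projective}(ii). It says that every module $\opn{H}^j(G(\Q),\cA_{G,c}(\opn{I}_{\overline{B}}^{\opn{la}}(\chi_\Sigma)))^\varepsilon_{\phi^S}$ is projective over the local ring $\cO(\Sigma)_\lambda$, hence free and in particular flat. It follows that $E_2^{i,j}=0$ for $i\neq 0$, since the higher Tor groups against $L$ vanish. The spectral sequence is therefore concentrated in the column $i=0$. All differentials $d_r$ with $r\geq 2$ change the column index $i$, so they vanish. For each $j$ the abutment in degree $j$ has a single graded piece, namely $E_\infty^{0,j}=E_2^{0,j}$. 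The edge map then gives the isomorphism
\[
\opn{H}^j(\cdots\chi_\Sigma\cdots)^\varepsilon_{\phi^S}\otimes_{\cO(\Sigma)_\lambda} L \isorightarrow \opn{H}^j(\cdots 1\cdots)^\varepsilon_{\phi^S}.
\]

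It remains to identify this edge map with the specialisation map. The Tor spectral sequence comes from the isomorphism $\cA_{G,c}(M_R^\bullet)\otimes_R S\cong \cA_{G,c}(M_S^\bullet)$. The edge map $E_2^{0,j}\to \opn{H}^j$ is induced by the chain map $\cA_{G,c}(M_R^\bullet)\to \cA_{G,c}(M_R^\bullet)\otimes_R S$. On the resolved modules this chain map is reduction modulo $\m_\lambda$, and it is compatible, through the resolutions of Corollary \ref{c: dual spectral sequence}, with the map $\opn{I}_{\overline{B}}^{\opn{la}}(\chi_\Sigma)^*\to\opn{I}_{\overline{B}}^{\opn{la}}(1)^*$.

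I expect the main obstacle to be the localisation and completion issues. One has to make sure that the Tor spectral sequence over $\cO(\Sigma)$ localised at $\phi^S$ really is one over $\cO(\Sigma)_\lambda$, and that the flatness lemma applies with $R=\cO(\Sigma)$. The second point needs a finite map $R\to S$ and a character with $\chi_\Sigma(t_i)\in(R^+)^\times$, which holds after shrinking $\Sigma$ since the family is ordinary. If localisation causes trouble, the fallback is to run the argument directly at the derived level. By Proposition \ref{prop:quasi-iso sum} and Corollary \ref{cor:sigma projective}(i), the complex over $\Sigma$ is quasi-isomorphic to a direct sum of shifted free $\cO(\Sigma)_\lambda$-modules. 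Then $-\otimes^{\mathbf{L}}L$ visibly commutes with taking cohomology, which gives the same conclusion.
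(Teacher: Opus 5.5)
Your proposal is correct and follows the paper's own argument. You run the localised Tor spectral sequence for $\cO(\Sigma)\to L$, with $\chi_\Sigma$ specialising to the trivial character; by the projectivity in Corollary \ref{cor:sigma projective}(ii), $E_2^{i,j}=0$ for $i\neq 0$, so the sequence degenerates and the edge map gives the isomorphism. Your extra remarks on identifying the edge map with specialisation, and on how localising at $\phi^S$ interacts with localising at $\m_\lambda$, fill in bookkeeping the paper leaves implicit. One small correction to your fallback: formality over $\Sigma$ should be deduced from part (ii), not from Proposition \ref{prop:quasi-iso sum} and part (i), since Proposition \ref{prop:quasi-iso sum} is stated over $\Omega$.
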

\begin{proof}
Let $(E_2^{i,j})^{\varepsilon}_{\phi^S}$ be the localised Tor spectral sequence for $\cO(\Sigma)\to L$.     By Corollary \ref{cor:sigma projective}, this vanishes unless $i = 0$. In particular, the spectral sequence degenerates on the $E_2$ page, immediately giving the claimed isomorphism.
\end{proof}

\subsection{The automorphic Benois--Colmez--Greenberg--Stevens formula} \label{ss: automorphic BCGS}

The following is the main result of \S\ref{sec:automorphic BCGS}. We maintain our running assumptions on $\pi$ from Set-up \ref{setup}, including Assumption \ref{ass:nap}, and the notation  introduced between the start of \S\ref{sec:automorphic BCGS} and the end of \S\ref{sec:infinitesimal weights}. In particular, recall we have fixed an admissible sign $\varepsilon \in \mathcal{E}_{\pi}$ and we have a continuous homomorphism $\partial\chi_v[c] \in \opn{Hom}_{\opn{cts}}(\Qp^\times,L)$ from \eqref{eq:partial-chi}.

\begin{theorem}\label{p: automorphic BCGS} 
Let $\pi$ and $\Sigma$ be as in Set-up \ref{setup}, and assume the non-abelian Leopoldt conjecture holds. Let $1 \leq c \leq n-1$ and $v= (v_1,..., v_{n})\in \opn{T}_1(\Sigma)$. 
\begin{itemize}
\item[(i)] For all $0 \leq r \leq \ell$, we have 
\[
    \partial\chi_{v}[c] \in \bL^{r,\varepsilon}_c(\pi).
    \]
\item[(ii)] If $v_c \neq v_{c+1}$, then $\bL^{r,\varepsilon}_c(\pi) = \langle \partial\chi_v[c]\rangle \subset \opn{Hom}_{\opn{cts}}(\Qp^\times,L)$ for all $r$.
\end{itemize}
\end{theorem}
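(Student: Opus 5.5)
The plan is to verify the local criterion of Corollary \ref{cor:GR localisation}: for each $0 \leq r \leq \ell$, the localised reduction map
\[
\mathrm{red}^{q'+r,\varepsilon}_{\chi_v,\phi^S} \colon \h^{q'+r}\big(G(\Q),\cA_{G,c}(\opn{I}^{\opn{la}}_{\overline{B}}(\chi_v)^*)\big)^\varepsilon_{\phi^S} \longrightarrow \h^{q'+r}\big(G(\Q),\cA_{G,c}(\opn{I}^{\opn{la}}_{\overline{B}}(1)^*)\big)^\varepsilon_{\phi^S}
\]
should be surjective. This immediately gives (i) for every $c$. Part (ii) then follows from the last sentence of Proposition \ref{prop:GR}. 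If $v_c \neq v_{c+1}$, then $\partial\chi_v[c]$ restricted to $\Zp^\times$ equals $(v_c - v_{c+1})\log_p$ by \eqref{eq:universal infinitesimal}. This is nonzero, so $\partial\chi_v[c]$ spans the line $\bL^{r,\varepsilon}_c(\pi)$, using $\dim \leq 1$ from Proposition \ref{p: L-invariant}.

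For surjectivity, factor the reduction map through the family over $\Sigma$. The tangent vector $v$ gives a map $\cO(\Sigma) \to L[\epsilon] \to L$, and by the commutative diagram of characters in \S\ref{sec:chars on T} we have $\chi_\Sigma \mapsto \chi_v \mapsto 1$. These maps of characters induce $G(\Qp)$-equivariant maps of locally analytic inductions, with duals
\[
\opn{I}^{\opn{la}}_{\overline{B}}(\chi_\Sigma)^* \to \opn{I}^{\opn{la}}_{\overline{B}}(\chi_v)^* \to \opn{I}^{\opn{la}}_{\overline{B}}(1)^*,
\]
compatible with the prime-to-$S$ Hecke action and with $\varepsilon$-parts. (The base change of $\opn{I}^{\opn{la}}(\chi_\Sigma)$ along $\cO(\Sigma)\to L[\epsilon]$ is $\opn{I}^{\opn{la}}(\chi_v)$.) After localising at $\phi^S$, the composite map on cohomology is therefore the specialisation map of Proposition \ref{prop:red specialise}, composed with the quotient $M \to M\otimes_{\cO(\Sigma)_\lambda}\cO(\Sigma)/\m_\lambda$. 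Proposition \ref{prop:red specialise} shows this specialisation induces an isomorphism
\[
\h^j(\ldots\chi_\Sigma\ldots)^\varepsilon_{\phi^S}\otimes\cO(\Sigma)/\m_\lambda \cong \h^j(\ldots 1\ldots)^\varepsilon_{\phi^S},
\]
so in particular the composite is surjective. Since the composite factors through $\mathrm{red}_{\chi_v,\phi^S}^{q'+r,\varepsilon}$, that map is surjective too.

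The main obstacle is checking that the composite really factors as claimed, with everything correctly localised. Two points need care.

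First, the cohomology over $\Sigma$ appearing in Proposition \ref{prop:red specialise} is localised over $\cO(\Sigma)_\lambda$, whereas the $L[\epsilon]$-cohomology is localised only at $\phi^S$. Since $L[\epsilon]$ is local with residue field $L$ at $\lambda$, the map $\cO(\Sigma)\to L[\epsilon]$ factors through $\cO(\Sigma)_\lambda$. Elements of $\cO(\Sigma)$ outside $\m_\lambda$ become units in $L[\epsilon]$, so the map on localised cohomology is well defined.

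Second, the characters $\chi_\Sigma$ used in \S\ref{sec:chars on T} must be the ones appearing in Corollary \ref{cor:sigma projective}. This requires $\chi_\Sigma(t_i)\in(\cO(\Sigma)^+)^\times$, which holds by ordinarity, after possibly shrinking $\Omega$.

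With these checked, surjectivity in every degree $q'+r$ follows, and Corollary \ref{cor:GR localisation} finishes the proof.
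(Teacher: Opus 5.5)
Your proposal is correct and follows the paper's proof essentially verbatim. You verify the criterion of Corollary \ref{cor:GR localisation} by factoring the surjective specialisation of Proposition \ref{prop:red specialise} through $\cO(\Sigma)\to L[\epsilon]\to L$ along $v$, which is also how the paper proves (i). You then deduce (ii), as the paper does, from $\partial\chi_v[c]|_{\Zp^\times}=(v_c-v_{c+1})\log_p\neq 0$ together with $\dim_L \bL_c^{r,\varepsilon}(\pi)\leq 1$; your localisation and integrality checks simply make explicit details that the paper leaves implicit.
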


\begin{proof}
To show (i), we must verify Gehrmann--Rosso's criterion. This follows by considering the isomorphism
    \[
    \opn{H}^{q'+r}(G(\mbb{Q}), \mathcal{A}_{G, c}(\opn{I}_{\overline{B}}^{\opn{la}}(\chi_{\Sigma})))^{\varepsilon}_{\phi^S} \otimes_{\mathcal{O}(\Sigma)_{\lambda}} \cO(\Sigma)/\m_\lambda \cong \opn{H}^{q'+r}(G(\mbb{Q}), \mathcal{A}_{G, c}(\opn{I}_{\overline{B}}^{\opn{la}}(1)))^{\varepsilon}_{\phi^S}
    \]
from Proposition \ref{prop:red specialise}, and then noting that it factorises through specialisation $\cO(\Sigma) \to L[\epsilon] \to L$ at any $v \in \opn{T}_1(\Sigma)$ (see \cite[\S2.3]{GehrmannRosso}, particularly Proposition 2.4). In particular, surjectivity of the reduction from $\Sigma$ to $L$ implies surjectivity of the reduction from $L[\epsilon]$ to $L$ along $v$, as required.

\medskip

It remains to show (i) implies (ii).  We know
\begin{equation}\label{eq:partial chi}
    \partial\chi_v[c]|_{\Zp^\times} = (\partial\chi_{v,c}- \partial\chi_{v,c+1})|_{\Zp^\times} = (v_c-v_{c+1})\log_p,
\end{equation}
as $1 + \partial\chi_{v,i}\epsilon = \chi_{v,i}$, and $\chi_{v,i}|_{\Zp^\times} = \kappa^{\opn{univ}}_{v,i} = 1 + v_i\log_p \epsilon$. If $v_c \neq v_{c+1}$,  then by \eqref{eq:partial chi} we see $\partial\chi_v[c]\neq 0$. By Proposition \ref{p: L-invariant}, this means $\partial\chi_v[c]$ will be a basis of the automorphic $\cL$-invariant $\bL_{c}^{r,\varepsilon}(\pi)$ for all $r$.
\end{proof}

\begin{definition}\label{def:non-c-parabolic}
Let $1 \leq c \leq n-1$. We say $\pi$ \emph{admits non-$c$-parabolic deformations}\footnote{To explain the terminology, note that if $v_c \neq v_{c+1}$, then $v$ is not a tangent vector in the $Q_c$-parabolic weight space at $1$ (defined in \cite[Def.\ 3.3]{BW20}).} if there exists $v \in \opn{T}_1(\Sigma)$ such that $v_c \neq v_{c+1}$.
\end{definition}

We obtain the following immediate corollary.

\begin{corollary}
Let $1 \leq c \leq n-1$, and suppose $\pi$ admits non-$c$-parabolic deformations. Then \cite[Conjecture A(i)--(ii)]{AutomorphicLinvariants} holds for $\pi$ at $c$; that is, $\mbb{L}_c^{r, \varepsilon}(\pi) \subset \opn{Hom}_{\opn{cont}}(\mbb{Q}_p^{\times}, L)$ has codimension one, and is independent of the degree $r$. 
\end{corollary}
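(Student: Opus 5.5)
The corollary should follow at once from Theorem \ref{p: automorphic BCGS} together with Proposition \ref{p: L-invariant}, so the plan is short. Fix $c$ and, using the hypothesis, a tangent vector $v = (v_1,\dots,v_n) \in \opn{T}_1(\Sigma)$ with $v_c \neq v_{c+1}$. The standing assumptions of Set-up \ref{setup} hold, and so does Assumption \ref{ass:nap} (non-abelian Leopoldt). Hence Theorem \ref{p: automorphic BCGS}(i) applies and gives $\partial\chi_v[c] \in \bL_c^{r,\varepsilon}(\pi)$ for every $0 \leq r \leq \ell$.

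Next I will show $\partial\chi_v[c] \neq 0$ by restricting it to $\Zp^\times$. By \eqref{eq:partial chi}, this restriction is $(v_c - v_{c+1})\log_p$, which is non-zero. It follows that $\bL_c^{r,\varepsilon}(\pi)$ has dimension at least $1$ for every $r$.

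Proposition \ref{p: L-invariant}(i) gives $\opn{ord}_p \notin \bL_c^{r,\varepsilon}(\pi)$. Since $\opn{Hom}_{\opn{cts}}(\Qp^\times,L)$ is two-dimensional, each $\bL_c^{r,\varepsilon}(\pi)$ therefore has dimension at most $1$. Combining with the previous step, every $\bL_c^{r,\varepsilon}(\pi)$ has dimension exactly one, i.e. codimension one, which is Conjecture A(i). Moreover, each of these lines equals $\langle \partial\chi_v[c] \rangle$, and this span does not depend on $r$. That gives independence of the degree, Conjecture A(ii), at the fixed sign $\varepsilon$; this is exactly Theorem \ref{p: automorphic BCGS}(ii).

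The only point I need to check carefully is the scope of the statement. The space $\Sigma = \Sigma^\varepsilon$, and hence the tangent vector $v$, may a priori depend on $\varepsilon$. So the argument proves independence of $r$ for each admissible sign $\varepsilon$ for which a non-$c$-parabolic deformation exists, but not independence of $\varepsilon$. I will state the corollary in that form, reading the hypothesis as the existence of such a $v$ in $\opn{T}_1(\Sigma^\varepsilon)$ for the sign under consideration. Independence of the sign needs the Fontaine--Mazur comparison, which is beyond what Theorem \ref{p: automorphic BCGS} gives.
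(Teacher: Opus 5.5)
Your proposal is correct and is essentially the paper's argument. The paper just cites Theorem \ref{p: automorphic BCGS}(ii), whose proof is exactly your combination of part (i), the non-vanishing $\partial\chi_v[c]|_{\Zp^\times} = (v_c-v_{c+1})\log_p \neq 0$, and the bound $\dim \bL_c^{r,\varepsilon}(\pi) \leq 1$ from Proposition \ref{p: L-invariant}; your caveat that this gives independence of $r$ only at a fixed sign (since $\Sigma=\Sigma^\varepsilon$ may depend on $\varepsilon$) matches the scope the paper itself claims.
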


\begin{proof}
Let $v \in \opn{T}_1(\Sigma)$ be the given non-$c$-parabolic weight deformation, so that $v_c \neq v_{c+1}$. By part (ii) of the theorem, we know $\bL_c^{r,\varepsilon}(\pi) = \langle \partial\chi_v[c]\rangle$ for all $r$. The result follows since $\partial\chi_v[c]$ is independent of $r$.
\end{proof}

In this case, we thus have a complete description of the $\cL$-invariant for $\pi$, which we can exactly pin down in terms of infinitesimal data in a $p$-adic family through $\pi$.

\begin{remark}\label{rem:symplectic}
It is natural to ask when Definition \ref{def:non-c-parabolic} is satisfied. For $\GL_3$, a weak analogue of the Calegari--Mazur condition would predict it always holds, for all $c$; see \cite[Rem.\ 8.2.4]{GL3-ExceptionalZeros}. If one could prove that the $Q_c$-parabolic eigenvariety is reduced at $\pi$, then this also implies the condition.

If $\pi$ is essentially-self-dual, this assumption should be easier to verify. In this case we expect $\pi$ to vary in an ordinary family over the pure weight space $\sW^0$, yielding non-$c$-parabolic deformations for all $c$ (see Remark \ref{rem:esd}). Such a $\pi$ has either orthogonal or symplectic type. In the symplectic case, it is shown in \cite[Thm.\ A$'$]{BDGJW} that if $\pi_p$ is unramified and its $p$-adic $L$-function $L_p(\pi)$ is not identically zero, then $\pi$ varies over $\sW^0$.  In the Steinberg case, combining the methods \emph{op.\ cit}.\ with the $p$-adic $L$-function of Liu--Sun \cite{LS25} should also yield this result. We hope to return to this in future work.

Finally, in \S\ref{sec:symmetric powers} below, we give a class of unconditional examples; namely, we show that if $\pi$ is the symmetric power lift of a classical modular form, then it admits non-$c$-parabolic deformations for all $c$.
\end{remark}

\begin{remark}
Part (iii) of Gehrmann's conjecture also predicts independence of the sign $\varepsilon$. If $\Sigma = \Sigma^\varepsilon$ is independent of the choice of (admissible) sign, then we would also obtain this independence under the above hypotheses. This would also follow under the weaker condition that there exists an affinoid $\Sigma'$ contained in $\Sigma^\varepsilon$ for all $\varepsilon$, and such that $v \in \opn{T}_1(\Sigma')$, where $v$ is the tangent vector in the theorem.

When there exists a \emph{classical} family $\sC$ through $\pi$ -- that is, a component containing a Zariski-dense set of classical points -- then exploiting the Zariski-density of classical cuspidal points, Matsushima's formula, and Newton--Johansson's $p$-adic Langlands functoriality \cite{JoNew}, we can show that $\Sigma' \defeq w(\sC) \subset \Sigma^\varepsilon$ for all admissible signs $\varepsilon \in \cE_\pi$. This idea is explained fully in \cite[Cor.\ 7.22]{BDW20}. We expect such classical families to exist if $\pi$ is essentially self-dual; in particular, combining these observations with Remark \ref{rem:symplectic} should yield independence of sign in the symplectic-type setting.
\end{remark}

\begin{remark}\label{rem:venkatesh}
    We note that Gehrmann shows (see \cite[Theorem 4.6]{AutomorphicLinvariants}) that $\mbb{L}_c^{r, \varepsilon}(\pi)$ is independent of the degree $r$, conditional on the conjecture of Venkatesh \cite{Ven19} that $\opn{H}^*(Y(K), L)[\pi]$ is generated by $\opn{H}^q(Y(K), L)[\pi]$ under the action of the derived Hecke algebra. This conjecture is currently still open. Our approach instead assumes the non-abelian Leopoldt conjecture (which is also not known in general) rather than Venkatesh's conjecture, and provides an alternative way to show independence of the degree. 
    
    That such results are obtainable is perhaps not that surprising given the relationship between Venkatesh's conjecture and the non-abelian Leopoldt conjecture (explored in \cite{HT17}). Indeed, in \cite{Ven19} Venkatesh makes a motivic conjecture. As explained in \cite[Intro.]{HT17}, the conjecture Gehrmann uses -- studied in \cite[\S7]{Ven19}, via Taylor--Wiles-patching-style arguments -- can be viewed as an $\ell$-adic realisation of the motivic conjecture. Hansen--Thorne's treatment, in \cite[Cor.\ 4.10]{HT17}, instead shows that non-abelian Leopoldt implies an analogous $p$-adic realisation. (Whilst we don't use any of this in the present paper, for completeness we do note that Hansen--Thorne assume that $\pi_p$ is unramified, which excludes the Steinberg case we consider.)
\end{remark}

\begin{remark}\label{rem:GR bottom}
In \cite{GehrmannRosso}, a version of Theorem \ref{p: automorphic BCGS} is proved for groups $G$ admitting discrete series, i.e.\ with $\ell = 0$. They consider a commutative diagram
\[
    \xymatrix{
 \mathrm{H}^{j}(G(\Q), \cA_{G, c}(\mathrm{I}_{\overline{B}}^{\mathrm{la}}(\chi_\Sigma)^{\ast}))\ar^{\newmod{\m_1}}[d]\ar^-{\mathrm{aug}_{\chi_\Sigma}^{j}}[rr] && \hc{j+n-1}(K,\bfD_\Sigma^r)\ar^{\newmod{\m_1}}[d]\\
 \mathrm{H}^{j}(G(\Q), \cA_{G, c}(\mathrm{I}_{\overline{B}}^{\mathrm{la}}(1)^{\ast})) \ar^-{\mathrm{aug}_{1}^{j}}[rr] && \hc{j+n-1}(K,\bfD_1^r),
}
\]
where the horizontal maps are induced from the augmentation map in the Koszul complex. The double complex in Proposition \ref{prop:quasi-iso sum} yields a spectral sequence $E_1^{p,q}$. When $\ell=0$, after localising and taking ordinary parts, this sequence degenerates from the $E_2$ page, from which one can determine the image of the horizontal maps above in the (unique) degree $j=q'$ of non-zero cohomology. By Proposition \ref{prop:top cyclic}(i), the right-hand map is surjective. Gehrmann and Rosso deduce that the top two maps are surjective.  They also show, using crucially that $\ell=0$, that the bottom map is injective. In particular, the left-hand map must be surjective, verifying the criterion.

In our setting, one can still control the image of the horizontal maps in all degrees (using intermediate affinoids, and Proposition \ref{prop:spec surjective}). However the bottom map is only injective in bottom degree $j=q'$. In particular, whilst one can deduce Theorem \ref{p: automorphic BCGS} in bottom degree, one cannot use this method to go any further. This is why we have introduced the Tor spectral sequence for locally analytic principal series, to allow direct study of the left-hand map (and direct verification of Gehrmann--Rosso's criterion).
\end{remark}

\section{The Galois Benois--Colmez--Greenberg--Stevens formula} \label{s: families of triangulation}

In this section we continue using notation from \S\ref{sec:automorphic BCGS}.  We assume $\pi$ and $\Sigma$ are as in Set-up \ref{setup}, after fixing a sign $\varepsilon \colon K_\infty/K_\infty^\circ \to \{\pm 1\}$. Moreover, we will assume that Hypothesis \ref{h: abs irred and decomposed generic} holds. Given $v\in T_1(\Sigma)$, recall in particular the characters $\chi_{v, 1}, \chi_{v, 2},..., \chi_{v, n}$ from the end of \S\ref{sec:infinitesimal weights}, and let $\omega$ denote the $p$-adic cyclotomic character (i.e., $\omega \colon \Qp^\times \to \Qp^\times$ with $\omega(p) = 1$ and $\omega(x) = x$ for all $x \in \Zp^\times$).

\subsection{Main result and strategy}
The goal of this section is to prove the following result.

\begin{theorem} \label{t: TriangulationInFamilies}
  Assume that Hypothesis \ref{h: abs irred and decomposed generic} holds. If $v\in T_1(\Sigma)$, there exists a rank $n$ trianguline $(\varphi, \Gamma)$-module $D_{L[\epsilon]}$ over $L[\epsilon]$ with parameters $\{ \chi_{v, 1}, \chi_{v, 2}\omega^{-1},..., \chi_{v, n}\omega^{1-n} \}$. Moreover, the specialisation $D_{L[\epsilon]}$ mod $\epsilon$ corresponds, via $p$-adic Hodge theory, to $\rho_{\pi}|_{G_{\mbb{Q}_p}}$.
\end{theorem}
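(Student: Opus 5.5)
I would construct the family of Galois representations over the local ring of $\Sigma$ (or the Hecke algebra $\bT_{\Sigma,h}^\varepsilon(K)_\phi \cong \cO(\Sigma)_1$) and show that it is trianguline at $p$ with the expected parameters. Then I would specialise along $v$ to $L[\epsilon]$. The first step is to produce a Galois representation $\rho_\Sigma \colon G_\Q \to \GL_n(\cO(\Sigma)_1)$ lifting $\rho_\pi$, whose Frobenius traces at $\ell\notin S$ are given by $\phi^S_\Sigma$. By \eqref{eq:family over Sigma} and Theorem \ref{thm:main etale}, the Hecke action on the top-degree ordinary overconvergent cohomology factors through $\cO(\Sigma)$. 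Working integrally with the ordinary (Hida) part, the Hecke algebra acting on ordinary completed cohomology with $\Zp[\![T(\Zp)]\!]$-coefficients carries a determinant valued in it, by Scholze's construction of Galois representations for torsion classes. Since $\overline{\rho}_\pi$ is absolutely irreducible (Hypothesis \ref{h: abs irred and decomposed generic}), this determinant lifts to an honest representation over the localisation at the corresponding maximal ideal, and pushing forward to $\cO(\Sigma)_1$ gives $\rho_\Sigma$.

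The second step, which is the crux, is local--global compatibility at $p$ in the family. Here I would invoke our earlier result \cite[\S9]{GL3-ExceptionalZeros}, the ordinary local--global compatibility at $\ell=p$ for torsion classes for $\GL_n/\Q$, proved under $p>2n$ and decomposed genericity, following \cite{10author}. It gives that $\rho_\Sigma|_{G_{\Qp}}$, at least over the reduced quotient and after reduction modulo powers of $p$, preserves a full flag whose graded pieces are the characters $\chi_{\Sigma,i}\omega^{1-i}$. Here $\chi_\Sigma$ is the character \eqref{eq:chisigma}: its restriction to $T(\Zp)$ is the universal weight and its values on the $t_i$ are the $U_{t_i}$-eigenvalues $\Phi_{\Sigma,p}$. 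Passing to the limit gives an ordinary filtration on $\rho_\Sigma|_{G_{\Qp}}$ over $\cO(\Sigma)_1$, or at least over its image in $L[\epsilon]$ along $v$. This is where I expect the main obstacle. The known results hold over Hecke algebras acting on torsion cohomology, possibly only up to nilpotent ideals. Transferring them to the non-reduced ring $L[\epsilon]$ requires care: one needs either that $\cO(\Sigma)_1$ is reduced or an argument that the nilpotent ideal of bounded exponent causes no loss. My first attempt would be to use that $\Sigma$ is a local complete intersection (Theorem \ref{thm:regular sequence}) and that $\rho_\Sigma$ is determined by a determinant, and then argue with determinants of the $\Qp^\times$-characters on the flag pieces as in \cite{GL3-ExceptionalZeros}, which treated $n=3$.

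Finally, I would pull back along $\cO(\Sigma)\to L[\epsilon]$ at $v$ to get a filtered $L[\epsilon]$-representation. Applying Berger's functor $D_{\mathrm{rig}}$, the filtration by $G_{\Qp}$-stable $L[\epsilon]$-submodules with graded pieces $L[\epsilon](\chi_{v,i}\omega^{1-i})$ becomes a triangulation of the $(\varphi,\Gamma)$-module $D_{L[\epsilon]}$ with parameters $\{\chi_{v,1},\chi_{v,2}\omega^{-1},\dots,\chi_{v,n}\omega^{1-n}\}$; each graded piece is free of rank one because it arises from a character. Reducing modulo $\epsilon$ recovers $\rho_\Sigma \bmod \m_1=\rho_\pi$, with parameters $\omega^{1-i}$, consistent with \eqref{eq:steinberg at p}, since $\chi_v\equiv 1 \pmod{\epsilon}$ by Lemma \ref{lem:centre trivial 3}.
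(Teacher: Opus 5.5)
\paragraph{Main gap: no comparison between overconvergent and torsion cohomology.}
Your first step moves from the overconvergent eigensystem $\phi_\Sigma$ to the Hecke algebra of ordinary completed (or torsion) cohomology without argument. ``Pushing forward to $\cO(\Sigma)_1$'' presupposes a ring map $\cT_{\m}\to\cO(\Sigma)$ through which $\phi_\Sigma$ factors. It must also be compatible with the $T(\Zp)$- and $U_{t_i}$-actions, so that the parameters $\tilde\chi_i$ of the torsion local--global compatibility pull back to $\chi_{\Sigma,i}\omega^{1-i}$.

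This is not automatic. $\Sigma$, $\phi_\Sigma$ and $\chi_v$ are defined through Hansen's eigenvariety, i.e.\ through $\hc{\bullet}(K,\sD_\Omega)$ with rational coefficients. When $\ell>0$ you also cannot obtain the factorisation by interpolating from classical points, which need not be Zariski-dense in $\Sigma$.

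Establishing this comparison is the bulk of the paper's proof:
\begin{itemize}
\item Proposition \ref{p: factorization} reduces the theorem to showing that $\bT(K)\to\bT(K)(\hc{\bullet}(K,\bfD^{\circ,r}_{\Omega^\circ})^{\mathrm{ord},\varepsilon}_{\fn})$ factors through $\cT_{\m}$. This uses that the rational ordinary cohomology is a base change of this integral version.
\item Lemma \ref{l:factorize} proves the factorisation. One filters $\bfD^{\circ,r}_{\Omega^\circ}$ by $\mathrm{Fil}^m$ with finite quotients.
\item The evaluation map $\Lambda_m$ at $F_{\Omega^\circ}$ induces a quasi-isomorphism on ordinary parts, since $t^{r+1}\ker(\Lambda_m)\subset p\ker(\Lambda_m)$.
\item \cite[Lem.\ 5.2.9]{10author} identifies the result with $R\Gamma\big(T(\Zp)/T(b), R\Gamma(Y(K^p\mathrm{Iw}(b,c)),\cO_L/\varpi^m)^{\mathrm{ord},\varepsilon}\otimes(\cO(\Omega^\circ)^+/\fa^m)(\kappa_{\Omega^\circ}^{-1})\big)$. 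Hence the Hecke action factors through $\bT(K)(b,c;m)$.
\item One passes to the limit using that $\m$ is non-Eisenstein and that the integral complex is perfect.
\end{itemize}
The twist by $\kappa_{\Omega^\circ}^{-1}$ is exactly what makes the weight parts of the parameters match. You could instead cite \cite{Johansson-McDonald-Tarrach}, but some such comparison is indispensable, and your proposal does not mention it.

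\paragraph{Second gap: the nilpotent ideal.}
You leave the nilpotent ideal unresolved, and your suggested fix does not work. A complete intersection need not be reduced: the quotient by the single nonzerodivisor $X_1^2$ is a counterexample.

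The paper instead takes $v$ in the tangent space of a single irreducible component of $\Sigma$ and replaces $\Sigma$ by that component. Then $\cO(\Sigma)$ is reduced, so any map $\cT_{\m}\to\cO(\Sigma)$ kills the nilpotent $J$. Specialising along $v$ gives $\phi_v\colon\bT(K)\to\cT_{\m}/J\xrightarrow{f_v}L[\epsilon]$.

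You also do not need an honest $\rho_\Sigma$ over $\cO(\Sigma)_1$ with a $G_{\Qp}$-stable full flag, which is more than Theorem \ref{thm:BGW LGC} provides. That result directly attaches to every $f\colon\cT_{\m}/J\to L[\epsilon]$ a trianguline $(\varphi,\Gamma)$-module with parameters $f\circ\tilde\chi_i$. Once the factorisation through $\cT_{\m}/J$ is established, applying it to $f_v$ finishes the proof with no further appeal to $D_{\mathrm{rig}}$.
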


We follow the strategy of the proof of \cite[Theorem 8.3.5]{GL3-ExceptionalZeros}, crucially using \cite[Cor. 9.7.3]{GL3-ExceptionalZeros}. Moreover, we will adopt significant notation from \cite{GL3-ExceptionalZeros}, giving precise references where necessary. In particular, let $\varpi \in \cO_L$ denote a uniformiser, and let $\m = (\varpi, \phi) \subset \bT(K)_{\cO_L}$, the maximal ideal cutting out the mod $\varpi$ Hecke eigensystem associated with $\pi$. By Hypothesis \ref{h: abs irred and decomposed generic}, $\m$ is non-Eisenstein. 

For integers $m\geq 1$ and $c\geq b\geq 0$ with $c\geq 1$ we denote $K(b,c)= K^p \mathrm{Iw}(b,c)\subset \mathrm{GL}_n(\A_f)$ an open compact subgroup, where $K^p$ was introduced at the beginning of \S\ref{sec:automorphic BCGS} and $\mathrm{Iw}(b,c)$ is as in \cite[\S 9.1.5]{GL3-ExceptionalZeros}. We consider the cohomology $R\Gamma(Y(K(b,c)), \cO_L/\varpi^m)$ of the locally symmetric space $Y(K(b,c))$, and we denote by $\bT(K)(b, c; m)\subset \mathrm{End}_{\mathbf{D}(\cO_L)}(R\Gamma(Y(K(b,c)), \cO_L/\varpi^m)^{\mathrm{ord},\varepsilon})$  the $\cO_L$-module generated by the image of the Hecke algebra $\bT(K)_{\cO_L}$. Moreover, we denote 
\[
    \cT= \varprojlim_{b, c, m}\bT(K)(b, c; m).
\]
 Note that $\m$ is in the support of $\bT(K)(0, 1; m)$ for some $m\geq 1$, hence gives a maximal ideal in $\cT$.

The following infinitesimal triangulation result was proved in \cite[Cor. 9.7.3]{GL3-ExceptionalZeros}.

\begin{theorem}\label{thm:BGW LGC}
There exists a nilpotent ideal $J \subset \cT_{\m}$ such that for any morphism $f \colon \cT_{\m}/J \to L[\epsilon]$, there exists a rank $n$ trianguline $(\varphi,\Gamma)$-module with prescribed parameters $f \circ \tilde\chi_i$, as described in Definition 9.1.2(2) \emph{op.\ cit.}\footnote{Here we use the notation $\tilde{\chi}_i$ for the characters in \cite[Def.\ 9.1.2(2)]{GL3-ExceptionalZeros} to avoid clashing with the notation earlier in this article.}
\end{theorem}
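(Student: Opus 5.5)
This is \cite[Cor.\ 9.7.3]{GL3-ExceptionalZeros}, and my plan is to deduce it directly from there by checking that the present set-up satisfies the hypotheses of that corollary, rather than reproving it. First I would check that the objects coincide. The tame level $K^p$ is the Whittaker new level \eqref{eq:whittaker new}, and the groups $\mathrm{Iw}(b,c)$ are those of \cite[\S 9.1.5]{GL3-ExceptionalZeros}. The ordinary Hecke algebras $\bT(K)(b,c;m)$ are defined exactly as \emph{op.\ cit.}, with one difference: we take $\varepsilon$-parts. Since $K_\infty/K_\infty^\circ$ acts on $R\Gamma(Y(K(b,c)),\cO_L/\varpi^m)$ through a finite group of order prime to $p$ and commutes with $\bT(K)$, the $\varepsilon$-part is a Hecke-stable direct summand. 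Hence our $\cT_\m$ is a quotient of the ring considered there, and a nilpotent ideal there pushes forward to a nilpotent ideal here. Second, the arithmetic input required there is that $p>2n$ and that $\overline{\rho}_\pi$ is absolutely irreducible and decomposed generic. This is exactly Hypothesis \ref{h: abs irred and decomposed generic}, and it also shows that $\m$ is non-Eisenstein.

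For the reader's orientation, the argument \emph{op.\ cit.} runs as follows. By Scholze and \cite{10author}, for each finite level $(b,c,m)$ there is a nilpotent ideal $J_{b,c,m}$ with exponent bounded independently of $(b,c,m)$. Modulo this ideal, there is a Galois representation (in fact a determinant lifted to a genuine representation, using residual irreducibility) valued in $\bT(K)(b,c;m)_\m/J_{b,c,m}$. Ordinary local-global compatibility at $p$ for torsion classes then shows that its restriction to $G_{\Qp}$ is upper triangular, with diagonal characters given by the $U_{t_i}$-eigenvalues and the $T(\Zp)$-action, i.e.\ by the $\tilde\chi_i$. Passing to the inverse limit, the uniform bound on nilpotence produces a single nilpotent $J\subset\cT_\m$. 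For any $f\colon\cT_\m/J\to L[\epsilon]$, pushing the triangular representation forward and applying the $(\varphi,\Gamma)$-module functor to its filtration gives a trianguline module with parameters $f\circ\tilde\chi_i$.

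So the only real work in this step is bookkeeping: matching normalisations of the $\tilde\chi_i$ (including the cyclotomic twists) and confirming that restricting to $\varepsilon$-parts is compatible with the cited construction. The main obstacle in the underlying theorem is the uniform control of the nilpotent ideal across the inverse system, together with the ordinary local-global compatibility for torsion. Both are supplied by \cite[\S9]{GL3-ExceptionalZeros}, which is why assumption (c) of Theorem \ref{thm:intro equality} is needed.
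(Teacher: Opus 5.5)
Your proposal is correct and matches the paper, which gives no independent argument for this statement: it is quoted directly from \cite[Cor.\ 9.7.3]{GL3-ExceptionalZeros}, with Hypothesis \ref{h: abs irred and decomposed generic} supplying the required inputs. Your extra bookkeeping is consistent with the set-up and not needed beyond the citation. In particular, the $\varepsilon$-part is a Hecke-stable direct summand because $p$ is odd, so $\cT_\m$ is a quotient and nilpotent ideals push forward.
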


It remains to exhibit a morphism $f$ such that $f \circ \tilde\chi_i$ can be identified with the required $\chi_{v,i}\omega^{1-i}$. To do this, we compare overconvergent cohomology (as used in our construction of eigenvarieties, hence to define $\chi_{v,i}$) and completed cohomology (giving rise to the Hecke algebra $\cT_{\m}$). For completeness we prove the weak comparison we require in our special case, for $\GL_n$ around ordinary points. We remark that a stronger (and more general) version of this comparison can be found in \cite{Johansson-McDonald-Tarrach}.

\subsection{Comparison of overconvergent and completed cohomology}

To compare completed and overconvergent cohomology, we need an integral version of the latter. Shrinking $\Omega$ (see \ref{convention}) we may assume that $\Omega\subset \Omega^{\circ}\subset \Omega'\subset \sW$, where $\Omega'$ is an open affinoid neighbourhood of the trivial weight and $\Omega^\circ$ is a wide open disc, i.e., $\Omega^\circ$ is the rigid generic fibre of $\mathrm{Spf}(\cO_L[\![X_1/p^{\rho}, \dots, X_{n}/p^{\rho}]\!])$ for some $\rho\in \Q_{>0}$ (see \S\ref{sec:infinitesimal weights}). Moreover, we denote by $\kappa_{\Omega^\circ}$ its universal weight. Recall that if $R$ is an $L$-affinoid algebra, then $R^+ \subset R$ denotes the unit ball with respect to the supremum semi-norm on $R$.

As in \cite[\S2.2]{Han17} and \S\ref{ss: Koszul resolutions and overconvergent cohomology}, for sufficiently large $r$ we consider the following $\cO(\Omega^{\circ})^+$-module:
\[
\bfA_{\Omega^{\circ}}^{\circ, r}= \{f: \mathrm{Iw}\rightarrow \cO(\Omega^{\circ})^+ : f\text{ is $r$-analytic}, \ f(bg)= \kappa_{\Omega^\circ}(b)f(g) \ \forall \ b\in \overline{B}(\Q_p)\cap \mathrm{Iw}, g\in \rm Iw\}.
\]
As before this module is endowed with left actions of $\mathrm{Iw}$ and $T^-$. We denote by $\bfD_{\Omega^{\circ}}^{\circ, r}$ the continuous $\cO(\Omega^{\circ})^+$-linear dual of $\bfA_{\Omega^{\circ}}^{\circ, r}$, which inherits left actions by $\mathrm{Iw}$ and $T^+$.  Moreover, we have a Hecke-equivariant isomorphism 
\begin{equation}\label{e: + versus usual oc}
\mathrm{H}_c^{\bullet}(\YK, \bfD_{\Omega^{\circ}}^{\circ, r})^{\rm ord, \varepsilon}\otimes_{\cO(\Omega^{\circ})^+}\cO(\Omega)\cong \mathrm{H}_c^{\bullet}(\YK, \bfD_{\Omega}^{r})^{\rm ord, \varepsilon}\cong \mathrm{H}_c^{\bullet}(\YK, \crD_{\Omega})^{\rm ord, \varepsilon}.
\end{equation}

Recall from \S\ref{ss: Infinitesimal families} the map $\phi_v\colon \bT(K)\rightarrow L[\epsilon]$, the specialisation along $v$ of a map $\phi_{\Sigma} \colon \bT(K) \to \cO(\Sigma)$.
By assumption, there exists an irreducible component $\Sigma' \subset \Sigma$ such that $v \in \opn{T}_1(\Sigma')$. Without loss of generality, we replace $\Sigma$ with $\Sigma'$, and thus assume $\cO(\Sigma)$ is reduced. (This will allow us to ignore the nilpotent ideal $J$.)

Let $\fn\subset \bT(K)\otimes_{\Z_p}\cO(\Omega^{\circ})^+$ be the maximal ideal obtained by pulling back $\m$ under the reduction modulo $(X_1/p^{\rho},.., X_{n}/p^{\rho})$. By construction of the eigenvariety, and Theorem \ref{thm:main etale}, the map $\phi_{\Sigma}:\bT(K)\rightarrow \cO(\Sigma)$ factors though $\bT(K)\rightarrow \bT(K)(\mathrm{H}^{\bullet}_c(\YK, \crD_{\Omega})^{\mathrm{ord}, \varepsilon})$. Moreover, equation (\ref{e: + versus usual oc}) shows $\phi_\Sigma$ induces a map 
\[
    \bT(K)(\mathrm{H}^{\bullet}_c(\YK, \bfD_{\Omega^{\circ}}^{\circ, r})^{\mathrm{ord}, \varepsilon})\rightarrow \cO(\Sigma)
\]
which factors through $\bT(K)(\mathrm{H}^{\bullet}_c(\YK, \bfD_{\Omega^{\circ}}^{\circ, r})^{\mathrm{ord}, \varepsilon}_{\fn})$. Summarising this discussion, we have a commutative diagram
\[
\xymatrix{
\bT(K) \ar[r]& \bT(K)(\mathrm{H}^{\bullet}_c(\YK, \bfD_{\Omega^{\circ}}^{\circ, r})^{\mathrm{ord}, \varepsilon})\ar[r]\ar[d]& \bT(K)(\mathrm{H}^{\bullet}_c(\YK, \crD_{\Omega})^{\mathrm{ord}, \varepsilon}) \ar[d] \\
& \bT(K)(\mathrm{H}^{\bullet}_c(\YK, \bfD_{\Omega^{\circ}}^{\circ, r})^{\mathrm{ord}, \varepsilon}_{\fn}) \ar[r] & \cO(\Sigma),
}
\]
where any path from $\bT(K)$ to $\cO(\Sigma)$ is the map $\phi_\Sigma$.

\begin{proposition} \label{p: factorization}
Suppose the natural map $\bT(K)\rightarrow \bT(K)(\mathrm{H}^{\bullet}_c(\YK, \bfD_{\Omega^{\circ}}^{\circ, r})^{\mathrm{ord}, \varepsilon}_{\fn})$ factors through $\cT_{\m}$. Then Theorem \ref{t: TriangulationInFamilies} holds.
\end{proposition}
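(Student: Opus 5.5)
The plan is to construct the morphism $f\colon \cT_\m/J \to L[\epsilon]$ required by Theorem \ref{thm:BGW LGC} as a composite through the overconvergent Hecke algebra, and then to identify the parameters $f\circ\tilde\chi_i$ with $\chi_{v,i}\omega^{1-i}$. The hypothesis gives a map $\cT_\m \to \bT(K)(\hc{\bullet}(\YK,\bfD^{\circ,r}_{\Omega^\circ})^{\opn{ord},\varepsilon}_{\fn})$ compatible with the maps from $\bT(K)$. We compose it with the bottom arrow of the diagram to $\cO(\Sigma)$, and then with the specialisation $\cO(\Sigma)\to L[\epsilon]$ along $v$ from \eqref{eq:spec to v}. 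This defines $f_0\colon \cT_\m \to L[\epsilon]$, and by construction $f_0$ restricted to the image of $\bT(K)$ is $\phi_v$.

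\textbf{Step 1: the nilpotent ideal.} We show that $f_0$ kills $J$. Since $J$ is nilpotent, its image in $\cO(\Sigma)$ consists of nilpotent elements. We have arranged that $\cO(\Sigma)$ is reduced, by replacing $\Sigma$ with an irreducible component through which $v$ passes. So the map $\cT_\m\to\cO(\Sigma)$ kills $J$, and $f_0$ factors through a map $f\colon\cT_\m/J\to L[\epsilon]$. Here we must check that the map to $\cO(\Sigma)$ really is a ring map out of $\cT_\m$. This should hold because $\phi_\Sigma$ is a character of the Hecke algebra acting on the free rank-one module \eqref{eq:family over Sigma}.

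\textbf{Step 2: matching parameters.} We check that $f\circ\tilde\chi_i=\chi_{v,i}\omega^{1-i}$ as characters of $\Qp^\times$. Following \cite[Def.\ 9.1.2]{GL3-ExceptionalZeros}, the characters $\tilde\chi_i$ are built from two pieces of data. On $p$ they record the $U_{t_i}$-eigenvalues, through ratios $U_{t_i}/U_{t_{i-1}}$. On $\Zp^\times$ they record the action of the diagonal torus $T(\Zp)$, i.e.\ the diamond/weight action on completed cohomology, twisted by $\omega^{1-i}$ to account for the normalisation of the Hodge--Tate weights.

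\emph{On $p$:} the image of $U_{t_i}$ under $f$ is $\phi_{v,p}(t_i)=\Phi_{v,p}(t_i)$. So the ratios give $\Phi_{v,p,i}(p)=\chi_{v,i}(p)$, with $\omega(p)=1$.

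\emph{On $\Zp^\times$:} the $T(\Zp)$-action on $\bfD^{\circ,r}_{\Omega^\circ}$-cohomology is through the universal weight $\kappa_{\Omega^\circ}$. Its image in $\cO(\Sigma)$ is $\kappa^{\opn{univ}}_\Sigma$, and along $v$ this becomes $\kappa^{\opn{univ}}_{v,i}=\chi_{v,i}|_{\Zp^\times}$. This step is essentially bookkeeping, but it requires care with conventions: opposite Borel versus Borel, the dominance shift by $\rho$, and inverse versus non-inverse actions. Getting exactly the twist $\omega^{1-i}$, rather than some other power, is where errors could creep in. I would verify it by matching at the classical point $\epsilon=0$, where the parameters must reduce to $\{1,\omega^{-1},\dots,\omega^{1-n}\}$ as in \eqref{eq:steinberg at p}.

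\textbf{Step 3: conclusion.} Theorem \ref{thm:BGW LGC} then produces a trianguline $(\varphi,\Gamma)$-module $D_{L[\epsilon]}$ over $L[\epsilon]$ with parameters $\chi_{v,i}\omega^{1-i}$. Its reduction mod $\epsilon$ corresponds to the Galois representation attached to $f \bmod \epsilon$. That reduction is the mod-$\epsilon$ specialisation of $\phi$, i.e.\ the eigensystem of $\pi$. By Chebotarev and the absolute irreducibility in Hypothesis \ref{h: abs irred and decomposed generic}, this representation is $\rho_\pi$, and hence the module corresponds to $\rho_\pi|_{G_{\Qp}}$.

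I expect Step 2 to be the main obstacle, specifically the identification of the weight-character normalisations between completed and overconvergent cohomology.
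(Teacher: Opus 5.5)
Your proposal is correct and follows the paper's proof essentially step for step. You compose the assumed factorisation with $\bT(K)(\hc{\bullet}(\YK,\bfD^{\circ,r}_{\Omega^\circ})^{\opn{ord},\varepsilon}_{\fn})\to\cO(\Sigma)$, use reducedness of $\cO(\Sigma)$ (after passing to a component whose tangent space contains $v$) to kill the nilpotent ideal $J$, specialise along $v$, and invoke Theorem \ref{thm:BGW LGC}. The only difference is that you spell out the parameter matching $f\circ\tilde\chi_i=\chi_{v,i}\omega^{1-i}$ and the identification of the reduction mod $\epsilon$ with $\rho_\pi|_{G_{\Qp}}$, which the paper asserts without detail.
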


\begin{proof}
The hypothesis allows us to complete the commutative diagram above to the diagram
\[
\xymatrix{
\bT(K) \ar[r]\ar[d]& \bT(K)(\mathrm{H}^{\bullet}_c(\YK, \bfD_{\Omega^{\circ}}^{\circ, r})^{\mathrm{ord}, \varepsilon})\ar[r]\ar[d]& \bT(K)(\mathrm{H}^{\bullet}_c(\YK, \crD_{\Omega})^{\mathrm{ord}, \varepsilon}) \ar[d] \\
\cT_{\m}\ar[r]& \bT(K)(\mathrm{H}^{\bullet}_c(\YK, \bfD_{\Omega^{\circ}}^{\circ, r})^{\mathrm{ord}, \varepsilon}_{\fn}) \ar[r] & \cO(\Sigma).
}
\]
In particular, $\phi_\Sigma$ factors through $\bT(K)\rightarrow \cT_{\m}$. Since $\cO(\Sigma)$ is reduced, and $J$ is nilpotent, the resulting map $\cT_{\m} \to \cO(\Sigma)$ factors through $\cT_{\m}/J$. Specialising along $v \in \opn{T}_1(\Sigma)$, we obtain a factorisation $\phi_v \colon \bT(K) \to \cT_{\m}/J \xrightarrow{\ f_v \ } L[\epsilon]$. The parameters $f_v \circ \tilde\chi_i$ associated with $f_v$ match exactly the $\chi_{v,i}\omega^{1-i}$, and hence we conclude from Theorem \ref{thm:BGW LGC}.
\end{proof}

To complete the proof of Theorem \ref{t: TriangulationInFamilies}, it remains to prove that $\bT(K)\rightarrow \bT(K)(\mathrm{H}^{\bullet}_c(\YK, \bfD_{\Omega^{\circ}}^{\circ, r})^{\mathrm{ord}, \varepsilon}_{\fn})$ factors through $\cT_{\m}$. To do this, we exploit the profinite structure of the module $\bfD_{\Omega^{\circ}}^{\circ, r}$. There is an exhaustive  $(\mathrm{Iw}, T^+)$-stable descending filtration $\mathrm{Fil}^m \bfD_{\Omega^{\circ}}^{\circ, r}$ for $m\in \N$ (see \cite[\S2]{Hanpreprint} and \cite[\S5.1]{JoNewExt}) such that:
\begin{itemize}
\item $\mathrm{Fil}^0\bfD_{\Omega^{\circ}}^{\circ, r}= \bfD_{\Omega^{\circ}}^{\circ, r}$,
\item $\bfD_{\Omega^{\circ}}^{\circ, r}= \varprojlim_{m} \bfD_{\Omega^{\circ}}^{\circ, r}/\mathrm{Fil}^m \bfD_{\Omega^{\circ}}^{\circ, r},$
\item for each $m$ the module $\bfD_{\Omega^{\circ}}^{\circ, r}/\mathrm{Fil}^m \bfD_{\Omega^{\circ}}^{\circ, r}$ is a finite set, and the action of $\mathrm{Iw}_{m+ r}= \mathrm{ker}(\mathrm{Iw}\rightarrow \mathrm{GL}_n(\Z/p^{m+r}\Z))$ on this is trivial.
\end{itemize}
The following is proved similarly to \cite[Lemma 9.8.1]{GL3-ExceptionalZeros}, and completes the proof of Theorem \ref{t: TriangulationInFamilies}.

\begin{lemma}\label{l:factorize}
\begin{enumerate}
\item For $m\geq 1$ and $c\geq b\geq m+r$ there exists a $\bT(K)$-equivariant quasi-isomorphism between $R\Gamma(\YK, \bfD_{\Omega^{\circ}}^{\circ, r}/\mathrm{Fil}^m \bfD_{\Omega^{\circ}}^{\circ, r})^{\mathrm{ord}, \varepsilon}$ and
\[
R\Gamma(T(\Z_p)/T(b), R\Gamma(Y(K^p\mathrm{Iw}(b,c)), \cO_L/\varpi^m)^{\mathrm{ord}, \varepsilon}\otimes_{\cO_L/\varpi^m}(\cO(\Omega^{\circ})^+/\fa^m)(\kappa_{\Omega^\circ}^{-1})),
\]
where $T(b)= \{t\in T(\Z_p): t\equiv 1 \ \text{mod} \ p^b\}$ and $\fa$ is the maximal ideal of $\cO(\Omega^{\circ})^+$.
\item The map $\bT(K)\rightarrow \bT(K)(R\Gamma(\YK, \bfD_{\Omega^{\circ}}^{\circ, r}/\mathrm{Fil}^m)^{\mathrm{ord}, \varepsilon})$ factors through $\bT(K)(b, c; m)$. Moreover, this is compatible with the change of $b,c$ and $m$. 
\item The map $\bT(K)\rightarrow \bT(K)(\mathrm{H}^{\bullet}_c(\YK, \bfD_{\Omega^{\circ}}^{\circ, r})^{\mathrm{ord}, \varepsilon}_{\fn})$ factors through $\cT_{\fm}$.
\end{enumerate}
\end{lemma}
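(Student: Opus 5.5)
We prove the three parts in order, following \cite[Lemma 9.8.1]{GL3-ExceptionalZeros}.

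For (1), the approach is a Shapiro's lemma argument. Since $\mathrm{Iw}_{m+r}$ acts trivially on the finite module $\bfD_{\Omega^{\circ}}^{\circ, r}/\mathrm{Fil}^m$, the local system it defines on $Y(K)$ is pulled back from the finite cover $Y(K^p\mathrm{Iw}(b,c)) \to Y(K)$ once $c \geq b \geq m+r$. Here I expect $\mathrm{Iw}(b,c)$ to be the group of elements of $\mathrm{Iw}$ that are unipotent upper-triangular mod $p^b$ and lower-triangular mod $p^c$, as in \cite[\S9.1.5]{GL3-ExceptionalZeros}. We then write $R\Gamma(\YK,-)$ as $R\Gamma(\mathrm{Iw}/\mathrm{Iw}(b,c), R\Gamma(Y(K^p\mathrm{Iw}(b,c)),\cO_L/\varpi^m)\otimes -)$, after first killing $\varpi^m$: since $\cO_L/\varpi^m \to \cO(\Omega^\circ)^+/\fa^m$ is defined, the $m$-th quotient is an $\cO_L/\varpi^m$-module.

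The key input is an ordinary-part computation. Applying the ordinary projector for $U_t$ with $t$ strictly dominant contracts the $\overline{N}$-direction (lower unipotent) coordinates of the distribution module. On ordinary parts, $\bfD^{\circ,r}/\mathrm{Fil}^m$ is therefore replaced by the functions of the torus variable alone, namely $(\cO(\Omega^\circ)^+/\fa^m)(\kappa^{-1}_{\Omega^\circ})$ regarded as a $T(\Zp)/T(b)$-module. This is the familiar Hida-theoretic "independence of level" argument: $U_t$ maps level $\mathrm{Iw}(b,c)$ ordinary cohomology isomorphically to level $\mathrm{Iw}(b,b)$ and then to a $T(\Zp)/T(b)$-torsor. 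I would check Hecke equivariance of each step directly: prime-to-$p$ operators act only on $K^p$, and the $U_{t}$ are compatible with the trace and conjugation maps once ordinary projectors are applied. The $\varepsilon$-parts commute with everything, since $K_\infty/K_\infty^\circ$ acts through the archimedean component.

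For (2), the Hecke action on the complex in (1) is induced from the action on $R\Gamma(Y(K^p\mathrm{Iw}(b,c)),\cO_L/\varpi^m)^{\mathrm{ord},\varepsilon}$, tensored with an $\cO(\Omega^\circ)^+$-module and followed by the functor $R\Gamma(T(\Zp)/T(b),-)$. Every endomorphism in the image of $\bT(K)$ is therefore a functorial image of the corresponding element of $\bT(K)(b,c;m)$, so the map factors. Compatibility in $b,c,m$ follows from naturality of the quasi-isomorphisms of (1) under the pullback and reduction maps.

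For (3), pass to the limit. Since $\bfD^{\circ,r} = \varprojlim_m \bfD^{\circ,r}/\mathrm{Fil}^m$ with finite terms, Mittag-Leffler gives $R\Gamma(\YK,\bfD^{\circ,r}) \simeq R\varprojlim_m R\Gamma(\YK,\bfD^{\circ,r}/\mathrm{Fil}^m)$, and this is compatible with ordinary parts because the projectors are compatible. The Hecke image on cohomology of the limit is then a quotient of $\varprojlim_m \bT(K)(R\Gamma(\ldots/\mathrm{Fil}^m))$, which receives a map from $\cT$ by (2). Localising at $\fn$, which pulls back to $\m$, gives the factorisation through $\cT_\m$. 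The delicate point is that the derived endomorphism image of the limit is controlled by the limit of the images; I would handle this by passing through $\mathrm{End}$ in the derived category, as in the definition of $\bT(K)(b,c;m)$. The main obstacle is (1), specifically identifying the ordinary part of $\bfD^{\circ,r}/\mathrm{Fil}^m$ with the torus-induced module compatibly with $U_t$.
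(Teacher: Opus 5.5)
Your part (2) matches the paper, but part (1) has a genuine gap, located exactly at the step you flag as the main obstacle.

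\textbf{The Shapiro step in (1) fails as stated.} In the reference, $\mathrm{Iw}(b,c)$ consists of matrices that are upper-triangular modulo $p^c$ and unipotent upper-triangular modulo $p^b$. In particular $\mathrm{Iw}(b,c)\supset N(\Zp)$, which acts non-trivially on $\bfD_{\Omega^{\circ}}^{\circ, r}/\mathrm{Fil}^m$. So this local system is not constant on $Y(K^p\mathrm{Iw}(b,c))$ for any $b,c$. Moreover, $\mathrm{Iw}(b,c)$ is not normal in $\mathrm{Iw}$, so $R\Gamma(\mathrm{Iw}/\mathrm{Iw}(b,c),-)$ makes no sense. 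The group $T(\Zp)/T(b)$ in the statement is $\mathrm{Iw}(0,c)/\mathrm{Iw}(b,c)$. Hochschild--Serre can only be applied after the coefficients have been replaced by a module on which $\mathrm{Iw}(b,c)$ acts trivially, so your order of steps must be reversed.

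\textbf{What the paper does instead.}
\begin{enumerate}
\item It first passes from level $K$ to $K(0,c)$, using that ordinary parts are independent of $c$.
\item It then evaluates at the highest-weight vector $F_{\Omega^\circ}$ (which is $1$ on $N(\Zp)$). This gives $\Lambda_m\colon \bfD_{\Omega^{\circ}}^{\circ, r}/\mathrm{Fil}^m\to(\cO(\Omega^{\circ})^+/\fa^m)(\kappa_{\Omega^\circ}^{-1})$. The underlying map $\Lambda$ is only $B(\Zp)$-equivariant, which is why one first passes to $\mathrm{Iw}(0,c)$, where $\Lambda_m$ is $(\mathrm{Iw}(0,c),T^+)$-equivariant.
\item The key estimate is $t^{r+1}\ker(\Lambda_m)\subset p\ker(\Lambda_m)$ for $t=t_1\cdots t_{n-1}$. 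Since $\ker(\Lambda_m)$ is finite, a power of $U_t$ kills its cohomology. Hence $\Lambda_m$ is a quasi-isomorphism on ordinary parts.
\item Only now is Hochschild--Serre applied, for $\mathrm{Iw}(b,c)\subset\mathrm{Iw}(0,c)$, with Lemma 5.2.9 of the ten-author paper handling ordinary parts.
\end{enumerate}
Your remark that the ordinary projector `contracts' a unipotent direction is the right intuition. However, with induction from $\overline{B}$ the contracted variable lies in $N(\Zp)$, not $\overline{N}$. Without $\Lambda_m$ and the contraction estimate, there is no argument.

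\textbf{Part (3) also omits an input.} Parts (1) and (2), and the definition of $\cT$, involve $R\Gamma$, whereas (3) is about $\mathrm{H}^\bullet_c$. The paper identifies $R\Gamma$ and $R\Gamma_c$ of $\bfD_{\Omega^{\circ}}^{\circ, r}/\mathrm{Fil}^m$ after localising at $\fn$. This uses that $\m$ is non-Eisenstein, which is where Hypothesis \ref{h: abs irred and decomposed generic} enters. Your argument never addresses boundary cohomology, which could otherwise prevent the Hecke action on compactly supported cohomology from factoring. The `delicate point' you mention is settled by the fact that $R\Gamma_c(\YK,\bfD_{\Omega^{\circ}}^{\circ, r})^{\mathrm{ord},\varepsilon}_{\fn}$ is a perfect complex of $\cO(\Omega^\circ)^+$-modules. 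Consequently its Hecke image is the inverse limit over $m$ of the Hecke images for the quotients by $\mathrm{Fil}^m$.
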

\begin{proof}
Part (2) is a direct consequence of the Hecke-equivariance in (1). To prove (3), it is enough to observe that from (2) the following map factors through $\cT_{\m}$:
\begin{align*}
\bT(K) &\to \varprojlim_m \bT(K)(R\Gamma(\YK, \bfD_{\Omega^{\circ}}^{\circ, r}/\mathrm{Fil}^m)^{\mathrm{ord}, \varepsilon}_{\fn}) = \varprojlim_m \bT(K)(R\Gamma_c(\YK, \bfD_{\Omega^{\circ}}^{\circ, r}/\mathrm{Fil}^m)^{\mathrm{ord}, \varepsilon}_{\fn})\\
&= \bT(K)(R\Gamma_c(\YK, \bfD_{\Omega^{\circ}}^{\circ, r})^{\mathrm{ord}, \varepsilon}_{\fn}) \to \bT(K)(\mathrm{H}^{\bullet}_c(\YK, \bfD_{\Omega^{\circ}}^{\circ, r})^{\mathrm{ord}, \varepsilon}_{\fn}).
\end{align*}
Here we use that $\m$ is non-Eisenstein in the first equality, and that $R\Gamma_c(\YK, \bfD_{\Omega^{\circ}}^{\circ, r})^{\mathrm{ord}, \varepsilon}_{\fn}$ is a perfect complex in the derived category of $\cO(\Omega^{\circ})^+$-modules in the second one.

To prove (1), firstly because the ordinary part of the cohomology at level $\mathrm{Iw}(b,c)$ is independent of $c$, we deduce that
\begin{equation}
\label{e: 1} R\Gamma(\YK, \bfD_{\Omega^{\circ}}^{\circ, r}/\mathrm{Fil}^m \bfD_{\Omega^{\circ}}^{\circ, r})^{\mathrm{ord}, \varepsilon} \cong R\Gamma(Y(K(0,c)), \bfD_{\Omega^{\circ}}^{\circ, r}/\mathrm{Fil}^m \bfD_{\Omega^{\circ}}^{\circ, r})^{\mathrm{ord}, \varepsilon}.
\end{equation}

Now let $F_{\Omega^{\circ}}: B(\Z_p)\rightarrow \cO(\Omega^{\circ})^+$ be the natural map obtained from the universal character $\kappa_{\Omega^{\circ}}$. For sufficiently large $r$, we consider $F_{\Omega^{\circ}}\in \bfA_{\Omega^\circ}^{\circ, r}$ in the natural way, restricting to the constant function $1$ on $N(\Z_p)$. Evaluating at $F_{\Omega^{\circ}}$ we obtain a $\cO(\Omega^{\circ})^+$-linear map $\Lambda: \bfD_{\Omega^{\circ}}^{\circ, r}\rightarrow \cO(\Omega^{\circ})^+(\kappa_{\Omega^\circ}^{-1})$ which is equivariant for the actions of $B(\Z_p)$ and $T^+$. For each $m\geq 1$, from $\Lambda$ we obtain a map 
\[
    \Lambda_m: \bfD_{\Omega^{\circ}}^{\circ, r}/\mathrm{Fil}^m \bfD_{\Omega^{\circ}}^{\circ, r}\rightarrow (\cO(\Omega^{\circ})^+/ \fa^m)(\kappa_{\Omega^\circ}^{-1})
\]
which is equivariant for the action of $(\mathrm{Iw}(0,c), T^+)$. Thus we obtain a map
\begin{equation}\label{e: 2}
R\Gamma(Y(K(0,c)), \bfD_{\Omega^{\circ}}^{\circ, r}/\mathrm{Fil}^m \bfD_{\Omega^{\circ}}^{\circ, r})^{\mathrm{ord}, \varepsilon}\rightarrow R\Gamma(Y(K(0,c)), (\cO(\Omega^{\circ})^+/ \fa^m)(\kappa_{\Omega^\circ}^{-1}))^{\mathrm{ord}, \varepsilon}.
\end{equation}

To study this morphism, we observe that, analogously to \cite[Claim 9.8.3]{GL3-ExceptionalZeros}, we have $t^{r+1}\mathrm{ker}(\Lambda_m)\subset p\mathrm{ker}(\Lambda_m)$ where $t= t_1 \cdots t_{n-1}$. From this we deduce that a big enough power of $U_t$ acts by zero on the cohomology with coefficients in $\mathrm{ker}(\Lambda_m)$. Thus, from the short exact sequence obtained from $\Lambda_m$, we deduce that the morphism (\ref{e: 2}) is a quasi-isomorphism. Now, using \cite[Lemma 5.2.9]{10author} we obtain
\begin{equation}\label{e: 3}
R\Gamma(Y(K(0,c)), (\cO(\Omega^{\circ})^+/ \fa^m)(\kappa_{\Omega^\circ}^{-1}))^{\mathrm{ord}, \varepsilon}\cong R\Gamma(T(\Z_p), R\Gamma (N(\Z_p), (\cO(\Omega^{\circ})^+/ \fa^m)(\kappa_{\Omega^\circ}^{-1}))^{\mathrm{ord}, \varepsilon}).
\end{equation}

Moreover, from the equality $R\Gamma(T(\Z_p),-)= R\Gamma(T(\Z_p)/T(b), R\Gamma(T(b),-))$, the fact that $\mathrm{Iw}(b,c)$ acts trivially on $(\cO(\Omega^{\circ})^+/ \fa^m)(\kappa_{\Omega^\circ}^{-1})$, and \cite[Lemma 5.2.9]{10author} again, we deduce that 
\begin{multline*}
R\Gamma(T(\Z_p), R\Gamma (N(\Z_p), (\cO(\Omega^{\circ})^+/ \fa^m)(\kappa_{\Omega^\circ}^{-1}))^{\mathrm{ord}, \varepsilon})\cong\\
R\Gamma(T(\Z_p)/T(b), R\Gamma(Y(K^p\mathrm{Iw}(b,c)), \cO_L/\varpi^m)^{\mathrm{ord}, \varepsilon}\otimes_{\cO_L/\varpi^m}(\cO(\Omega^{\circ})^+/\fa^m)(\kappa_{\Omega^\circ}^{-1})).
\end{multline*}
Finally, we complete the proof of (1) by combining the quasi-isomorphisms \eqref{e: 1}, \eqref{e: 2} and \eqref{e: 3}. \end{proof}

\begin{remark} 
The $n=3$ case of Theorem \ref{t: TriangulationInFamilies} was proved in \cite[Thm. 8.3.5]{GL3-ExceptionalZeros}. In proving both of these results, the crucial input is Theorem \ref{thm:BGW LGC} above. In turn, the key step in proving this was a local-global compatibility result at $\ell= p$, established in  \cite[\S9]{GL3-ExceptionalZeros} for Galois representations attached to $p$-ordinary torsion classes for $\mathrm{GL}_n$. 
\end{remark}

 The following is our main application of the triangulation above.

\begin{theorem} \label{t: galois Benois--Colmez--Greenberg--Stevens formula} 
Let $\pi$ and $\Sigma$ be as in Set-up \ref{setup}. Assume the non-abelian Leopoldt conjecture, and that Hypothesis \ref{h: abs irred and decomposed generic} holds.
Then for any $c\in \{1,\dots, n-1\}$, and any $v = (v_1,\dots,v_n) \in \opn{T}_1(\Sigma)$, we have 
\[
\partial \chi_{v}[c] \in \bL^{\mathrm{FM}}_c(\pi).
\]
In particular, if $v_c \neq v_{c+1}$, we have $\bL^{\mathrm{FM}}_c(\pi) = \langle \partial\chi_v[c]\rangle.$
\end{theorem}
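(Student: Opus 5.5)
We will deduce the statement from Theorem \ref{t: TriangulationInFamilies} by a Colmez-style computation of the $\cL$-invariant of a rank two trianguline deformation. Fix $c$ and $v\in\opn{T}_1(\Sigma)$. Theorem \ref{t: TriangulationInFamilies} gives a rank $n$ trianguline $(\varphi,\Gamma)$-module $D_{L[\epsilon]}$ over $L[\epsilon]$ with parameters $\delta_i = \chi_{v,i}\omega^{1-i}$, reducing mod $\epsilon$ to $D_{\opn{rig}}(\rho_\pi|_{G_{\Qp}})$. The triangulation mod $\epsilon$ is the one of \eqref{eq:steinberg at p}, with parameters $\omega^{1-i}$. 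We then pass to the rank two subquotient $D_c$ of $D_{L[\epsilon]}$ cut out by the $c$-th and $(c+1)$-st steps of the filtration. This is a trianguline $(\varphi,\Gamma)$-module over $L[\epsilon]$ with parameters $(\delta_c,\delta_{c+1})$. Its reduction mod $\epsilon$ is the extension $*_c$ of $\omega^{-c}$ by $\omega^{1-c}$. After twisting by $\delta_{c+1}^{-1}$, we obtain an extension over $L[\epsilon]$ of $\cR_{L[\epsilon]}$ by $\cR_{L[\epsilon]}(\delta_c\delta_{c+1}^{-1})$. It deforms the extension of $\cR_L$ by $\cR_L(\omega)$ given by the class $*_c\in\h^1(\Qp,L(1))$. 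Here $\delta_c\delta_{c+1}^{-1} = \omega\cdot(1+\partial\chi_v[c]\epsilon)$.

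The key step is the standard infinitesimal computation, as in Benois' and Colmez's treatments and in the $n=2,3$ cases (e.g.\ \cite[\S8]{GL3-ExceptionalZeros}). Suppose an extension class $x\in\h^1(\Qp,L(1))$ deforms to a class in $\h^1(\Qp,L[\epsilon](\omega(1+\psi\epsilon)))$ for some $\psi\in\opn{Hom}_{\opn{cts}}(\Qp^\times,L)$. Then the obstruction is the connecting map of
\[
0\to L(1)\xrightarrow{\ \epsilon\ } L[\epsilon](\omega(1+\psi\epsilon))\to L(1)\to 0,
\]
which is cup product with $\psi\in\h^1(\Qp,L)$, landing in $\h^2(\Qp,L(1))\cong L$. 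So the deformation exists if and only if $\psi\cup x=0$. Under local Tate duality this says exactly that $\psi\in\langle x\rangle^\perp$. Applying this with $x=*_c$ and $\psi=\partial\chi_v[c]$ gives $\partial\chi_v[c]\in\bL_c^{\opn{FM}}(\pi)$.

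Two points need care. First, $(\varphi,\Gamma)$-cohomology must be identified with Galois cohomology. This holds for these rank one objects, which are \'etale since $\delta$ is a twist of $\omega$. Second, the subquotient must really be a deformation of the rank two piece. Taking successive saturated sub-modules in the triangulation handles this, since the graded pieces are free rank one $\cR_{L[\epsilon]}$-modules.

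If $*_c$ is split, the statement is empty as $\bL_c^{\opn{FM}}$ is undefined, so we implicitly assume non-splitness. For the final claim, \eqref{eq:partial chi} gives $\partial\chi_v[c]|_{\Zp^\times}=(v_c-v_{c+1})\log_p\neq0$ when $v_c\neq v_{c+1}$. Since $\bL_c^{\opn{FM}}(\pi)$ is one-dimensional, it is spanned by $\partial\chi_v[c]$.

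The main obstacle is the triangulation itself, already provided by Theorem \ref{t: TriangulationInFamilies}. Within this proof, the delicate point is making the rank two extraction and the sign and normalisation conventions (Tate duality, $\omega$ versus cyclotomic twist) match those defining $\bL_c^{\opn{FM}}$.
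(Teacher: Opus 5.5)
Your proposal is correct and follows the paper's route. Like you, the paper feeds the triangulation of Theorem \ref{t: TriangulationInFamilies}, with $\delta_c\delta_{c+1}^{-1}=\omega(1+\partial\chi_v[c]\epsilon)$, into the rank-two infinitesimal computation; the difference is that it cites \cite[Thm.\ 3.4]{DingLinvariants} for this step, whereas you carry out the connecting-map (cup product) and Tate duality argument by hand. Your one unjustified assertion, that the mod-$\epsilon$ triangulation is the filtration of \eqref{eq:steinberg at p}, follows from uniqueness of triangulations with ordered parameters $(1,\omega^{-1},\dots,\omega^{1-n})$, since $\opn{Hom}(\mathcal{R}(\omega^{1-i}),\mathcal{R}(\omega^{1-j}))=0$ for $i\neq j$.
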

\begin{proof} 
The first statement is the (Galois) Benois--Colmez--Greenberg--Stevens formula (see \cite{Benois11} and \cite{DingLinvariants}). In fact, we have $\chi_{c, v}\omega^{1-c} (\chi_{c+1, v}\omega^{1-(c+1)})^{-1}= \omega^{1-c}(\omega^{1-(c+1)})^{-1}(1+ \partial\chi_{v}[c]\epsilon)$. Using the triangulation of Theorem \ref{t: TriangulationInFamilies}, we deduce the result from \cite[Thm.\ 3.4]{DingLinvariants}. The second statement is immediate (as in the proof of Theorem \ref{p: automorphic BCGS}).
\end{proof}

Combining Theorems \ref{t: galois Benois--Colmez--Greenberg--Stevens formula} and \ref{p: automorphic BCGS} we have proved Theorem \ref{thm:intro equality}.

\section{Symmetric power functoriality}\label{sec:symmetric powers}

In this final section, we discuss an application of our results to the symmetric power functoriality of automorphic $\mathcal{L}$-invariants. For this, let $f \in S_2^{\opn{new}}(\Gamma_1(Np), \chi_f)$ be a weight two newform of level $\Gamma_1(Np)$ and nebentypus $\chi_f$, where $p\nmid N$. We assume that $a_p(f) = 1$ and the conductor of $\chi_f$ is prime to $p$. 

Let $\pi$ be the (unitary) automorphic representation of $\GL_2(\A)$ corresponding to $f$; our assumptions ensure that $\pi_p$ is the Steinberg representation of $\GL_2(\Qp)$ and $\pi$ is $p$-ordinary. Moreover, $\pi$ is non-CM; CM automorphic representations of $\GL_2(\A)$, whose local Weil--Deligne representations have no monodromy, are locally unramified or supercuspidal (i.e., not Steinberg) at all finite primes.

\subsection{Symmetric power functoriality for Hida families}

Let $\opn{Sym}^{n-1}\pi$ denote the symmetric power lifting of $\pi$, as constructed by Newton--Thorne \cite{NewtonThorneSymII}. This is a RACAR of $\opn{GL}_n(\mbb{A})$ of trivial cohomological weight which is unramified outside $Np$, with the property that the Satake parameters away from $Np$ match with $\opn{Sym}^{n-1}$ of the Satake parameters of $\pi$. Since $\opn{Sym}^{n-1}\pi$ is essentially self-dual, and by construction its associated semisimple Galois representation is $\opn{Sym}^{n-1}\rho_{\pi}$, by known local-global compatibility results (see Remark \ref{rem:essentially self-dual}) we see that $\opn{Sym}^{n-1}\pi$ must be Steinberg at $p$.

\medskip

We now explain how to vary functoriality in families. Fix admissible signs $\varepsilon \in \mathcal{E}_{\pi}$ and $\varepsilon' \in \mathcal{E}_{\opn{Sym}^{n-1}\pi}$; all constructions in this section will be with respect to these signs. Let $\mathscr{C} \to \mathscr{W}_{\opn{GL}_2}$ denote the unique Hida family passing through $f$ in the $\opn{GL}_2$-eigenvariety (built from considering the Hecke operators away from $N$). Here, our conventions are that the point corresponding to $f$ on $\mathscr{C}$ has weight equal to the trivial character of $T_{\operatorname{GL}_2}(\mathbb{Z}_p)$. This Hida family is finite over the component of $\mathscr{W}_{\opn{GL}_2}$ containing the trivial character. We say a point $x \in \mathscr{C}$ is classical if the weight of $x$ is locally algebraic, and the algebraic part is dominant. By Hida's classicality theory, this implies that $x$ is associated with a classical finite-slope ordinary cusp form $f_x$ of level $\Gamma_1(Np^r)$ for some $r \geq 0$.

Consider the morphism of weight spaces $\iota \colon \mathscr{W}_{\opn{GL}_2} \to \mathscr{W}_{\opn{GL}_n}$ given by sending a weight $(\kappa_1, \kappa_2)$ to $(\kappa_1^{n-1}, \kappa_1^{n-2}\kappa_2, \dots, \kappa_1\kappa_2^{n-2}, \kappa_2^{n-1})$.\footnote{This is locally (on the source) an immersion.} Let $K = K^p \opn{Iw} \subset \opn{GL}_n(\mbb{A}_f)$ denote the compact open subgroup with $K^p$ equal to the Whittaker new level of conductor $N$ (defined in \eqref{eq:whittaker new}). Let $S$ denote the set of primes dividing $Np$, and let $\mbf{T}_{\opn{GL}_n}(Np) = \mbf{T}_{\opn{GL}_n}^S \otimes_{\mbb{Q}_p} \mbb{Q}_p[T_{\opn{GL}_n}^+]$, where $\mbf{T}_{\opn{GL}_n}^S$ denotes the spherical Hecke algebra away from $S$. Similarly, let $\mbf{T}_{\opn{GL}_2}(Np) = \mbf{T}_{\opn{GL}_2}^{S} \otimes_{\mbb{Q}_p} \mbb{Q}_p[T_{\opn{GL}_2}^+]$. 

Consider the morphism of tori $T_{\opn{GL}_n} \to T_{\opn{GL}_2}$ given by 
\[
\opn{diag}(x_1, \dots, x_n) \mapsto \opn{diag}(x_1^{n-1} x_2^{n-2} \cdots x_{n-1}, x_2 x_3^2 \cdots x_n^{n-1}) .
\]
This induces a morphism $X_*(T_{\opn{GL}_n}) \to X_*(T_{\opn{GL}_2})$, and hence a morphism $\mbf{T}^S_{\opn{GL}_n} \to \mbf{T}^S_{\opn{GL}_2}$ via the Satake isomorphisms; this is defined over $\mbb{Q}_p$, because the pullback of the half-sum of positive roots for $\opn{GL}_2$ along this map is the half-sum of positive roots for $\opn{GL}_n$. This also induces a natural map $\mbb{Q}_p[T^+_{\opn{GL}_n}] \to \mbb{Q}_p[T^+_{\opn{GL}_2}]$. Taking the tensor product, we obtain a natural morphism 
\[
\jmath \colon \mbf{T}_{\opn{GL}_n}(Np) \rightarrow \mbf{T}_{\opn{GL}_2}(Np) .
\]
If $\phi \colon \mbf{T}_{\opn{GL}_2}(Np) \to \mbb{C}_p$ is a Hecke eigensystem attached to a classical RACAR of weight $\lambda$, then $\phi \circ \jmath$ is a Hecke eigensystem attached to its $\opn{Sym}^{n-1}$-lift, which has weight $\iota(\lambda)$.

\begin{lemma}\label{lem:functorial}
    Let $\mathscr{C}'$ denote the unique Hida family for $\opn{GL}_n$ passing through the unique ordinary $p$-refinement in $\opn{Sym}^{n-1}\pi$. Then the morphism $\jmath$ induces a map $\mathscr{C} \to \mathscr{C}'$ fitting into the diagram
    \[
\begin{tikzcd}
\mathscr{C} \arrow[d] \arrow[r]    & \mathscr{C}' \arrow[d]   \\
\mathscr{W}_{\opn{GL}_2} \arrow[r] & \mathscr{W}_{\opn{GL}_n},
\end{tikzcd}
    \]
    interpolating the $\opn{Sym}^{n-1}$-lift on classical points.
\end{lemma}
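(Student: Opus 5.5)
The plan is to build the map $\mathscr{C} \to \mathscr{C}'$ through a Zariski-dense set of classical points, using Galois representations, and then upgrade it to a morphism of rigid spaces by exploiting that both sides are reduced and finite over weight space. Since $\mathscr{C}$ is a Hida family, the map $\mathscr{C} \to \mathscr{W}_{\opn{GL}_2}$ is finite onto the relevant component. The Zariski closure of classical points of cohomological weight (dominant algebraic weight $\lambda$) is then all of $\mathscr{C}$, and at each such point $x$ we have a classical ordinary eigenform $f_x$.

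First, for each classical point $x$ of regular weight, we apply Newton--Thorne to obtain the cuspidal lift $\opn{Sym}^{n-1}\pi_{f_x}$, of weight $\iota(\lambda)$. The $\GL_2$ eigensystem $\phi_x$ is $p$-ordinary. Hence, by the discussion preceding the lemma, $\phi_x \circ \jmath$ is a Hecke eigensystem of $\opn{Sym}^{n-1}\pi_{f_x}$: away from $S$ this is Satake compatibility, and at $p$ it is the unique ordinary refinement, since the $U_{t_i}$-eigenvalues are products of powers of the unit root. This eigensystem is ordinary and of non-critical slope, so it defines a point of the ordinary $\GL_n$-eigenvariety of weight $\iota(\lambda)$, at tame level $K^p$ and sign $\varepsilon'$. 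Here one must check that the Whittaker new level of the lift divides that of $\opn{Sym}^{n-1}\pi$ uniformly in $x$. This holds because the conductor of $\opn{Sym}^{n-1}\pi_{f_x}$ away from $p$ is constant along the Hida family, by rigidity of the local Galois representations at $\ell \neq p$ in families (or by possibly shrinking to a neighbourhood); it contributes to the $\varepsilon'$-part because parity is locally constant.

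Next, I would define the morphism itself. Let $\mathscr{C}$ have ring of functions $\cO(\mathscr{C})$, with universal eigensystem $\phi_{\mathscr{C}} \colon \mbf{T}_{\opn{GL}_2}(Np) \to \cO(\mathscr{C})$. Consider the composite $\phi_{\mathscr{C}}\circ\jmath \colon \mbf{T}_{\opn{GL}_n}(Np) \to \cO(\mathscr{C})$, together with the weight map $\cO(\mathscr{W}_{\opn{GL}_n}) \to \cO(\mathscr{C})$ induced by $\iota$. Using the construction of the eigenvariety as the relative Spec of the Hecke image acting on ordinary overconvergent cohomology over $\iota(\mathscr{W}_{\opn{GL}_2})$, we need this pair to factor through the Hecke algebra $\bT(\mathrm{H}^\bullet_c(K,\sD_{\Omega})^{\rm ord,\varepsilon'})$ pulled back along $\iota$. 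I would invoke the standard interpolation lemma (Chenevier / Johansson--Newton \cite{JoNew}): if a reduced rigid space $\mathscr{C}$ carries a Zariski-dense set of points whose specialised eigensystems lie in an eigenvariety $\mathscr{X}$, compatibly with weights, then there is a unique morphism $\mathscr{C} \to \mathscr{X}$ interpolating them. The hypotheses are that $\mathscr{C}$ is reduced (Hida families are) and that $\phi_{\mathscr{C}}\circ\jmath$ takes bounded values, which holds because the values are integral.

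Finally, the image is irreducible, since $\mathscr{C}$ is irreducible, and it contains the point attached to $\opn{Sym}^{n-1}\pi$. Being ordinary, the image therefore lands in the unique Hida family $\mathscr{C}'$ through that point; uniqueness follows from the étaleness of Theorem \ref{thm:main etale} in the ordinary locus. Commutativity of the square holds by construction of the weight maps. The main obstacle I expect is the level and sign bookkeeping in the first step: showing that the lifts of all classical specialisations genuinely appear at the fixed tame level $K^p$ and sign $\varepsilon'$, so that the interpolation lemma applies.
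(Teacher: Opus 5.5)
Your proposal is correct and follows essentially the same route as the paper: Zariski density of classical points in $\mathscr{C}$; ordinarity, hence non-criticality, of each lift $\phi_x\circ\jmath$, so that it contributes to $\varepsilon'$-overconvergent cohomology by Hansen's Prop.~4.5.2, with the sign checked via the parity of weights in the component of the trivial character; and then Johansson--Newton's $p$-adic interpolation theorem. Your bookkeeping on the tame level of the lifts is extra care that the paper omits. The one thing to drop is your appeal to Theorem~\ref{thm:main etale} for the uniqueness of $\mathscr{C}'$: that uniqueness is presupposed by the statement, and invoking the theorem would bring in the non-abelian Leopoldt hypothesis, which the lemma does not assume.
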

\begin{proof}
It is standard that the classical points are Zariski dense in the $\GL_2$-Hida family $\mathscr{C}$. Let $x$ be such a classical point of weight $\lambda_x = (\kappa_1, \kappa_2)$, corresponding to an eigensystem $\phi_x$ in a $\GL_2$-RACAR $\pi_x$. The $\opn{Sym}^{n-1}$-lift $\phi_x \circ \jmath$ is $p$-ordinary, hence non-critical. Thus by \cite[Prop.\ 4.5.2]{Han17} it contributes to $\opn{H}^{\bullet}_c(K, \mathscr{D}_{\Omega})^{\opn{fs}, \varepsilon'}$ for an open affinoid neighbourhood $\Omega \subset \mathscr{W}_{\opn{GL}_n}$  of $\iota(\lambda_x)$. In particular, $\phi_x \circ \jmath$ yields a point $\opn{Sym}^{n-1} x \in \sC'$. Note that $\varepsilon'$ is admissible for $\pi_x$ because, if $n$ is odd, the central character of $\pi_x$ has the same parity as $\pi$ and $(\kappa_1 \kappa_2)^{(n-1)/2} \equiv 0$ modulo $2$, c.f., \cite[(3.2)]{Mah05} (here we are using the fact that $\lambda_x$ is in the same component of weight space as the trivial character, so necessarily $\kappa_i \equiv 0$ modulo $2$). 

The result now follows from Johansson--Newton's $p$-adic functoriality theorem \cite[Thm.\ 3.2.1]{JoNew}.
\end{proof}

\subsection{$\mathcal{L}$-invariants}

We now apply the automorphic BCGS formula. Let $u = (1, 0) \in \opn{T}_1(\mathscr{W}_{\opn{GL}_2})$, which maps to the tangent vector $v = (v_1, \dots, v_n) = (n-1, n-2, \dots, 1,0) \in \opn{T}_1(\mathscr{W}_{\opn{GL}_n})$. Note $v_c \neq v_{c+1}$ for all $1 \leq c \leq n-1$. The tangent vector $u$ lifts to a tangent vector $\tilde{u} \in \opn{T}_{f}(\mathscr{C})$, which -- via Lemma \ref{lem:functorial} -- maps to a tangent vector $\tilde{v} \in \opn{T}_{\opn{Sym}^{n-1}f}(\mathscr{C}')$ lifting $v$ via the map $\jmath$. In particular, both $\pi$ and $\opn{Sym}^{n-1}\pi$ admit non-$c$-parabolic deformations for all possible $c$.

Since the weight map is locally an isomorphism around $f$, the family in $\sC$ varying in the direction of $\tilde{u}$ is unique. Its corresponding parameter at $p$, as defined in \eqref{eq:chiv}, is described by a character $\chi_u = (\chi_{u, 1}, \chi_{u, 2}) \colon T_{\opn{GL}_2}(\mbb{Q}_p) \to L[\epsilon]^{\times}$. The parameter at $p$ corresponding to the family through $\opn{Sym}^{n-1}f$ in the direction $\tilde{v}$, which by Theorem \ref{thm:main etale} is also unique, is then given by the character 
\[
\chi'_{v} = (\chi'_{v, 1}, \dots, \chi'_{v, n}) = (\chi_{u, 1}^{n-1}, \chi_{u, 1}^{n-2} \chi_{u, 2}, \dots, \chi_{u, 2}^{n-1}) .
\]

\begin{proposition}
    Suppose the non-abelian Leopoldt conjecture holds for $p$-adic families through $\opn{Sym}^{n-1}\pi$. Then for all admissible signs $\varepsilon \in \mathcal{E}_{\pi}$ and $\varepsilon' \in \mathcal{E}_{\opn{Sym}^{n-1}\pi}$, and for all $1 \leq c \leq n-1$ and $0 \leq r \leq \ell(\opn{GL}_n)$, we have
    \[
    \mbb{L}^{r, \varepsilon'}_c(\opn{Sym}^{n-1}\pi) = \mbb{L}^{0, \varepsilon}_1(\pi) .
    \]
\end{proposition}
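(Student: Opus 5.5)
The plan is to apply Theorem~\ref{p: automorphic BCGS} on both sides, $\GL_n$ and $\GL_2$, along the compatible tangent vectors $\tilde v$ and $\tilde u$, and to compare the resulting characters $\partial\chi$.

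\emph{The $\GL_n$ side.} First I check that the tangent vector $v=(n-1,\dots,1,0)$ lies in $\opn{T}_1(\Sigma^{\varepsilon'})$, where $\Sigma^{\varepsilon'}$ is the weight subspace attached to $\opn{Sym}^{n-1}\pi$ by Theorem~\ref{thm:main etale}. By Lemma~\ref{lem:functorial}, $\tilde v\in\opn{T}_{\opn{Sym}^{n-1}f}(\sC')$. Since $\sC'$ is a component of $\sX^{\varepsilon'}_{G,K}$ through $x_\phi$, its image near $x_\phi$ lies in $w(\sY)=\Sigma^{\varepsilon'}$ by Lemma~\ref{lem:equidimensional}(ii). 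Hence $v=dw(\tilde v)\in\opn{T}_1(\Sigma^{\varepsilon'})$, and the infinitesimal eigenpacket $\phi_v$ is the one obtained from $\tilde v$.

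Its parameter at $p$ is $\chi'_v=(\chi_{u,1}^{n-1},\chi_{u,1}^{n-2}\chi_{u,2},\dots,\chi_{u,2}^{n-1})$. This holds because the $U_{t_i}$-eigenvalues of the lift are $\jmath$ applied to the $\GL_2$ ones, which holds on the Zariski-dense classical points and hence over the whole family. Writing $\chi_{u,i}=1+\partial\chi_{u,i}\epsilon$, a first-order expansion gives
\[
\partial\chi'_v[c]=\partial\chi_{u,1}-\partial\chi_{u,2}=\partial\chi_u[1]
\]
for every $c$. Since $v_c\neq v_{c+1}$, Theorem~\ref{p: automorphic BCGS}(ii), using the non-abelian Leopoldt hypothesis, gives
\[
\bL_c^{r,\varepsilon'}(\opn{Sym}^{n-1}\pi)=\langle\partial\chi_u[1]\rangle
\]
for all $r$ and $c$.

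\emph{The $\GL_2$ side.} Here $\ell(\GL_2)=0$, so non-abelian Leopoldt is known and cohomological multiplicity one holds by Example~\ref{ex:mult one}. Theorem~\ref{thm:main etale} applies, with $\Sigma$ open near the trivial weight because the Hida family is étale over weight space there. Thus $u\in\opn{T}_1(\Sigma)$ with $u_1\neq u_2$, and Theorem~\ref{p: automorphic BCGS}(ii) for $n=2$ gives $\bL_1^{0,\varepsilon}(\pi)=\langle\partial\chi_u[1]\rangle$. This is the same line as on the $\GL_n$ side, which proves the claim.

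The main obstacle is making the parameter compatibility $\chi'_v=\chi_u\circ(\text{torus map})$ rigorous at the infinitesimal level. This requires $\phi_{\Sigma}\circ\jmath$ to agree with $\phi_{\Sigma'}$ on the whole family, including the $\sA_p^+$-part, and the normalisations of $\Phi_{\Sigma,p}$ to match. I would derive this from Zariski-density of classical points in $\sC$ together with the reducedness of $\sC$ (the local ring there is étale over weight space), and from the explicit formula for $\jmath$ on $\Qp[T^+]$.
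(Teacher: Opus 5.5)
Your proposal is correct and follows the paper's own route. You compute $\partial\chi'_v[c]=\partial\chi_{u,1}-\partial\chi_{u,2}$ from the torus map, and then apply Theorem~\ref{p: automorphic BCGS}(ii) on both the $\GL_n$ side (using $v_c\neq v_{c+1}$) and the $\GL_2$ side. Your extra checks fill in details the paper only asserts: that $v\in\opn{T}_1(\Sigma^{\varepsilon'})$ via Lemma~\ref{lem:equidimensional}(ii), that the infinitesimal lift is unique by \'etaleness, and that $\jmath$ is compatible on the $\sA_p^+$-parts.
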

\begin{proof}
    For $\pi$, we know (by Proposition \ref{p: automorphic BCGS}, or indeed from \cite{GehrmannRosso}) that $\partial \chi_u [1] = \partial \chi_{u, 1} - \partial \chi_{u, 2} \neq 0$ is a basis of $\mbb{L}^{0, \varepsilon}_1(\pi)$. From the definitions, one can easily compute that
    \[
    \partial \chi'_v [c] = \partial \chi_{u, 1} - \partial \chi_{u, 2},
    \]
    and we know by Theorem \ref{p: automorphic BCGS} that this is also a basis of $\mbb{L}^{r, \varepsilon'}_c(\opn{Sym}^{n-1} \pi)$. The result follows.
\end{proof}

\begin{remark}
    Since the Galois and automorphic $\mathcal{L}$-invariants are known to agree for the $\GL_2$ representation $\pi$, this result gives an alternative way to prove that the Galois and automorphic $\mathcal{L}$-invariants match up for $\opn{Sym}^{n-1}\pi$, without appealing to local-global compatibility results in families at $\ell = p$.
\end{remark}

\newcommand{\etalchar}[1]{$^{#1}$}
\renewcommand{\MR}[1]{}
\providecommand{\bysame}{\leavevmode\hbox to3em{\hrulefill}\thinspace}
\providecommand{\MR}{\relax\ifhmode\unskip\space\fi MR }
\providecommand{\MRhref}[2]{%
  \href{http://www.ams.org/mathscinet-getitem?mr=#1}{#2}
}
\providecommand{\href}[2]{#2}

\Addresses

\end{document}